\documentclass[10pt,a4paper,dvipsnames]{article}

\usepackage{fullpage,amssymb,amsmath,amsfonts,amsthm,paralist,graphicx,color,float, mathtools,tikz}
\usepackage[utf8]{inputenc}
\usepackage[english]{babel}
\usepackage[colorlinks=true,linkcolor=BrickRed,citecolor=blue]{hyperref}
\usepackage[normalem]{ulem}
\usepackage{centernot}
\usepackage{ifthen,xkeyval, tikz, calc, graphicx}
\usepackage{xcolor}
\usepackage{framed}
\usepackage[textsize=scriptsize, backgroundcolor=blue!20!white,bordercolor=red]{todonotes}
\usepackage{dsfont}
\usepackage{mathrsfs}
\usepackage{comment}
\usepackage{stmaryrd}
\usepackage{enumitem}
\usepackage{caption}
\usepackage{subcaption}
\usepackage{todonotes}
\usepackage{thmtools}
\usetikzlibrary{plotmarks}
\usetikzlibrary{shapes.misc}
\usepackage{orcidlink}


\newtheorem{theorem}{Theorem}[section]
\newtheorem{lemma}[theorem]{Lemma}
\newtheorem{prop}[theorem]{Proposition}
\newtheorem{corollary}[theorem]{Corollary}
\newtheorem{observation}[theorem]{Observation}

\theoremstyle{definition}
\newtheorem{definition}[theorem]{Definition}

\newenvironment{assumptionp}[1]{
  
  \assumptionalt
}{\endassumptionalt}

\theoremstyle{remark}
\newtheorem{remark}[theorem]{Remark}

\definecolor{shadecolor}{named}{GreenYellow}

\makeatletter
\newcommand{\pushright}[1]{\ifmeasuring@#1\else\omit\hfill$\displaystyle#1$\fi\ignorespaces}
\newcommand{\pushleft}[1]{\ifmeasuring@#1\else\omit$\displaystyle#1$\hfill\fi\ignorespaces}
\makeatother

\newcommand{\p}{\mathbb P}
\newcommand{\pla}{\mathbb P_\lambda}
\newcommand{\Pcal}{\mathcal P}

\newcommand{\E}{\mathbb E}
\newcommand{\Ecal}{\mathcal E}
\newcommand{\Efrak}{\mathfrak E}

\newcommand{\R}{\mathbb R}
\newcommand{\X}{\mathbb X}
\newcommand{\Xcal}{\mathcal X}

\newcommand{\Rd}{\mathbb R^d}
\newcommand{\Z}{\mathbb Z}

\newcommand{\N}{\mathbb N}
\newcommand{\B}{\mathcal B}
\newcommand{\dd}{\mathrm{d}} 
\newcommand{\C}{\mathscr {C}}
\newcommand{\Complex}{\mathbb C}

\newcommand{\piv}[1]{\textsf {Piv}(#1)}
\newcommand{\Leb}{{\rm Leb}}
\newcommand{\xbar}{\overline{x}}
\newcommand{\ybar}{\overline{y}}
\newcommand{\zbar}{\overline{z}}
\newcommand{\ubar}{\overline{u}}
\newcommand{\wbar}{\overline{w}}

\newcommand{\sbar}{\overline{s}}
\newcommand{\tbar}{\overline{t}}
\newcommand{\vbar}{\overline{v}}
\newcommand{\zerobar}{\overline{0}}
\newcommand{\Hilbert}{\mathbb H}
\DeclareMathOperator*{\esssup}{ess\,sup}
\newcommand{\Fourier}[1]{\mathcal{F}\left( #1 \right)}
\newcommand{\Rmin}{R^{({\rm min})}_d}
\newcommand{\Rmax}{R^{({\rm max})}_d}
\newcommand{\Vmax}{V^{({\rm max})}}
\newcommand{\LandauBigO}[1]{\mathcal{O}\left(#1\right)}

\newcommand{\conn}[3]{#1 \longleftrightarrow #2\textrm { in } #3}

\newcommand{\nconn}[3]{#1 \centernot\longleftrightarrow #2\textrm { in } #3}
\newcommand{\dconn}[3]{#1 \Longleftrightarrow #2\textrm { in } #3}
\newcommand{\ndconn}[3]{#1 \centernot\Longleftrightarrow #2\textrm { in } #3}
\newcommand{\xconn}[4]{#1 \xleftrightarrow{\,\,#4\,\,} #2\textrm { in } #3}

\newcommand{\thinning}[2]{#1_{\langle #2 \rangle}}

\newcommand{\tlam}{\tau_\lambda}
\newcommand{\tlamo}{\tau_\lambda^\circ}

\newcommand{\tklam}{\tau_{\lambda,k}}

\newcommand{\ftlam}{\widehat\tau_\lambda}

\newcommand{\ftklam}{\widehat\tau_{\lambda,k}}

\newcommand{\trilam}{\triangle_\lambda}
\newcommand{\trilamo}{\triangle^\circ_\lambda}
\newcommand{\trilamoo}{\triangle^{\circ\circ}_\lambda}
\newcommand{\trilamB}{\triangle^{(B)}_\lambda}

\newcommand{\trilamooBar}{\overline{\triangle^{\circ\circ}_\lambda}}

\newcommand{\Wk}{W_k}
\newcommand{\WkBar}{\overline{W_k}}
\newcommand{\HkBar}{\overline{H_k}}

\newcommand{\Ulam}{U_\lambda}
\newcommand{\Vlam}{V_\lambda}


\newcommand{\fgmu}{\widehat G_{\mulam}}
\newcommand{\dispfgmu}{\widehat{\mathcal{G}}_{\mulam}}

\newcommand{\Id}{\mathds 1}
\newcommand{\Optlam}{\mathcal{T}_\lambda}
\newcommand{\OptlamT}{\mathcal{T}_{\lambda_O}}
\newcommand{\fOptlam}{\widehat{\mathcal{T}}_\lambda}
\newcommand{\fOptlamT}{\widehat{\mathcal{T}}_{\lambda_O}}
\newcommand{\Optklam}{\mathcal{T}_{\lambda,k}}
\newcommand{\fOptklam}{\widehat{\mathcal{T}}_{\lambda,k}}
\newcommand{\Opconnf}{\varPhi}
\newcommand{\fOpconnf}{\widehat{\Opconnf}}
\newcommand{\OpLace}{\varPi}
\newcommand{\OpLacelam}{\varPi_\lambda}
\newcommand{\OpLacelamT}{\varPi_{\lambda_O}}
\newcommand{\fOpLace}{\widehat{\OpLace}}
\newcommand{\fOpLacelam}{\fOpLace_{\lambda}}
\newcommand{\fOpLacelamT}{\fOpLace_{\lambda_O}}

\newcommand{\EndBlock}{\overline{\psi}}

\newcommand{\OpEndBlock}{\overline{\Psi}}

\newcommand{\OpNorm}[1]{\left\lVert#1\right\rVert_{\rm op}}
\newcommand{\OneNorm}[1]{\left\lVert#1\right\rVert_{1,\infty}}
\newcommand{\TwoNorm}[1]{\left\lVert#1\right\rVert_{2,\infty}}
\newcommand{\InfNorm}[1]{\left\lVert#1\right\rVert_{\infty,\infty}}

\DeclarePairedDelimiter\abs{\lvert}{\rvert}
\DeclarePairedDelimiter\norm{\lVert}{\rVert}

\DeclarePairedDelimiterX{\inner}[2]{\langle}{\rangle}{#1, #2}
\newcommand{\MinModulus}[1]{\left\lVert#1\right\rVert_{\rm min}}
\newcommand{\SupSpec}[1]{\mathbb{S}\left(#1\right)}

\newcommand{\EssIm}[1]{{\rm ess.Im}\left(#1\right)}
\newcommand{\Ess}{{\rm ess.}}
\DeclareMathOperator*{\essinf}{ess\,inf}


\newcommand{\mulam}{\mu_\lambda}
\newcommand{\orig}{\mathbf{0}}
\newcommand{\e}{\text{e}}

\newcommand{\connf}{\varphi}

\newcommand{\fconnf}{\widehat\connf}

\newcommand{\Bb}{\mathbb B}

\newcommand{\const}{c_f}
\newcommand{\constKappaOne}{c_{\kappa+1}}

\definecolor{darkorange}{RGB}{255,165,0}
\definecolor{altviolet}{RGB}{139,0,139}
\definecolor{turquoise}{RGB}{64,224,208}
\definecolor{lblue}{RGB}{173,216,230}
\definecolor{violet}{RGB}{238,130,238}
\definecolor{darkgreen}{RGB}{0,100,0}
\definecolor{lgreen}{RGB}{144,238,144}


\tikzset{cross/.style={cross out, draw=black, minimum size=2*(#1-\pgflinewidth), inner sep=0pt, outer sep=0pt},
cross/.default={1pt}}

\newcommand{\IntegralDot}{\raisebox{1pt}{\tikz{\filldraw (0,0) circle (2pt)}}}

\newcommand{\SupremumDot}{\raisebox{1pt}{\tikz{\draw (0,0) circle (2pt)}}}

\newcommand{\MarkSupremumDot}{\raisebox{1pt}{\tikz{\node[mark size=2pt] at (0,0) {\pgfuseplotmark{square*}}}}}

\newcommand{\connfline}{\raisebox{1pt}{\tikz{\draw (0,0) -- (1,0); \draw (0.5,0) circle (0pt) node[above]{\scriptsize$\sim$};}}}

\newcommand{\tlamline}{\raisebox{2pt}{\tikz{\draw (0,0) -- (1,0);}}}

\newcommand{\supline}{\raisebox{2pt}{\tikz{\draw[dashed] (0,0) -- (1,0); \filldraw[fill=white] (0,0) circle (2pt)}}}

\newcommand{\tlamoline}{\raisebox{1pt}{\tikz{\draw (0,0) -- (1,0); \draw (0.5,0) circle (0pt) node[above]{\scriptsize$\circ$};}}}

\newcommand{\tlamBline}{\raisebox{1pt}{\tikz{\draw[<->] (0,0) -- (1,0); \draw (0.5,0) circle (0pt) node[above]{\scriptsize$\in B$};}}}

\newcommand{\NOTtlamBline}{\raisebox{1pt}{\tikz{\draw[<->] (0,0) -- (1,0); \draw (0.5,0) circle (0pt) node[above]{\scriptsize$\not\in B$};}}}

\newcommand{\displaceline}{\raisebox{1pt}{\tikz{\draw[<->] (0,0) -- (1,0); \draw (0.5,0) node[cross=3pt]{};}}}


\newcommand{\psioneintegral}{\raisebox{-15pt}{

}}

\newcommand{\SubCaseOne}{\mathrm{\left(I\right)}}
\newcommand{\SubCaseTwo}{\mathrm{\left(II\right)}}
\newcommand{\SubCaseThree}{\mathrm{\left(III\right)}}

\tikzset{cross/.style={cross out, draw=black, minimum size=2*(#1-\pgflinewidth), inner sep=0pt, outer sep=0pt},
cross/.default={1pt}}




\setcounter{tocdepth}{2}

\numberwithin{equation}{section}
\allowdisplaybreaks

\title{The Triangle Condition for the\\ Marked Random Connection Model}
\author{Matthew Dickson\footnote{University of British Columbia, Department of Mathematics, Vancouver, BC, Canada, V6T 1Z2; Email: dickson@math.ubc.ca; \orcidlink{0000-0002-8629-4796}~https://orcid.org/0000-0002-8629-4796} \and Markus Heydenreich\footnote{Universität Augsburg, Institut für Mathematik, 86135 Augsburg, Germany; Email: markus.heydenreich@uni-a.de;\orcidlink{0000-0002-3749-7431}~https://orcid.org/0000-0002-3749-7431}
}
 
\date{}

\begin{document}
\maketitle

\vspace{-1em}

{\centering{ \today}\par}

\vskip-3em

\begin{abstract}
We investigate a spatial random graph model whose vertices are given as a marked Poisson process on $\Rd$. Edges are inserted between any pair of points independently with probability depending on the spatial displacement of the two endpoints and on their marks. Upon variation of the Poisson density, a percolation phase transition occurs under mild conditions: for low density there are finite connected components only, whilst for large density there is an infinite component almost surely. 
Our focus is on the transition between the low- and high-density phase, where the system is critical. We prove that if the dimension is high enough and the edge probability function satisfies certain conditions, then an infrared bound for the critical connection function is valid. This implies the triangle condition, and thus mean-field behaviour. 
We achieve this result through combining the recently established lace expansion for Poisson processes with spectral estimates.
\end{abstract}

\noindent\emph{Mathematics Subject Classification (2020).} 60K35, 82B43, 60G55.

\noindent\emph{Keywords and phrases.} Random connection model, continuum percolation, inhomogeneous percolation, lace expansion, mean-field behaviour, triangle condition, Ornstein-Zernike equation, Boolean percolation

{\footnotesize
\tableofcontents
}

\section{Introduction}

\subsection{Phase transition in percolation} 
For percolation, each edge of the hypercubic lattice is retained independently with fixed probability $p\in[0,1]$. It is well-known that the model undergoes a phase transition: in any dimension $d\ge2$ there is a $p_c\in(0,1)$ such that for $p<p_c$ there is no infinite cluster and if $p>p_c$ there is a unique infinite cluster. Behaviour of the model at (and near) the critical threshold $p_c$ is a major theme in probability theory. Indeed, we have good control on the critical behaviour in two regimes: 
\begin{itemize}
    \item if $d=2$, then planarity and the link with complex analysis allows a refined description of the critical behaviour (to some extent only on the triangular lattice), see \cite{SmirnWerne01,Werne09,BolloRiord06} and references therein;
    \item if $d$ is large (at least $d>6$), then Hara and Slade \cite{HarSla90} established the famous triangle condition, which implies that various critical exponents exist and take on their mean-field values \cite{AizNew84,BarAiz91,Ngu87,KozNac09,KozNac11,HeyHof17,hutchcroft2021derivation}, see also \cite{HeyMat20} for a corresponding result for site percolation. While it is generally believed that $d>6$ is sufficient, current proof techniques only allow the proof for $d\ge11$ \cite{FitHof17}. Already \cite{HarSla90} show it for $d>6$ under an additional spread-out assumption. 
\end{itemize}
%
%
The focus of the present paper is on the second case, which we coin the ``mean-field regime''. 
Indeed, it has been shown that certain mean-field critical exponents exist for a wide range of (discrete and continuous) models provided that the dimension is large enough \cite{HeyHofLasMat19,HeyHofSak08}. This therefore substantiates the postulated \emph{universality} of critical exponents. 
The principle workhorse in showing the mean-field critical exponents in Euclidean spaces is the establishment of the triangle condition, which has also been verified for non-amenable graphs under varying assumptions \cite{Schon01,Schon02,Hutch20b}. 
However, all of the above papers crucially rely on the independence of edge occupation, and the question we put forward is: 
\begin{center}\emph{To what extent can dependency of the edge occupation change critical exponents?} \end{center}
The present work establishes the triangle condition for a class of \emph{dependent} percolation models, and thus shows that the critical exponents do not change. Mean-field bounds on some critical exponents were proven recently in the context of  Boolean percolation \cite{DewMui}, but a proof of the triangle condition in such situations is new. 

We prove an infrared bound for the marked random connection model whenever $d\geq d^*$ for some $d^*>6$, and thus (with the results of Caicedo-Dickson \cite{CaiDic24}) establish mean-field critical exponents. Mean-field theory predicts that mean-field criticality is true for all dimensions $d>6$, and thus $6$ is the upper critical dimension. 
On the other hand, the triangle condition, and hence the mean-field value of various critical exponents, has also been established for \emph{long-range percolation} in lower dimensions provided that the edge occupation probability is decaying slowly enough \cite{HeyHofSak08,  CheSak15,CheSak19}. 
A further question, which is open, is thus: Can strongly correlated (but short-range) percolation models have an upper critical dimension different from 6?   
A special instance (with negative answer to this question) is the \emph{cable system for the Gaussian Free field} on lattices. 
The very particular dependency structure of that model admits a rigorous analysis via  loop soups. Based on an explicit calculation of the two-point function of this model by Lupu \cite{Lup16}, Werner \cite{Wer21} pointed out that the upper critical dimension is indeed $6$ for this model, and recently Cai and Ding \cite{CaiDin24} even verified that the \emph{one-arm} critical exponent takes its critical value whenever $d>6$. 
The fact that this strongly correlated model shows mean-field behaviour as soon as $d>6$ is nevertheless most remarkable, as Drewitz, Pr{\'e}vost, and Rodriguez \cite{DrePreRod23} used links with potential theory to identify various critical exponents in dimension $d=3$, which differ from the (supected) exponents for independent edge occupation. 

\subsection{Complex networks} 
Random geometric graphs are versatile stochastic models for real networks. As many real-world networks exhibit certain stylised facts such as scale-free degrees, short distances and geometric clustering, it is clear that mathematical models of complex networks must share such features. While numerous different mathematical models for complex networks exist, it has been pointed out by \cite{GraHeyMonMor} that the \emph{weight-dependent random connection model} provides a unified approach that generalises many models for (infinite) geometric random graphs. 
This is a continuum random graph model whose vertices are given as a Poisson point process on $\R^d\times(0,1)$. If $x,y\in\Rd$ and $s,t\in(0,1)$, where edges $\big\{(x,s),(y,t)\}$ are inserted with probability given by a connection function $\connf((x,s),(y,t))$. A standard assumption is that $\connf$ is decreasing in the spatial distance $|x-y|$ as well as in both marks $s$ and $t$. 

Relevance of this model comes from the fact that different choices for the connection function $\connf$ generalise various well-known random graph models such as Boolean percolation, scale-free Gilbert graph, scale-free percolation, soft random geometric graphs, long-range percolation, and spatial preferential attachment graphs. Results proven in the generalised setup are thus valid in great generality.  

For some choices of $\connf$, these models are known to be \emph{robust}, i.e., infinite components exist for all edge densities; see \cite{GraLucMor,GraLucMon}. 
Our interest is in regimes where these graphs are non-robust and thus a proper percolation phase transition exist: if $\connf$ is small (scaled by a certain parameter) then there is no infinite cluster, but if $\connf$ is large, then an infinite cluster does appear. Investigation of such transitions is classical in percolation theory, and of great importance in modern probability. 

The present paper is the first one to prove an infrared bound for an inhomogeneous spatial percolation model. While such results are well-known for the homogeneous case (i.e., without marks), there are hardly any results for the percolation phase transition for inhomogeneous models. A notable exception is Boolean percolation, where sharp phase transition is known \cite{DumRaoTas20} and the mean-field bounds on three critical exponents is identified \cite{DewMui}. 

\bigskip
The key challenge is therefore, to overcome the dependencies imposed by the vertex weights. We prove the infrared bound under specific conditions, and verify these conditions for various concrete examples.

\subsection{The Marked Random Connection Model}
In this paper we consider a generalisation of the weight-dependent random connection model, which we first describe informally and refer to Section \ref{sec:formalConstruction} for a formal construction. It is for our purposes more convenient to generalise the setup by considering a general measurable mark space $\Ecal$ and thus define the model on the space $\mathbb{X} = \Rd\times \Ecal$. 
Let $\nu = \Leb \times \Pcal$ be a $\sigma$-finite measure on $\X$ where $\Leb$ denotes the Lebesgue measure on $\R^d$ and $\Pcal$ is a probability measure on $\Ecal$. Let $\lambda>0$, and then $\lambda \nu$ is an intensity measure on $\X$. The connection function is given by the measurable and symmetric $\connf\colon\X^2\to\left[0,1\right]$. We furthermore require that $\connf$ is translation and reflection invariant in the position components. That is, if we denote $x=\left(\xbar,a\right)$ and $y=\left(\ybar,b\right)$ for $\xbar,\ybar\in\R^d$ and $a,b\in\Ecal$, we require that $\connf\left(x,y\right) = \connf(\xbar-\ybar;a,b) = \connf(-\xbar+\ybar;a,b)$. 
We denote by $\eta$ the Poisson point process with intensity measure $\lambda\nu$ (which represents the vertex set of the random graph) and by $\xi$ the full random graph including additional randomness for the formation of edges.

Once we have the concept of two vertices being adjacent, we can consider the concept of two vertices being \emph{connected}. We say $x,y\in\X$ are connected, and write $x\longleftrightarrow y$, if they are equal or there exists a path in $\xi$ connecting the two vertices. By this we mean that there exists a sequence of distinct vertices $x = u_0, u_1, \ldots, u_k, u_{k+1} = y \in \eta$ (with $k \in\N_0$) such that $u_i \sim u_{i+1}$ for all $0 \leq i \leq k$. This definition can also be extended to connectedness on the RCM augmented by the vertex $x\in\X$ (resp. $x,y\in\X$), where $\left(\eta,\xi\right)$ is replaced by $\left(\eta^x,\xi^x\right)$ (resp. $\left(\eta^{x,y},\xi^{x,y}\right)$).
Therefore given a pair of points $x,y\in\X$ we can consider the event $\left\{\conn{x}{y}{\xi^{x,y}}\right\}$, and in turn the probability of this event. We define the pair-connectedness (or two-point) function $\tlam \colon \X^2 \to \left[0,1\right]$ to be
\begin{equation}
    \tlam\left(x,y\right) := \pla\left(\conn{x}{y}{\xi^{x,y}}\right).
\end{equation}

Using the connection and pair-connectedness functions $\connf,\tlam\colon\X^2 \to \left[0,1\right]$ as kernel functions, we construct associated linear integral operators. Where $L^2\left(\X\right)$ is the Hilbert space of square integrable functions on $\X$, we define the connection operator, $\Opconnf\colon L^2\left(\X\right) \to L^2\left(\X\right)$, and the pair-connectedness operator, $\Optlam\colon L^2\left(\X\right)\to L^2\left(\X\right)$. For $f\in L^2\left(\X\right)$, we have
\begin{equation}
    \Opconnf f \left(x\right) = \int \connf(x,y)f(y)\nu\left(\dd y\right), \qquad \Optlam f \left(x\right) = \int \tlam(x,y)f(y)\nu\left(\dd y\right),
\end{equation}
for $\nu$-almost every $x\in \X$. Note that the composition of two of these operators can be seen as a ``convolution over $\X$" of the kernel functions. Where we rarely use it, we reserve the $\star$ notation for straight-forward convolution over $\Rd$ only. For example, given $\xbar\in\Rd$ and $a_1,a_2,a_3,a_4\in \Ecal$,
\begin{equation}
    \left(\connf\left(\cdot;a_1,a_2\right)\star \connf\left(\cdot;a_3,a_4\right)\right)\left(\xbar\right) = \int_{\Rd}\connf\left(\xbar-\ybar;a_1,a_2\right)\connf\left(\ybar;a_3,a_4\right) \dd \ybar.
\end{equation}

We will also consider Fourier transformed versions of these operators. For a Lebesgue-integrable function $g\colon\Rd\to\R$, we define the Fourier transform of $g$ to be
\begin{equation}
    \widehat{g}(k) = \int \e^{ik\cdot\xbar}g\left(\xbar\right) \dd \xbar
\end{equation}
for $k\in\Rd$ where $k\cdot\xbar = \sum^d_{j=1}k_j\xbar_j$ denotes the standard inner product on $\Rd$. For given marks $a,b\in\Ecal$, we define $\fconnf(k;a,b)$ and $\ftlam(k;a,b)$ as the Fourier transforms of $\connf(\xbar;a,b)$ and $\tlam(\xbar;a,b)$ respectively. We use these $\Ecal^2\to\R$ functions to define linear operators on $L^2(\Ecal)$, the space of square integrable functions on $\Ecal$. For each $k\in\Rd$ we define the operators $\fOpconnf(k),\fOptlam(k)\colon L^2\left(\Ecal\right) \to L^2\left(\Ecal\right)$ by their action on $f\in L^2(\Ecal)$:
\begin{equation}
    \left[\fOpconnf(k) f\right] \left(a\right) = \int \fconnf(k;a,b)f(b)\Pcal\left(\dd b\right), \qquad \left[\fOptlam(k) f\right] \left(a\right) = \int \tlam(k;a,b)f(b)\Pcal\left(\dd b\right),
\end{equation}
for $\Pcal$-almost every $a$.

Our main aims of this paper are to derive an infrared bound for the two-point function (via its integral operators), and to prove that the so-called \emph{triangle condition} is satisfied by a large family of models in sufficiently high dimension. For $\lambda>0$ we define
\begin{equation}
\label{eqn:triangledef}
    \trilam := \lambda^2\esssup_{x,y\in\X}\int\tlam(x,u)\tlam(u,v)\tlam(v,y)\nu^{\otimes 2}\left(\dd u, \dd v\right).
\end{equation}
It is relatively simple to show that $\trilam$ is finite for sub-critical $\lambda$ and infinite for super-critical $\lambda$, but a model is said to satisfy the triangle condition if $\trilam$ is finite at criticality. 

An aside: the essential supremum taken in \eqref{eqn:triangledef} is with respect to the measure $\nu$ on $\X$. We will be taking the essential supremum many times in this paper over various spaces, and each space will naturally have a different measure implicitly associated with it. It should be clear from the context what that measure is.

\subsection{Overview}

In Section~\ref{sec:results} we provide the main results of the paper. We begin by making precise the idea of a critical intensity in Section~\ref{sec:Results_criticalintensities}. These propositions are largely independent of the rest of the paper and their proofs can be found in Appendix~\ref{sec:Critical_intensities}. The main concern of the paper is in proving the results of Section~\ref{sec:Results_twopointoperator}. Given the set of assumption \ref{Assump:2ndMoment}, \ref{Assump:Bound}, and \ref{Assump:BallDecay},  we prove that the triangle condition is satisfied and we provide an infrared bound on the two-point operator in terms of the adjacency operator. Some examples of models that satisfy these assumptions are provided in Section~\ref{subsection:Examples}, with the proofs that they satisfy them given in Appendix~\ref{sec:Model_properties}.

Before we begin with the proof proper, we give some preliminaries in Section~\ref{sec:preliminaries}. First Section~\ref{sec:formalConstruction} gives a formal construction of the RCM. Then we have Sections~\ref{sec:Prelim:Probability} and \ref{sec:Prelim:LinearOperators}, which give standard but important results in probability and linear algebra respectively that we will frequently use in our arguments. The probabilistic lemmas will be familiar to students of point process theory and percolation theory, with the caveat that we require them to apply with the augmented space $\X=\Rd\times\Ecal$ taking the place of the more usual $\Rd$. This adds no complication. The linear algebra results may well be foreign to a probabilistic audience, but will nevertheless be fundamental to our proof. Here we provide a sufficient technical description of the linear operators we construct and the Hilbert spaces upon which they act. Perhaps the most important result we call upon from this section is the Spectral Theorem (Theorem~\ref{thm:spectraltheorem}), which allows us to relate bounded self-adjoint linear operators to a multiplication operator by some unitary map. Multiplication operators are far simpler to work with, and we use the theorem many times towards the end of our proof. For operators on finite dimensional vector spaces, the spectral theorem is equivalent to saying that a Hermitian matrix can be diagonalized over $\R$. The proofs of the remaining lemmas in this subsection can be found in Appendix~\ref{appendix:LinearOperatorProofs}.

In Section~\ref{sec:expansion} we describe the finite-term lace expansion. This describes the two-point function in terms of the adjacency function, the lace expansion coefficient functions, and a remainder function. The presence of marks makes little difference here and this step is largely the same as for \cite{HeyHofLasMat19}. The novelty is that we formulate the resulting convolution equation as an operator equation.

Next we show that the finite-term lace expansion converges in the sub-critical regime to an Ornstein-Zernike equation (OZE). The first step (in Section~\ref{sec:diagrammaticbounds}) is to bound the lace expansion coefficient operators with long implicitly defined integral expressions which can be decomposed into simpler integral expressions like the triangle diagram (which is also implicitly defined). While the bounds derived here may not be surprising to those familiar with lace expansion arguments, the arguments in \cite{HeyHofLasMat19} make use of translation symmetry which does not apply if marks are present. The details of the proofs of these bounds can be found in Appendix~\ref{appendix:DiagrammaticBoundsProofs}. In Section~\ref{sec:bootstrapanalysis} these integral expressions are in turn bounded in terms of a bootstrap function. Once again, the marks introduce complications that mean that this is not a simple modification of the arguments in \cite{HeyHofLasMat19}. The proofs of the results in this section can be found in Appendix~\ref{appendix:BoundingwithBootstrap}. These bounds on the the lace expansion coefficient operators culminate in Proposition~\ref{thm:convergenceoflaceexpansion}, which proves that the OZE holds for $\lambda<\lambda_O$.

Section~\ref{sec:Bootstrap:ForbiddenRegion} takes this further and shows that the convergence in the sub-critical regime is in fact uniform in $\lambda$. This is shown by deriving a uniform upper bound on the bootstrap function by means of a forbidden-region argument in Proposition~\ref{thm:bootstrapargument}. A vital part of this argument is the differentiability of $\lambda\mapsto\Optlam$ and related functions. The proofs of these differentiability properties can be found in Appendix~\ref{sec:Prelim:differentiating}.

This uniform convergence allows for many results to be taken from the sub-critical regime into the critical regime in Section~\ref{sec:ConcludeProofs}. Proposition~\ref{lem:Triangle_Critical} uses this uniformity to prove extend the bound on the triangle diagram to $\lambda=\lambda_O$, while Proposition~\ref{lem:convergence_of_lace_expansion_corollary_lambda_T} shows that the bounds on the lace expansion coefficient operator and the OZE also extend to the critical threshold.

\section{Results}
\label{sec:results}

As we will be working with operators, we will require some concept of size for these operators. This will take a number of forms, as some will have advantages in various scenarios.

Let $\left(\Xcal,\mu\right)\in\left\{\left(\X,\nu\right),\left(\Ecal,\Pcal\right)\right\}$, and let $L^2\left(\Xcal\right)\in\left\{L^2\left(\X\right),L^2\left(\Ecal\right)\right\}$ denote the corresponding space of square integrable functions. In practice it will be clear which is being used. We will require five various ideas of size. First suppose $H\colon L^2\left(\Xcal\right) \to L^2\left(\Xcal\right)$ is a bounded linear operator. Then the \emph{operator norm} is given by
\begin{equation}
    \OpNorm{H} = \sup_{f\in L^2(\Xcal):f\ne 0}\frac{\norm*{Hf}_2}{\norm*{f}_2},
\end{equation}
where $\norm*{\cdot}_2$ is the standard $L^2$-norm on $L^2\left(\Xcal\right)$. In particular, we have $\OpNorm{H} = \sup \left\{\abs*{z}:z\in\sigma(H)\right\}$, where $\sigma\left(H\right)\subset \Complex$ denotes the spectrum of $H$. Let $\inner{\cdot}{\cdot}$ be the inner product on $L^2\left(\Xcal\right)$ defined by \eqref{eqn:innerproduct}. If $H$ is also self-adjoint, then $\inner{f}{Hf}\in\R$ and we define the \emph{spectral supremum}
\begin{equation}
    \label{eqn:supspecDef}
    \SupSpec{H} := \sup_{f\in L^2(\Xcal):f\ne 0}\frac{\inner{f}{Hf}}{\inner{f}{f}}.
\end{equation}
In particular, the self-adjointness of $H$ implies that $\sigma(H)\subset \R$ and $\SupSpec{H} = \sup \left\{z:z\in\sigma(H)\right\}$. Note that $\SupSpec{\cdot}$ is not a norm: it is not necessarily non-negative.

Now suppose $H\colon L^2(\Xcal)\to L^2(\Xcal)$ is a bounded linear integral operator with measurable kernel function $h\colon\Xcal^2 \to \Complex$. Then we have the explicit norms:
\begin{align}
    \OneNorm{H} &:= \esssup_{y\in\Xcal}\norm*{h\left(\cdot,y\right)}_1 =  \esssup_{y\in\Xcal}\int \abs*{h\left(x,y\right)}\mu\left(\dd x\right), \label{eqn:One_Infty_Norm}\\
    \TwoNorm{H} &:= \esssup_{y\in\Xcal}\norm*{h\left(\cdot,y\right)}_2 =  \esssup_{y\in\Xcal}\left(\int \abs*{h\left(x,y\right)}^2\mu\left(\dd x\right)\right)^\frac{1}{2},\label{eqn:Two_Infty_Norm}\\
    \InfNorm{H} &:= \esssup_{y\in\Xcal}\norm*{h\left(\cdot,y\right)}_\infty =  \esssup_{x,y\in\Xcal}\abs*{h\left(x,y\right)} \label{eqn:Infty_Infty_Norm}.
\end{align}

\subsection{Critical Intensities}
\label{sec:Results_criticalintensities}

Before we move on to our results describing behaviour at and around criticality, we will first explore how we are defining criticality. In our main results in Section~\ref{sec:Results_twopointoperator} we use the \emph{operator critical intensity} $\lambda_O$, defined by
\begin{equation}
    \lambda_O := \inf\left\{\lambda>0:\OpNorm{\fOptlam(0)}=\infty\right\}.
\end{equation}
Whilst the divergence of a two-point operator has been used previously to define a critical threshold in random graphs before (for example in \cite{bollobas2007phase, hutchcroft2019percolation, Hutch20}), at first glance it may seem to be quite far removed from the standard probabilistic interpretations of a percolation transition. A variety of transition thresholds for the continuum Boolean disc model are discussed in \cite{roy1990russo,Gou08,DumRaoTas20}. However, we are able to show that under certain conditions the operator critical intensity coincides with more familiar interpretations.

For $\lambda\geq 0$, we define the two functions $\theta_\lambda\colon\Ecal\mapsto \left[0,1\right]$ and $\chi_\lambda\colon\Ecal\mapsto \left[0,+\infty\right]$ as
\begin{align}
    \theta_\lambda(a) &:= \pla\left(\abs*{\C(\zerobar,a)}=\infty\right),\\
    \chi_\lambda(a) &:= \E_\lambda\left[\abs*{\C(\zerobar,a)}\right].
\end{align}
For each $p\in\left[1,+\infty\right]$, we define corresponding functions $\theta^{(p)}\colon\lambda\mapsto \norm{\theta_\lambda}_p$ and $\chi^{(p)}\colon\lambda\mapsto \norm{\chi_\lambda}_p$. These suggest the critical intensities
\begin{align}
    \lambda_c^{(p)} &:= \inf\left\{\lambda\geq 0: \norm{\theta_\lambda}_p >0\right\},\\
    \lambda_T^{(p)} &:= \inf\left\{\lambda\geq 0: \norm{\chi_\lambda}_p =\infty\right\}.
\end{align}
Note that the critical intensity $\lambda_T^{(\infty)}$ corresponds to the critical intensity $\lambda_{1\to 1}$ used in \cite{dickson2024nonuniqueness} and analogous to $p_{1\to 1}$ used in \cite{hutchcroft2019percolation,Hutch20}. On the other hand, $\lambda_O$ corresponds to to the critical intensity $\lambda_{2\to 2}$ and is analogous to $p_{2\to 2}$ used in these references.

Some relations between these intensities can be swiftly deduced without requiring the RCM to satisfy any special conditions.
\begin{itemize}
    \item Their definitions clearly show that $\theta_\lambda(a)>0$ implies $\chi_\lambda(a)=\infty$. Therefore $\lambda^{(p)}_c \geq \lambda^{(p)}_T$ for all $p\in\left[1,\infty\right]$.
    \item If $\norm*{\theta_\lambda}_p>0$ for some $p\in\left[1,\infty\right]$, then it is non-zero on some $\Pcal$-positive set, and therefore $\norm*{\theta_\lambda}_q>0$ for all $q\in\left[1,\infty\right]$. Therefore $\lambda_c^{(p)} = \lambda_c^{(1)}$ for all $p\in\left[1,+\infty\right]$, and we denote this value simply by $\lambda_c$.
    \item By Jensen's inequality, $1\leq p_1 \leq p_2 \leq \infty$ implies the ordering $\norm*{\chi_\lambda}_1 \leq \norm*{\chi_\lambda}_{p_1} \leq \norm*{\chi_\lambda}_{p_2} \leq \norm*{\chi_\lambda}_\infty$. Therefore we have $\lambda_T^{(1)} \geq \lambda^{(p_1)}_T \geq \lambda^{(p_2)}_T \geq \lambda^{(\infty)}_T$.
\end{itemize}

With some of the techniques developed in this paper it is possible to relate $\lambda_O$ to the intensities $\lambda^{(1)}_T$, $\lambda^{(\infty)}_T$, and $\lambda_c$ described here. 

\begin{prop}
\label{prop:lambda_T=lambda_0}
The critical intensities satisfy $\lambda^{(\infty)}_T \leq \lambda_O \leq \lambda^{(1)}_T \leq \lambda_c$. If
\begin{equation}
\label{eqn:supDegree}
    \esssup_{a,b\in\Ecal}\int\connf(\xbar;a,b)\dd \xbar <\infty,
\end{equation}
then we also have $\lambda^{(p)}_T = \lambda_O$ for all $p\in\left[1,\infty\right]$.
\end{prop}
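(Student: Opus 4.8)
The plan is to turn each claimed inequality into a pointwise-in-$\lambda$ relation between $\chi_\lambda$ and the Fourier-at-zero kernel $\ftlam(0;\cdot,\cdot)$, and then read off the comparison of critical intensities by taking contrapositives and infima (recall that $\inf$ reverses set inclusion). The bridge is the identity
\begin{equation*}
    \chi_\lambda(a) \;=\; 1 + \lambda\int\tlam\big((\zerobar,a),y\big)\nu(\dd y) \;=\; 1 + \lambda\int_{\Ecal}\ftlam(0;a,b)\,\Pcal(\dd b),
\end{equation*}
valid for $\Pcal$-a.e.\ $a$ (both sides possibly infinite), which follows from the Mecke equation together with $\int\tlam(\xbar;a,b)\dd\xbar=\ftlam(0;a,b)$ and translation invariance; when $\fOptlam(0)$ is a bounded operator the last term equals $1+\lambda\big[\fOptlam(0)\Id\big](a)$, with $\Id\in L^2(\Ecal)$ since $\Pcal$ is a probability measure. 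So $\norm{\chi_\lambda}_p<\infty$ is equivalent to $a\mapsto\int_{\Ecal}\ftlam(0;a,b)\Pcal(\dd b)$ lying in $L^p(\Ecal)$.

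For the chain $\lambda^{(\infty)}_T\le\lambda_O\le\lambda^{(1)}_T\le\lambda_c$: the rightmost inequality is the case $p=1$ of the already-recorded relation $\lambda^{(p)}_c\ge\lambda^{(p)}_T$ (valid since $\theta_\lambda(a)>0$ forces $\chi_\lambda(a)=\infty$). For $\lambda_O\le\lambda^{(1)}_T$ I will argue that if $\fOptlam(0)$ is bounded then $\fOptlam(0)\Id\in L^2(\Ecal)\subseteq L^1(\Ecal)$ on the probability space $(\Ecal,\Pcal)$, hence $\norm{\chi_\lambda}_1<\infty$; the contrapositive gives $\{\lambda:\norm{\chi_\lambda}_1=\infty\}\subseteq\{\lambda:\OpNorm{\fOptlam(0)}=\infty\}$ and therefore $\lambda_O\le\lambda^{(1)}_T$. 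For $\lambda^{(\infty)}_T\le\lambda_O$ I will use that $\ftlam(0;\cdot,\cdot)$ is a non-negative symmetric kernel, so Schur's test gives $\OpNorm{\fOptlam(0)}\le\OneNorm{\fOptlam(0)}=\esssup_a\int_{\Ecal}\ftlam(0;a,b)\Pcal(\dd b)=\lambda^{-1}\big(\norm{\chi_\lambda}_\infty-1\big)$; hence $\norm{\chi_\lambda}_\infty<\infty$ forces $\fOptlam(0)$ bounded, and the same contrapositive argument yields $\lambda^{(\infty)}_T\le\lambda_O$.

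For the equality statement under \eqref{eqn:supDegree} it remains only to prove $\lambda^{(1)}_T\le\lambda^{(\infty)}_T$: combined with $\lambda^{(\infty)}_T\le\lambda_O\le\lambda^{(1)}_T$ and the known monotonicity $\lambda^{(\infty)}_T\le\lambda^{(p)}_T\le\lambda^{(1)}_T$ in $p$, this collapses $\lambda^{(\infty)}_T$, $\lambda_O$ and all $\lambda^{(p)}_T$ to a single value. This reduces to the bound $\norm{\chi_\lambda}_\infty\le 1+\lambda D\norm{\chi_\lambda}_1$ with $D:=\esssup_{a,b}\int\connf(\xbar;a,b)\dd\xbar<\infty$. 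Writing $x=(\zerobar,a)$ and using that the vertices of a path are distinct, every Poisson point $z$ with $\conn{x}{z}{\xi^x}$ lies in the cluster of some neighbour $w\in\eta$ of $x$ \emph{computed in the graph with the vertex $x$ deleted}, so
\begin{equation*}
    \chi_\lambda(a)-1 \;=\; \E_\lambda\big[|\C(x)\setminus\{x\}|\big] \;\le\; \E_\lambda\Big[\sum_{w\in\eta}\Id\{w\sim x\}\,\big|\C_{-x}(w)\big|\Big];
\end{equation*}
applying the Mecke equation to the sum over $w$ — the connecting edge $\{x,w\}$ being independent of $\C_{-x}(w)$ — and then translation invariance (so that $\E_\lambda[\,|\C_{-x}(w)|\,]$ is $\chi_\lambda$ at the mark of $w$) bounds the right-hand side by $\lambda\int_{\Ecal}\big(\int_{\Rd}\connf(\wbar;a,c)\dd\wbar\big)\chi_\lambda(c)\,\Pcal(\dd c)\le\lambda D\norm{\chi_\lambda}_1$. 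Hence $\norm{\chi_\lambda}_1<\infty\Rightarrow\norm{\chi_\lambda}_\infty<\infty$, giving $\lambda^{(1)}_T\le\lambda^{(\infty)}_T$.

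I expect this last cluster decomposition to be where the real work lies. The delicate point is that after isolating the first edge $x\sim w$, the remainder of the path must avoid $x$ (paths are self-avoiding), so the relevant object is genuinely the cluster of $w$ \emph{with $x$ deleted}; it is precisely this avoidance that decouples the estimate from $\chi_\lambda(a)$ itself — a one-step peeling that ignored the avoidance would only relate $\chi_\lambda(a)$ back to $\chi_\lambda(a)$ and be vacuous for $\lambda D\ge 1$ — and that lets translation invariance bring $\chi_\lambda$ at the mark of $w$, hence $\norm{\chi_\lambda}_1$, onto the right-hand side. One must also check carefully that the Mecke equation is applied with the edge $\{x,w\}$ decoupled from the cluster of $w$ in $\eta\cup\{w\}$. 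Everything else — Schur's test, the $L^2\subseteq L^1$ containment on a probability space, and the bookkeeping with infima — is routine.
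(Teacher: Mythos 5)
Your proposal is correct and follows essentially the same route as the paper: the Mecke identities expressing $\norm{\chi_\lambda}_1$ and $\norm{\chi_\lambda}_\infty$ through $\ftlam(0;\cdot,\cdot)$, the sandwich $\int\ftlam(0;a,b)\,\Pcal^{\otimes2}(\dd a,\dd b)\le\OpNorm{\fOptlam(0)}\le\OneNorm{\fOptlam(0)}$ (the content of Lemma~\ref{lem:BoundsonOperatorNorm}), and the one-step peeling $\norm{\chi_\lambda}_\infty\le 1+\lambda D\norm{\chi_\lambda}_1$ under \eqref{eqn:supDegree}. Your explicit use of the cluster of the neighbour with the root deleted is a slightly more careful rendering of the decoupling step that the paper's Mecke application implicitly relies on, but it is the same argument.
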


This proof of Proposition~\ref{prop:lambda_T=lambda_0} uses Mecke's equation, Lemma~\ref{lem:BoundsonOperatorNorm}, Lemma~\ref{thm:Spectrum_Subset}, and some simple probabilistic ideas. The details can be found in Appendix~\ref{sec:Critical_intensities}.

The coincidence of critical intensities for percolation in marked random connection models has also been considered by \cite{men1988percolation,Gou08,DumRaoTas20} in the specific case of a family of Boolean defect models, including Boolean disc models with random bounded radii. In our terminology they introduced a new mark, $\dagger$, which forms connections as if it had radius $0$, and proved that $\theta_\lambda\left(\dagger\right) =0$ if and only if $\chi_\lambda\left(\dagger\right) < \infty$. Therefore if we define
\begin{equation}
    \lambda_c\left(\dagger\right) := \inf\left\{\lambda\geq 0: \theta_\lambda\left(\dagger\right) >0\right\}, \qquad \lambda_T\left(\dagger\right) := \inf\left\{\lambda\geq 0: \chi_\lambda\left(\dagger\right) = \infty\right\},
\end{equation}
this gives $\lambda_c\left(\dagger\right) = \lambda_T\left(\dagger\right)$. The following Proposition uses this to prove that for the Boolean disc model with bounded radii, all the critical intensities we have considered are equal. 

\begin{prop}
\label{prop:Boolean_Critical_Intensities}
In the case of the Boolean disc model with bounded radii,
\begin{equation}
    \lambda_c\left(\dagger\right)= \lambda_T\left(\dagger\right) = \lambda_c = \lambda_O = \lambda_T^{(p)},
\end{equation}
for all $p\in\left[1,\infty\right]$.
\end{prop}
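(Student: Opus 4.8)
The plan is to reduce the statement to Proposition~\ref{prop:lambda_T=lambda_0}, the cited identity $\lambda_c(\dagger)=\lambda_T(\dagger)$ of \cite{men1988percolation,Gou08,DumRaoTas20}, and one external fact from continuum percolation. For the Boolean disc model with radii in $[0,M]$, $M<\infty$, the connection function is $\connf(\xbar;a,b)=\Id\{\abs{\xbar}\le a+b\}$, so $\int\connf(\xbar;a,b)\dd\xbar$ is the volume of the ball of radius $a+b\le 2M$, bounded uniformly in the marks. Hence \eqref{eqn:supDegree} holds, and Proposition~\ref{prop:lambda_T=lambda_0} already yields $\lambda_T^{(p)}=\lambda_O$ for every $p\in[1,\infty]$ together with $\lambda_T^{(\infty)}=\lambda_O\le\lambda_T^{(1)}\le\lambda_c$. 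So the whole assertion follows once I show $\lambda_c=\lambda_c(\dagger)$ and $\lambda_c\le\lambda_O$.

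For $\lambda_c=\lambda_c(\dagger)$ I would realise $\eta^{(\zerobar,\dagger)}$ and $\eta^{(\zerobar,a)}$ on one common Poisson process $\eta$. Since $\dagger$ connects as a radius-$0$ point, every edge incident to $(\zerobar,\dagger)$ is also incident to $(\zerobar,a)$, while edges within $\eta$ are identical, so $\C(\zerobar,\dagger)\subseteq\C(\zerobar,a)$ pathwise and therefore $\theta_\lambda(\dagger)\le\theta_\lambda(a)$ and $\chi_\lambda(\dagger)\le\chi_\lambda(a)$ for every mark $a$. The first bound gives $\theta_\lambda(\dagger)>0\Rightarrow\norm{\theta_\lambda}_1>0$, i.e.\ $\lambda_c\le\lambda_c(\dagger)$. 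For the reverse inequality I would use that for $\lambda>\lambda_c$ there is almost surely an unbounded occupied component and that (being a union of discs of positive radius) the union of all unbounded occupied components has positive Lebesgue density; by translation invariance this density equals $\pla\big(\zerobar\text{ lies in an unbounded occupied component}\big)$, which is thus positive, and on that event $\abs{\C(\zerobar,\dagger)}=\infty$, so $\theta_\lambda(\dagger)>0$ and $\lambda_c(\dagger)\le\lambda_c$. (The identity $\lambda_c(\dagger)=\lambda_c$ is also implicit in the cited analyses of Boolean defect models.)

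For $\lambda_c\le\lambda_O$, fix $\lambda<\lambda_c$. By the sharpness of the phase transition for bounded-radius Boolean percolation (\cite{Gou08,DumRaoTas20}), the subcritical regime has finite mean cluster size; in particular, for a test disc of radius $M$, $\chi_\lambda(M)=\E_\lambda\big[\abs{\C(\zerobar,M)}\big]<\infty$. As every mark $a$ satisfies $a\le M$, the monotone coupling above gives $\chi_\lambda(a)\le\chi_\lambda(M)$, so $\norm{\chi_\lambda}_\infty\le\chi_\lambda(M)<\infty$ and hence $\lambda\le\lambda_T^{(\infty)}=\lambda_O$; letting $\lambda\uparrow\lambda_c$ yields $\lambda_c\le\lambda_O$. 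Combined with $\lambda_O\le\lambda_c$ this gives $\lambda_O=\lambda_c$, and together with $\lambda_c=\lambda_c(\dagger)=\lambda_T(\dagger)$ and $\lambda_T^{(p)}=\lambda_O$ all five intensities coincide.

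The one genuinely non-elementary ingredient is the finiteness of the mean subcritical cluster size for bounded-radius Boolean percolation used in the last step --- a manifestation of sharpness of the continuum-percolation phase transition --- which I would simply quote; the remaining steps are either a short coupling or a direct appeal to Proposition~\ref{prop:lambda_T=lambda_0}. A minor secondary point is the classical fact, invoked for $\lambda_c=\lambda_c(\dagger)$, that the infinite occupied component has positive Lebesgue density, which can be obtained from an FKG argument or quoted from the continuum-percolation literature.
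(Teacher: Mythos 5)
Your proposal is correct in outline, but it reaches the conclusion by a genuinely different route than the paper, and it leans on a heavier external input. The paper never proves $\lambda_c(\dagger)\leq\lambda_c$ or $\lambda_c\leq\lambda_O$ directly; instead it closes the circle $\lambda_c\leq\lambda_c(\dagger)=\lambda_T(\dagger)\leq\lambda_T^{(1)}=\lambda_O\leq\lambda_c$, so the only thing left to prove after Proposition~\ref{prop:lambda_T=lambda_0} and the cited identity $\lambda_c(\dagger)=\lambda_T(\dagger)$ is the single inequality $\lambda_T(\dagger)\leq\lambda_T^{(1)}$. This is done by an elementary coupling/restriction argument: if $\norm{\chi_\lambda}_1=\infty$ then, by monotonicity of $a\mapsto\chi_\lambda(a)$ in the radius, $\int\chi_\lambda(b)\Id\{b\geq a\}\Pcal(\dd b)=\infty$ for suitable $a$; restricting to the event that $(\zerobar,\dagger)$ (or an auxiliary mark $\star_a$) has a \emph{unique} neighbour and that neighbour has radius $\geq a$ — an event of strictly positive probability whose conditional mark law is $\Pcal(\cdot\mid R_d\geq a)$ — forces $\chi_\lambda(\dagger)=\infty$. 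You instead establish $\lambda_c(\dagger)=\lambda_c$ (which requires the positive-density-of-the-infinite-occupied-region fact, a step that needs a little more care than you give it when $\Pcal$ charges arbitrarily small or zero radii) and then prove $\lambda_c\leq\lambda_O$ by quoting full sharpness of the bounded-radius Boolean phase transition, i.e.\ finiteness of $\chi_\lambda(M)$ for all $\lambda<\lambda_c$, and dominating every mark by the maximal one. That quoted theorem is strictly stronger than the paper's only external input ($\theta_\lambda(\dagger)=0\iff\chi_\lambda(\dagger)<\infty$ from \cite{men1988percolation}); indeed, bridging from the $\dagger$-susceptibility to the marked susceptibilities is precisely the content of the paper's unique-neighbour argument, which your appeal to sharpness buys wholesale. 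So: what you gain is brevity, what the paper's route gains is self-containedness — it needs sharpness only for the degenerate mark $\dagger$ and derives the rest by hand. Both are valid proofs.
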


To prove Proposition~\ref{prop:Boolean_Critical_Intensities} we show $\lambda_T\left(\dagger\right) \leq \lambda^{(p)}_T$ by considering points in $\eta$ adjacent to the root point with mark $\dagger$. The details can be found in Appendix~\ref{sec:Critical_intensities}.

\subsection{Two-Point Operator}
\label{sec:Results_twopointoperator}

The majority of this paper is concerned with describing the two-point function and the associated two-point operator. Our results will require Assumption \ref{Model_Assumption} described below to hold. This can be considered to be a generalisation of the ``finite variance" models described by \cite{HeyHofLasMat19} - extending to cases with multiple marks.

\begin{assumptionp}{\textbf{H}}
\label{Model_Assumption}
We require $\connf$ to satisfy the following properties: 
\begin{enumerate}[label=\textbf{(\ref{Model_Assumption}.\arabic*)}]
\item \label{Assump:2ndMoment} For all dimensions $d$ we have $\SupSpec{\fOpconnf(0)} < \infty$, and there exists a $d$-independent constant $C>0$ such that
\begin{align}
    \label{eqn:fconnfIsL2}
    \InfNorm{\fOpconnf(0)} &\leq C \SupSpec{\fOpconnf(0)},\\
\label{eqn:directional2ndMoment}
    \InfNorm{\fOpconnf(0) - \fOpconnf(k)} &\leq C \left(\SupSpec{\fOpconnf(0)} - \SupSpec{\fOpconnf(k)}\right).
\end{align}

\item \label{Assump:Bound}There exist $d$-independent constants $C_1\in\left(0,1\right)$ and $C_2>0$ such that
\begin{equation}
    \frac{\SupSpec{\fOpconnf\left(k\right)}}{\SupSpec{\fOpconnf\left(0\right)}} \leq C_1 \vee \left(1 - C_2\abs{k}^2\right).
\end{equation}
\item \label{Assump:BallDecay} There exists a function $g\colon\mathbb N \to \R_{\geq 0}$ with the following three properties. Firstly, that $g(d)\to 0$ as $d\to\infty$. Secondly, that
\begin{equation}
\label{eqn:ThreeAdjStep}
    \SupSpec{\fOpconnf(0)}^{-3}\esssup_{\xbar\in\Rd,a_1,\ldots,a_6\in\Ecal}\left(\connf(\cdot;a_1,a_2)\star \connf(\cdot;a_3,a_4) \star \connf(\cdot;a_5,a_6)\right)\left(\xbar\right) \leq g(d).
\end{equation}
Thirdly, that the family of sets $\left\{B\left(x\right)\right\}_{x\in\X}$ given by
\begin{equation}
\label{eqn:BallDecay}
    B\left(x\right) := \left\{y\in\X: \SupSpec{\fOpconnf(0)}^{-2}\int \connf\left(x,u\right)\connf\left(u,y\right)\nu\left(\dd u\right)  > g\left(d\right)\right\}
\end{equation}
satisfy $\esssup_{x\in\X}\nu\left(B\left(x\right)\right) \leq g\left(d\right)$.
\end{enumerate}
\end{assumptionp}

\begin{remark}
We have some remarks regarding these assumptions.
\begin{itemize}
    \item By Jensen's inequality, the condition  \eqref{eqn:fconnfIsL2} in \ref{Assump:2ndMoment} also implies that
\begin{equation}
    \label{eqn:Sup-Exp_Ratio}
    \OneNorm{\fOpconnf(0)} \leq \TwoNorm{\fOpconnf(0)} \leq C \SupSpec{\fOpconnf(0)}.
\end{equation}
    We will often use these bounds (the $\OneNorm{\cdot}$ bound in particular) more directly than \eqref{eqn:fconnfIsL2} itself.

    \item From \eqref{eqn:ThreeAdjStep} and \eqref{eqn:Sup-Exp_Ratio} we can use a supremum bound to deduce that analogous bounds hold for the convolution of $m$ adjacency functions (for $m\geq 3$) with corresponding bound $C^{m-3}g(d)$. 

    \item The sets $\left\{B\left(x\right)\right\}_{x\in\X}$ appearing in \ref{Assump:BallDecay} are all measurable - this follows from the measurability of $\connf$. Furthermore, the symmetry of $\connf$ implies that $y\in B\left(x\right)\iff x\in B\left(y\right)$.
    
    \item In stating our theorems, we will use a parameter $\beta=\beta(\connf, d)$ that depends upon $g(d)$. The definition depends upon the asymptotic properties of $g(d)$ in that
    \begin{equation}
        \beta(d) := \begin{cases}
            g(d)^{\frac{1}{4} -\frac{3}{2d}}d^{-\frac{3}{2}} &: \lim_{d\to\infty}g(d)\rho^{-d}\Gamma\left(\frac{d}{2}+1\right)^2 = 0 \qquad\forall\rho>0,\\
            g(d)^\frac{1}{4}&: \text{otherwise}.
        \end{cases}
    \end{equation}
    That is, $\beta(d) = g(d)^\frac{1}{4}$ unless $g(d)$ approaches zero particularly quickly. The parameter $\beta$ will reappear many times in this paper - often in the case where $\beta$ is `small.' This corresponds to `large' dimension $d$, and in particular, where the Landau notation $\LandauBigO{\beta}$ is used the asymptotics are as $d\to\infty$.
\end{itemize}
\end{remark}

\begin{theorem}\label{thm:OZEtheorem}
Suppose $\connf$ and $\nu$ satisfy the assumptions \ref{Assump:2ndMoment}, \ref{Assump:Bound}, and \ref{Assump:BallDecay}. Then there exists a $d^*>6$ and $C>0$ such that for $\lambda\in\left[0,\lambda_O\right]$ there exists an operator $\OpLacelam$ with $\OpNorm{\OpLacelam}\leq C\beta$ and a family of operators $\left\{\fOpLacelam(k)\right\}_{k\in\Rd}$ with $\OpNorm{\fOpLacelam(k)}\leq C\beta$, such that for all $d\geq d^*$ the following statements are satisfied.

\begin{itemize}
    \item For $\lambda\in\left[0,\lambda_O\right)$ the bounded operator $\Optlam$ satisfies the Ornstein–Zernike equation:
\begin{equation}
    \Optlam = \Opconnf + \OpLacelam + \lambda \Optlam \left( \Opconnf + \OpLacelam \right).
\end{equation}
    \item For $\lambda\in\left[0,\lambda_O\right)$ and $k\in\Rd$, or $\lambda = \lambda_O$ and $k\in\Rd\setminus\left\{0\right\}$, the bounded operator $\fOptlam(k)$ satisfies the Ornstein–Zernike equation in Fourier form:
\begin{equation}
    \fOptlam(k) = \fOpconnf(k) + \fOpLacelam(k) + \lambda \fOptlam(k) \left( \fOpconnf(k) + \fOpLacelam(k) \right).
\end{equation}
\end{itemize}
\end{theorem}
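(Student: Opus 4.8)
The plan is to carry out the lace-expansion programme in the operator framework on $L^2(\X)$. First I would establish a \emph{finite} lace expansion for the subcritical two-point function: for each $\lambda < \lambda_O$ and each $N\in\N$,
\begin{equation*}
\tlam(x,y) = \connf(x,y) + \sum_{n=1}^{N}(-1)^n\Lacelam^{(n)}(x,y) + \lambda\int\Bigl(\connf + \sum_{n=1}^{N}(-1)^n\Lacelam^{(n)}\Bigr)(x,u)\,\tlam(u,y)\,\nu(\dd u) + R_{\lambda,N}(x,y),
\end{equation*}
with non-negative coefficients $\Lacelam^{(n)}$ and a remainder $R_{\lambda,N}$. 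As indicated in Section~\ref{sec:expansion}, formulating this is essentially the argument of \cite{HeyHofLasMat19} for the unmarked RCM with $\Rd$ replaced by $\X$: the Mecke equation, the decomposition at the first pivotal vertex, and the inclusion--exclusion over pivotal structure all carry over, since the mark coordinate plays no role in the combinatorial surgery. Passing to integral operators (Section~\ref{sec:expansion:Operators}) and, once convergence is known, setting $\OpLacelam := \sum_{n\geq 1}(-1)^n\OpLacelam^{(n)}$ with Fourier transform $\fOpLacelam(k)$, turns this into the operator identity. Boundedness of $\Optlam$ and $\fOptlam(k)$ for $\lambda<\lambda_O$ follows from the definition of $\lambda_O$ together with Lemma~\ref{thm:Spectrum_Subset}.

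Second, I would bound the coefficient operators. By Section~\ref{sec:diagrammaticbounds} (Proposition~\ref{thm:DB:Pi_bound_Psi} and its consequences), $\OpNorm{\OpLacelam^{(n)}}$ and $\OpNorm{\fOpLacelam^{(n)}(k)}$, uniformly in $k$, are bounded by sums of products of elementary diagrams --- open and closed triangles, and terms such as $\InfNorm{\Opconnf\Optlam\Optklam\Opconnf}$ --- with a factor geometrically decaying in $n$ once these diagrams are small. Summing over $n$ then gives $\OpNorm{\OpLacelam}\leq C\beta$ and $\OpNorm{\fOpLacelam(k)}\leq C\beta$, \emph{provided} the elementary diagrams (in particular $\trilam$) are $\LandauBigO{1}$ and the lace contributions $\LandauBigO{\beta}$, uniformly over $\lambda\in[0,\lambda_O)$. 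Establishing this uniform control is the content of Section~\ref{sec:bootstrapanalysis}: one introduces a bootstrap function $f(\lambda)$ packaging a normalised $\OpNorm{\fOptlam(0)}$, spatial-decay ratios, and $\trilam$; shows $f$ is continuous on $[0,\lambda_O)$, with equicontinuity supplied by the $\lambda$-derivative bounds of Section~\ref{sec:Prelim:differentiating}, that $f(0)$ is small, and that $f(\lambda)$ bounded by a constant forces $f(\lambda)$ bounded by a strictly smaller constant (the forbidden-region step). In that step each unknown factor of $\tlam$ inside a diagram is replaced by the explicit Green's function $\gmu$ at the cost of one power of $f(\lambda)$; convolutions of $\connf$ are controlled via \ref{Assump:2ndMoment} and \ref{Assump:Bound}, and the single diagram that genuinely requires the $B(x)$-bound of \ref{Assump:BallDecay} is treated there (Proposition~\ref{thm:DisplacementNeq1}). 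The upshot is that for every $\lambda<\lambda_O$ the finite lace expansion converges ($R_{\lambda,N}\to 0$ as $N\to\infty$, by the diagrammatic bound on the remainder), its limit is the subcritical OZE $\Optlam = \Opconnf + \OpLacelam + \lambda\Optlam(\Opconnf+\OpLacelam)$, and Fourier transforming --- all operators being bounded for $\lambda<\lambda_O$ --- yields the Fourier OZE for every $k\in\Rd$.

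To reach $\lambda = \lambda_O$: since $\tlam(x,y)$ is non-decreasing in $\lambda$, monotone convergence extends $\OpNorm{\OpLacelam}\leq C\beta$ and $\trilam = \LandauBigO{1}$ to $\lambda = \lambda_O$, defining $\OpLacelamT$ and, by the same diagrammatic bounds evaluated at $k\neq 0$, the operators $\fOpLacelamT(k)$ with $\OpNorm{\fOpLacelamT(k)}\leq C\beta$. For $k\neq 0$, Assumption \ref{Assump:Bound} gives the \emph{strict} inequality $\SupSpec{\fOpconnf(k)} < \SupSpec{\fOpconnf(0)}$, which keeps $\OpNorm{\fOptlam(k)}$ uniformly bounded as $\lambda\uparrow\lambda_O$ --- precisely why $k=0$ must be excluded at criticality --- so dominated convergence passes the Fourier OZE to $\lambda=\lambda_O$, $k\neq 0$. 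The threshold $d^* > 6$ is any dimension making $\beta=\beta(d)$ small enough that the series over $n$ converge and the forbidden-region inequality closes.

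I expect the main obstacle to be the bootstrap/forbidden-region step: choosing a bootstrap function whose several components control one another, verifying its continuity (which rests on the $\lambda$-differentiability and equicontinuity input), and proving the self-improving inequality. Two features make this harder than in the unmarked setting: the genuine use of the $B(x)$ part of \ref{Assump:BallDecay} for one displacement diagram, and the non-commutativity forced by the marks --- the term $\InfNorm{\Opconnf\Optlam\Optklam\Opconnf}$ does not collapse to a shorter diagram as it would without marks and must be carried through the whole argument.
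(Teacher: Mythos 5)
Your proposal follows essentially the same route as the paper: the finite lace expansion of \cite{HeyHofLasMat19} transplanted from $\Rd$ to $\X$ and recast at operator level (Propositions~\ref{thm:laceexpansionFunction} and~\ref{thm:OperatorLaceExpansion}), diagrammatic bounds on the coefficients, the bootstrap/forbidden-region argument of Section~\ref{sec:bootstrapanalysis} to make those bounds uniform in $\lambda<\lambda_O$, and a monotone/dominated-convergence plus bounded-inverse argument (Lemma~\ref{lem:Bounded_Linear_Inverse}) to push the OZE to $\lambda_O$ for $k\neq 0$; you also correctly isolate the two genuinely new difficulties (the single diagram needing the $B(x)$ bound, and the non-collapsing term $\InfNorm{\Opconnf\Optlam\Optklam\Opconnf}$). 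The only cosmetic discrepancies are that the expansion coefficients start at $n=0$ (the $\pi^{(0)}_\lambda$ term from the doubly-connected event must be included in $\OpLacelam$) and that one needs $\trilam=\LandauBigO{\beta^2}$, not merely $\LandauBigO{1}$, to get $\OpNorm{\OpLacelam}\leq C\beta$.
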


\begin{theorem}
\label{thm:InfraRed}
If $\connf$ and $\nu$ satisfy the assumptions \ref{Assump:2ndMoment}, \ref{Assump:Bound}, and \ref{Assump:BallDecay}, then there exist $d^*>6$ and $C>0$ such that for all $\lambda\in\left[0,\lambda_O\right]$ and $d\geq d^*$
\begin{align}
    \lambda\OpNorm{\fOptlam\left(k\right)} \leq \frac{\SupSpec{\fOpconnf\left(k\right)}+ C \beta}{\SupSpec{\fOpconnf\left(0\right)} - \SupSpec{\fOpconnf\left(k\right)}}\vee 1 \qquad\forall k\in\Rd\setminus\left\{0\right\}.
\end{align}
and $\trilam \leq C\beta^2$.
\end{theorem}

For the following two results, we call upon a strong form of irreducibility:

\begin{assumptionp}{\textbf{I}}
\label{assumption:StrongIrreducible}
``Some mark can be connected to every other mark in exactly $k$ steps for some $k$.'' That is,
    \begin{equation}
        \esssup_{a_0\in\Ecal}\,
        \sup_{k\geq 1}\,
        \essinf_{a_k\in\Ecal}
        \int\prod^k_{i=1}\fconnf\left(0;a_i,a_{i-1}\right)\Pcal^{\otimes (k-1)}\!\left(\dd \vec{a}_{\left[1,k-1\right]}\right)>0.
    \end{equation}
\end{assumptionp}

\begin{restatable}{prop}{NoCriticalPercolation}
Suppose assumptions \ref{Model_Assumption} and \ref{assumption:StrongIrreducible} hold. Then for $d\geq d^*$ the critical percolation function $\theta_{\lambda_O}(a)=0$ for $\Pcal$-almost every $a\in\Ecal$.    
\end{restatable}

This infrared bound and particularly the resulting triangle condition, have immediate consequences on various critical exponents. 
Such implications have been proven in the context of independent lattice percolations, see e.g.\  \cite{AizBar87,AizNew84,BarAiz91}; recently Caicedo and the first author \cite{CaiDic24} extended it to continuum percolation models with vertex weights: 

\begin{corollary}
If $\connf$ and $\nu$ satisfy the assumptions \ref{Model_Assumption} and \ref{assumption:StrongIrreducible}, then there exist $d^*>6$ such that for all $d\geq d^*$ there exist constants $C,C'>0$ such that 
    \begin{align}
    C\left(\lambda_c-\lambda\right)^{-1} \leq &\;\norm*{\chi_\lambda}_p \leq C'\left(\lambda_c-\lambda\right)^{-1}
    \quad \text{for $\lambda<\lambda_c$,}
    \\
    C\left(\lambda - \lambda_c\right) \leq &\;\norm*{\theta_\lambda}_p \leq C'\left(\lambda - \lambda_c\right)
    \qquad\quad \text{for $\lambda>\lambda_c$,}
    \\
    Cn^{\frac{1}{2}} \leq &\;\mathbb{P}_{\lambda_c}\left(\abs*{\C\left(0,{a}\right)}\geq n\right) \leq C'n^{\frac{1}{2}}
    \quad \text{for all $n\in\mathbb N$}
    \end{align}
for all $p\in\left[1,\infty\right]$ and $\mathcal P$-almost all $a\in\mathcal E$. 
In other words, the critical exponents $\gamma$, $\beta$ and $\delta$ (as defined in \cite{CaiDic24}) exist and take on their mean-field values 1, 1, and 2, respectively. 
\end{corollary}

The assertions are readily implied by Theorems 1.8, 1.11, 1.12 of \cite{CaiDic24}. Indeed, these theorems have three assumptions: Assumption (D1) (in that paper) is verified through our Hypothesis \ref{Assump:2ndMoment}, Assumption (D2) is precisely our Assumption \ref{assumption:StrongIrreducible}, and their Assumption (T) is precisely guaranteed by the triangle condition in Theorem \ref{thm:InfraRed}.

This infrared bound can also be related directly to the two-point function by Lemma~\ref{lem:BoundsonOperatorNorm} via
\begin{equation}
    \int\ftlam\left(k;a,b\right)\Pcal^{\otimes 2}\left(\dd a, \dd b\right) \leq \OpNorm{\fOptlam\left(k\right)}.
\end{equation}

\begin{remark}
Note that in Theorem~\ref{thm:OZEtheorem} we say there exists some $d^*>6$ such that the results hold for all $d\geq d^*$. If we were willing to say that there exists some $d^*>10$ instead (which would not change the strict logical content of the result), then the proof would be far simpler and Assumptions~\ref{Assump:2ndMoment} and~\ref{Assump:BallDecay} could be weakened. The $\InfNorm{\cdot}$ in \eqref{eqn:fconnfIsL2} and \eqref{eqn:directional2ndMoment} could be replaced with $\TwoNorm{\cdot}$ and $\OneNorm{\cdot}$ respectively, and \eqref{eqn:ThreeAdjStep} could be replaced with $\InfNorm{\Opconnf^3}\leq g(d)\SupSpec{\fOpconnf(0)}^3$. We do not pursue this approach because it is expected that a very similar argument to that presented here (with context appropriate changes to the assumptions) would prove the corresponding result for the spread-out model in $d>6$. For the simpler argument and weaker conditions, we would only get the spread-out result in $d>10$, and this \emph{would} be a logically weaker statement.
\end{remark}

\subsection{Examples of Models}
\label{subsection:Examples}
It is natural to ask if there are any models for which $\connf$ and $\nu$ do indeed satisfy \ref{Assump:2ndMoment}, \ref{Assump:Bound}, and \ref{Assump:BallDecay}. All the models we describe here also satisfy the condition \eqref{eqn:supDegree} for every dimension $d$.
We prove in Appendix \ref{sec:Model_properties} that all the models below do satisfy our assumptions. 

\paragraph{Single Mark Finite Variance Model.} Suppose $\Ecal$ is a singleton, so essentially $\X=\Rd$ and $\nu= \Leb$. Then the finite variance models described in \cite{HeyHofLasMat19} all satisfy our conditions. These include the Poisson blob model (or the spherical Boolean model) in which $\connf(\xbar) = \mathds{1}_{\{\abs*{\xbar} \leq r_d\}}$ where $r_d$ is the radius of the unit volume $d$-ball, and the Gaussian connection model with $\smash\connf(\xbar) = \left(2\pi\right)^{-d/2}\exp\big(-\tfrac{1}{2}\abs*{\xbar}^2\big)$.

\paragraph{Space-Mark Factorisation Model} Suppose now that the connection function admits the factorisation $\connf(\xbar;a,b) = \overline{\connf}(\xbar)K(a,b)$. We require that $\overline{\connf}$ satisfies the conditions of a single mark finite variance model, and that the ($d$-independent) mark-kernel $K\colon \Ecal^2 \to \left[0,1\right]$ is symmetric and measurable. Note that this means that the integral operator $\mathcal{K}\colon L^2(\Ecal) \to L^2(\Ecal)$ with kernel function $K$ is a bounded self-adjoint operator with operator norm $\OpNorm{\mathcal{K}}\leq 1$. We further suppose that $\SupSpec{\mathcal{K}} = \OpNorm{\mathcal{K}}$. Then the connection operator is a bounded operator on the mark-space multiplied by a scalar function of spatial position. This formulation places no constraint on the number of marks, permitting even models with uncountably many marks.

\paragraph{Marked Multivariate Gaussian Model.}
Let $\Sigma\colon\Ecal^2\to\R^{d\times d}$ be a measurable map where for every $a,b\in\Ecal$, $\Sigma\left(a,b\right)$ is itself a symmetric positive definite covariance matrix. We further require that there exists $\Sigma_{\max}<\infty$ and $\Sigma_{\min} = \Sigma_{\min}(d) > 0$ such that the spectrum $\sigma(\Sigma(a,b)) \subset \left[\Sigma_{\min},\Sigma_{\max}\right]$ for all $a,b\in\Ecal$ and dimension $d$. Then for $\mathcal{A} = \mathcal{A}(d)>0$ such that $\mathcal{A}^2 \leq \left(2\pi\right)^d\det \Sigma(a,b)$ for all $a,b\in\Ecal$ and $\limsup_{d\to\infty} \mathcal{A}^2\left(4\pi \Sigma_{\min}\right)^{-d/2} = 0$, let the connection function be given by
\begin{equation}
    \connf(\xbar;a,b) = \mathcal{A}\left(2\pi\right)^{-d/2}\left(\det \Sigma(a,b)\right)^{-1/2}\exp\left(-\frac{1}{2}\xbar^{\intercal}\Sigma(a,b)^{-1}\xbar\right).
\end{equation}
This class of model includes examples of models with (uncountably) infinitely many marks, and for which the mark and spatial behaviours are truly coupled. For a concrete example, consider $\Ecal=\left[\tfrac{1}{4\pi},1\right]$ with $\Sigma(a,b) = (a+b)\Id$ and $\mathcal{A} = 1$.

\paragraph{Bounded-Volume Boolean Disc Model.}
Let $\left(\left[0,1\right], \Sigma_{\left[0,1\right]}, \Pcal\right)$ be a separable probability space and $\left\{R_d\right\}_{d\geq 1}$ be a sequence of measurable functions $R_d\colon\left[0,1\right]\to\R_+$ such that there exist $ 0<c_1<1/\sqrt{8\pi\e}$ and $\Vmax>0$ such that
\begin{equation}\label{eqn:RadiusBounds}
    \Rmin := c_1 d^{\frac{1}{2}} \leq R_d\left(a\right) \leq \frac{1}{2\sqrt{\pi}}\left(\Vmax\right)^{\frac{1}{d}} \Gamma\left(\frac{d}{2}+1\right)^\frac{1}{d} =: \Rmax,
\end{equation}
for all $a\in[0,1]$. Note that as $d\to\infty$, Stirling's formula gives that the upper bound $\Rmax\sim \sqrt{d/8\pi\e}$. It is also required that there exist $\epsilon>0$ and $c_2>0$ such that 
\begin{equation}
\label{eqn:MeanRadiusBound}
    \Pcal\left(R_d > \left(1-\frac{c_2}{d}\right)\Rmax\right)>\epsilon
\end{equation}
for all $d$. Then let the connection function be defined by
\begin{equation}
    \connf_d(\xbar;a,b) = \mathds{1}\left\{\abs*{\xbar} \leq R_d(a) + R_d(b)\right\}.
\end{equation}

\vspace{5mm}

It is also perhaps worth mentioning a model that \emph{does not} satisfy condition \ref{Model_Assumption}. Such an example would be a Boolean disk model with random radii for which $\Pcal$ gives positive measure to arbitrarily large radii. No matter the decay rate of $\Pcal$ for large radii, the presence of the suprema in \eqref{eqn:fconnfIsL2} and \eqref{eqn:directional2ndMoment} ensure that \ref{Assump:2ndMoment} cannot be satisfied. Similarly the condition \eqref{eqn:supDegree} is also not satisfied for such a model.

\section{Preliminaries} \label{sec:preliminaries}

\subsection{Formal Construction of the RCM}\label{sec:formalConstruction}

We choose to construct our RCM in largely the same way as \cite{HeyHofLasMat19}, with appropriate replacements of $\Rd$ with the marked space $\X$. As they did, we follow \cite{LasZie17} in defining the RCM, $\xi$, as an independent edge-marking of a Poisson point process $\eta$.

Recall $\X=\R^d\times\Ecal$. We denote by $\mathbf N\left(\X\right)$ the set of all at most countably infinite subsets of $\X$ and accompany it with the $\sigma$-algebra $\mathcal N(\mathbb{X})$ generated by the
sets $\{A\in\mathbf{N}\left(\X\right):|A \cap B| = k, B \in\Sigma_\X, k \in \N_0\}$, 
where $\abs*{A\cap B}$ denotes the cardinality of $A\cap B$. Then a point process on $\X$ is a measurable mapping $\zeta\colon\Omega\to \mathbf N\left(\X\right)$ for some underlying probability space $\left(\Omega,\Sigma_\Omega,\mathbb{P}\right)$. The intensity measure of such a point process is then the measure on $\X$ given by $B\mapsto \E\left[\abs*{\zeta\cap B}\right]$ for $B\in\Sigma_\X$. Let $\zeta$ be a point process on $\X$ with a $\sigma$-finite intensity measure. Then there exist measurable $\pi_i\colon\mathbf N \left(\X\right) \to \X$, $i\in\N$ such that almost surely
\begin{equation}\label{eq:orderPPP}
    \zeta = \left\{\pi_i\left(\zeta\right): i\leq \abs*{\zeta\cap\X}\right\}.
\end{equation}

For a $\sigma$-finite non-atomic measure on $\X$, say $\nu$, a Poisson
point process (PPP) on $\mathbb{X}$ with intensity measure $\nu$ is a point
process $\zeta$ such that the number of points $\abs*{\zeta \cap B}$ is
$\text{Poi}(\nu(B))$-distributed for each $B \in \Sigma_\mathbb{X}$, and the
random variables $\abs*{\zeta \cap B_1}, \ldots, \abs*{\zeta \cap B_m}$ are independent
whenever $B_1, \ldots, B_m\in\Sigma_\X$ are pairwise disjoint. In this paper we will be considering a PPP $\eta$ in $\X$ with intensity measure $\lambda \nu$, where $\nu=\Leb\otimes \Pcal$ and $\lambda>0$. We write $\eta=\left\{X_i:i\in\N\right\}$, where $X_i=\pi_i\left(\eta\right)$ for $i\in\N$.

We now construct the RCM as a (non-Poissionian) point process on the space $\X^{[2]}\times\left[0,1\right]$. We denote by $\X^{[2]}$ the set of all subsets of $\X$ containing precisely two elements, and each $e\in\X^{[2]}$ is then a potential edge of the RCM. Also let $\left\{U_{i,j}\right\}_{i,j\in\N}$ be a double sequence of independent identically distributed random variables uniformly distributed on $\left[0,1\right]$, such that the sequence is independent of the PPP $\eta$. We define
\begin{equation}
    \xi = \left\{\left(\left\{X_i,X_j\right\},U_{i,j}\right):1\leq i\leq j\right\},
\end{equation}
where we recall $X_i=\pi_i(\eta)$. Thus $\xi$ is a random point process on $\X^{[2]}\times\left[0,1\right]$, which we interpret as a random marked graph and say that $\xi$ is an {\em independent edge-marking} of $\eta$.

The independent edge-marking $\xi$ of $\eta$ can then be related by a deterministic functional, $\Gamma_\connf$, to a random graph on $\X$ by defining its vertex and edge sets:
\begin{equation}
    V(\Gamma_\connf(\xi)) = \eta = \{X_i:i\in\N\},\qquad
	E(\Gamma_\connf(\xi)) = \{ \{X_i,X_j\}:U_{i,j} \leq \connf(X_i,X_j), 1\leq i\leq j\}.
\end{equation}
Since the sequence $U_{i,j}$ is independent of the Poisson process $\eta$, the ordering in \eqref{eq:orderPPP} does not change the distribution of the resulting random graph. 

This interpretation of the RCM also allows for the introduction of additional points. This will be required many times in this paper, not least in our definition of the two-point function itself. For additional points $x_1,x_2\in\X$, the point process $\eta$ is augmented to make
\begin{equation}
    \eta^{x_1}:=\eta\cup\{x_1\},\quad \eta^{x_1,x_2}:=\eta\cup\{x_1,x_2\}.
\end{equation}
To extend these point processes on $\X$ to a RCM, we extend the sequence $\left\{U_{m,n}\right\}_{m,n \geq 1}$ to a sequence $\left\{U_{m,n}\right\}_{m,n \geq -1}$ of independent random variables uniformly distributed on $[0,1]$, independent of the Poisson process $\eta$. We then define $\xi^{x_1,x_2}$ on $\X^{[2]}\times[0,1]$ as 
\begin{equation}
    \xi^{x_1,x_2}:=\{(\{X_i,X_j\},U_{i,j}):-1\leq i\leq j\},
\end{equation}
where $X_0 = x_1$ and $X_{-1} = x_2$. It is straightforward to define $\xi^{x_1, \ldots, x_m}$ for arbitrary $m \geq 3$ by the same idea, and to define $\xi^{x_1}$ by removing all (marked) edges incident to $x_2$ from $\xi^{x_1,x_2}$. 
We assume implicitly that the augmented points are properly coupled such that, e.g., $\xi^{x_1}$ obtained in this way from $\xi^{x_1,x_2}$ (by removing all edges adjacent to $x_2$) is the same as if obtained from $\xi^{x_1,x_3}$ (by removing all edges adjacent to $x_3$); see \cite[Section 3.2]{HeyHofLasMat19} for further details. 

\subsection{Probabilistic Lemmas}
\label{sec:Prelim:Probability}

We introduce here a number of important lemmas that are standard in point process theory or percolation theory.

\subsubsection{The Mecke Equation}
An important part of our analysis will be a version of the Mecke equation - for a discussion see \cite[Chapter~4]{LasPen17}. This result allows us to make sums over the random sum of points in $\eta$ more tractable. Given $m\in\N$ and a measurable function $f\colon\mathbf{N}\left(\X^{[2]}\times[0,1]\right) \times \X^m \to \R_{\geq 0}$, the Mecke equation for $\xi$ states that 
\begin{equation}
    \E_\lambda \left[ \sum_{\vec x \in \eta^{(m)}} f(\xi, \vec{x})\right] = \lambda^m \int
				\E_\lambda\left[ f\left(\xi^{x_1, \ldots, x_m}, \vec x\right)\right] \nu^{\otimes m}\left(\dd \vec{x}\right),  \label{eq:prelim:mecke_n}
\end{equation}
where $\vec x=(x_1,\ldots,x_m)$, $\eta^{(m)}=\{(x_1,\ldots,x_m): x_i \in \eta, x_i \neq x_j \text{ for } i \neq j\}$, and $\nu^{\otimes m}$ is the $m$-product measure of $\nu$ on $\X^m$. We will only need \eqref{eq:prelim:mecke_n} for $m\leq 3$, and largely only use it for $m=1$.

\subsubsection{The Margulis-Russo Formula}
An often used and very important tool in (discrete) percolation theory, the Margulis-Russo formula will be widely used in our analysis as well. The version we use follows from a slight adjustment to the more general result given in \cite[Theorem~3.2]{LasZie17}. Writing $\mathbf{N}:=\mathbf{N}\left(\X^{[2]}\times[0,1]\right)$, let $\Lambda\subset\X$ be $\nu$-finite, $\zeta\in\mathbf{N}$, and define
\begin{equation}
    \zeta_\Lambda := \{(\{x,y\},u)\in \zeta :\{x,y\}\subseteq\Lambda \}.  \label{eq:prelim:def:PP_restriction}
\end{equation}
We call $\zeta_\Lambda$ the restriction of $\zeta$ to $\Lambda$. We say that $f\colon \mathbf N\to\R$ \emph{lives} on $\Lambda$ if $f(\zeta) = f(\zeta_\Lambda)$ for every $\zeta \in\mathbf N$. Assume that there exists a $\nu$-finite set $\Lambda \subset \X$ such that $f$ lives on $\Lambda$. Moreover, assume that there exists  $\lambda_0>0$ such that $\E_{\lambda_0} \left[\abs*{f(\xi)}\right] < \infty$. Then the Margulis-Russo formula states that, for all $\lambda\leq \lambda_0$,
\begin{equation}
    \frac{\partial}{\partial \lambda} \E_\lambda[f(\xi)] = \int_\Lambda \E_\lambda\left[f(\xi^x) - f(\xi)\right] \nu\left(\dd x\right). \label{eq:prelim:russo}
\end{equation}
The result in \cite{LasZie17} requires that $\X=\Rd$ and that $\Lambda\subset\R^d$ is compact, but \cite[Theorem~3.1]{last2014perturbation} allows us to make the necessary adjustments to permit $\X=\Rd\times\Ecal$.

\subsubsection{The BK Inequality}

The idea of the BK inequality is that we acquire a simple upper bound for the probability that two increasing events happen, and that they do so on disjoint subsets of the space. For example, we may want to bound the probability that there exist two disjoint paths (up to the start and end points) in $\xi$ between two given points. The BK inequality we present here is slightly more general than the usual BK inequality in that the point process $\xi$ is augmented with independent random variables. It differs only slightly from the version appearing in \cite{HeyHofLasMat19}.

For notational clarity, we write $\mathbf{N}:=\mathbf{N}(\X^{[2]}\times[0,1])$ and denote the $\sigma$-algebra on $\mathbf{N}$ by $\mathcal{N}$. We call a set $E\subset \mathbf{N}$ \emph{increasing} if $\mu \in E$ implies $\nu\in E$ for each $\nu\in\mathbf{N}$ with $\mu\subseteq\nu$. Let $(\mathbb{Y}_1,\mathcal{Y}_1)$, $(\mathbb{Y}_2,\mathcal{Y}_2)$ be two measurable spaces.  We say that a set $E_i\subset \mathbf{N}\times \mathbb{Y}_i$ is {\em increasing} if $\E^z_i:=\{\mu\in\mathbf{N}:(\mu,z)\in E_i\}$ is increasing for each $z\in \mathbb{Y}_i$.

For Borel $\Lambda\in \B\left(\Rd\right)$ and $\mu\in \mathbf{N}$, we define $\mu_\Lambda$, the restriction of $\mu$ to all edges completely contained in $\Lambda\times \Ecal$, analogously to~\eqref{eq:prelim:def:PP_restriction}. Also let $\mathcal{R}$ denote the ring of all finite unions of  half-open $d$-dimensional rectangles in $\Rd$ with rational coordinates. Then for increasing $E_i\in\mathcal{N}\otimes\mathcal{Y}_i$, we define
\begin{multline}
    E_1 \circ E_2 :=  \left\{(\mu,z_1,z_2)\in\mathbf{N}\times\mathbb{Y}_1\times\mathbb{Y}_2: \exists K_1,K_2 \in \mathcal{R} \text{ s.t.~} \right.\\\left.
			K_1\cap K_2 = \varnothing, (\mu_{K_1},z_1)\in E_1,(\mu_{K_2},z_2)\in E_2   \right\}. \label{eq:prelim:circ2}
\end{multline}
A set $E_i\in\mathcal{N}\otimes \mathcal{Y}_i$ {\em lives on} $\Lambda$ if $\mathds 1_{E_i}(\mu,z)=\mathds 1_{E_i}(\mu_\Lambda,z)$ for each $(\mu,z)\in\mathbf{N}\times\mathbb{Y}_i$. We consider random elements $W_1,W_2$ of $\mathbb{Y}_1$ and $\mathbb{Y}_2$, respectively, and assume that $\xi$, $W_1$, and $W_2$ are independent.

\begin{theorem}[BK inequality] \label{lem:prelimbk_inequality}
Let $E_1\in\mathcal{N}\otimes\mathcal{Y}_1$  and $E_2\in\mathcal{N}\otimes\mathcal{Y}_2$ be increasing events that live on $\Lambda\times\Ecal$ for some bounded set $\Lambda\in\B\left(\Rd\right)$. Then 
\begin{equation}
    \pla((\xi,W_1,W_2)\in E_1 \circ E_2) \leq \pla((\xi,W_1)\in E_1)\pla((\xi,W_2)\in E_2).
\end{equation}
\end{theorem}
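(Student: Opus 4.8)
The plan is to adapt the discretisation proof of the corresponding inequality in \cite{HeyHofLasMat19}; the two new features here — the extra mark coordinate $\Ecal$ carried by every Poisson point and the external random elements $W_1,W_2$ — are inessential. The external variables I would dispose of first. Since $\xi,W_1,W_2$ are independent and $W_1$ is independent of $W_2$, it suffices to prove, for every fixed $w_1\in\mathbb Y_1$ and $w_2\in\mathbb Y_2$, that
\[
    \pla\big(\xi\in E_1^{w_1}\circ E_2^{w_2}\big)\le\pla\big(\xi\in E_1^{w_1}\big)\,\pla\big(\xi\in E_2^{w_2}\big),
\]
where $E_i^{w_i}:=\{\mu\in\mathbf N:(\mu,w_i)\in E_i\}$ is an increasing subset of $\mathbf N$ living on $\Lambda\times\Ecal$ and $\circ$ on the right is disjoint occurrence of two \emph{point-process} events; integrating this against $\mathbb P_{W_1}\otimes\mathbb P_{W_2}$ and using $E_1\circ E_2=\{(\mu,z_1,z_2):\mu\in E_1^{z_1}\circ E_2^{z_2}\}$ recovers the statement. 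Thus the task reduces to the BK inequality for the marked Poisson process $\xi$ alone, which is precisely the result of \cite{HeyHofLasMat19} with $\X=\Rd\times\Ecal$ in place of $\Rd$.

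For that, I would discretise. Because $E_1^{w_1},E_2^{w_2}$ live on the bounded set $\Lambda$, only $\xi_\Lambda$ matters, and $\xi_\Lambda$ is almost surely a finite configuration. For $m\in\N$ partition a bounded rational cube containing $\Lambda$ into the dyadic cells of generation $m$, and let $\Omega_m$ be the event that every cell carries at most one point of $\eta$; since $\eta$ is a.s.\ locally finite with pairwise distinct points, $\Omega_m\uparrow$ and $\pla(\Omega_m)\to1$. Conditionally on $\Omega_m$, the restriction $\xi_\Lambda$ is a measurable function of an independent family of coordinates indexed by the cells (occupation, position, mark) and by the unordered pairs of cells (the edge marks), arranged — exactly as in \cite{HeyHofLasMat19} — so that for a union of cells $R$ the restricted configuration $\xi_R$ depends only on the coordinates attached to cells contained in $R$ and pairs of such cells, and so that adjoining a point is the increasing direction. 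The finite van den Berg--Kesten inequality then yields
\[
    \pla\big(\xi\in E_1^{w_1}\circ_m E_2^{w_2}\,\big|\,\Omega_m\big)\le\pla\big(\xi\in E_1^{w_1}\,\big|\,\Omega_m\big)\,\pla\big(\xi\in E_2^{w_2}\,\big|\,\Omega_m\big),
\]
where $\circ_m$ denotes disjoint occurrence witnessed by two disjoint unions of generation-$m$ cells; here one uses that, taking the witnessing region to be all of $\Lambda$, $\bigcup_R\{\xi_R\in E_i^{w_i}\}=\{\xi\in E_i^{w_i}\}$ because $E_i^{w_i}$ lives on $\Lambda$.

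Finally I would pass to the limit $m\to\infty$. A generation-$m$ cell is a union of generation-$(m+1)$ cells, and $\Omega_m\uparrow$, so the events $(E_1^{w_1}\circ_m E_2^{w_2})\cap\Omega_m$ are nondecreasing in $m$. Moreover, up to the null set of configurations that place a point on the boundary of some $K\in\mathcal R$ (a countable union of pieces of Lebesgue-null hyperplanes) or have two coincident points, one has $E_1^{w_1}\circ E_2^{w_2}\subseteq\bigcup_m(E_1^{w_1}\circ_m E_2^{w_2})\cap\Omega_m$: given witnessing disjoint $K_1,K_2\in\mathcal R$, the inner dyadic approximations $K_i^{(m)}\subseteq K_i$ are disjoint and, for $m$ large enough (depending on the configuration), already capture every point of the configuration lying in $K_i$ while every cell carries at most one point. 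Monotone convergence together with $\pla(\Omega_m)\to1$ and $\pla(E_i^{w_i}\mid\Omega_m)\to\pla(E_i^{w_i})$ then upgrades the displayed bound to $\pla(\xi\in E_1^{w_1}\circ E_2^{w_2})\le\pla(\xi\in E_1^{w_1})\pla(\xi\in E_2^{w_2})$, and the first paragraph concludes.

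I expect the genuine obstacle to be the core of the discretisation step: arranging the conditional law of $\xi_\Lambda$ given $\Omega_m$ as a monotone function of an i.i.d.\ family to which the finite BK inequality applies, so that the continuous position-and-mark data of each point and the edge marks are carried by the coordinates without spoiling the monotonicity of an arbitrary increasing event, and matching, in the limit, the regional notion of disjoint occurrence through $\mathcal R$ with the coordinate notion through cells. This is exactly the content worked out in \cite{HeyHofLasMat19}; neither the mark space $\Ecal$ nor the auxiliary variables $W_1,W_2$ interact with it.
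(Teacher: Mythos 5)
Your proposal is correct and takes essentially the same route as the paper, which for this theorem offers no argument beyond deferring to the discretisation proof of \cite{HeyHofLasMat19} and remarking that the mark space $\Ecal$ changes nothing; your outline is a faithful reconstruction of that proof (dyadic cells, conditioning on at-most-one-point-per-cell, the finite BK inequality on the resulting product space, and the monotone passage from cell-witnessed to $\mathcal{R}$-witnessed disjoint occurrence), and you correctly identify that the only real work lies in the step you defer to the reference. Your preliminary Fubini reduction over $W_1,W_2$ is a valid and slightly cleaner alternative to folding these auxiliary variables into the discrete product space as extra coordinates.
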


A proof for this theorem can be found in \cite{HeyHofLasMat19}. The presence of the marked space $\X=\Rd\times\Ecal$ rather than just $\Rd$ makes no difference to the validity of their proof.
It is worth noting that the operation $\circ$ is commutative and associative, which allows repeated application of the inequality.

\subsubsection{The FKG Inequality}

In contrast to the BK inequality, the FKG inequality gives us a simple lower bound on the probability of two increasing events occurring.

Given two increasing and integrable functions $f,g$ on $\mathbf{N}\left(\X^{[2]}\times[0,1]\right)$, we have
\begin{equation}
    \E_\lambda\left[f(\xi) g(\xi)\right] = \E_\lambda \left[ \E_\lambda[f(\xi)g(\xi) \mid \eta] \right] \geq \E_\lambda \left[ \E_\lambda\left[f(\xi)\mid \eta\right] \; \E_\lambda\left[g(\xi)\mid \eta\right] \right]
				\geq \E_\lambda\left[f(\xi)\right] \; \E_\lambda\left[g(\xi)\right]. \label{eq:prelim:FKG}
\end{equation}
The first inequality was obtained by applying FKG to the random graph conditioned to have $\eta$ as its vertex set, the second inequality by applying FKG for point processes (see \cite{LasPen17} for details).

\subsection{Linear Operator Lemmas}
\label{sec:Prelim:LinearOperators}

We shall formulate our problem as one of linear operators acting on Hilbert spaces. We will want to define analogous objects on the full marked space $\X=\Rd\times\Ecal$ and on the mark space $\Ecal$ itself. Let $\left(\Xcal,\Sigma_\Xcal,\mu\right)\in\left\{\left(\X,\Sigma_\X,\nu\right),\left(\Ecal,\Sigma_\Ecal,\Pcal\right)\right\}$, and $L^2\left(\Xcal\right)\in\left\{L^2\left(\X\right),L^2\left(\Ecal\right)\right\}$ correspondingly.

Given the measure space $\left(\Xcal,\Sigma_\Xcal,\mu\right)$, we consider the Banach space of square-integrable functions $L^2(\Xcal) = L^2\left(\Xcal,\Sigma_{\Xcal},\mu\right)$. We augment this with the inner product defined by
\begin{equation}
\label{eqn:innerproduct}
    \inner{f}{g} := \int \overline{f\left(x\right)}g\left(x\right) \mu\left(\dd x \right),
\end{equation}
for all $f,g\in L^2(\Xcal)$, where $\overline{f(x)}$ is the complex conjugate of $f(x)$. With this inner product, $L^2(\Xcal)$ is a Hilbert space.

The measure space $\left(\Xcal,\Sigma_\Xcal,\mu\right)$ is separable if there exists a countable family $\left\{E_k\right\}_{k=1}^\infty\subset \Sigma_\Xcal$ such that given $E\in\Sigma_\Xcal$ with $\mu(E)<\infty$ there exists an $E$-dependent subsequence $\{n_k\}_{k=1}^\infty$ such that
\begin{equation}
    \mu\left(E\Delta E_{n_k}\right) \to 0, \qquad\text{ as }k\to\infty,
\end{equation}
where $\Delta$ here is the symmetric difference. A well-known example of a separable measure space is $\left(\R^d,\mathfrak{B},\Leb\right)$ - where $\mathfrak B$ is the Borel $\sigma$-algebra - and therefore if the probability measure space $\left(\Ecal,\Sigma_{\Ecal},\Pcal\right)$ is separable then so is the product measure space $\left(\X,\Sigma_\X,\nu\right)$. A topological space is called separable if there exists a countable dense subset. If the measure space $\left(\Xcal,\Sigma_\Xcal,\mu\right)$ is separable, then the function spaces $L^p\left(\Xcal\right) = L^p\left(\Xcal,\Sigma_\Xcal,\mu\right)$ with $p\in\left[1,\infty\right)$ are all separable (see, for example, \cite{stein2011functional}). Since we have assumed that $\left(\Ecal,\Sigma_\Ecal,\Pcal\right)$ is separable, we know that both $L^2(\X)$ and $L^2(\Ecal)$ are separable Hilbert spaces.

The inner product for $L^2(\Xcal)$ allows us to define the $L^2$-norm for $f\in L^2(\Xcal)$:
\begin{equation}
    \norm*{f}_2 = \inner{f}{f}^{\frac{1}{2}}.
\end{equation}
This allows us to define a norm on the space of bounded linear operators on $L^2(\Xcal)$. Suppose we have a linear operator $H\colon \mathcal{D}\left(H\right) \to L^2\left(\Xcal\right)$ for some domain $\mathcal{D}\left(H\right)\subset L^2\left(\Xcal\right)$. Then we can define the \emph{operator norm} by
\begin{equation}
    \OpNorm{H} := \begin{cases}
        \sup_{f\in L^2(\Xcal):f\ne 0}\frac{\norm*{Hf}_2}{\norm*{f}_2} &\text{if }\mathcal{D}\left(H\right) = L^2\left(\Xcal\right),\\
        +\infty &\text{otherwise},
    \end{cases}
\end{equation}
and $H$ is bounded if $\OpNorm{H}<\infty$. The space of bounded linear operators on a Banach space is itself a Banach space when augmented with this operator norm. Since $H$ is linear, the supremum can equivalently be taken over $f\in L^2(\Xcal)$ such that $\norm*{f}_2=1$. 

If $H$ is defined as the integral operator with kernel function $h\colon \Xcal^2\to \R$, then we can take
\begin{equation}
    \mathcal{D}\left(H\right) = \left\{f\in L^2\left(\Xcal\right)\colon \esssup_{x\in\Xcal}\int_\Xcal\abs*{h\left(x,y\right)}\abs*{f(y)}\mu\left(\dd y\right)<\infty\right\},
\end{equation}
since this ensures that $(Hf)(x)$ is defined for $\mu$-almost every $x\in\Xcal$ by dominated convergence.

Note that $\OpNorm{\cdot}$ is sub-multiplicative, in the sense that for linear operators $H\colon \mathcal{D}(H)\to \mathcal{D}(G)$ and $G\colon \mathcal{D}(G)\to L^2(\Xcal)$ we have
\begin{equation}
    \OpNorm{GH} \leq \OpNorm{G}\OpNorm{H}.
\end{equation}

Given a linear operator $H\colon L^2(\Xcal)\to L^2(\Xcal)$ on Hilbert spaces, the \emph{adjoint} of $H$ is the operator $H^\dagger\colon L^2(\Xcal)\to L^2(\Xcal)$ such that
\begin{equation}
    \inner{f}{Hg} = \inner{H^\dagger f}{g}
\end{equation}
for all $f,g\in L^2(\Xcal)$. An operator is \emph{self-adjoint} if $H=H^\dagger$. It is worth noting that a bounded integral operator $H$ with kernel function $h$ is self-adjoint if and only if $h(x,y) = \overline{h(y,x)}$ for $\mu$-a.e. $x,y\in\Xcal$. If $h$ is real valued, this is equivalent to $h$ being symmetric on such a set of $x,y$. A simple calculation shows that if $H$ is bounded and self-adjoint, then $\inner{f}{Hf}\in\R$ for all $f\in L^2(\Xcal)$. This allows us to define the following object. Given a bounded and self-adjoint linear $H\colon L^2(\Xcal)\to L^2(\Xcal)$, define
\begin{equation}
\label{eqn:SupSpecDef}
    \SupSpec{H} := \sup_{f\in L^2(\Xcal):f\ne 0}\frac{\inner{f}{Hf}}{\inner{f}{f}}.
\end{equation}
Note that, in contrast to $\OpNorm{\cdot}$, the spectral supremum $\SupSpec{\cdot}$ is not a norm - it is possible for it to be negative. 

The objects $\OpNorm{H}$ and $\SupSpec{H}$ can also be understood via the spectrum of $H$. The spectrum, denoted $\sigma(H)$, is the set of $z\in\Complex$ such that $z\Id - H$ does not have a bounded linear inverse. For linear operators on finite dimensional vector spaces this coincides with the set of eigenvalues of $H$. If $H$ is self-adjoint, then $\sigma(H)\subset\R$. We can then express $\OpNorm{H}$ and $\SupSpec{H}$ in terms of the spectrum:
\begin{equation}
\label{eqn:Opnorm_Supspec_Spectrum}
    \OpNorm{H} = \sup \left\{\abs*{z}:z\in\sigma(H)\right\},\qquad \SupSpec{H} = \sup \left\{z:z\in\sigma(H)\right\}.
\end{equation}
Note that for self-adjoint operators the operator norm is equal to the spectral radius of the operator.

While $\OpNorm{\cdot}$ and $\SupSpec{\cdot}$ are very important properties of the operators we will be considering, they are not necessarily simple to compute for an integral operator given its kernel function. Recall that in \eqref{eqn:One_Infty_Norm}-\eqref{eqn:Infty_Infty_Norm} we defined the three integral operator norms $\OneNorm{\cdot}$, $\TwoNorm{\cdot}$, and $\InfNorm{\cdot}$. These have the advantage that they are explicitly defined in terms of the kernel function. We now provide some results regarding the four norms (plus $\SupSpec{\cdot}$), including relations between some of them.

\begin{lemma}[Bounds on the Operator Norm]
\label{lem:BoundsonOperatorNorm}
Let $H\colon L^2\left(\Xcal\right)\to L^2\left(\Xcal\right)$ be a bounded self-adjoint integral operator. Then we can bound the operator norm:
\begin{equation}
\label{eqn:OpnormBounds}
    \abs*{\SupSpec{H}} \leq \OpNorm{H} \leq \OneNorm{H}.
\end{equation}
Furthermore, if $\mathcal{X}=\Ecal$ and $H$ has kernel function $h$, then
\begin{equation}
    \int h\left(a,b\right)\Pcal^{\otimes 2}\left(\dd a,\dd b\right) \leq \SupSpec{H}
\end{equation}
\end{lemma}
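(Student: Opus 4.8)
The plan is to prove the three inequalities separately, each by a short application of a standard tool: Cauchy--Schwarz for the first, the Schur test for the second, and testing against a distinguished function for the third.

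For the bound $\abs{\SupSpec{H}} \leq \OpNorm{H}$, I would use Cauchy--Schwarz: for every nonzero $f\in L^2(\Xcal)$ we have $\abs{\inner{f}{Hf}} \leq \norm{f}_2\,\norm{Hf}_2 \leq \OpNorm{H}\norm{f}_2^2$, so the Rayleigh quotient $\inner{f}{Hf}/\inner{f}{f}$ lies in $[-\OpNorm{H},\OpNorm{H}]$; taking the supremum over $f$ gives $\SupSpec{H}\leq\OpNorm{H}$, and since the quotient is also bounded below by $-\OpNorm{H}$ we get $-\SupSpec{H}\leq\OpNorm{H}$ as well. (Equivalently one may quote \eqref{eqn:Opnorm_Supspec_Spectrum} directly, as $\SupSpec{H}\in\sigma(H)\subset\R$.)

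For $\OpNorm{H}\leq\OneNorm{H}$ I would run the Schur test. Self-adjointness of $H$ means $h(x,y)=\overline{h(y,x)}$ for $\mu\otimes\mu$-a.e.\ $(x,y)$, so that $\esssup_x\int\abs{h(x,y)}\,\mu(\dd y) = \esssup_y\int\abs{h(x,y)}\,\mu(\dd x) = \OneNorm{H}$. For $f\in L^2(\Xcal)$, writing $\abs{h(x,y)}\abs{f(y)} = \abs{h(x,y)}^{1/2}\cdot\abs{h(x,y)}^{1/2}\abs{f(y)}$ and applying Cauchy--Schwarz in the $y$-variable gives $\abs{Hf(x)}^2 \leq \big(\int\abs{h(x,y)}\,\mu(\dd y)\big)\big(\int\abs{h(x,y)}\abs{f(y)}^2\,\mu(\dd y)\big) \leq \OneNorm{H}\int\abs{h(x,y)}\abs{f(y)}^2\,\mu(\dd y)$. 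Integrating over $x$ and exchanging the order of integration by Tonelli (using $\sigma$-finiteness of $\mu$) yields $\norm{Hf}_2^2 \leq \OneNorm{H}^2\norm{f}_2^2$; dividing by $\norm{f}_2^2$ and taking the supremum gives the claim.

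For the final inequality, with $(\Xcal,\mu)=(\Ecal,\Pcal)$, I would test the Rayleigh quotient against the constant function $\mathds 1$. Since $\Pcal$ is a probability measure, $\mathds 1\in L^2(\Ecal)$ with $\inner{\mathds 1}{\mathds 1}=1$, while $\inner{\mathds 1}{H\mathds 1} = \int (H\mathds 1)(a)\,\Pcal(\dd a) = \int h(a,b)\,\Pcal^{\otimes 2}(\dd a,\dd b)$ by Fubini --- the integrand is integrable because $H$ bounded gives $H\mathds 1\in L^2(\Ecal)\subset L^1(\Ecal)$, and the value is real since $H$ is self-adjoint. Hence $\int h(a,b)\,\Pcal^{\otimes 2}(\dd a,\dd b) = \inner{\mathds 1}{H\mathds 1}/\inner{\mathds 1}{\mathds 1} \leq \SupSpec{H}$ by \eqref{eqn:SupSpecDef}. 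None of the steps is a genuine obstacle; the only care needed is the measure-theoretic bookkeeping in the Schur test (the a.e.\ qualifications, the use of Tonelli) together with the observation that self-adjointness makes the two one-sided $L^1$-bounds on the kernel coincide, so a single constant $\OneNorm{H}$ suffices.
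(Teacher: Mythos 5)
Your proof is correct and follows essentially the same route as the paper: the spectral/Rayleigh-quotient characterisation for the first inequality, the Schur test (which the paper merely cites) for the second, and the constant test function for the third. No gaps.
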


\begin{lemma}
\label{lem:NormBounds}
Suppose $h,g\colon \Xcal^2\to\Complex$ are kernel functions for the self-adjoint linear operators $H,G\colon L^2\left(\Xcal\right) \to L^2\left(\Xcal\right)$, such that $h\left(x,y\right)\in\R_{\geq0}$ and $h\left(x,y\right) \geq \abs*{g\left(x,y\right)}$ for  $\mu$-a.e. $x,y\in\Xcal$. Then
\begin{equation}
    \SupSpec{H}\geq\SupSpec{G},\quad \OpNorm{H}\geq \OpNorm{G}, \quad \OneNorm{H}\geq \OneNorm{G}, \quad \TwoNorm{H}\geq \TwoNorm{G}, \quad \InfNorm{H}\geq \InfNorm{G}.
\end{equation}
\end{lemma}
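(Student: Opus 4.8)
The plan is to separate the five inequalities into the three ``kernel norms'' $\OneNorm{\cdot},\TwoNorm{\cdot},\InfNorm{\cdot}$, which are immediate from their explicit definitions, and the two spectral quantities $\OpNorm{\cdot},\SupSpec{\cdot}$, which need a short comparison argument passing through absolute values.

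For the kernel norms I would argue directly from \eqref{eqn:One_Infty_Norm}--\eqref{eqn:Infty_Infty_Norm}. Since $h(x,y)\geq 0$ and $h(x,y)\geq\abs*{g(x,y)}$ hold $\mu$-a.e., for $\mu$-a.e.\ fixed $y$ we get $\norm*{g(\cdot,y)}_p\leq\norm*{h(\cdot,y)}_p$ for $p\in\{1,2,\infty\}$ by monotonicity of the $L^p$-norm; taking the essential supremum over $y$ (and using that $a(y)\leq b(y)$ $\mu$-a.e.\ implies $\esssup_y a(y)\leq\esssup_y b(y)$) yields $\OneNorm{H}\geq\OneNorm{G}$, $\TwoNorm{H}\geq\TwoNorm{G}$, and $\InfNorm{H}\geq\InfNorm{G}$ simultaneously (for $\InfNorm{\cdot}$ one may instead compare the double essential suprema directly). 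This is one or two lines.

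For $\OpNorm{\cdot}$ and $\SupSpec{\cdot}$ the key step is the pointwise domination
\[
    \abs*{Gf(x)}\leq (H\abs*{f})(x)\qquad\text{for }\mu\text{-a.e. }x,\qquad\text{where }\abs*{f}\colon x\mapsto\abs*{f(x)},
\]
valid for every $f\in L^2(\Xcal)$: writing $Gf(x)=\int g(x,y)f(y)\,\mu(\dd y)$ (valid $\mu$-a.e.\ since $G$ is an integral operator), the triangle inequality for integrals together with $\abs*{g}\leq h$ and $h\geq 0$ bounds this by $\int h(x,y)\abs*{f(y)}\,\mu(\dd y)=(H\abs*{f})(x)$; the right-hand side is a genuine element of $L^2(\Xcal)$ because $\abs*{f}\in L^2(\Xcal)$ and $H$ is an integral operator with kernel $h$, so the integral is finite for $\mu$-a.e.\ $x$. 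Combined with $\norm*{\abs*{f}}_2=\norm*{f}_2$ and $\inner{\abs*{f}}{\abs*{f}}=\inner{f}{f}$, this gives both bounds. For the operator norm, $\norm*{Gf}_2=\norm*{\abs*{Gf}}_2\leq\norm*{H\abs*{f}}_2\leq\OpNorm{H}\norm*{\abs*{f}}_2=\OpNorm{H}\norm*{f}_2$, so $\OpNorm{G}\leq\OpNorm{H}$ after taking the supremum over unit vectors. For the spectral supremum, self-adjointness of $G$ makes $\inner{f}{Gf}$ real, hence $\inner{f}{Gf}\leq\abs*{\inner{f}{Gf}}\leq\inner{\abs*{f}}{\abs*{Gf}}\leq\inner{\abs*{f}}{H\abs*{f}}\leq\SupSpec{H}\inner{\abs*{f}}{\abs*{f}}=\SupSpec{H}\inner{f}{f}$, where the last inequality is the definition \eqref{eqn:SupSpecDef} applied to the test function $\abs*{f}$; dividing by $\inner{f}{f}$ and taking the supremum over $f\neq 0$ yields $\SupSpec{G}\leq\SupSpec{H}$.

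The only point needing a little care --- the ``hard part'', such as it is --- is the measure-theoretic bookkeeping: that $H\abs*{f}$ is a well-defined element of $L^2(\Xcal)$ and that the $\mu$-a.e.-defined integral representation of $Gf$ may be dominated pointwise as above. Both are standard facts about bounded integral operators with measurable kernels, and once they are in hand the remaining manipulations are routine.
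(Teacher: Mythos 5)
Your proposal is correct and follows essentially the same route as the paper's proof: the kernel norms are handled directly from the definitions, and the operator norm and spectral supremum are obtained via the pointwise domination $\abs{Gf}\leq H\abs{f}$ together with $\norm{\abs{f}}_2=\norm{f}_2$ (the paper's ``$\mathrm{Abs}$'' trick). No gaps.
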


\begin{lemma}
\label{lem:SupSpecTriangle}
Let $G,H$ be bounded self-adjoint linear operators on a separable Hilbert space. Then
\begin{equation}
    \abs*{\SupSpec{G+H} - \SupSpec{G}} \leq \OpNorm{H}.
\end{equation}
\end{lemma}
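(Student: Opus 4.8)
The plan is to work directly from the variational characterisation \eqref{eqn:SupSpecDef} of $\SupSpec{\cdot}$, exploiting that a supremum of a sum of two functions is bounded above by the sum of the suprema. First I would fix an arbitrary $f\in L^2(\Xcal)$ with $f\neq 0$ (here $L^2(\Xcal)$ is the separable Hilbert space in question) and decompose
\begin{equation}
    \frac{\inner{f}{(G+H)f}}{\inner{f}{f}} = \frac{\inner{f}{Gf}}{\inner{f}{f}} + \frac{\inner{f}{Hf}}{\inner{f}{f}}.
\end{equation}
The first term is at most $\SupSpec{G}$ by definition. For the second term I would use the Cauchy–Schwarz inequality together with the definition of the operator norm to get $\abs*{\inner{f}{Hf}} \leq \norm*{f}_2\norm*{Hf}_2 \leq \OpNorm{H}\norm*{f}_2^2 = \OpNorm{H}\inner{f}{f}$, so that the second term is at most $\OpNorm{H}$. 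Taking the supremum over $f\neq 0$ then yields $\SupSpec{G+H} \leq \SupSpec{G} + \OpNorm{H}$.

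To obtain the reverse inequality I would apply the bound just proved with $G+H$ in place of $G$ and $-H$ in place of $H$: since $-H$ is again bounded and self-adjoint and $\OpNorm{-H}=\OpNorm{H}$, this gives $\SupSpec{G} = \SupSpec{(G+H)+(-H)} \leq \SupSpec{G+H} + \OpNorm{H}$, i.e.\ $\SupSpec{G+H} \geq \SupSpec{G} - \OpNorm{H}$. Combining the two one-sided estimates gives $\abs*{\SupSpec{G+H} - \SupSpec{G}} \leq \OpNorm{H}$, as claimed.

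There is essentially no serious obstacle here; the only points requiring a word of care are (i) that $\inner{f}{Gf}$, $\inner{f}{Hf}$ and $\inner{f}{(G+H)f}$ are all real, which is guaranteed by self-adjointness of $G$, $H$ and $G+H$ (already noted in the text preceding \eqref{eqn:SupSpecDef}), so that the inequalities make sense; and (ii) the elementary fact that $\sup_f(a(f)+b(f)) \leq \sup_f a(f) + \sup_f b(f)$, used to split the supremum. Separability of the Hilbert space plays no role in the argument beyond being the ambient setting in which $\SupSpec{\cdot}$ was defined. One could alternatively phrase the whole argument spectrally, using $\SupSpec{A}=\sup\sigma(A)$ and $\OpNorm{H}=\sup\{\abs*{z}:z\in\sigma(H)\}$ from \eqref{eqn:Opnorm_Supspec_Spectrum} together with a perturbation bound for the top of the spectrum, but the variational route above is shorter and self-contained.
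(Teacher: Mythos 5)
Your proof is correct, but it takes a genuinely different route from the paper's. The paper invokes the classical perturbation result (cited from Kato) that $\sigma(G+H)$ lies in the closed $\OpNorm{H}$-neighbourhood of $\sigma(G)$, and then obtains the two-sided bound by perturbing $G$ by $+H$ and $G+H$ by $-H$ — a two-line argument, but one that outsources the real content to an external theorem about the full spectrum. You instead work directly with the variational characterisation \eqref{eqn:SupSpecDef}: splitting the Rayleigh quotient of $G+H$, bounding the $H$-term by $\OpNorm{H}$ via Cauchy--Schwarz, taking suprema, and then symmetrising with $-H$ exactly as the paper does for its spectral inclusion. Your version is entirely self-contained and uses only facts already set up in Section~\ref{sec:Prelim:LinearOperators}, at the cost of being slightly longer; the paper's version is shorter but proves (via the citation) a stronger statement about the whole spectrum, of which only the supremum is needed here. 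Your closing remarks — that self-adjointness is what makes the quadratic forms real, and that separability is irrelevant to the argument — are both accurate.
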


The main advantage of the Hilbert space formulation for the operators is that it allows us to use the following theorem to better understand the operators. Although the ``Multiplication Operator" version presented here may not provide as much information as the ``Projection-Valued Measure" or ``Direct Integral" versions (see \cite{hall2013quantum}), it has the advantage of being simpler to state and is sufficient for our purposes.

\begin{theorem}[Spectral Theorem - Multiplication Operator Version]\label{thm:spectraltheorem}
Let $\left\{H_i\right\}_{i\in I}$ be a commutative family of bounded, self-adjoint operators on a separable Hilbert space $\Hilbert$. Then there exists a $\sigma$-finite measure
space $\left(\Omega,\Sigma_\Omega, \mu\right)$, a family of bounded, measurable, real-valued functions $\left\{h_i\right\}_{i\in I}$ on $\Omega$, and a unitary map $U \colon  \Hilbert \to L^2\left(\Omega,\Sigma_\Omega, \mu\right)$ such that for $\mu$-a.e. $\omega$,
\begin{equation}
    \left(UH_iU^{-1}f\right)\left(\omega\right) = h_i\left(\omega\right)f\left(\omega\right)
\end{equation}
for all $f\in L^2\left(\Omega,\Sigma_\Omega. \mu\right)$ and $i\in I$.
\end{theorem}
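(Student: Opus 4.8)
The plan is to reduce the assertion for a commuting family to the classical spectral theorem for a \emph{single} bounded self-adjoint operator, after first producing one operator whose generated von Neumann algebra contains the whole family.

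First I would dispatch the single-operator case. Given one bounded self-adjoint $H$ on the separable Hilbert space $\Hilbert$, I would decompose $\Hilbert=\bigoplus_{n}\Hilbert_n$ into an at most countable orthogonal sum of $H$-cyclic subspaces $\Hilbert_n=\overline{\{p(H)\psi_n:p\in\Complex[t]\}}$: take, by Zorn's lemma, a maximal orthogonal family of nonzero cyclic $H$-invariant subspaces; $H$-invariance of the pieces together with self-adjointness of $H$ makes the orthogonal complement of their closed sum $H$-invariant, so maximality forces the family to span $\Hilbert$, while separability forces it to be countable. On each $\Hilbert_n$, the functional $f\mapsto\inner{\psi_n}{f(H)\psi_n}$ on $C(\sigma(H))$ is positive, hence by the Riesz–Markov theorem is integration against a finite Borel measure $\mu_n$ on $\sigma(H)\subset\R$; the identity $\|f(H)\psi_n\|^2=\int|f|^2\,\dd\mu_n$ shows that $f\mapsto f(H)\psi_n$ extends to a unitary $L^2(\sigma(H),\mu_n)\to\Hilbert_n$ intertwining multiplication by the coordinate function with $H$. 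Glueing over $n$ on $\Omega=\bigsqcup_n\sigma(H)\times\{n\}$, with the $\sigma$-finite (indeed, after rescaling, finite) measure assembled from the $\mu_n$, produces a unitary $U\colon\Hilbert\to L^2(\Omega,\mu)$ and a bounded measurable real-valued $a\colon\Omega\to\R$ with $UHU^{-1}=M_a$ (multiplication by $a$). This part is entirely classical and I would simply cite it.

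Next I would produce a single generator for the family. The pairwise-commuting self-adjoint operators $\{H_i\}_{i\in I}$ generate an abelian von Neumann algebra $\mathcal M\subseteq\mathcal B(\Hilbert)$. Since $\Hilbert$ is separable, the strong operator topology is metrizable on bounded sets, so the self-adjoint part of the unit ball of $\mathcal M$ is separable in the strong operator topology; choosing a sequence $\{A_n\}$ dense there and setting $A:=\sum_n 3^{-n}A_n$ gives a bounded self-adjoint $A\in\mathcal M$ for which a base-$3$ digit-extraction argument with the spectral projections of $A$ recovers each $A_n$ inside the von Neumann algebra $W^\ast(A)$; consequently $W^\ast(A)=\mathcal M$, and in particular $H_i\in W^\ast(A)$ for every $i$. (Alternatively one may invoke the known fact that an abelian von Neumann algebra on a separable Hilbert space is singly generated, or bypass von Neumann algebras entirely by running the cyclic decomposition directly for the $C^\ast$-algebra generated by $\{H_i\}$ and reading off the multiplication functions from the GNS/Gelfand picture.)

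Finally I would combine the two steps: applying the single-operator case to $A$ yields $(\Omega,\mu)$, $U$, and $a$ with $UAU^{-1}=M_a$, and conjugation by $U$ carries $W^\ast(A)$ onto $W^\ast(M_a)$; since $\{M_a\}$ lies inside the maximal abelian multiplication algebra $\{M_g:g\in L^\infty(\Omega,\mu)\}$, whose commutant is itself, we get $W^\ast(M_a)\subseteq\{M_g:g\in L^\infty(\Omega,\mu)\}$. Hence for each $i\in I$ there is $h_i\in L^\infty(\Omega,\mu)$ with $UH_iU^{-1}=M_{h_i}$; self-adjointness of $H_i$ forces $h_i=\overline{h_i}$ $\mu$-a.e., so one may take a real-valued representative with $\|h_i\|_\infty\le\OpNorm{H_i}$, and all $H_i$ are simultaneously realized as multiplication operators on the single space $L^2(\Omega,\mu)$ through the single unitary $U$ — which is exactly the claim. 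The one genuinely non-formal ingredient is the production of $A$ with $W^\ast(A)\ni H_i$ for all $i$, and this is precisely the step where separability of $\Hilbert$ is indispensable (the statement genuinely fails for non-separable $\Hilbert$ and uncountable $I$); by contrast the identification of $W^\ast(M_a)$ with a subalgebra of multiplication operators, and the single-operator spectral theorem itself, are routine.
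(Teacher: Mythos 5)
Your argument is correct in substance, but it is worth noting that the paper does not actually prove this theorem at all: its ``proof'' consists of a citation to \cite[Theorem~1.47]{folland2016course}, where the result is obtained by decomposing $\Hilbert$ into cyclic subspaces for the commutative $C^\ast$-algebra generated by the family and applying the Gelfand transform on each piece --- essentially the ``alternative'' route you mention in passing. Your main route is the other standard one: single-operator spectral theorem via cyclic decomposition and Riesz--Markov, then reduction of the whole family to one generator using the fact that an abelian von Neumann algebra on a separable Hilbert space is singly generated, then the observation that $W^\ast(M_a)$ sits inside the maximal abelian multiplication algebra $\left\{M_g : g\in L^\infty(\Omega,\mu)\right\}$, which is its own commutant. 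This buys a genuinely simultaneous diagonalization for an arbitrary (possibly uncountable) index set $I$ from a single application of the one-operator theorem, at the cost of importing von Neumann algebra machinery; the $C^\ast$/Gelfand route avoids commutants but has to run the cyclic decomposition for the whole family at once. One small inaccuracy: the base-$3$ digit-extraction argument does not work for a strongly dense sequence of general self-adjoint contractions $A_n$ (their ``digits'' are continuous values and cannot be read off from spectral projections of the sum); the standard construction uses a strongly dense sequence of \emph{projections} $P_n$ in $\mathcal M$ and sets $A=\sum_n 3^{-n}P_n$. Since you explicitly offer to cite the singly-generated theorem instead, this is a cosmetic slip rather than a gap. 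The remaining points --- $\sigma$-finiteness of the glued measure, boundedness $\norm{h_i}_\infty\leq\OpNorm{H_i}$, and real-valuedness of $h_i$ from self-adjointness --- are all handled correctly.
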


\begin{proof}
For a proof in more generality, see \cite[Theorem~1.47]{folland2016course}.
\end{proof}

In the context of a finite dimensional Hilbert space $\Hilbert$, the Spectral Theorem is effectively just the statement that Hermitian matrices can be diagonalized over $\R$. Here $U$ is represented by a unitary matrix composed from the orthonormal eigenbasis of $\left\{H_i\right\}$, and the entries of the resulting diagonal matrix corresponding to $H_i$ are the values of the function $h_i(\omega)$. For convolution operators on functions on $\Rd$, the Fourier transform plays the role of $U$ and turns the complicated convolution operation into simple pointwise multiplication.

We also present here a lemma that relates the operator norm and spectral supremum for operators on $L^2(\X)$ to operators on $L^2(\Ecal)$. Given a family $\left\{A_k\right\}_{k\in\Rd}$ of subsets of $\Complex$, we define the \emph{essential union} of these sets to be
\begin{equation}
    \Ess \bigcup_{k\in\Rd}A_k := \left\{z\in\Complex:\forall \epsilon>0, \Leb\left(k\in\Rd:\exists x\in A_k \text{ s.t. }\abs*{x-z}<\epsilon\right)>0\right\}.
\end{equation}
This can be understood as the union of $A_k$ over $k$ neglecting null sets of $k$.

\begin{lemma}
\label{thm:Spectrum_Subset}
We have
\begin{equation}
\label{eqn:spectrum_equality}
    \sigma\left(\Opconnf\right) = \Ess \bigcup_{k\in\Rd}\sigma\left(\fOpconnf(k)\right) , \qquad \sigma\left(\Optlam\right) = \Ess\bigcup_{k\in\Rd}\sigma\left(\fOptlam(k)\right).
\end{equation}
It then follows that
\begin{equation}
    \OpNorm{\Opconnf} = \esssup_{k\in\Rd}\OpNorm{\fOpconnf\left(k\right)}, \qquad \OpNorm{\Optlam} = \esssup_{k\in\Rd}\OpNorm{\fOptlam(k)},
\end{equation}
\begin{equation}
    \SupSpec{\Opconnf} = \esssup_{k\in\Rd}\SupSpec{\fOpconnf(k)}, \qquad \SupSpec{\Optlam} = \esssup_{k\in\Rd}\SupSpec{\fOptlam(k)}.
\end{equation}
\end{lemma}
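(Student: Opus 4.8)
The plan is to realise $\Opconnf$ and $\Optlam$ as direct-integral (``decomposable'') operators by Fourier-transforming only in the spatial coordinate, and then to read off the spectrum from the standard description of the spectrum of such operators. Since $\nu = \Leb\otimes\Pcal$, the unitary map given by the $\Rd$-Fourier transform in the spatial variable tensored with the identity on $L^2(\Ecal)$ (unitary by Plancherel) identifies $L^2(\X)$ with the constant-fibre direct integral $\int^{\oplus}_{\Rd} L^2(\Ecal)\,\dd k$. Because $\connf(x,y) = \connf(\xbar-\ybar;a,b)$ is a convolution kernel in the spatial variable, applying this unitary and using the convolution theorem together with Fubini shows that $\Opconnf$ is carried onto the decomposable operator whose fibre at $k$ is the integral operator on $L^2(\Ecal)$ with kernel $\fconnf(k;a,b)$, i.e.\ exactly $\fOpconnf(k)$; the same computation with $\tlam$ in place of $\connf$ (valid since $\tlam$ inherits spatial translation and reflection invariance from $\connf$ and $\nu$) carries $\Optlam$ onto the decomposable operator with fibres $\fOptlam(k)$. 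Here we work, as we must for the statement to be meaningful, on the range of $\lambda$ for which $\Optlam$ is a bounded operator.

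Next I would verify the two hypotheses needed to invoke the spectral theory of decomposable operators. Joint measurability of $(k,a,b)\mapsto\fconnf(k;a,b)$ and $(k,a,b)\mapsto\ftlam(k;a,b)$ follows from measurability of $\connf$, $\tlam$ by Fubini, so $k\mapsto\fOpconnf(k)$ and $k\mapsto\fOptlam(k)$ are measurable operator fields. Essential boundedness of these fields is immediate from the Plancherel identity applied to product functions: for $h\in L^2(\Rd)$, $f\in L^2(\Ecal)$ one has $\norm{\Opconnf(h\otimes f)}_2^2 = \int\abs{\widehat h(k)}^2\,\norm{\fOpconnf(k)f}_2^2\,\dd k$, which forces $\OpNorm{\fOpconnf(k)}\le\OpNorm{\Opconnf}$ for a.e.\ $k$ (and likewise $\OpNorm{\fOptlam(k)}\le\OpNorm{\Optlam}$ a.e.). With measurability and essential boundedness established, the standard description of the spectrum of a decomposable operator as the essential union of its fibre spectra (see \cite{hall2013quantum} and the standard references on direct integral decompositions) yields precisely \eqref{eqn:spectrum_equality}.

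The four norm/spectral-supremum identities are then equivalent reformulations of \eqref{eqn:spectrum_equality}. All of $\Opconnf,\Optlam,\fOpconnf(k),\fOptlam(k)$ are bounded and self-adjoint: their kernels are real — on the Fourier side because reflection invariance of $\connf$ and $\tlam$ in the spatial variable kills the imaginary part of the transform — and symmetric in their arguments. Hence by \eqref{eqn:Opnorm_Supspec_Spectrum}, $\OpNorm{H} = \sup\{\abs z : z\in\sigma(H)\}$ and $\SupSpec{H} = \sup\{z:z\in\sigma(H)\}$ for each of them. Substituting \eqref{eqn:spectrum_equality} and using the elementary fact that, for a measurable family of uniformly bounded closed subsets $\{A_k\}_{k\in\Rd}$ of $\R$ and continuous $\phi$, $\sup\{\phi(z):z\in\Ess\bigcup_k A_k\} = \esssup_k\sup\{\phi(z):z\in A_k\}$ (take $\phi(z)=\abs z$ for the operator norm and $\phi(z)=z$ for the spectral supremum), we obtain $\OpNorm{\Opconnf}=\esssup_k\OpNorm{\fOpconnf(k)}$, $\SupSpec{\Opconnf}=\esssup_k\SupSpec{\fOpconnf(k)}$, and the corresponding identities for $\Optlam$.

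I expect the main obstacle to be the first step carried out rigorously — setting up the direct-integral identification and confirming, via Plancherel and Fubini, that the spatial Fourier transform truly realises $\Opconnf$ and $\Optlam$ as the decomposable operators with the claimed fibres, being careful about the $L^1$-versus-$L^2$ technicalities and joint measurability of the operator fields. Once that is in place, the spectral theorem for decomposable operators is classical, so only its hypotheses need checking, and the passage to the norm identities reduces to the routine measure-theoretic lemma about suprema over essential unions; the delicate point there is the bookkeeping of $k$-null sets, which is exactly what the $\Ess$ and $\esssup$ notation is designed to absorb.
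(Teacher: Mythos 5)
Your proposal is correct, but it reaches \eqref{eqn:spectrum_equality} by a different route than the paper. You set up the constant-fibre direct integral $\int^{\oplus}_{\Rd}L^2(\Ecal)\,\dd k$, verify that the spatial Fourier transform carries $\Opconnf$ and $\Optlam$ onto decomposable operators with fibres $\fOpconnf(k)$ and $\fOptlam(k)$, and then quote the classical theorem that the spectrum of a decomposable operator is the essential union of its fibre spectra. The paper never introduces direct-integral language: it proves the two inclusions by hand. For $\supseteq$ it builds an approximate eigenvector $h$ for $\Opconnf$ by inverse-Fourier-synthesising approximate eigenvectors $g^{(k)}$ of the fibres over a positive-measure set $P_\epsilon$; for $\subseteq$ it writes $\norm{(\zeta\Id-\Opconnf)h}_2^2$ as a $k$-integral of fibre norms via Plancherel, diagonalises each fibre with Theorem~\ref{thm:spectraltheorem}, and derives a contradiction from the assumption that $\zeta$ stays $\epsilon$-far from a.e.\ fibre spectrum. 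The trade-off is the expected one: your argument is shorter and conceptually transparent once the decomposable-operator theorem is accepted (and your verification of its hypotheses — joint measurability by Fubini, essential boundedness by testing on product functions $h\otimes f$, with separability of $L^2(\Ecal)$ supplying a single null set — is sound), whereas the paper's argument is self-contained, using only Plancherel and the multiplication-operator spectral theorem it has already stated. Two small points worth making explicit if you write this up: the passage from $\norm{\fOpconnf(k)f}_2\le\OpNorm{\Opconnf}\norm{f}_2$ for a.e.\ $k$ (null set depending on $f$) to a single null set needs the countable dense subset of $L^2(\Ecal)$; and your ``elementary fact'' about suprema over essential unions, while true, requires a short compactness argument (cover the level set $\{\phi\ge s-2\delta\}$ by finitely many balls and discard finitely many null sets), so it is not entirely free. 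Neither point is a gap. Both proofs, and the lemma itself, implicitly assume $\lambda$ is such that $\Optlam$ is bounded, which you correctly flag.
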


\section{The Expansion}\label{sec:expansion}

There are two parts to this section. First we recount an expansion of the two-point function. This argument is essentially identical to the corresponding step in \cite{HeyHofLasMat19}, so we refer to this reference for most of the details. The novelty in this section is rather in the formulation of this expansion in terms of operators, contained in the second part.

\subsection{Function-level expansion}
\label{sec:expansion:Functions}

The expansion of the two-point function $\tlam$ proceeds in essentially the same way as in \cite{HeyHofLasMat19}. The similarity holds because the argument uses quite general properties of Poisson point processes and connection models - the change from $\Rd$ to $\X=\Rd\times\Ecal$ adds no further complication.  We give here a very brief overview of the derivation - the details can be found in that reference.

We first introduce some notation. It will be important to consider thinning events and pivotal points. 
\begin{definition} \label{def:LE:thinning_events} Let $x,y \in \X$, and let $A \subset \X$ be locally finite and of cardinality $\abs{A}$.
\begin{compactitem}
\item[(1)] Set 
\begin{equation}
    \bar\connf(A,x) :=\prod_{y\in A}(1-\connf(x,y))  \label{eq:LE:def:thinning_probability}
\end{equation}
and define $\thinning{\eta}{A}$ as a \emph{$\bar\connf(A, \cdot)$-thinning of $\eta$} (or simply \emph{$A$-thinning of $\eta$}) as follows. We keep a point $w \in \eta$ as a point of $\thinning{\eta}{A}$ with probability $\bar\connf(A,w)$ independently of all other points of $\eta$. We similarly define $\thinning{\eta}{A}^x$ as a $\bar\connf(A,\cdot)$-thinning of $\eta^x$ using the marks in $\xi^x$.

\item[(2)] We write $\xconn{x}{y}{\xi}{A}$ if both $\{\conn{x}{y}{\xi}\}$ and $\{\nconn{x}{y}{\xi[\thinning{\eta}{A} \cup\{x\}]} \}$ take place. In words, $\{\xconn{x}{y}{\xi}{A}\}$ is the event that $x,y\in\eta$ and $x$ is connected to $y$ in $\xi$, but that this connection is broken by an $A$-thinning of $\eta\setminus\{x\}$. In particular, the connection does not survive if $y$ is thinned out.

\item[(3)] We define
\begin{equation}
    \tlam^{A}(x,y) = \pla \left(\conn{y}{x}{\xi^{x,y}[\thinning{\eta^y}{A}\cup\{x\}]} \right). \label{eq:def:LE:offconn}
\end{equation}
In words, $\tlam^A(x,y)$ is the probability of the event that there exists an open path between $x$ and $y$ in an RCM driven by an $A$-thinning of $\eta^y$, where the point $x$ is fixed to be present (but $y$ is not).

\item[(4)] Given $x,y\in\X$ and edge-marking $\xi$, we say $u\in\X$ is \emph{pivotal} and $u\in\piv{y,x,\xi}$ if $\left\{\conn{y}{x}{\xi^{x,y}}\right\}$ and yet $\left\{\nconn{y}{x}{\xi\left[\eta\setminus\left\{u\right\}\right]}\right\}$. That is,  every path on $\xi^{x,y}$ connecting $x$ and $y$ uses the vertex $u$. Note that the end points $x$ and $y$ are never pivotal.

\item[(5)] We will also use the set
\begin{equation}
    E(x,y;A,\xi) := \{\xconn{x}{y}{\xi}{A} \} \cap \{\nexists w \in \piv{x,y;\xi}: \xconn{x}{w}{\xi}{A} \}, \label{eq:LE:def:E_event}
\end{equation}
for a locally finite set $A \subset\X$, and $x,y\in \X$. If we consider the pivotal points of the $x$ to $y$ connection in $\xi$ in sequence, then this is the event that an $A$-thinning breaks the connection after the last pivotal point, but not before.
\end{compactitem}
\end{definition}

Now we can state a continuum version of a standard lemma often used in discrete models - see, for example, \cite[Lemma 2.1]{HarSla90}. In bond percolation, it has the name ``Cutting-bond lemma''. The proof of this continuum version can be found in \cite{HeyHofLasMat19}. The notation $\dconn{x}{y}{\xi}$ (for $x,y\in\X$ and configuration $\xi$) denotes the event that $x,y\in\eta$ and $x\sim y$, or that $x,y\in\eta$ and that there are two paths in $\xi$ that connect $x$ and $y$ and that these paths are disjoint in their interior vertices.
 
\begin{lemma}[Cutting-point lemma] \label{lem:LE:cutting_point}
Let $\lambda \geq 0$ and let $v,u,x \in \X$ with $u \neq x$ and let $A\subset \X$ be locally finite. Then
\begin{equation}
    \E_\lambda\big[\mathds 1_{E(v,u;A,\xi^{v,u,x})} \mathds 1_{\{ u \in \textsf{\textup{Piv}}(v,x;\xi^{v,u,x})\}} \big] 
					= \E_\lambda\left[\mathds 1_{E(v,u;A,\xi^{v,u})} \cdot \tlam^{\C(v,\xi^v) }(x,u) \right].
\end{equation}
Moreover,
\begin{equation}
    \pla\left(\dconn{y}{u}{\xi^{y,u,x}}, u \in \textsf{\textup{Piv}}(y,x; \xi^{y,u,x})\right) = \E_\lambda\left[\mathds 1_{\{\dconn{y}{u}{\xi^{y,u}}\}} \cdot \tlam^{\C(y,\xi^{y}) }(x,u) \right].
\end{equation}
\end{lemma}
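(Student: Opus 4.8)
The plan is to obtain both identities by a single conditioning argument: condition on the cluster of $v$ (resp.\ $y$) in the RCM \emph{before} the pivot vertex $u$ is adjoined, and recognise the conditional law of the configuration outside that cluster as a $\bar\connf$-thinning — which is exactly the object that $\tlam^{\,\cdot}(\cdot,\cdot)$ encodes in \eqref{eq:def:LE:offconn}. Because the argument only uses generic properties of Poisson processes and of independent edge-markings, it is verbatim the one in \cite{HeyHofLasMat19}, with $\X=\Rd\times\Ecal$ in place of $\Rd$; I outline the steps.

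First I would reduce to the event $\{\conn{v}{u}{\xi^{v,u}}\}$: both sides of the first identity vanish off it, since $E(v,u;A,\cdot)$ contains it and, on the left, $\{u\in\piv{v,x;\xi^{v,u,x}}\}$ together with $\{\conn{v}{x}{\xi^{v,u,x}}\}$ also forces it. On $\{\conn{v}{u}{\cdot}\}$ I would apply a cutting-point decomposition: writing $\mathcal C:=\C\big(v,\xi^{v,x}[\eta^{v,x}\setminus\{u\}]\big)$ for the cluster of $v$ after deleting $u$, the event $\{u\in\piv{v,x;\xi^{v,u,x}}\}$ holds iff $x\notin\mathcal C$, in which case $\mathcal C=\C(v,\xi^v)$, no $v$--$u$ path (and no path from $v$ to a $v$--$u$ pivotal) uses $x$, and the $u$--$x$ connection avoids $\mathcal C$; conversely $\{\conn{v}{u}{\cdot}\}\cap\{x\not\sim\C(v,\xi^v)\}\cap\{\offconn{u}{x}{\xi^{v,u,x}}{\C(v,\xi^v)}\}$ forces $u\in\piv{v,x;\cdot}$. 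Two consequences follow from elementary path surgery: $\mathds 1_{E(v,u;A,\xi^{v,u,x})}=\mathds 1_{E(v,u;A,\xi^{v,u})}$ on the pivotal event (the vertex $x$ never participates in the $v$--$u$ part of the configuration), and the left-hand integrand equals $\mathds 1_{E(v,u;A,\xi^{v,u})}$ times the indicator of a $u$--$x$ connection living entirely in the configuration exterior to $\C(v,\xi^v)$.

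Next I would condition on $\C(v,\xi^v)$, which is a stopping set for the natural exploration of $\xi^v$. The indicator $\mathds 1_{E(v,u;A,\xi^{v,u})}$ is measurable with respect to $\C(v,\xi^v)$, the edges it induces, the edges joining it to $u$, and the auxiliary thinning variables defining $\{\xconn{x}{y}{\xi}{A}\}$, whereas the exterior $u$--$x$ connection uses a disjoint family of Poisson points and edge variables. Conditionally on $\C(v,\xi^v)=C$, the points of $\eta$ not in $C$ are precisely the points not adjacent to $C$, hence form a $\bar\connf(C,\cdot)$-thinning of an independent PPP; adjoining $u$ and $x$ with the bookkeeping of Definition~\ref{def:LE:thinning_events}(1)--(3), the conditional probability of the exterior connection equals $\tlam^{\,C}(x,u)$. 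The tower property then yields
\begin{equation}
    \E_\lambda\big[\mathds 1_{E(v,u;A,\xi^{v,u,x})}\mathds 1_{\{u\in\piv{v,x;\xi^{v,u,x}}\}}\big]=\E_\lambda\big[\mathds 1_{E(v,u;A,\xi^{v,u})}\,\tlam^{\C(v,\xi^v)}(x,u)\big].
\end{equation}
For the ``moreover'' identity I would run the same three steps with $y$ in place of $v$ and the double-connection event $\{\dconn{y}{u}{\xi^{y,u}}\}$ — which is again determined on the cluster side — in place of $E(v,u;A,\cdot)$; no new ingredient is required.

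I expect the conditioning step to be the main obstacle: one has to set up the exploration of $\C(v,\xi^v)$ as a bona fide stopping-set construction, prove that the exterior configuration is conditionally a $\bar\connf$-thinning, and — most delicately — match the ``which endpoint is kept, which is thinned'' convention so that the exterior probability is \emph{exactly} $\tlam^{\C(v,\xi^v)}(x,u)$, rather than off by a $\bar\connf(\C(v,\xi^v),u)$ factor; this is precisely why the thinning events of Definition~\ref{def:LE:thinning_events} are phrased as they are. All of this is carried out in \cite{HeyHofLasMat19}, and the passage to the marked space $\X$ introduces nothing new.
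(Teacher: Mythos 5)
Your outline is correct and is essentially the argument the paper relies on: the paper itself does not prove this lemma but defers to \cite{HeyHofLasMat19}, whose proof proceeds exactly by the cluster-conditioning/stopping-set decomposition you describe, with the exterior configuration identified as a $\bar\connf(\C(v,\xi^v),\cdot)$-thinning so that its conditional connection probability is $\tlam^{\C(v,\xi^v)}(x,u)$. You also correctly identify the one delicate point (matching the thinning convention at the endpoints $u$ and $x$ to the definition of $\tlam^A$), so nothing further is needed.
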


We now define the lace expansion coefficient function.
\begin{definition}[Lace-expansion function coefficients] \label{def:LE:lace_expansion_coefficients}
For $n\in\N$ and $x,y\in\X$, we define
\begin{align}
    \pi_\lambda^{(0)}(x,y) &:= \pla \left(\dconn{y}{x}{\xi^{y,x}}\right) - \connf(x,y), \label{eq:LE:Pi0_def} \\
			\pi_\lambda^{(n)}(x,y) &:= \lambda^n \int \pla \left( \left\{\dconn{y}{u_0}{\xi^{y, u_0}_{0}}\right\}
					\cap \bigcap_{i=1}^{n} E\left(u_{i-1},u_i; \C_{i-1}, \xi^{u_{i-1}, u_i}_{i}\right) \right) \nu^{\otimes n}\left(\dd \vec u_{[0,n-1]}\right) , \label{eq:LE:Pin_def}
\end{align}
where $u_n=x$ and $\C_{i} = \C(u_{i-1}, \xi^{u_{i-1}}_{i})$ is the cluster of $u_{i-1}$ in $\xi^{u_{i-1}}_i$. Further define the remainder functions
\begin{align}
    r_{\lambda, 0} (x,y) &:= - \lambda \int \pla \left( \left\{\dconn{y}{u_0}{\xi^{y, u_0}_0}\right\} \cap \left\{\xconn{u_0}{x}{\xi^{u_0,x}_1}{\C_0}\right\}  \right) \nu\left(\dd u_0\right), \label{eq:LE:R0_def}\\
		r_{\lambda, n}(x,y) &:= (-\lambda)^{n+1} \int \pla \left( \left\{\dconn{y}{u_0}{\xi^{y, u_0}_0}\right\} \cap \bigcap_{i=1}^{n} E\left(u_{i-1},u_i; \C_{i-1}, \xi^{u_{i-1}, u_i}_{i}\right)\right. \nonumber \\
				& \hspace{6cm}\left.\cap \left\{ \xconn{u_n}{x}{\xi^{u_n,x}_{n+1}}{\C_n} \right\} \right) \nu^{\otimes\left(n+1\right)}\left(\dd \vec u_{[0,n]}\right)
							 \label{eq:LE:Rn_def}.
\end{align}
Additionally, define $\pi_{\lambda, n}$ as the alternating partial sum
\begin{equation}
    \pi_{\lambda, n}(x,y) := \sum_{m=0}^{n} (-1)^m \pi_\lambda^{(m)}(x,y). \label{eq:LE:PiN_def}
\end{equation}
We will also be requiring Fourier transforms of these functions. Since the whole model is spatially translation invariant, so are the functions defined above. Therefore for each pair of marks $(a,b)$ we perform the Fourier transform on the spatial displacement to get the functions $\widehat{\pi}^{(n)}_\lambda(k;a,b)$, $\widehat{\pi}_{\lambda,n}(k;a,b)$, and $\widehat{r}_{\lambda,n}(k;a,b)$ for $n\geq 0$. 
\end{definition}

\begin{prop}[Lace expansion: function-level]
\label{thm:laceexpansionFunction}
Let $\lambda \in [0,\lambda_O)$ and $n\geq0$. Then for $x,y\in\X$, and for $k\in\Rd$ and $a,b\in\Ecal$,
\begin{align}
    \tlam\left(x,y\right) &= \connf\left(x,y\right) + \pi_{\lambda, n}\left(x,y\right) + \lambda \int\tlam\left(x,u\right)\left(\connf + \pi_{\lambda, n}\right)\left(u,y\right)\nu\left(\dd u\right) + r_{\lambda, n}\left(x,y\right), \label{eqn:LaceexpansionFunction1}\\
    \ftlam\left(k;a,b\right) &= \fconnf\left(k;a,b\right) + \widehat{\pi}_{\lambda, n}\left(k;a,b\right) + \lambda \int\ftlam\left(k;a,c\right)\left(\fconnf + \widehat{\pi}_{\lambda, n}\right)\left(k;c,b\right)\Pcal\left(\dd c\right) + \widehat{r}_{\lambda, n}\left(k;a,b\right). \label{eqn:LaceexpansionFunction2}
\end{align}
\end{prop}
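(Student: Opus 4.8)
The plan is to establish the function identity \eqref{eqn:LaceexpansionFunction1} by induction on $n$, following the lace-expansion argument of \cite{HeyHofLasMat19} line by line, the only modification being that the base space $\Rd$ is everywhere replaced by $\X=\Rd\times\Ecal$; this causes no trouble because Mecke's equation \eqref{eq:prelim:mecke_n}, the BK inequality, and the cutting-point lemma (Lemma~\ref{lem:LE:cutting_point}) all hold verbatim on $\X$. The Fourier identity \eqref{eqn:LaceexpansionFunction2} will then follow by applying the spatial Fourier transform to \eqref{eqn:LaceexpansionFunction1}. Throughout, all interchanges of $\E_\lambda$ with $\nu$-integration (Fubini and Mecke) are licensed by $\lambda<\lambda_O$, which by \eqref{eqn:OneOneNormFinite} gives $\int\tlam(x,u)\,\nu(\dd u)<\infty$ for $\nu$-a.e.\ $x$, so that every iterated integral appearing below is absolutely convergent.

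\emph{Base case ($n=0$).} Starting from $\tlam(x,y)=\pla(\conn{y}{x}{\xi^{x,y}})$, I would split the connection event according to whether the $y$--$x$ connection in $\xi^{x,y}$ possesses a pivotal point. The pivot-free part is $\pla(\dconn{y}{x}{\xi^{x,y}})=\connf(x,y)+\pi^{(0)}_\lambda(x,y)$ by \eqref{eq:LE:Pi0_def}. On the complementary event, let $u_0$ be the first pivotal point met from $y$; Mecke's equation in $u_0$ together with the second identity in Lemma~\ref{lem:LE:cutting_point} gives
\begin{equation*}
\pla\bigl(\conn{y}{x}{\xi^{x,y}},\ \exists\text{ pivotal}\bigr)=\lambda\int \E_\lambda\!\left[\mathds 1_{\{\dconn{y}{u_0}{\xi^{y,u_0}}\}}\,\tlam^{\C(y,\xi^y)}(x,u_0)\right]\nu(\dd u_0).
\end{equation*}
Now write $\tlam^{\C}(x,u_0)=\tlam(x,u_0)-\bigl(\tlam(x,u_0)-\tlam^{\C}(x,u_0)\bigr)$. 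In the first summand the cluster no longer appears, so the $\E_\lambda$ factorises and, using $\pla(\dconn{y}{u_0}{\xi^{y,u_0}})=(\connf+\pi^{(0)}_\lambda)(u_0,y)$, it produces exactly the convolution term $\lambda\int\tlam(x,u_0)(\connf+\pi^{(0)}_\lambda)(u_0,y)\,\nu(\dd u_0)$. Recognising the difference $\tlam(x,u_0)-\tlam^{\C}(x,u_0)$ as the probability of the event that $\conn{x}{u_0}{\xi^{x,u_0}}$ yet this connection is broken by a $\C$-thinning of $\eta$ identifies the second summand with $r_{\lambda,0}(x,y)$ of \eqref{eq:LE:R0_def}. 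Since $\pi_{\lambda,0}=\pi^{(0)}_\lambda$ this is \eqref{eqn:LaceexpansionFunction1} for $n=0$.

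\emph{Induction step.} Assuming \eqref{eqn:LaceexpansionFunction1} for $n-1$, the only term left to treat is $r_{\lambda,n-1}(x,y)$, which contains the event $\{\xconn{u_{n-1}}{x}{\xi^{u_{n-1},x}_n}{\C_{n-1}}\}$. I would decompose this event according to the last pivotal point $u_n$ of the $u_{n-1}$--$x$ connection after which the $\C_{n-1}$-thinning still disconnects — this is precisely the event $E(u_{n-1},u_n;\C_{n-1},\,\cdot\,)$ of \eqref{eq:LE:def:E_event} — apply Mecke in $u_n$ and the cutting-point lemma, and split the resulting $\tlam^{\C_n}$ factor as $\tlam-(\tlam-\tlam^{\C_n})$ once more. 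This yields
\begin{equation*}
r_{\lambda,n-1}(x,y)=(-1)^n\pi^{(n)}_\lambda(x,y)+\lambda\int\tlam(x,u)\,(-1)^n\pi^{(n)}_\lambda(u,y)\,\nu(\dd u)+r_{\lambda,n}(x,y),
\end{equation*}
with $\pi^{(n)}_\lambda$ and $r_{\lambda,n}$ exactly as in \eqref{eq:LE:Pin_def} and \eqref{eq:LE:Rn_def} (the sign $(-1)^n$ and the extra factor $-\lambda$ in $r_{\lambda,n}$ coming from the new Mecke step). Substituting into the $(n-1)$ identity and using $\pi_{\lambda,n-1}+(-1)^n\pi^{(n)}_\lambda=\pi_{\lambda,n}$ from \eqref{eq:LE:PiN_def} completes the induction.

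\emph{Fourier form and the main obstacle.} To obtain \eqref{eqn:LaceexpansionFunction2} I would take the Fourier transform of \eqref{eqn:LaceexpansionFunction1} in the spatial displacement: by $\Rd$-translation invariance the term $\lambda\int\tlam(x,u)(\connf+\pi_{\lambda,n})(u,y)\,\nu(\dd u)$ is a spatial convolution composed with a $\Pcal$-integral over the intermediate mark, so the transform turns the spatial convolution into a pointwise product while leaving the mark integral untouched, giving \eqref{eqn:LaceexpansionFunction2}; the transforms exist because every function involved is spatially integrable for $\lambda<\lambda_O$. The genuinely delicate point — and the reason we defer the details to \cite{HeyHofLasMat19} — is the combinatorial and probabilistic bookkeeping around the cutting-point lemma: one must track the conditional-independence structure between each ``sausage'' cluster $\C_{i-1}$ and the remainder of the connection, which is exactly what the $\bar\connf$-thinning of Definition~\ref{def:LE:thinning_events} and the independent copies $\xi_i$ of the edge marking encode, and then check that the algebraic rearrangement of the nested thinned events reproduces verbatim the coefficients $\pi^{(n)}_\lambda$ and remainders $r_{\lambda,n}$ as written. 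This is routine but lengthy, and since the underlying Poisson and percolation ingredients are insensitive to the passage from $\Rd$ to $\X$, it carries over unchanged.
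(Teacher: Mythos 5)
Your proposal is correct and follows essentially the same route as the paper: the base step via the pivotal-point decomposition, Mecke's equation, the cutting-point lemma, and the inclusion–exclusion identity $\tlam^{A}=\tlam-\pla(\xconn{\cdot}{\cdot}{\cdot}{A})$, with the Fourier form obtained by transforming the spatial convolution. Your induction with the recursion $r_{\lambda,n-1}=(-1)^n\pi^{(n)}_\lambda+(-1)^n\lambda\,\tlam*\pi^{(n)}_\lambda+r_{\lambda,n}$ is exactly the formalisation of the paper's ``repeated use'' of these three ingredients, and the signs and prefactors check out against Definition~\ref{def:LE:lace_expansion_coefficients}.
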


We outline the proof of \eqref{eqn:LaceexpansionFunction1} here. The equation \eqref{eqn:LaceexpansionFunction2} then follows by applying the Fourier transform. The details of the proof of \eqref{eqn:LaceexpansionFunction1} - with appropriate contextual changes - can be found in \cite{HeyHofLasMat19}. Firstly, the definitions give
\begin{equation}
    \tlam(x,y) = \connf(x,y) + \pi_\lambda^{(0)}(x,y) + \pla\left(\conn{y}{x}{\xi^{y,x}}, \ndconn{y}{x}{\xi^{y,x}}\right). \label{eq:lace_expansion_first_step_1}
\end{equation}
With an appropriate event partition, Mecke's equation and the Cutting-point lemma allow us to re-write the probability term as
\begin{equation}
    \pla\left(\conn{y}{x}{\xi^{y,x}}, \ndconn{y}{x}{\xi^{y,x}}\right) = \lambda \int \E_\lambda\left[ \mathds 1_{\{\dconn{y}{u}{\xi^{y,u}}\}} \tlam^{\C_0}(x,u) \right] \dd u
\end{equation}
Now we use the identity
\begin{equation}
    \tlam^{A}(x,u) = \tlam(x,u) - \pla \left( \xconn{u}{x}{\xi^{u,x}}{A} \right), \label{eq:LE:incl_excl_probabilities}
\end{equation}
to extract the deterministic function $\tlam(x,u)$ from the expectation at the cost of an extra term. The $\tlam$ term then gives us the second $\pi^{(0)}_\lambda$ term and the extra term becomes the remainder $r_{\lambda,0}$. Repeated use of \eqref{eq:LE:incl_excl_probabilities} with  Mecke's equation and the Cutting-point lemma gives an alternating series that produces the various $\pi_\lambda^{(n)}$ and $r_{\lambda,n}$ terms.

\subsection{Operator-level expansion}
\label{sec:expansion:Operators}

\begin{definition}[Lace-expansion operator coefficients] \label{def:LE:lace_expansion_coefficients-Operators}
For $n\in\N$, we define $\OpLacelam^{(n)}\colon \mathcal{D}\left(\OpLacelam^{(n)}\right) \to L^2\left(\X\right)$ as the linear operator with kernel function $\pi^{(n)}_\lambda$, $R_{\lambda,n}\colon \mathcal{D}\left(R_{\lambda,n}\right) \to L^2\left(\X\right)$ as the linear operator with kernel function $r_{\lambda,n}$, and $\OpLace_{\lambda,n}\colon \mathcal{D}\left(\OpLace_{\lambda,n}\right) \to L^2\left(\X\right)$ as the linear operator with kernel function $\pi_{\lambda, n}$. It follows that
\begin{equation}
    \OpLace_{\lambda, n} = \sum_{m=0}^{n} (-1)^m \OpLacelam^{(m)},
\end{equation}
and $\mathcal{D}\left(\OpLace_{\lambda,n}\right) \supset\bigcap_{m=0}^n\mathcal{D}\left(\OpLacelam^{(n)}\right)$.

We also use the kernel functions $\widehat{\pi}^{(n)}_\lambda(k;a,b)$, $\widehat{\pi}_{\lambda,n}(k;a,b)$, and $\widehat{r}_{\lambda,n}(k;a,b)$ to define the integral operators $\fOpLace^{(n)}_{\lambda}(k),\fOpLace_{\lambda,n}(k),\widehat{R}_{\lambda,n}(k)$ respectively for each $k\in\Rd$.  
\end{definition}

\begin{prop}[Lace expansion: operator-level]
\label{thm:OperatorLaceExpansion}
Let $\lambda \in [0,\lambda_O)$. Then for $n \geq 0$ and $k\in\Rd$,
\begin{align}
\label{eqn:OperatorLaceExpansion}
    \Optlam &= \Opconnf + \OpLace_{\lambda, n} +  \lambda\Optlam\left(\Opconnf + \OpLace_{\lambda, n}\right) + R_{\lambda, n},\\
    \fOptlam(k) &= \left(\fOpconnf + \fOpLace_{\lambda, n}\right)(k) +  \lambda\fOptlam(k)\left(\fOpconnf + \fOpLace_{\lambda, n}\right)(k) + \widehat{R}_{\lambda, n}(k).\label{eqn:OperatorLaceExpansion2}
\end{align}
\end{prop}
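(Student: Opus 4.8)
The plan is to derive the operator identity \eqref{eqn:OperatorLaceExpansion} directly from the function-level lace expansion \eqref{eqn:LaceexpansionFunction1} of Proposition \ref{thm:laceexpansionFunction}, using the fact that integral operators respect pointwise identities and convolution-type integrals between their kernels. First I would fix $\lambda\in[0,\lambda_O)$ and $n\ge 0$, and observe that by Definition \ref{def:LE:lace_expansion_coefficients-Operators}, each of $\Opconnf$, $\OpLace_{\lambda,n}$, $R_{\lambda,n}$, $\Optlam$ is the integral operator on $L^2(\X)$ with the corresponding kernel $\connf$, $\pi_{\lambda,n}$, $r_{\lambda,n}$, $\tlam$. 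The key point is that for bounded integral operators $G,H$ on $L^2(\X)$ with kernels $g,h$, the composition $\lambda GH$ has kernel $(x,y)\mapsto\lambda\int g(x,u)h(u,y)\,\nu(\dd u)$ whenever this integral is well-defined; applied with $G=\Optlam$ and $H=\Opconnf+\OpLace_{\lambda,n}$, the term $\lambda\int\tlam(x,u)(\connf+\pi_{\lambda,n})(u,y)\,\nu(\dd u)$ in \eqref{eqn:LaceexpansionFunction1} is exactly the kernel of $\lambda\Optlam(\Opconnf+\OpLace_{\lambda,n})$. Hence the right-hand side of \eqref{eqn:LaceexpansionFunction1}, read as a kernel identity in $(x,y)$ for $\nu^{\otimes 2}$-a.e.\ pair, says precisely that $\tlam$ agrees $\nu^{\otimes 2}$-a.e.\ with the kernel of $\Opconnf+\OpLace_{\lambda,n}+\lambda\Optlam(\Opconnf+\OpLace_{\lambda,n})+R_{\lambda,n}$; since integral operators are determined by their kernels up to null sets, \eqref{eqn:OperatorLaceExpansion} follows. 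Then \eqref{eqn:OperatorLaceExpansion2} is obtained identically from \eqref{eqn:LaceexpansionFunction2}, now working on $L^2(\Ecal)$ with the measure $\Pcal$, using that the Fourier transform in the spatial variable turns the $\nu$-convolution into the $\Pcal$-convolution of Fourier kernels at fixed $k$ — this is exactly the content of how $\fOptlam(k)$, $\fOpconnf(k)$ etc.\ were defined in Section \ref{sec:results}.

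The step requiring care — and what I expect to be the main obstacle — is the verification that all the operators appearing are genuinely \emph{bounded} linear operators on $L^2$, so that the composition-kernel formula is legitimate and no issue of convergence of the defining integrals arises. For $\Opconnf$ this is immediate from $\OneNorm{\fOpconnf(0)}<\infty$ (a consequence of \ref{Assump:2ndMoment} via \eqref{eqn:Sup-Exp_Ratio}) together with Lemma \ref{lem:BoundsonOperatorNorm} and the Schur test. For $\Optlam$ with $\lambda<\lambda_O$ one uses $\OpNorm{\fOptlam(0)}<\infty$ (the definition of $\lambda_O$) together with Lemma \ref{thm:Spectrum_Subset}, which gives $\OpNorm{\Optlam}=\esssup_k\OpNorm{\fOptlam(k)}\le\OpNorm{\fOptlam(0)}<\infty$ by monotonicity in $k$ of the kernel bound (cf.\ \eqref{eqn:OneOneNormFinite}). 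For $\OpLace_{\lambda,n}$ and $R_{\lambda,n}$, boundedness must be extracted from the BK-type bounds on $\pi^{(m)}_\lambda$ and $r_{\lambda,n}$ implicit in Definition \ref{def:LE:lace_expansion_coefficients}: each such kernel is dominated in absolute value by a finite product/convolution of $\tlam$'s and $\connf$'s whose $\OneNorm{\cdot}$ is finite by the above, hence each is a bounded integral operator by Lemma \ref{lem:NormBounds} and the Schur test. Here it is worth noting we only need \emph{finiteness} of these operator norms, not the quantitative $\LandauBigO{\beta}$ bounds of Theorem \ref{thm:OZEtheorem}; those stronger bounds come later from the diagrammatic estimates and the bootstrap, and are not needed for the bare identity.

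Once boundedness is in hand, the remaining work is bookkeeping: one must check that the a.e.-defined kernel identity \eqref{eqn:LaceexpansionFunction1} lifts to an operator identity, which follows because two bounded integral operators with $\nu^{\otimes 2}$-a.e.\ equal kernels act identically on $L^2(\X)$ (test against $f\in L^2(\X)$ and apply Fubini, justified by the $L^1$-domination established above, exactly as in the proof of Corollary \ref{thm:differentiabilityofOpT}). Summing the alternating series $\OpLace_{\lambda,n}=\sum_{m=0}^n(-1)^m\OpLacelam^{(m)}$ is finite, so no convergence issue arises at this stage — the passage $n\to\infty$ and the disappearance of $R_{\lambda,n}$ is deferred to the later sections that prove $\OpNorm{R_{\lambda,n}}\to 0$. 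For the Fourier version \eqref{eqn:OperatorLaceExpansion2} the only additional point is that translation invariance of $\connf$, $\tlam$, $\pi_{\lambda,n}$, $r_{\lambda,n}$ in the spatial variable (noted in Definition \ref{def:LE:lace_expansion_coefficients}) makes each of these operators a "fibered" operator over $k\in\Rd$ with fibers $\fOpconnf(k)$, $\fOptlam(k)$, etc., and the fiberwise identity at each fixed $k$ is just \eqref{eqn:LaceexpansionFunction2}; alternatively one applies the spatial Fourier transform directly to \eqref{eqn:OperatorLaceExpansion} using the Convolution Theorem, which is the cleaner route.
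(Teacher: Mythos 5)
Your proposal is correct and follows essentially the same route as the paper: the paper's proof simply tests against $f\in L^2(\X)$, substitutes the function-level identity \eqref{eqn:LaceexpansionFunction1} for the kernel, and uses Fubini's theorem (justified by $\lambda<\lambda_O$ making the relevant integrals finite) to identify the convolution term with $\lambda\Optlam(\Opconnf+\OpLace_{\lambda,n})f$, then repeats the argument verbatim for the Fourier version. Your additional discussion of boundedness of the individual operators is a reasonable elaboration of what the paper leaves implicit, but it is not a different approach.
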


\begin{proof}
Let $f\in L^2\left(\X\right)$ be a test function. Then Proposition~\ref{thm:laceexpansionFunction} gives for $x\in\X$,
\begin{align}
    \Optlam f\left(x\right) &= \int \tlam\left(x,y\right)f\left(y\right)\nu\left(\dd y\right)\nonumber\\
    & = \int \left(\connf\left(x,y\right) + \pi_{\lambda, n}\left(x,y\right) + \lambda \int\tlam\left(x,u\right)\left(\connf + \pi_{\lambda, n}\right)\left(u,y\right)\nu\left(\dd u\right) + r_{\lambda, n}\left(x,y\right)\right)f\left(y\right)\nu\left(\dd y\right)\nonumber\\
    & = \left(\Opconnf + \OpLace_{\lambda, n} +  \lambda\Optlam\left(\Opconnf + \OpLace_{\lambda, n}\right) + R_{\lambda, n}\right)f\left(x\right).
\end{align}
Note that in this last equality we used 
\begin{align}
    \int\int\tlam\left(x,u\right)\left(\connf + \pi_{\lambda, n}\right)\left(u,y\right)\nu\left(\dd u\right)f\left(y\right)\nu\left(\dd y\right) &= \int\tlam\left(x,u\right)\int\left(\connf + \pi_{\lambda, n}\right)\left(u,y\right)f\left(y\right)\nu\left(\dd y\right)\nu\left(\dd u\right)\nonumber \\
    & = \Optlam\left(\Opconnf + \OpLace_{\lambda, n}\right)f\left(x\right).
\end{align}
This exchange of integrals is valid by Fubini's theorem because $\lambda<\lambda_O$ ensures the integrals are finite.

The same argument applies for the Fourier transformed operators to complete the proof.
\end{proof}


\section{Sub-Critical Convergence}
\label{sec:SubCritConvergence}

In this section, we outline the lemmas and propositions that show that the operator expansion in Proposition~\ref{thm:OperatorLaceExpansion} converges for $\lambda<\lambda_O$. We present the lemmas for this argument here, but the proof for them are contained in Appendix~\ref{appendix:DiagrammaticBoundsProofs}. The bounds are largely analogous to those proven in \cite{HeyHofLasMat19}, but the argument here makes much reduced use of Assumption~\ref{Assump:BallDecay}. The proofs are also significantly different in parts as we don't have the universal applicability of tools like translation symmetry and the Fourier transform (due to the introduction of marks).

\subsection{Diagrammatic bounds} \label{sec:diagrammaticbounds}

Our argument will require bounds on $\OpNorm{\OpLacelam^{(n)}}$, $\OpNorm{\fOpLacelam^{(n)}(k)}$, $\OpNorm{\fOpLacelam^{(n)}\left(0\right) - \fOpLacelam^{(n)}\left(k\right)}$, $\OpNorm{R_{\lambda,n}}$ and $\OpNorm{\widehat{R}_{\lambda,n}(k)}$ for all $n\in\N$ and $k\in\Rd$. However, it turns out that we can bound the last two remainder terms using others.

\begin{restatable}{lemma}{remainderBound}
\label{lem:remainderBound}
For all $n\in\N$, $k\in\Rd$, and $\lambda>0$,
\begin{align}
    \OpNorm{R_{\lambda,n}} &\leq \lambda \OpNorm{\Optlam}\OpNorm{\OpLacelam^{(n)}},\\
    \OpNorm{\widehat{R}_{\lambda,n}(k)} &\leq \lambda \OpNorm{\fOptlam(0)}\OpNorm{\fOpLacelam^{(n)}(0)}.
\end{align}
\end{restatable}

We first present the bounds for $\OpNorm{\OpLacelam^{(0)}}$, $\OpNorm{\fOpLacelam^{(0)}(k)}$, and $\OpNorm{\fOpLacelam^{(0)}\left(0\right) - \fOpLacelam^{(0)}\left(k\right)}$. The bounds and proofs for these with $n\geq 1$ will be qualitatively different.

Let us introduce the notation $\tklam\left(\xbar;a,b\right) = \left(1-\cos\left(k\cdot\xbar\right)\right)\tlam\left(\xbar;a,b\right)$ and similarly $\connf_k\left(\xbar;a,b\right) = \left(1-\cos\left(k\cdot\xbar\right)\right)\connf\left(\xbar;a,b\right)$ for $k\in\Rd$. $\Optklam$ and $\Opconnf_k$ then denote the integral operators constructed using these kernels. We can also define the Fourier transforms of these functions: $\ftklam\left(l;a,b\right)$ and $\fconnf_k\left(l;a,b\right)$ for $k,l\in\Rd$, and use these as kernels to define the integral operators $\fOptklam(l)$ and $\fOpconnf_k(l)$. In particular note that this results in 
\begin{align}
    \fOptklam\left(l\right) &= \fOptlam\left(k+l\right) - \fOptlam\left(l\right),\\
    \fOpconnf_k\left(l\right) &= \fOpconnf\left(k+l\right) - \fOpconnf\left(l\right),
\end{align}
for $k,l\in\R^d$.

\begin{restatable}[Bounds for $n=0$]{prop}{DBPiObounds} \label{thm:DB:Pi0_bounds}
Let $\lambda\in[0,\lambda_O)$. Then
\begin{equation}
    \OpNorm{\OpLacelam^{(0)}} \leq \tfrac{1}{2}\lambda^2\InfNorm{\Opconnf\Optlam^2\Opconnf} \leq \tfrac{1}{2}\lambda^2\OneNorm{\Opconnf}\InfNorm{\Optlam^3}.
\end{equation}
For $k\in\Rd$,
\begin{align}
    \OpNorm{\fOpLacelam^{(0)}(k)} &\leq \tfrac{1}{2}\lambda^2\OneNorm{\Opconnf}\InfNorm{\Optlam^3},\\
    \OpNorm{\fOpLacelam^{(0)}\left(0\right) - \fOpLacelam^{(0)}\left(k\right)} &\leq \lambda^2\left(\InfNorm{\Optlam^3}\OneNorm{\Opconnf_k} + \InfNorm{\Opconnf\Optlam\Optklam\Opconnf}\right).
\end{align}
\end{restatable}

\subsubsection{Diagrammatic Bounds for Lace Expansion Coefficients}

Now we aim to bound $\OpNorm{\OpLacelam^{(n)}}$ and $\OpNorm{\fOpLacelam^{(n)}(k)}$ for $n\geq 1$. Lemmas~\ref{lem:BoundsonOperatorNorm} and \ref{lem:NormBounds} along with the positivity of $\pi^{(n)}_\lambda(x,y)$ prove that it is sufficient to bound $\OneNorm{\OpLacelam^{(n)}}$ to get bounds on both of these. Much of the hard work for this has been done previously - for example in \cite{HeyHofLasMat19}. The novelty here is to see how the argument can be naturally written in terms of operators.

We inherit the following notation from \cite{HeyHofLasMat19}. Note that we make use of the Dirac delta function - specifically the one that holds with respect to the measure $\nu$. They are used here primarily for convenience and to make the argument more readable. In particular, they appear when applying the Mecke equation~\eqref{eq:prelim:mecke_n} to obtain
\begin{equation}
	\E \left[ \sum_{y\in\eta^u} f(y, \xi^u) \right] = \int \left(\lambda+\delta_{y,u}\right) \E_\lambda \left[ f\left(y,\xi^{u,y}\right)\right] \nu\left(\dd y\right), 
\end{equation} 
and so the factor $\lambda+\delta_{y,u}$ encodes a case distinction of whether point $y$ coincides with $u$ or not. 

\begin{definition}[The $\psi$ functions] \label{def:DB:psi_functions}
Let $r,s,u,w,x,y\in\X$. We first set $\tlamo(x,y) := \lambda^{-1}\delta_{x,y} + \tlam(x,y)$. Also define
\begin{align*}
    \psi_0^{(1)}(w,u,y) &:= \lambda^2\tlam(y,u)\tlam(u,w)\tlam(w,y),\\
    \psi_0^{(2)}(w,u,y) &:=\lambda^2 \delta_{w,y} \tlam(y,u)\int \tlam(u,t)\tlam(t,y) \nu\left(\dd t\right),\\
    \psi_0^{(3)}(w,u,y) &:= \lambda^2\connf(u,y) \left(\lambda^{-1}\delta_{w,y}\right),\\
    \psi^{(1)}(w,u,r,s) &:= \lambda^4\tlam(w,u)\int \tlamo(t,s) \tlam(t,w)\tlam(u,z)\tlam(z,t)\tlam(z,r)\nu^{\otimes2}\left(\dd z, \dd t\right), \\
    \psi^{(2)}(w,u,r,s) &:= \lambda^4\tlamo(w,s)\int \tlam(t,z)\tlam(z,u)\tlam(u,t)\tlamo(t,w)\tlam(z,r)\nu^{\otimes2}\left(\dd z, \dd t\right), \\
	\psi^{(3)}(w,u,r,s) &:= \lambda^2\tlam(u,w)\tlam(w,s)\tlam(u,r),\\
	\psi^{(4)}(w,u,r,s) &:= \lambda^2\tlam(u,w)\left(\lambda^{-1}\delta_{w,s}\right)\tlam(u,r),\\
	\psi_n^{(1)} (x,r,s) &:= \lambda^3\int \tlamo(t,s)\tlam(z,r)\tlam(t,z)\tlam(z,x)\tlam(x,t)\nu^{\otimes2}\left(\dd z, \dd t\right),\\
    \psi_n^{(2)}(x,r,s) &:=\lambda \tlam(x,s)\tlam(x,r),
\end{align*}
and set $\psi_0 := \psi_0^{(1)}+\psi_0^{(2)}+\psi_0^{(3)}$, $\psi_n := \psi_n^{(1)} + \psi_n^{(2)}$, and $\psi:= \psi^{(1)}+\psi^{(2)}+\psi^{(3)} + \psi^{(4)}$. Note that by using $\lambda^{-1}\delta_{\left(\cdot,\cdot\right)}$ in $\tlamo$ and in $\psi^{(3)}_0$, we ensure that each integral that will be truly performed (that is, not an integration over a Dirac delta function accounting for a coincidence of points) has a $\lambda$ factor associated with it when the bound $\pi^{(n)}_\lambda$ is calculated.

Diagrammatic representations of these functions can be found in Figure~\ref{fig:psiFunctions}. In these diagrams, the declared variables are represented by \SupremumDot~vertices whereas the variables that are integrated over are represented by \IntegralDot~vertices. If $\tlam$ connects two variables then a standard edge \tlamline~connects their vertices, if $\tlamo$ connects two variables then an edge \tlamoline~connects their vertices, and if $\connf$ connects two variables then an edge \connfline~connects their vertices.
\end{definition}

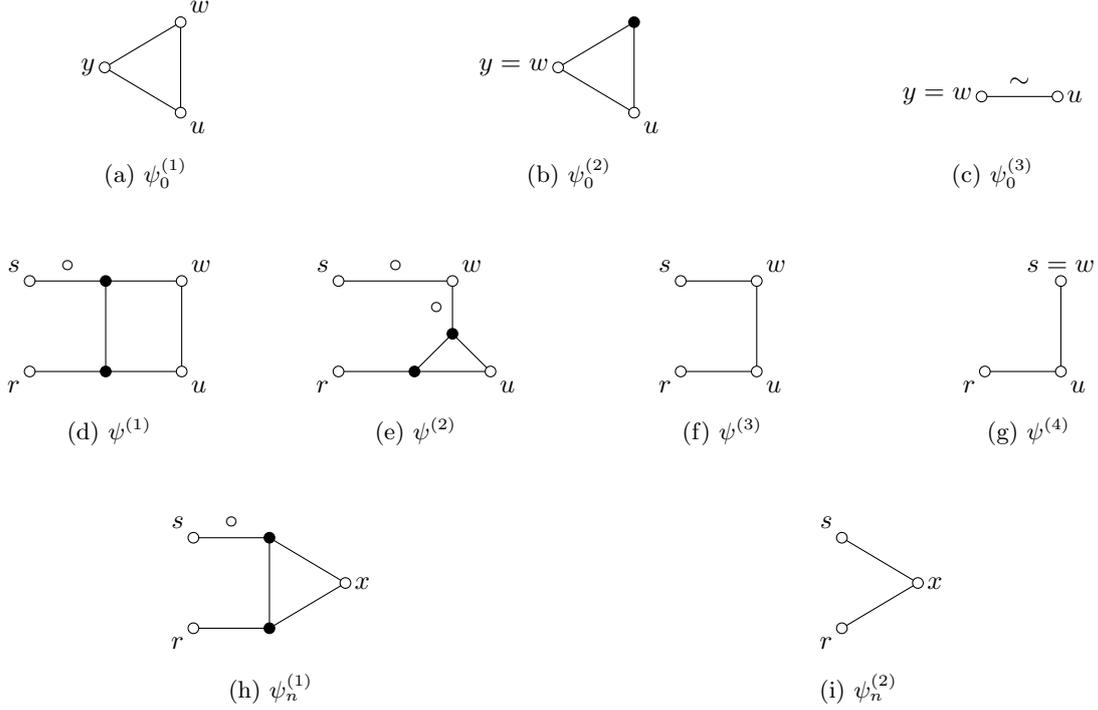
\begin{figure}
    \centering
    \begin{subfigure}[b]{0.3\textwidth}
    \centering
        \begin{tikzpicture}
        \draw (0,0) -- (1,0.6) -- (1,-0.6) -- cycle;
        \filldraw[fill=white] (0,0) circle (2pt) node[left]{$y$};
        \filldraw[fill=white] (1,0.6) circle (2pt) node[above right]{$w$};
        \filldraw[fill=white] (1,-0.6) circle (2pt) node[below right]{$u$};
        \end{tikzpicture}
    \caption{$\psi^{(1)}_0$}
    \end{subfigure}
    \hfill
    \begin{subfigure}[b]{0.3\textwidth}
    \centering
        \begin{tikzpicture}
        \draw (0,0) -- (1,0.6) -- (1,-0.6) -- cycle;
        \filldraw[fill=white] (0,0) circle (2pt) node[left]{$y=w$};
        \filldraw (1,0.6) circle (2pt);
        \filldraw[fill=white] (1,-0.6) circle (2pt) node[below right]{$u$};
       \end{tikzpicture}
    \caption{$\psi^{(2)}_0$}
    \end{subfigure}
    \hfill
    \begin{subfigure}[b]{0.3\textwidth}
    \centering
        \begin{tikzpicture}
        \draw (0,0) -- (1,0);
        \draw (0.5,0) circle (0pt) node[above]{$\sim$};
        \draw (0,-0.6) circle (0pt);
        \filldraw[fill=white] (0,0) circle (2pt) node[left]{$y=w$};
        \filldraw[fill=white] (1,0) circle (2pt) node[right]{$u$};
        \end{tikzpicture}
    \caption{$\psi^{(3)}_0$}
    \end{subfigure}
    \begin{subfigure}[b]{0.24\textwidth}
    \centering
        \begin{tikzpicture}
        \draw (0,0.6) -- (1,0.6) -- (1,-0.6) -- (0,-0.6);
        \draw (0.5,0.6) circle (0pt) node[above]{$\circ$};
        \draw (1,0.6) -- (2,0.6) -- (2,-0.6) -- (1,-0.6);
        \draw (0,1.75) circle (0pt);
        \filldraw[fill=white] (0,0.6) circle (2pt) node[above left]{$s$};
        \filldraw[fill=white] (0,-0.6) circle (2pt) node[below left]{$r$};
        \filldraw (1,0.6) circle (2pt);
        \filldraw (1,-0.6) circle (2pt);
        \filldraw[fill=white] (2,0.6) circle (2pt) node[above right]{$w$};
        \filldraw[fill=white] (2,-0.6) circle (2pt) node[below right]{$u$};
        \end{tikzpicture}
    \caption{$\psi^{(1)}$}
    \end{subfigure}
    \hfill
    \begin{subfigure}[b]{0.24\textwidth}
    \centering
        \begin{tikzpicture}
        \draw (0,0.6) -- (1.5,0.6) -- (1.5,-0.1) -- (1,-0.6);
        \draw (0.75,0.6) circle (0pt) node[above]{$\circ$};
        \draw (1.5,0.25) circle (0pt) node[left]{$\circ$};
        \draw (0,-0.6) -- (1,-0.6) -- (2,-0.6) -- (1.5,-0.1);
        \filldraw[fill=white] (0,0.6) circle (2pt) node[above left]{$s$};
        \filldraw[fill=white] (0,-0.6) circle (2pt) node[below left]{$r$};
        \filldraw[fill=white] (1.5,0.6) circle (2pt) node[above right]{$w$};
        \filldraw (1,-0.6) circle (2pt);
        \filldraw (1.5,-0.1) circle (2pt);
        \filldraw[fill=white] (2,-0.6) circle (2pt) node[below right]{$u$};
        \end{tikzpicture}
    \caption{$\psi^{(2)}$}
    \end{subfigure}
    \hfill
    \begin{subfigure}[b]{0.24\textwidth}
    \centering
        \begin{tikzpicture}
        \draw (0,0.6) -- (1,0.6) -- (1,-0.6) -- (0,-0.6);
        \filldraw[fill=white] (0,0.6) circle (2pt) node[above left]{$s$};
        \filldraw[fill=white] (0,-0.6) circle (2pt) node[below left]{$r$};
        \filldraw[fill=white] (1,0.6) circle (2pt) node[above right]{$w$};
        \filldraw[fill=white] (1,-0.6) circle (2pt) node[below right]{$u$};
        \end{tikzpicture}
    \caption{$\psi^{(3)}$}
    \end{subfigure}
    \hfill
    \begin{subfigure}[b]{0.24\textwidth}
    \centering
        \begin{tikzpicture}
        \draw (1,0.6) -- (1,-0.6) -- (0,-0.6);
        \filldraw[fill=white] (0,-0.6) circle (2pt) node[below left]{$r$};
        \filldraw[fill=white] (1,0.6) circle (2pt) node[above]{$s=w$};
        \filldraw[fill=white] (1,-0.6) circle (2pt) node[below right]{$u$};
        \end{tikzpicture}
    \caption{$\psi^{(4)}$}
    \end{subfigure}
    \hspace*{\fill}%
    \begin{subfigure}[b]{0.45\textwidth}
    \centering
        \begin{tikzpicture}
        \draw (0,0.6) -- (1,0.6) -- (1,-0.6);
        \draw (0.5,0.6) circle (0pt) node[above]{$\circ$};
        \draw (1,0.6) -- (2,0) -- (1,-0.6) -- (0,-0.6);
        \filldraw[fill=white] (0,0.6) circle (2pt) node[above left]{$s$};
        \filldraw[fill=white] (0,-0.6) circle (2pt) node[below left]{$r$};
        \filldraw (1,0.6) circle (2pt);
        \filldraw (1,-0.6) circle (2pt);
        \filldraw[fill=white] (2,0) circle (2pt) node[right]{$x$};
        \end{tikzpicture}
    \caption{$\psi^{(1)}_n$}
    \end{subfigure}
    \hfill
    \begin{subfigure}[b]{0.45\textwidth}
    \centering
        \begin{tikzpicture}
        \draw (1,0.6) -- (2,0) -- (1,-0.6);
        \draw (0,1.75) circle (0pt);
        \filldraw[fill=white] (1,0.6) circle (2pt) node[above left]{$s$};
        \filldraw[fill=white] (1,-0.6) circle (2pt) node[below left]{$r$};
        \filldraw[fill=white] (2,0) circle (2pt) node[right]{$x$};
        \end{tikzpicture}
    \caption{$\psi^{(2)}_n$}
    \end{subfigure}
    \hspace*{\fill}%
    \caption{Diagrams of the $\psi_0$, $\psi$, and $\psi_n$ functions.}
    \label{fig:psiFunctions}
\end{figure}

\begin{definition}[The $\Psi$ operators]
\label{def:DB:Psi_operators}
Here we turn the functions of Definition~\ref{def:DB:psi_functions} into linear operators, with the $\psi$ functions acting as their kernels. For $j\in\left\{1,2,3\right\}$, we define $\Psi^{(j)}_0\colon L^2\left(\X\right) \to L^2\left(\X^2\right)$ as the linear operator acting on $f\in L^2\left(\X\right)$ as
\begin{equation}
    \Psi^{(j)}_0 f\left(w,u\right) = \int \psi^{(j)}_0(w,u,y) f(y)\nu\left(\dd y\right).
\end{equation}
For $j\in\left\{1,2,3,4\right\}$, we define $\Psi^{(j)}\colon L^2\left(\X^2\right) \to L^2\left(\X^2\right)$ as the linear operator acting on $f\in L^2\left(\X^2\right)$ as
\begin{equation}
    \Psi^{(j)} f\left(w,u\right) = \int \psi^{(j)}(w,u,r,s) f(r,s)\nu^{\otimes2}\left(\dd r, \dd s\right).
\end{equation}
For $j\in\left\{1,2\right\}$, we define $\Psi^{(j)}_n\colon L^2\left(\X^2\right) \to L^2\left(\X\right)$ as the linear operator acting on $f\in L^2\left(\X^2\right)$ as
\begin{equation}
    \Psi^{(j)}_n f\left(x\right) = \int \psi^{(j)}_n(x,r,s) f(r,s)\nu^{\otimes2}\left(\dd r, \dd s\right).
\end{equation}
We can then naturally define $\Psi_0\colon L^2(\X)\to L^2(\X^2)$, $\Psi\colon L^2(\X^2)\to L^2(\X^2)$, and $\Psi_n\colon L^2(\X^2)\to L^2(\X)$ as the sums of $\Psi^{(j)}_0$, $\Psi^{(j)}$, and $\Psi^{(j)}_n$ respectively, or equivalently as those operators having kernel functions $\psi_0$, $\psi$, and $\psi_n$ respectively.
\end{definition}

\begin{prop}[Bound in terms of $\psi$  and $\Psi$] \label{thm:DB:Pi_bound_Psi}
Let $n \geq 1$, $x,y\in\X$, and $\lambda\in [0,\lambda_O)$. Then
\begin{equation}
    \label{eqn:Lacefunction_bound}
    \lambda\pi_\lambda^{(n)}(x,y) \leq \int  \psi_n(x,w_{n-1},u_{n-1}) \left( \prod_{i=1}^{n-1} \psi(\vec v_i) \right) \psi_0(w_0,u_0,y) \nu^{\otimes\left(2n\right)}\left(\dd\left( \left(\vec w, \vec u\right)_{[0,n-1]} \right)\right),
\end{equation}
where $\vec v_i = (w_i,u_i,w_{i-1},u_{i-1})$. In operator terms, this means
\begin{equation}
\label{eqn:Laceoperator_bound}
    \lambda\OneNorm{\OpLacelam^{(n)}} \leq \OneNorm{\Psi_n \Psi^{n-1} \Psi_0}.
\end{equation}
\end{prop}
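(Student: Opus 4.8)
The plan is to follow the diagrammatic bound established in \cite{HeyHofLasMat19}, replacing $\Rd$ by the marked space $\X$ throughout, and then to reinterpret the resulting nested integral as a composition of the operators from Definition~\ref{def:DB:Psi_operators}; this last reinterpretation is the only genuinely new ingredient. I would start from the definition \eqref{eq:LE:Pin_def}, which writes $\pi_\lambda^{(n)}(x,y)$ as $\lambda^n$ times a $\nu^{\otimes n}$-integral over $\vec u_{[0,n-1]}$ (with $u_n=x$) of the probability of the event
\[
    \left\{\dconn{y}{u_0}{\xi^{y,u_0}_0}\right\}\cap\bigcap_{i=1}^{n} E\left(u_{i-1},u_i;\C_{i-1},\xi^{u_{i-1},u_i}_{i}\right).
\]
The first step will be to reveal the nested clusters $\C_0,\C_1,\dots,\C_{n-1}$ one level at a time and to apply the generalised BK inequality (Theorem~\ref{lem:prelimbk_inequality}) together with the Cutting-point lemma (Lemma~\ref{lem:LE:cutting_point}) and an inclusion--exclusion of the type \eqref{eq:LE:incl_excl_probabilities}; this decouples the probability into $n+1$ ``building blocks'' sitting on pairwise disjoint regions: one innermost block attached to $y$, the $n-1$ middle blocks, and one outermost block attached to $x=u_n$.

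The core of the argument is the estimate for a single block. For a middle block I would use that, conditionally on the preceding cluster, the event $E(u_{i-1},u_i;\C_{i-1},\cdot)$ forces two features: (i) a vertex $w_{i-1}$ lying simultaneously on the $u_{i-1}\to u_i$ connection and on $\C_{i-1}$ — this is where the $\C_{i-1}$-thinning breaks the connection, and it is extracted via Mecke's equation \eqref{eq:prelim:mecke_n}; and (ii) a doubly-connected sub-structure between $u_{i-1}$ and $u_i$, forced by the ``no surviving pivotal'' clause in the definition of $E$. Bounding each sub-connection by $\tlam$, or by $\tlamo$ at the places where the relevant points may coincide (this is precisely where the $\circ$-edges in Figure~\ref{fig:psiFunctions} arise), and applying BK to the disjoint pieces, yields exactly the kernels $\psi^{(1)},\dots,\psi^{(4)}$, hence $\psi(\vec v_i)$ with $\vec v_i=(w_i,u_i,w_{i-1},u_{i-1})$; here $u_i$ records the next pivotal vertex and $w_i$ the point to be thinned at the next level. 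The innermost block, built from $\{\dconn{y}{u_0}{\xi^{y,u_0}_0}\}$, is treated by the same double-connection estimate that already underlies the proof of Proposition~\ref{thm:DB:Pi0_bounds}, now keeping the branch point $w_0$ and the endpoint $u_0$ as free variables: either there is a direct edge $y\sim u_0$ (giving $\psi_0^{(3)}$), or there is a double connection from $y$ to $u_0$ with or without the coincidence $w_0=y$ (giving $\psi_0^{(1)}$ and $\psi_0^{(2)}$). The outermost block attached to $x$ produces $\psi_n=\psi_n^{(1)}+\psi_n^{(2)}$.

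Assembling the blocks and tracking powers of $\lambda$ — every integration newly created by a Mecke step carries one $\lambda$, while the $\lambda^{-1}\delta_{(\cdot,\cdot)}$ factors built into $\tlamo$ and into $\psi_0^{(3)}$ compensate exactly for the coincidence cases, so that no extraneous $\lambda$ survives — gives \eqref{eqn:Lacefunction_bound}, i.e.\ $\lambda\pi_\lambda^{(n)}(x,y)$ is bounded by the stated $\nu^{\otimes 2n}$-integral. The operator bound \eqref{eqn:Laceoperator_bound} then follows at once: by Definition~\ref{def:DB:Psi_operators} the right-hand side of \eqref{eqn:Lacefunction_bound} is precisely the integral kernel of the composition $\Psi_n\Psi^{n-1}\Psi_0\colon L^2(\X)\to L^2(\X)$, and since all kernels are non-negative (so Tonelli legitimises every exchange of integrals), taking $\esssup_{y\in\X}\int(\cdot)\,\nu(\dd x)$ of both sides of \eqref{eqn:Lacefunction_bound} turns it into $\lambda\OneNorm{\OpLacelam^{(n)}}\le\OneNorm{\Psi_n\Psi^{n-1}\Psi_0}$, using $\OneNorm{\OpLacelam^{(n)}}=\esssup_{y\in\X}\int\pi_\lambda^{(n)}(x,y)\,\nu(\dd x)$.

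The main obstacle is the first step: one must set up the conditional revealment of the nested clusters $\C_0,\dots,\C_{n-1}$ in the correct order and verify that at each stage the events involved are increasing and live on disjoint finite regions, so that Theorem~\ref{lem:prelimbk_inequality} genuinely applies, and one must check carefully that ``thinning by $\C_{i-1}$ destroys the connection'' really does translate into the geometric sub-structure described above. This bookkeeping is carried out in full in \cite{HeyHofLasMat19}, and since it relies only on general properties of Poisson point processes and connection models, the passage from $\Rd$ to $\X=\Rd\times\Ecal$ introduces no additional difficulty; the operator reformulation in the final step is purely routine.
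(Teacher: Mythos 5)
Your proposal follows essentially the same route as the paper: the paper likewise obtains \eqref{eqn:Lacefunction_bound} by invoking the argument of \cite[Proposition~4.7]{HeyHofLasMat19} (Mecke, BK, cutting-point lemma, with $\Rd$ replaced by $\X$), and then deduces \eqref{eqn:Laceoperator_bound} from the sequential structure of the integrand exactly as you do. Your sketch of the block-by-block derivation and the Tonelli justification for the operator reformulation is a faithful (indeed more detailed) account of what the cited reference carries out.
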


\begin{proof}
The argument for \eqref{eqn:Lacefunction_bound} is essentially identical to \cite[Proposition~4.7]{HeyHofLasMat19}. It uses only general properties of Poisson point process and connection models - for example thinning edges and Mecke's equation.
The inequality \eqref{eqn:Laceoperator_bound} then follows from the clear sequential structure of \eqref{eqn:Lacefunction_bound}.
\end{proof}

The inequality \eqref{eqn:Laceoperator_bound} suggests that we will bound the lace coefficient operators with some product of $\OneNorm{\Psi_0}$, $\OneNorm{\Psi}$, and $\OneNorm{\Psi_n}$ terms. This is \emph{nearly} correct. Unfortunately the norms $\OneNorm{\Psi_n}$, $\OneNorm{\Psi}$ and $\OneNorm{\Psi_0}$ will not have the decay in $d$ that we will require. We will have to consider the norms of \emph{pairs} of operators.

\begin{definition}
\label{def:triangles}
Define
\begin{align*}
    \trilam &:= \lambda^2\esssup_{r,s\in\X}\int\tlam\left(r,u\right)\tlam\left(u,v\right)\tlam\left(v,s\right)\nu^{\otimes2}\left(\dd u, \dd v\right) = \lambda^2\InfNorm{\Optlam^3},\\
    \trilamo &:= \lambda^2 \esssup_{r,s\in\X}\int\tlamo\left(r,u\right)\tlam\left(u,v\right)\tlam\left(v,s\right)\nu^{\otimes2}\left(\dd u, \dd v\right) = \lambda^2\InfNorm{\Optlam^3} + \lambda\InfNorm{\Optlam^2},\\
    \trilamoo &:= \lambda^2 \esssup_{r,s\in\X}\int\tlamo\left(r,u\right)\tlamo\left(u,v\right)\tlam\left(v,s\right)\nu^{\otimes2}\left(\dd u, \dd v\right) = \lambda^2\InfNorm{\Optlam^3} + \lambda\InfNorm{\Optlam^2} + 1,\\
    \trilamooBar&:= \lambda^2\esssup_{\xbar\in\Rd,a_1,\ldots,a_6\in\Ecal}\int\tlamo\left(\xbar-\ubar;a_1,a_2\right)\tlamo\left(\ubar-\vbar;a_3,a_4\right)\tlam\left(\vbar;a_5,a_6\right)\dd \ubar \dd \vbar.
\end{align*}
We can think of $\trilam$ as an $\X$-convolution of three $\tlam$ functions with suprema taken over the end vertices. Then $\trilamo$ and $\trilamoo$ are produced by adding on the $\X$-convolution of two $\tlam$ functions and one $\tlam$ function (which will trivially take value $1$ once the suprema are taken). The object $\trilamooBar$ differs from $\trilamoo$ in that now the marks at adjacent $\tlam$ functions need not be equal, and then we take the supremum over all the marks - not just the end ones. We then use these elementary diagrams to produce composite objects:
\begin{equation}
    \Ulam := \trilam \trilamooBar + \trilamooBar^2 + \lambda\OneNorm{\Opconnf}, \qquad
    \Vlam := \left(\trilam\trilamooBar\Ulam\right)^\frac{1}{2}.
\end{equation}
We note a few relations. Firstly, since all the terms in $\Ulam$ are non-negative we have $\trilam\trilamooBar\leq \Ulam$ and thus $\Vlam \leq \Ulam$. Also note that $1\leq \trilamooBar$ and thus $\Ulam \geq \trilamooBar$. In particular this means that $\Ulam\geq 1$ and $\Ulam \geq \trilamo$.
\end{definition}

The following lemma tells us that we can use $\Ulam$ to bound single operators, and that we can use $\Vlam^2$ to bound pairs of operators. To aid the reader's understanding, bear in mind that later (in Section~\ref{sec:bootstrapanalysis}) we will prove that $\Ulam = \LandauBigO{1}$, whilst $\Vlam$ is much smaller and is $\LandauBigO{\beta}$.

\begin{restatable}{lemma}{singleandpairbounds}
\label{lem:singleandpairbounds}
For all $j_0\in\left\{1,2,3\right\}$, $j,j'\in\left\{1,2,3,4\right\}$, and $j_n\in\left\{1,2\right\}$,
\begin{align}
    \Ulam &\geq \OneNorm{\Psi^{(j_0)}_0} \vee \OneNorm{\Psi^{(j)}} \vee \OneNorm{\Psi^{(j_n)}_n},\\
    \Vlam^2 &\geq \OneNorm{\Psi^{(j)}\Psi^{(j_0)}_0} \vee \OneNorm{\Psi^{(j)}\Psi^{(j')}} \vee \OneNorm{\Psi^{(j_n)}_n\Psi^{(j)}} \vee \OneNorm{\Psi^{(j_n)}_n\Psi^{(j_0)}_0}.
\end{align}
\end{restatable}

These bounds with Proposition~\ref{thm:DB:Pi_bound_Psi} lead to the following proposition.

\begin{restatable}{prop}{LaceCoefficientBound}
\label{prop:LaceCoefficientBound}
For $n\geq 1$,
\begin{equation}
    \lambda\OneNorm{\OpLacelam^{(n)}} \leq 6\times 4^{n-1}\Ulam \Vlam^n.
\end{equation}
\end{restatable}

\subsubsection{Diagrammatic Bounds for Displaced Lace Expansion Coefficients}

Now let us aim to bound $\OpNorm{\fOpLacelam^{(n)}\left(0\right) - \fOpLacelam^{(n)}\left(k\right)}$ for $n\geq 1$. Our strategy draws inspiration from the corresponding step in \cite{HeyHofLasMat19}, whilst being adapted to account for inhomogeneous marks. 

The central idea is to bound
\begin{equation}
    \OpNorm{\fOpLacelam^{(n)}\left(0\right) - \fOpLacelam^{(n)}\left(k\right)} \leq \OneNorm{\fOpLacelam^{(n)}\left(0\right) - \fOpLacelam^{(n)}\left(k\right)} = \esssup_{b\in\Ecal}\int\left(1- \cos\left(k\cdot\xbar\right)\right)\pi^{(n)}_\lambda\left(\xbar;a,b\right)\dd \xbar \Pcal\left(\dd a\right),
\end{equation}
and bound $\pi^{(n)}_\lambda$ using the function expression from Proposition~\ref{thm:DB:Pi_bound_Psi}. This produces an integral (or diagram) composed from simpler segments with the displacement factor, $\left(1- \cos\left(k\cdot\xbar\right)\right)$, spanning the whole length of the diagram. The following Cosine-Splitting result allows us to get a sum of diagrams where the displacement factor only spans a single segment of each diagram. 

\begin{lemma}[{Split of cosines, \cite[Lemma 2.13]{FitHof16}}] \label{lem:cosinesplitlemma}
Let $t_i \in \R$ for $i=1, \ldots, m$ and $t = \sum_{i=1}^{m} t_i$. Then
	\[1-\cos (t) \leq m \sum_{i=1}^{m} [1- \cos(t_i)]. \]
\end{lemma}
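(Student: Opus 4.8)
The plan is to reduce the statement to an elementary inequality about $|\sin|$ via the half-angle identity $1-\cos\theta = 2\sin^2(\theta/2)$. Applying this identity to both sides, the claimed bound is equivalent to
\[
\sin^2\!\left(\tfrac{t}{2}\right) \leq m\sum_{i=1}^m \sin^2\!\left(\tfrac{t_i}{2}\right),
\]
so it suffices to prove this reformulated inequality.

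First I would establish the subadditivity estimate $\bigl|\sin(a+b)\bigr| \leq |\sin a| + |\sin b|$ for all $a,b\in\R$, which follows immediately from the addition formula $\sin(a+b) = \sin a\cos b + \cos a \sin b$ together with $|\cos|\leq 1$ and the triangle inequality. Iterating this over the $m$ summands $t_i/2$ gives
\[
\left|\sin\!\left(\tfrac{t}{2}\right)\right| = \left|\sin\!\left(\sum_{i=1}^m \tfrac{t_i}{2}\right)\right| \leq \sum_{i=1}^m \left|\sin\!\left(\tfrac{t_i}{2}\right)\right|.
\]

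Next I would apply the Cauchy–Schwarz inequality (equivalently the QM–AM inequality) to the right-hand side, which produces exactly the factor $m$:
\[
\left(\sum_{i=1}^m \left|\sin\!\left(\tfrac{t_i}{2}\right)\right|\right)^{\!2} \leq m\sum_{i=1}^m \sin^2\!\left(\tfrac{t_i}{2}\right).
\]
Squaring the iterated-subadditivity bound and chaining it with this inequality yields the reformulated claim, and hence the lemma. There is no serious obstacle here — the argument is entirely elementary; the only point worth remarking is that the constant $m$ is not an artefact but enters precisely through the Cauchy–Schwarz step, and taking all the $t_i$ equal and small shows the dependence on $m$ cannot be improved in general.
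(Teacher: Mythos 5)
Your proof is correct, and it is essentially the standard argument for this lemma (the paper itself gives no proof, deferring to \cite{FitHof16}, whose proof proceeds exactly this way): reduce via $1-\cos\theta = 2\sin^2(\theta/2)$, use subadditivity of $|\sin|$ from the addition formula, and finish with Cauchy--Schwarz to produce the factor $m$. Your closing remark on sharpness (equal small $t_i$) is also accurate.
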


To see this strategy in practice, consider the diagram corresponding to $\Psi^{(2)}_n\Psi^{(1)}\Psi^{(1)}_0$ with the displacement factor:
\begin{multline}
    \ExampleOne \quad \leq 3\left(\quad\ExampleOnePtOne \quad + \quad \right. \\ \left.\ExampleOnePtTwo \quad + \quad \ExampleOnePtThree\quad\right).
\end{multline}
In these diagrams and hereafter we use \displaceline~to denote that a displacement factor is connecting the two indicated vertices. As above, the path the \displaceline~takes will also suggest the route along which we will intend to use cosine-splitting. For the sake of simplicity, we will be taking the displacement path across the `top' of the diagram from the perspective of the usual orientation of the $\Psi_0$ segment. However - as can be seen in the above example - the way the operators compose with each other means that the orientation of each segment inverts when compared to its neighbours. Therefore the displacement may be across the `top' or the `bottom' of the $\Psi$ and $\Psi_n$ segments (in their usual orientation) depending upon the parity of its place in the sequence of segments.

Our general strategy will be to isolate the displaced segment (and perhaps a neighbouring segment) by splitting off `earlier' and `later' segments. When we split off earlier segments (and sometimes later segments), we will use the $\Psi$ structure we have been using so far. However, we will sometimes want to group vertices and edges slightly differently when we split off `later' segments. We use an observation from \cite{HeyHofLasMat19} to conveniently group these. Let us define the kernel functions
\begin{align*}
    \EndBlock^{(1)}_0\left(w,u,x\right) &:= \lambda^2\tlam(y,r)\tlam(r,s)\tlam(s,x),\\
    \EndBlock^{(2)}_0\left(w,u,x\right) &:= \lambda^2\left(\lambda^{-1}\delta_{y,r}\right)\left(\lambda^{-1}\delta_{y,s}\right),\\
    \EndBlock^{(1)}(w,u,r,s) &:= \lambda^4\tlam(w,u)\int \tlam(t,s) \tlam(t,w)\tlam(u,z)\tlam(z,t)\tlamo(z,r)\nu^{\otimes2}\left(\dd z, \dd t\right), \\
    \EndBlock^{(2)}(w,u,r,s) &:= \lambda^4\tlam(w,s)\int \tlam(t,z)\tlam(z,u)\tlam(u,t)\tlamo(t,w)\tlamo(z,r)\nu^{\otimes2}\left(\dd z, \dd t\right), \\
	\EndBlock^{(3)}(w,u,r,s) &:= \lambda^2\tlam(u,w)\tlam(w,s)\tlam(u,r),\\
	\EndBlock^{(4)}(w,u,r,s) &:= \lambda^2\tlam(u,w)\tlam(w,s)\left(\lambda^{-1}\delta_{u,r}\right),
\end{align*}
and set $\EndBlock_0 := \EndBlock^{(1)}_0+\EndBlock^{(2)}_0$ and $\EndBlock := \EndBlock^{(1)}+\EndBlock^{(2)}+\EndBlock^{(3)} + \EndBlock^{(4)}$. Diagrammatic representations of these functions can be found in Figure~\ref{fig:EndBlockFunctions}. Note the similarity to the $\psi$ functions, with the $\tlamo$ edge appearing elsewhere in the integral. 

\begin{figure}
    \centering
    \begin{subfigure}[b]{0.45\textwidth}
    \centering
        \begin{tikzpicture}
        \draw (0,0) -- (1,0.6) -- (1,-0.6) -- cycle;
        \filldraw[fill=white] (0,0) circle (2pt) node[left]{$x$};
        \filldraw[fill=white] (1,0.6) circle (2pt) node[above right]{$w$};
        \filldraw[fill=white] (1,-0.6) circle (2pt) node[below right]{$u$};
        \end{tikzpicture}
    \caption{$\EndBlock^{(1)}_0$}
    \end{subfigure}
    \hfill
    \begin{subfigure}[b]{0.45\textwidth}
    \centering
        \begin{tikzpicture}
        \filldraw[fill=white] (0,0) circle (2pt) node[above]{$x=w=u$};
        \draw[white] (0,-1) circle (0pt);
       \end{tikzpicture}
    \caption{$\EndBlock^{(2)}_0$}
    \end{subfigure}
    \begin{subfigure}[b]{0.24\textwidth}
    \centering
        \begin{tikzpicture}
        \draw (0,0.6) -- (1,0.6) -- (1,-0.6) -- (0,-0.6);
        \draw (0.5,-0.6) circle (0pt) node[above]{$\circ$};
        \draw (1,0.6) -- (2,0.6) -- (2,-0.6) -- (1,-0.6);
        \draw (0,1.75) circle (0pt);
        \filldraw[fill=white] (0,0.6) circle (2pt) node[above left]{$s$};
        \filldraw[fill=white] (0,-0.6) circle (2pt) node[below left]{$r$};
        \filldraw (1,0.6) circle (2pt);
        \filldraw (1,-0.6) circle (2pt);
        \filldraw[fill=white] (2,0.6) circle (2pt) node[above right]{$w$};
        \filldraw[fill=white] (2,-0.6) circle (2pt) node[below right]{$u$};
        \end{tikzpicture}
    \caption{$\EndBlock^{(1)}$}
    \end{subfigure}
    \hfill
    \begin{subfigure}[b]{0.24\textwidth}
    \centering
        \begin{tikzpicture}
        \draw (0,0.6) -- (1.5,0.6) -- (1.5,-0.1) -- (1,-0.6);
        \draw (0.5,-0.6) circle (0pt) node[above]{$\circ$};
        \draw (1.5,0.25) circle (0pt) node[left]{$\circ$};
        \draw (0,-0.6) -- (1,-0.6) -- (2,-0.6) -- (1.5,-0.1);
        \filldraw[fill=white] (0,0.6) circle (2pt) node[above left]{$s$};
        \filldraw[fill=white] (0,-0.6) circle (2pt) node[below left]{$r$};
        \filldraw[fill=white] (1.5,0.6) circle (2pt) node[above right]{$w$};
        \filldraw (1,-0.6) circle (2pt);
        \filldraw (1.5,-0.1) circle (2pt);
        \filldraw[fill=white] (2,-0.6) circle (2pt) node[below right]{$u$};
        \end{tikzpicture}
    \caption{$\EndBlock^{(2)}$}
    \end{subfigure}
    \hfill
    \begin{subfigure}[b]{0.24\textwidth}
    \centering
        \begin{tikzpicture}
        \draw (0,0.6) -- (1,0.6) -- (1,-0.6) -- (0,-0.6);
        \filldraw[fill=white] (0,0.6) circle (2pt) node[above left]{$s$};
        \filldraw[fill=white] (0,-0.6) circle (2pt) node[below left]{$r$};
        \filldraw[fill=white] (1,0.6) circle (2pt) node[above right]{$w$};
        \filldraw[fill=white] (1,-0.6) circle (2pt) node[below right]{$u$};
        \end{tikzpicture}
    \caption{$\EndBlock^{(3)}$}
    \end{subfigure}
    \hfill
    \begin{subfigure}[b]{0.24\textwidth}
    \centering
        \begin{tikzpicture}
        \draw (0,0.6) -- (1,0.6) -- (1,-0.6);
        \filldraw[fill=white] (0,0.6) circle (2pt) node[above left]{$s$};
        \filldraw[fill=white] (1,0.6) circle (2pt) node[above right]{$w$};
        \filldraw[fill=white] (1,-0.6) circle (2pt) node[below]{$r=u$};
        \end{tikzpicture}
    \caption{$\EndBlock^{(4)}$}
    \end{subfigure}
    \hspace*{\fill}%
    \caption{Diagrams of the $\EndBlock_0$, and $\EndBlock$ functions.}
    \label{fig:EndBlockFunctions}
\end{figure}

For $j\in\left\{1,2\right\}$ we define $\OpEndBlock^{(j)}_0\colon \mathcal{D}\left(\OpEndBlock^{(j)}_0\right) \to L^2\left(\X^2\right)$ to be the linear operators acting on $f\in L^2\left(\X\right)$ as
\begin{equation}
    \OpEndBlock^{(j)}_0 f\left(w,u\right) = \int \EndBlock^{(j)}_0(w,u,y) f(y)\nu\left(\dd y\right),
\end{equation}
and for $j\in\left\{1,2,3,4\right\}$ we define $\OpEndBlock^{(j)}\colon \mathcal{D}\left(\OpEndBlock^{(j)}\right) \to L^2\left(\X^2\right)$ to be the linear operators acting on $f\in L^2\left(\X^2\right)$ as
\begin{equation}
    \OpEndBlock^{(j)} f\left(w,u\right) = \int \EndBlock^{(j)}(w,u,r,s) f(r,s)\nu^{\otimes2}\left(\dd r, \dd s\right).
\end{equation}

In our terminology, the important observation of \cite{HeyHofLasMat19} was that for some $m\geq 0$ the `later' segments of each diagram can be bounded using $\big(\OpEndBlock^m\OpEndBlock_0\big)^\dagger$, the adjoint of $\OpEndBlock^m\OpEndBlock_0$. In terms of the kernel functions, taking the adjoint in this case amounts to reflecting the `input' and `output' arguments, and for the diagrams this amounts to reflecting in the vertical plane.

\begin{restatable}{lemma}{StartEndBounds}
\label{lem:StartEndBounds}
For $n\geq 1$, $m\in\left\{0,\ldots,n\right\}$, $j_0\in\left\{1,2,3\right\}$, $j_1,\ldots,j_{n-1}\in\left\{1,2,3,4\right\}$, and $j_n\in\left\{1,2\right\}$,
\begin{align}
    \OneNorm{\Psi^{(j_m)}\ldots\Psi^{(j_1)}\Psi^{(j_0)}_0} &\leq \Ulam \Vlam^m,\\
    \OneNorm{\Psi^{(j_n)}_n\Psi^{(j_{n-1})}\ldots\Psi^{(j_{m})}} &\leq \Ulam \Vlam^{n-m},\\
    \OneNorm{\OpEndBlock^{(j_m)}\ldots\OpEndBlock^{(j_1)}\OpEndBlock^{(j_0)}_0} &\leq \begin{cases}
    \Ulam &: m=0\\
    \Ulam^2\Vlam^{m-1} &: m\geq 1. 
    \end{cases}
\end{align}
\end{restatable}

\begin{definition}
In addition to the terms defined in Definition~\ref{def:triangles}, we will also use
\begin{align*}
    \Wk &:= \lambda\esssup_{x,y\in\X}\int \tklam(x,u)\tlam(u,y)\nu\left(\dd u\right) = \lambda\InfNorm{\Optklam\Optlam},\\
    \WkBar &:= \lambda\esssup_{\xbar\in\Rd,a_1,\ldots,a_4\in\Ecal}\int \tklam(\xbar-\ubar;a_1,a_2)\tlam(\ubar;a_3,a_4)\dd u,\\
    \HkBar  &:= \lambda^5 \esssup_{\xbar_1,\xbar_2\in\Rd,a_1,\ldots,a_{16}\in\Ecal}\int \tlam(\sbar-\xbar_1;a_1,a_2)\tlam(\ubar;a_3,a_4) \tlam(\vbar-\sbar;a_5,a_6) \tlam(\vbar+\xbar_2-\tbar;a_7,a_8)\nonumber\\ 
    & \hspace{1.5cm}\times \tlam(\sbar-\wbar;a_9,a_{10})\tlam(\wbar-\ubar;a_{11},a_{12}) \tlam(\tbar-\wbar;a_{13},a_{14}) \tklam(\tbar-\ubar;a_{15},a_{16}) \dd\tbar\dd\wbar\dd\zbar\dd\ubar.
\end{align*}
Note that we only use $\WkBar$ in Propositions~\ref{thm:DisplacementNgeq2} and \ref{thm:DisplacementNeq1} because $\WkBar\geq \Wk$, but in the proof we will sometimes use $\Wk$ to make the derivation easier to follow. The diagrams for $\WkBar$ and $\HkBar$ are in Figure~\ref{fig:WkandHk}. 

Also recall the sets $\left\{B\left(x\right)\right\}_{x\in\X}$ assumed to exist in Assumption~\ref{Assump:BallDecay}. Then define 
\begin{align*}
    \mathbb{B} &:= \esssup_{x\in\X}\nu\left(B\left(x\right)\right),\\
    \trilamB &:= \lambda^2 \esssup_{r,s\in\X: r\notin{B\left(s\right)}}\int\tlamo\left(r,u\right)\tlam\left(u,v\right)\tlam\left(v,s\right)\nu^{\otimes2}\left(\dd u, \dd v\right).
\end{align*}
Note that these are only required to deal with one specific diagram in the following $n=1$ case.
\end{definition}

\begin{figure}
    \centering
    \begin{subfigure}[b]{0.3\textwidth}
    \centering
    \begin{tikzpicture}[scale=2]
        \draw (0,0.6) -- (1,0) -- (0,-0.6);
        \draw[<->] (0,0.8) -- (1,0.2);
        \draw (0.5,0.5) node[cross=4pt, rotate=-30]{};
        \filldraw[fill=white] (0,0.6) circle (2pt);
        \filldraw[fill=white] (0,-0.6) circle (2pt);
        \node[mark size=4pt] at (1,0) {\pgfuseplotmark{square*}};
    \end{tikzpicture}
    \caption{$\WkBar$}
    \end{subfigure}
    \hfill
    \begin{subfigure}[b]{0.6\textwidth}
    \centering
    \begin{tikzpicture}[scale=2]
        \draw (0,0.6) -- (1.5,0.6) -- (1.5,-0.1) -- (1,-0.6);
        \draw (0,-0.6) -- (1,-0.6) -- (2,-0.6) -- (1.5,-0.1);
        \draw (1.5,0.6) -- (3,0.6);
        \draw (2,-0.6) -- (3,-0.6);
        \draw[dashed] (3,0.6) -- (3,-0.6);
        \filldraw[fill=white] (0,0.6) circle (2pt);
        \filldraw[fill=white] (0,-0.6) circle (2pt);
        \filldraw (1.5,0.6) circle (2pt);
        \filldraw[fill=white] (3,0.6) circle (2pt);
        \node[mark size=4pt] at (3,-0.6) {\pgfuseplotmark{square*}};
        \filldraw (1,-0.6) circle (2pt);
        \filldraw (1.5,-0.1) circle (2pt);
        \filldraw (2,-0.6) circle (2pt);
        \draw[<->, thick] (1,-0.75) -- (2,-0.75);
        \draw (1.5,-0.75) node[cross=4pt]{};
    \end{tikzpicture}
    \caption{$\HkBar$}
    \end{subfigure}
    \caption{Diagrams representing $\WkBar$ and $\HkBar$. In this figure, the marks on edges incident to filled square vertices need not be equal.\label{fig:WkandHk}}
\end{figure}
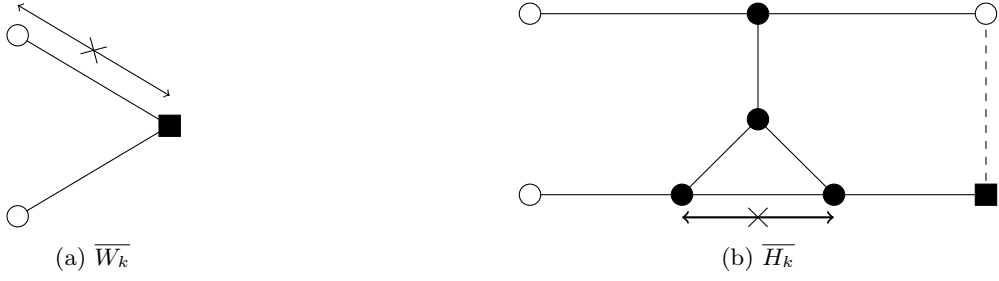

We already have a bound for the $n=0$ version of the displaced operators from Proposition~\ref{thm:DB:Pi0_bounds}. The following proposition provides the bound for the $n=1$ version. 
\begin{restatable}{prop}{DisplacementNeqOne}
\label{thm:DisplacementNeq1}
For $k\in\Rd$,
\begin{equation}
    \lambda\OpNorm{\fOpLacelam^{(1)}\left(0\right) - \fOpLacelam^{(1)}\left(k\right)} \leq 13\WkBar\Vlam\Ulam + \lambda \OneNorm{\Opconnf_k}\left(\Vlam + \trilamB\right) + \lambda^3\InfNorm{\Opconnf\Optlam\Optklam\Opconnf} + 4\lambda\mathbb{B}\WkBar.
\end{equation}
\end{restatable}

The following proposition then provides the bounds for the $n\geq 2$ versions, for which the proof will be qualitatively different.

\begin{restatable}{prop}{DisplacementNgeqTwo}
\label{thm:DisplacementNgeq2}
For $k\in\Rd$ and $n\geq 2$,
\begin{equation}
    \lambda\OpNorm{\fOpLacelam^{(n)}\left(0\right) - \fOpLacelam^{(n)}\left(k\right)} \leq
    \begin{cases}
    \left(n+1\right)^24^{n-1}\left(50\WkBar \Vlam + 6\HkBar\right) \Ulam^4 \Vlam^{n-4} &: n\geq4\\
    16\times16\left(50\WkBar \Vlam^2 + 6\HkBar\right) \Ulam^2 &: n=3\\
    9\times4\left(50\WkBar \Vlam + 6\HkBar\right) \Ulam^2 &: n=2.
    \end{cases}
\end{equation}
\end{restatable}

\subsection{The Bootstrap Function}\label{sec:bootstrapanalysis}

Now we have some bounds for the terms in our expansion, we will in turn bound these using a bootstrap function. To aid readability, in our following arguments we will assume that
\begin{equation}
    \SupSpec{\fOpconnf(0)} = 1.
\end{equation}
This does not reduce the generality of our result. First note that since $\fconnf(0;a,b)>0$ for a positive $\Pcal$-measure set of $a,b\in\Ecal$, we have $\SupSpec{\fOpconnf(0)}>0$. Now suppose we scale $\Rd$ by a factor of $q^{-1/d}$ (that is, the new unit radius ball is the previous ball of radius $q^{1/d}$), and leave the mark space $\Ecal$ unchanged. Then we find that the system we are left with has  the distribution of an RCM model with the new parameters
\begin{equation}
    \lambda^* = \E_\lambda\left[\abs*{\eta \cap \left[0, q^{\frac{1}{d}}\right]^d }\right] = \lambda q, \qquad \connf^*\left(\xbar;a,b\right) = \connf\left(q^{\frac{1}{d}}\xbar;a,b \right).
\end{equation}
Therefore for all $a,b\in\Ecal$ we have
\begin{equation}
    \fconnf^*(0;a,b) = \int \connf^*(\xbar;a,b) \dd \xbar = q^{-1}\int \connf^*(\xbar;a,b) \dd \xbar = q^{-1}\fconnf(\xbar;a,b),
\end{equation}
and $\SupSpec{\fOpconnf^*(0)} = q^{-1} \SupSpec{\fOpconnf(0)}$. We can therefore achieve our assumption by choosing $q=\SupSpec{\fOpconnf(0)}$. Furthermore, under this scaling we find that the old event $\left\{\conn{\left(\xbar,a\right)}{\left(\ybar,b\right)}{\xi^{x,y}}\right\}$ becomes the new event $\left\{ \conn{\left(q^{-1/d}\xbar,a\right)}{\left(q^{-1/d}\ybar,b\right)}{\xi^{\left(q^{-1/d}\xbar,a\right),\left(q^{-1/d}\ybar,b\right)}} \right\}$, and we get
\begin{equation}
    \tau^*_{\lambda^*}(\xbar;a,b) =  \tlam\left(q^\frac{1}{d}\xbar;a,b\right)
\end{equation}
where $\tau^*_{\lambda^*}$ is the two-point function in the RCM governed by the connection function $\connf^*$ and intensity $\lambda^*$. 

It is also worth noting that the scaling choice to have $\SupSpec{\fOpconnf(0)}=1$ means that \ref{Assump:2ndMoment} reduces to the condition that there exists a $d$-independent constant $C>0$ such that
\begin{align}
    \label{eqn:Sup-Exp_Ratio_2}
    \OneNorm{\fOpconnf(0)} \leq \TwoNorm{\fOpconnf(0)} \leq \InfNorm{\fOpconnf(0)} &\leq C,\\
    \label{eqn:directional2ndMoment_2}
    \OneNorm{\fOpconnf(0) - \fOpconnf(k)} \leq \InfNorm{\fOpconnf(0) - \fOpconnf(k)} &\leq C \left(1 - \SupSpec{\fOpconnf(k)}\right).
\end{align}
Note that the inequalities relating $\OneNorm{\cdot}$, $\TwoNorm{\cdot}$, and $\InfNorm{\cdot}$ follow from Jensen's inequality and a supremum bound on a probability space.

\begin{definition}[Bootstrap Function]
For $\lambda<\lambda_O$ and $k,l\in\Rd$, define
\begin{align}
    \mulam & := 1- \frac{1}{\SupSpec{\fOptlam(0)}},\\
    \fgmu\left(k\right) &:= \frac{1}{1 - \mulam \SupSpec{\fOpconnf\left(k\right)}},\\
    \widehat{J}_{\mulam}\left(k,l\right) &:=
    \left(1 - \SupSpec{\fOpconnf\left(k\right)}\right)\left(\fgmu\left(l-k\right)\fgmu\left(l\right) + \fgmu\left(l\right)\fgmu\left(l+k\right) + \fgmu\left(l-k\right)\fgmu\left(l+k\right)\right).
\end{align}
From these we can then define the bootstrap function $f:= f_1\vee f_2\vee f_3$, where
\begin{equation}\label{eq:def-f}
    f_1\left(\lambda\right) := \lambda, \qquad
    f_2\left(\lambda\right) := \esssup_{k\in\Rd} \frac{\OpNorm{\fOptlam\left(k\right)}}{\fgmu\left(k\right)}, \qquad
    f_3\left(\lambda\right) := \esssup_{k,l\in\Rd}\frac{\OpNorm{\fOptklam\left(l\right)}}{\widehat{J}_{\mulam}\left(k,l\right)}.
\end{equation}    
\end{definition}

\subsubsection{Bounding the Lace Expansion Coefficient Operator}
\label{sec:BootstrapBounds1}
We have managed to bound the lace expansion coefficient operator and its displacement in terms of a variety of objects. For the lace expansion coefficient operator itself (and its associated Fourier transformed operators), these are:
\begin{equation}
    \lambda, \trilam, \trilamooBar, \OneNorm{\Opconnf}.
\end{equation}
As a guide, we expect $\lambda$, $\trilamooBar$, and $\OneNorm{\Opconnf}$ to be bounded uniformly in $d$ and $\lambda<\lambda_O$, whereas we expect $\trilam$ to decay uniformly in $\lambda<\lambda_O$ as $d\to\infty$. For some, these properties are easy to prove. The boundedness of $\OneNorm{\Opconnf}$ follows from
\begin{equation}
    \OneNorm{\Opconnf} = \OneNorm{\fOpconnf(0)} \leq C,
\end{equation}
where we have used \ref{Assump:2ndMoment} via \eqref{eqn:Sup-Exp_Ratio_2}. Regarding $\lambda$, since we are only interested in $\lambda<\lambda_O$ the question is perhaps more clearly written as \emph{``Is $\lambda_O$ uniformly bounded in $d$?"}. While there may be more elementary ways of proving this directly, we will get it for free from the bootstrap argument. Since we have the bound $\lambda \leq f(\lambda)$, proving the uniform boundedness of $f(\lambda)$ with respect to $d$ and $\lambda$ for $\lambda<\lambda_O$ will prove the same for $\lambda_O$.

This will leave us with only $\trilam$ and $\trilamooBar$. Recall that we can write
\begin{equation}
    \trilam = \lambda^2\InfNorm{\Optlam^3}, \quad \trilamoo = \lambda^2\InfNorm{\Optlam^3} + \lambda\InfNorm{\Optlam^2} + 1.
\end{equation}
Then $\trilamooBar$ differs from $\trilamoo$ in that now the supremum is taken over all the marks, and adjacent $\tlam$ can have different marks. To get the desired decay and boundedness behaviour, we will therefore only need to prove that $\InfNorm{\Optlam^3}$ decays and to bound
\begin{align}
    &\esssup_{\xbar\in\Rd,a_1,\ldots,a_6\in\Ecal}\int\tlam\left(\xbar-\ubar;a_1,a_2\right)\tlam\left(\ubar - \vbar;a_3,a_4\right)\tlam\left(\vbar;a_5,a_6\right)\dd \ubar \dd \vbar,\\
    &\esssup_{\xbar\in\Rd,a_1,\ldots,a_4\in\Ecal}\int\tlam\left(\xbar-\ubar;a_1,a_2\right)\tlam\left(\ubar;a_3,a_4\right)\dd \ubar.
\end{align}
We begin by proving the decay of $\InfNorm{\Optlam^3}$.

Let us define the function $\overline{\epsilon}\colon \N \to \R_+$, where
\begin{equation}
    \overline{\epsilon}(d) := \begin{cases}
    \frac{1}{d} + \frac{\log d}{\log g(d)} &: \lim_{d\to\infty}g(d)\rho^{-d}\Gamma\left(\frac{d}{2}+1\right)^2 = 0 \qquad\forall\rho>0,\\
    0&: \text{otherwise}.
    \end{cases}
\end{equation}
Recall $g(d)$ is the function defined in \ref{Assump:BallDecay}. In particular, $\overline{\epsilon}(d) = 0$ unless $g(d)$ approaches zero particularly quickly, and $\beta(d) = g(d)^{\frac{1}{4}-\frac{3}{2}\overline{\epsilon}(d)}$.

\begin{restatable}[Bound for the $n$-gon diagrams]{prop}{TauNbound}
\label{thm:TauNbound}
Let $\lambda < \lambda_O$. Then for each $n$ such that $d>2n$, there exists finite $\const= c(n,f(\lambda))$ (where $x\mapsto c(n,x)$ is increasing and independent of $d$) such that for $d>6$
\begin{equation}
    \InfNorm{\Optlam^n} \leq \begin{cases}
    \const &: n=1,2,\\
    \const g(d)^{\frac{1}{2}-n\overline{\epsilon}(d)}&: n\geq 3.
    \end{cases}
\end{equation}
\end{restatable}

The following is not a direct corollary of the previous proposition, because $\trilamooBar$ includes suprema over the intermediate marks. Nevertheless, the proof in Appendix~\ref{appendix:BoundingwithBootstrap} follows a similar strategy. 

\begin{restatable}{lemma}{BoundTrilamBar}
\label{lem:BoundTrilamBar}
For $\lambda<\lambda_O$, there exists $\const=c\left(f\left(\lambda\right)\right)$ (where $x\mapsto c\left(x\right)$ is increasing and independent of $d$) such that for $d>6$,
\begin{align}
    \esssup_{\xbar\in\Rd,a_1,\ldots,a_6\in\Ecal}&\int\tlam\left(\xbar-\ubar;a_1,a_2\right)\tlam\left(\ubar - \vbar;a_3,a_4\right)\tlam\left(\vbar;a_5,a_6\right)\dd \ubar \dd \vbar \leq \const,\\
    \esssup_{\xbar\in\Rd,a_1,\ldots,a_4\in\Ecal}&\int\tlam\left(\xbar-\ubar;a_1,a_2\right)\tlam\left(\ubar;a_3,a_4\right)\dd \ubar \leq \const,
\end{align}
and therefore
\begin{equation}
    \trilamooBar \leq 1 + 2\const.
\end{equation}
\end{restatable}

\subsubsection{Bounding the Lace Expansion Coefficient Operator Displacement}
\label{sec:BootstrapBounds2}

We now consider the objects required to bound the displacement of the lace expansion coefficient operator. In addition to the objects used to bound the lace expansion coefficient operator itself, we require bounds on
\begin{equation}
    \OneNorm{\Opconnf_k}, \InfNorm{\Opconnf\Optklam\Optlam\Opconnf}, \WkBar, \HkBar, \trilamB, \mathbb{B}.
\end{equation}
The decay $\mathbb{B}\leq g(d)$ follows from Assumption~\ref{Assump:BallDecay}, and the following Observation~\ref{obs:trilamB_bound} demonstrates the decay of $\trilamB$.
\begin{observation}
\label{obs:trilamB_bound}
    In proving Proposition~\ref{thm:TauNbound} (in Appendix~\ref{appendix:BoundingwithBootstrap}), it is proven that for all $\lambda\geq 0$ and $x,y\in\X$
    \begin{equation}
        \label{eqn:tau-phi-ExpansionfunctionINTEXT}
        \tlam(x,y) \leq \connf(x,y) + \lambda \int \connf(x,u)\tlam(u,y)\nu\left(\dd u\right).
    \end{equation}
    This inequality allows us to bound $\trilamB$ in terms of other terms. By using the inequality twice and appropriately bounding $\connf$ with $\tlam$, we get
\begin{equation}
    \int\tlam(r,u)\tlam(u,s)\nu(\dd u) \leq \int\connf(r,u)\connf(u,s)\nu(\dd u) + 2\lambda\int\tlam(r,v)\tlam(v,u)\tlam(u,s)\nu^{\otimes 2}(\dd u, \dd v)
\end{equation}
for any $r,s\in\X$. Therefore
\begin{equation}
    \trilamB \leq 3\trilam + \lambda\esssup_{r\not\in B(s)}\int\connf(r,u)\connf(u,s)\nu(\dd u) \leq 3\trilam + \lambda g(d),
\end{equation}
where we have used \ref{Assump:BallDecay}. Therefore Proposition~\ref{thm:TauNbound} implies that for $d>6$ there exists $\const$ such that 
\begin{equation}
    \trilamB \leq \const\beta^2.
\end{equation}
\end{observation}

We are then left with $\OneNorm{\Opconnf_k}$, $\InfNorm{\Opconnf\Optklam\Optlam\Opconnf}$, $\WkBar$, and $\HkBar$, and we will deal them in this order. First note that
\begin{multline}
    \OneNorm{\Opconnf_k} = \esssup_{y\in\X}\int\connf_k(x,y)\nu\left(\dd x\right)  = \esssup_{b\in\Ecal}\int(1-\cos\left(k\cdot \xbar\right))\connf(\xbar;a,b)\dd \xbar\Pcal(\dd a) \\ = \OneNorm{\fOpconnf_k(0)} = \OneNorm{\fOpconnf(0) - \fOpconnf(k)}.
\end{multline}
The following lemma therefore also allows us to bound $\OneNorm{\Opconnf_k}$. Its extra generality will be required later.

\begin{restatable}{lemma}{connfComparison}
\label{thm:connfComparison}
For all $k,l\in\Rd$,
\begin{equation}
\label{eqn:connfComparison}
    \OpNorm{\fOpconnf\left(l\right) - \fOpconnf\left(l+k\right)} \leq \OneNorm{\fOpconnf\left(l\right) - \fOpconnf\left(l+k\right)} \leq \InfNorm{\fOpconnf\left(l\right) - \fOpconnf\left(l+k\right)} \leq C\left[1 - \SupSpec{\fOpconnf\left(k\right)}\right],
\end{equation}
where $C>0$ is the $d$-independent constant given in \ref{Assump:2ndMoment}.
\end{restatable}

We now address $\InfNorm{\Opconnf\Optklam\Optlam\Opconnf}$.

\begin{restatable}{lemma}{PTkTPBound}
\label{lem:PTkTPBound}
For $\lambda<\lambda_O$, there exists $\const=c\left(f\left(\lambda\right)\right)$ (where $x\mapsto c\left(x\right)$ is increasing and independent of $d$) such that for $d>6$,
\begin{equation}
    \InfNorm{\Opconnf\Optklam\Optlam\Opconnf}\leq \const\beta^2\left(1-\SupSpec{\fOpconnf(k)}\right).
\end{equation}
\end{restatable}

The following lemma allows us to deal with occurrences of $\WkBar$.

\begin{restatable}{lemma}{WkBound}
\label{lem:W_kBound}
Let $x,y \in \X$. Then
\begin{equation}
    \tklam(x,y) \leq \connf_k(x,y) + 2\lambda \int \left(\tlam(x,u)\connf_k(u,y) + \tklam(x,u)\connf(u,y)\right)\nu\left(\dd u\right). \label{eqn:tau-phiExpansionDisplacement}
\end{equation}
For $\lambda<\lambda_O$, there exists $\const=c\left(f\left(\lambda\right)\right)$ (where $x\mapsto c\left(x\right)$ is increasing and independent of $d$) such that for $d>6$,
\begin{equation}
    \WkBar \leq \const\left(1-\SupSpec{\fOpconnf(k)}\right).
\end{equation}
\end{restatable}

We are now only left with bounding $\HkBar$.

\begin{restatable}{lemma}{MartiniBound}
\label{lem:Martini_Bound}
For $\lambda<\lambda_O$, there exists $\const=c\left(f\left(\lambda\right)\right)$ (where $x\mapsto c\left(x\right)$ is increasing and independent of $d$) such that for $d>6$,
\begin{equation}
    \HkBar \leq \const\beta^2\left(1-\SupSpec{\fOpconnf(k)}\right).
\end{equation}
\end{restatable}

Now we can bring together these bounds to show that the finite lace expansion of Proposition~\ref{thm:OperatorLaceExpansion} converges to an Ornstein-Zernike equation (OZE) for both the raw operators on $L^2\left(\X\right)$ and the Fourier-transformed operators on $L^2\left(\Ecal\right)$. Proposition~\ref{thm:convergenceoflaceexpansion} does this for $\lambda\in\left[0,\lambda_O\right)$, with the caveat that there is a constant $\const$ that is increasing in the bootstrap function $f$ (but independent of $d$) appearing in the bounds. 


\begin{prop}[Convergence of the operator lace expansion and OZE] \label{thm:convergenceoflaceexpansion} \
Let $\lambda \in [0, \lambda_O)$ and $d>6$ be sufficiently large. Then there exists $\const=c(f(\lambda))$ (which is increasing in $f$ and independent of $d$) such that
\begin{align}
    \sum_{n\geq 0}\OpNorm{\OpLacelam^{(n)}}\leq \const \beta, &\qquad \sum_{n\geq 0}\OpNorm{\fOpLacelam^{(n)}(k)}\leq \const \beta, \label{eq:BA:convLE_intPi_bds1}\\ \sum_{n\geq 0}\OpNorm{\fOpLacelam^{(n)}\left(0\right) - \fOpLacelam^{(n)}\left(k\right)} &\leq \const \left(\SupSpec{\fOpconnf(0)}-\SupSpec{\fOpconnf(k)}\right) \beta, \label{eq:BA:convLE_intPi_bds2}\\
	\OpNorm{R_{\lambda, n}}  &\leq \lambda \OpNorm{\Optlam} \const\beta^{n}. \label{eq:BA:convLE_R_bd}
\end{align}
Furthermore, the limit $\OpLacelam := \lim_{M\to\infty} \sum^M_{n=0}\OpLacelam^{(n)}$ exists and is a bounded operator with Fourier transform $\fOpLacelam\left(k\right) = \lim_{M\to\infty} \sum^M_{n=0}\fOpLacelam^{(n)}\left(k\right)$ for $k\in\Rd$. The operators $\Optlam$ and $\fOptlam(k)$ satisfy operator Ornstein-Zernike equations, taking the form
\begin{equation}
    \Optlam = \Opconnf + \OpLacelam + \lambda \Optlam(\Opconnf+\OpLacelam), \label{eq:LE_identity_OZE}
\end{equation}
and 
\begin{equation}
    \fOptlam\left(k\right) = \fOpconnf\left(k\right) + \fOpLacelam\left(k\right) + \lambda \fOptlam\left(k\right)\left(\fOpconnf\left(k\right)+\fOpLacelam\left(k\right)\right), \label{eq:LE_identity_OZE_Fourier}
\end{equation}
for all $\lambda < \lambda_O$ and $k\in\Rd$.
\end{prop}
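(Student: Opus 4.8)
The plan is to combine the finite lace expansion of Proposition~\ref{thm:OperatorLaceExpansion} with the diagrammatic bounds of Section~\ref{sec:diagrammaticbounds}, assuming throughout that the bootstrap function $f(\lambda)$ is finite (so that the constants $\const$ appearing in Propositions~\ref{thm:DisplacementNgeq2} and~\ref{thm:DisplacementNeq1}, Lemma~\ref{lem:PTkTPBound}, Lemma~\ref{lem:W_kBound}, Lemma~\ref{lem:Martini_Bound}, and Observation~\ref{obs:trilamB_bound} are all controlled), and then upgrading from "finite" to "summable with a $\beta$ factor" using the large-$d$ behaviour of $\Ulam$ and $\Vlam$.

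First I would establish the two scale estimates $\Ulam = \LandauBigO{1}$ and $\Vlam = \LandauBigO{\beta}$ that are flagged just before Lemma~\ref{lem:singleandpairbounds}. For $\Ulam = \trilam\trilamooBar + \trilamooBar^2 + \lambda\OneNorm{\Opconnf}$: by Proposition~\ref{thm:TauNbound} we have $\trilam = \lambda^2\InfNorm{\Optlam^3} \leq \const\beta^2 \to 0$, and by Lemma~\ref{lem:BoundTrilamBar} we have $\trilamooBar \leq 1 + 2\const = \LandauBigO{1}$, and $\lambda\OneNorm{\Opconnf} \leq f(\lambda)\cdot C = \LandauBigO{1}$; hence $\Ulam = \LandauBigO{1}$. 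For $\Vlam = (\trilam\trilamooBar\Ulam)^{1/2}$: since $\trilam = \LandauBigO{\beta^2}$ and $\trilamooBar\Ulam = \LandauBigO{1}$, we get $\Vlam = \LandauBigO{\beta}$. Then \eqref{eq:BA:convLE_intPi_bds1}: by Proposition~\ref{thm:DB:Pi0_bounds} the $n=0$ term is $\OpNorm{\OpLacelam^{(0)}} \leq \tfrac12\lambda^2\OneNorm{\Opconnf}\InfNorm{\Optlam^3} \leq \const\beta^2 \leq \const\beta$, and by Proposition~\ref{prop:LaceCoefficientBound}, $\OpNorm{\OpLacelam^{(n)}} \leq \lambda^{-1}\cdot 6\cdot 4^{n-1}\Ulam\Vlam^n$; here $\lambda^{-1}$ is bounded away from $0$ for $\lambda$ bounded away from $0$, and a separate (trivial) argument handles $\lambda$ near $0$ since $\OpLacelam^{(n)}$ carries an overall $\lambda^n$ — actually more cleanly: $\lambda\OneNorm{\OpLacelam^{(n)}} \leq 6\cdot 4^{n-1}\Ulam\Vlam^n$, and since $\Vlam = \LandauBigO{\beta}$ we can take $d$ large enough that $4\Vlam < 1/2$, so $\sum_{n\geq 1}\lambda\OpNorm{\OpLacelam^{(n)}} \leq \const\Ulam\Vlam \leq \const\beta$; dividing by $\lambda$ is fine once we observe that the $n\geq 1$ contributions also directly satisfy $\OpNorm{\OpLacelam^{(n)}} \leq \const\beta^n$ because the $\psi$-kernels each already contain their own $\lambda$-factors (as noted in Definition~\ref{def:DB:psi_functions}), so no genuine division is needed. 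The Fourier version is identical since $\OpNorm{\fOpLacelam^{(n)}(k)} \leq \OpNorm{\fOpLacelam^{(n)}(0)} = \OneNorm{\OpLacelam^{(n)}}$ by Lemmas~\ref{lem:BoundsonOperatorNorm} and~\ref{lem:NormBounds} and positivity.

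Next, \eqref{eq:BA:convLE_intPi_bds2}: under the scaling $\SupSpec{\fOpconnf(0)} = 1$ of Section~\ref{sec:bootstrapanalysis}, the right-hand side reads $\const(1 - \SupSpec{\fOpconnf(k)})\beta$. For $n=0$ use the last bound of Proposition~\ref{thm:DB:Pi0_bounds} together with Lemma~\ref{thm:connfComparison} (for $\OneNorm{\Opconnf_k}$) and Lemma~\ref{lem:PTkTPBound} (for $\InfNorm{\Opconnf\Optlam\Optklam\Opconnf}$), both of which produce the factor $(1-\SupSpec{\fOpconnf(k)})\beta^2$; note $\InfNorm{\Optlam^3}\OneNorm{\Opconnf_k} \leq \const\beta^2\cdot C(1-\SupSpec{\fOpconnf(k)})$. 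For $n=1$ use Proposition~\ref{thm:DisplacementNeq1}: every term on its right-hand side is bounded by $\const\beta^2(1-\SupSpec{\fOpconnf(k)})$ after invoking $\WkBar \leq \const(1-\SupSpec{\fOpconnf(k)})$ (Lemma~\ref{lem:W_kBound}), $\trilamB \leq \const\beta^2$ (Observation~\ref{obs:trilamB_bound}), $\mathbb{B} \leq g(d) \leq \const\beta^2$ (Assumption~\ref{Assump:BallDecay}, noting $\beta \geq g(d)^{1/4}$ up to the other regime where $\beta$ is even smaller — here one checks $g(d) = \LandauBigO{\beta^2}$ in both cases from the definition of $\beta$), $\Vlam = \LandauBigO{\beta}$, $\Ulam = \LandauBigO{1}$, $\OneNorm{\Opconnf_k} \leq C(1-\SupSpec{\fOpconnf(k)})$, and $\lambda^3\InfNorm{\Opconnf\Optlam\Optklam\Opconnf} \leq \const\beta^2(1-\SupSpec{\fOpconnf(k)})$ from Lemma~\ref{lem:PTkTPBound}. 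For $n\geq 2$ use Proposition~\ref{thm:DisplacementNgeq2}: each branch has the shape $(\text{polynomial in }n)\cdot 4^{n}\cdot(\WkBar\Vlam^{a} + \HkBar)\Ulam^{b}\Vlam^{n-c}$ with $a\in\{1,2\}$; using $\WkBar \leq \const(1-\SupSpec{\fOpconnf(k)})$, $\HkBar \leq \const\beta^2(1-\SupSpec{\fOpconnf(k)})$ (Lemma~\ref{lem:Martini_Bound}), $\Ulam = \LandauBigO{1}$, and $\Vlam = \LandauBigO{\beta}$, each summand is at most $\const(1-\SupSpec{\fOpconnf(k)})\cdot(n+1)^2 4^n \beta^{n-3}$ or similar; taking $d$ large so that $4\Vlam$ and the relevant geometric ratios are below $1/2$ makes $\sum_{n\geq 2}$ converge and be bounded by $\const(1-\SupSpec{\fOpconnf(k)})\beta$ (the extra $\beta$ coming from $\WkBar\Vlam$ or $\HkBar$ each carrying at least one more $\beta$ than needed to close the geometric series). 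Then \eqref{eq:BA:convLE_R_bd} is immediate from Lemma~\ref{lem:remainderBound}: $\OpNorm{R_{\lambda,n}} \leq \lambda\OpNorm{\Optlam}\OpNorm{\OpLacelam^{(n)}} \leq \lambda\OpNorm{\Optlam}\const\beta^n$ (and likewise for $\widehat{R}_{\lambda,n}$).

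Finally, existence of $\OpLacelam$ and the OZE: \eqref{eq:BA:convLE_intPi_bds1} shows the partial sums $\sum_{n=0}^M \OpLacelam^{(n)}$ form a Cauchy sequence in operator norm (the space of bounded operators is complete), so $\OpLacelam$ exists and is bounded with $\OpNorm{\OpLacelam} \leq \const\beta$; since $(\cdot)\mapsto\widehat{(\cdot)}$ intertwines with norm limits, $\fOpLacelam(k) = \lim_M \sum_{n=0}^M \fOpLacelam^{(n)}(k)$ exists with $\OpNorm{\fOpLacelam(k)} \leq \const\beta$. To get \eqref{eq:LE_identity_OZE}, take $n\to\infty$ in the operator lace expansion \eqref{eqn:OperatorLaceExpansion} of Proposition~\ref{thm:OperatorLaceExpansion}: the left side is fixed, $\OpLace_{\lambda,n} \to \OpLacelam$ in operator norm (being the partial sums of a convergent series), $\lambda\Optlam(\Opconnf + \OpLace_{\lambda,n}) \to \lambda\Optlam(\Opconnf+\OpLacelam)$ by continuity of operator composition (both $\Optlam$ and $\Opconnf$ bounded for $\lambda<\lambda_O$), and $R_{\lambda,n} \to 0$ by \eqref{eq:BA:convLE_R_bd} since $\beta^n \to 0$ (for $d$ large $\beta < 1$). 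The Fourier identity \eqref{eq:LE_identity_OZE_Fourier} follows verbatim from \eqref{eqn:OperatorLaceExpansion2}. The main obstacle, as usual in lace-expansion arguments, is bookkeeping: verifying that every term in the $n\geq 2$ displacement bound (Proposition~\ref{thm:DisplacementNgeq2}) carries enough powers of $\beta$ to both (i) supply the single $\beta$ demanded by \eqref{eq:BA:convLE_intPi_bds2} and (ii) leave a geometrically decaying tail $(\text{const})^n$ with ratio below $1$ once $d$ is large — this requires keeping careful track of which branch ($n\geq 4$, $n=3$, $n=2$) is being summed and confirming the $\Vlam$-exponents never drop below what is needed; but this is routine once $\Ulam = \LandauBigO{1}$ and $\Vlam = \LandauBigO{\beta}$ are in hand.
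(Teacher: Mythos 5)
Your proposal is correct and follows essentially the same route as the paper's proof: establish $\Ulam = \LandauBigO{1}$ and $\Vlam = \LandauBigO{\beta}$ from the bootstrap-controlled estimates of Section~\ref{sec:BootstrapBounds}, feed these into Propositions~\ref{thm:DB:Pi0_bounds}, \ref{prop:LaceCoefficientBound}, \ref{thm:DisplacementNgeq2} and~\ref{thm:DisplacementNeq1} together with Lemmas~\ref{thm:connfComparison}, \ref{lem:PTkTPBound}, \ref{lem:W_kBound}, \ref{lem:Martini_Bound} and Observation~\ref{obs:trilamB_bound}, sum the resulting geometric series for $d$ large, and then obtain the OZE by completeness of the space of bounded operators and the vanishing of $R_{\lambda,n}$. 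Your aside on why the division by $\lambda$ in Proposition~\ref{prop:LaceCoefficientBound} is harmless (the $\Vlam^n$ factor itself carries $\lambda$-powers) is a point the paper glosses over, but it does not change the argument.
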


\begin{proof}
So far in this section we have provided bounds for the building blocks of the bounds appearing in Propositions~\ref{thm:DB:Pi0_bounds},~\ref{prop:LaceCoefficientBound},~\ref{thm:DisplacementNgeq2}, and~\ref{thm:DisplacementNeq1}. We still have to convert these into bounds for the composite terms $\Ulam$ and $\Vlam$. From their definitions and the bounds on the building blocks, it is simple to see that there exists $\const'$ such that for $d>6$
\begin{equation}
    \Ulam \leq \const', \qquad\Vlam \leq \const'\beta.
\end{equation}
This then implies that there exists $\const''$ such that for $d>6$
\begin{equation}
    \label{eqn:BoundonPiN}
    \OpNorm{\OpLacelam^{(n)}} \leq \begin{cases}
    \const''\beta^2 &: n=0,\\
    \const''\left(\const''\beta\right)^n &: n\geq 1.
    \end{cases}
\end{equation}
If $d$ is sufficiently large we have $\const''\beta<1/2$ (for example), and therefore there exists $\const$ such that
\begin{equation}
    \sum_{n\geq0}\OpNorm{\OpLacelam^{(n)}} \leq \const''\left(\beta^2 + \frac{\const''\beta}{1-\const''\beta}\right) \leq \const\beta.
\end{equation}
When we want to consider the displaced terms, we also make use of Observation~\ref{obs:trilamB_bound}, and Lemmas~\ref{thm:connfComparison},~\ref{lem:PTkTPBound},~\ref{lem:W_kBound}, and~\ref{lem:Martini_Bound} to give us the existence of $\const'''$ such that for $d>6$
\begin{equation}
    \OpNorm{\fOpLacelam^{(n)}\left(0\right) - \fOpLacelam^{(n)}\left(k\right)} \leq \begin{cases}
    \const'''\beta^2\left(1 - \SupSpec{\fOpconnf(k)}\right) &: n=0,3,\\
    \const'''\beta\left(1 - \SupSpec{\fOpconnf(k)}\right) &: n=1,2,\\
    \left(n+1\right)^2\const'''\left(\const'''\beta\right)^{n-3}\left(1 - \SupSpec{\fOpconnf(k)}\right) &: n\geq 4.
    \end{cases}
\end{equation}
If $d$ is sufficiently large we have $\const'''\beta<1/2$ (for example), and therefore there exists $\const$ such that
\begin{multline}
    \sum_{n\geq0}\OpNorm{\fOpLacelam^{(n)}\left(0\right) - \fOpLacelam^{(n)}\left(k\right)} \leq \const'''\left(2\beta^2 + 2\beta + \frac{16\left(\const'''\beta\right)^3-39\left(\const'''\beta\right)^2 + 25\const'''\beta}{\left(1-\const'''\beta\right)^3}\right)\left(1 - \SupSpec{\fOpconnf(k)}\right) \\\leq \const\beta\left(1 - \SupSpec{\fOpconnf(k)}\right).
\end{multline}
Lemma~\ref{lem:remainderBound} then allows us to use \eqref{eqn:BoundonPiN} to bound $\OpNorm{R_{\lambda,n}}$. This immediately gives the required result for $n\geq 1$. For $n=0$, we simply additionally require $d$ to be large enough that $\beta<1$ and therefore $\beta^2<\beta^0$.

Note that the dual space of a Banach space (endowed with the operator norm) is also a Banach space. In particular, it is complete. Since $\OpNorm{\OpLacelam^{(n)}}\leq \const''\left(\const''\beta\right)^n$ for $n\geq1$, the sequence $\OpLace_{\lambda,n}$ is a Cauchy sequence in the dual space of $L^2(\X)$ for sufficiently large $d$. Therefore the limit $\OpLacelam$ exists and is a bounded linear operator on $L^2(\X)$. Since the Fourier transform is a unitary linear operator it is bounded and continuous, and therefore the same argument says that the limit $\fOpLacelam(k)$ exists and is indeed the Fourier transform of $\OpLacelam$.

Since $\OpNorm{\Optlam}$ is finite for $\lambda<\lambda_O$, we have $\OpNorm{R_{\lambda,n}}\to0$ as $n\to\infty$ if $d$ is sufficiently large. As a a consequence of this and equation \eqref{eqn:OperatorLaceExpansion} from Proposition~\ref{thm:OperatorLaceExpansion}, the $\Optlam$ operator satisfies the operator Ornstein-Zernike equation.
\end{proof}

The following lemma has two main uses in what follows. Firstly, it takes the OZEs from Proposition~\ref{thm:convergenceoflaceexpansion} and uses it to write the two-point operators in terms of the sum of the adjacency operators and the lace expansion coefficient operators. Secondly, it uses this and an intermediate value theorem argument to bound the spectral supremum of the sum of the adjacency operators and the lace expansion coefficient operators.

\begin{lemma}
\label{lem:Optlam-a}
Let $\lambda\in\left[0,\lambda_O\right)$ and $d>6$ be sufficiently large. Then the operators $\left(\Id - \lambda\left(\Opconnf + \OpLacelam\right)\right)$ and $\left(\Id - \lambda\left(\fOpconnf(k) + \fOpLacelam(k)\right)\right)$ all have bounded linear inverses and
\begin{align}
    \Optlam &= \left(\Opconnf + \OpLacelam\right)\left(\Id - \lambda\left(\Opconnf + \OpLacelam\right)\right)^{-1}, \label{eqn:Optlam_Inverted}\\
    \fOptlam(k) &= \left(\fOpconnf(k) + \fOpLacelam(k)\right)\left(\Id - \lambda\left(\fOpconnf(k) + \fOpLacelam(k)\right)\right)^{-1}, \qquad\forall k\in\Rd. \label{eqn:Optlam_Inverted_Fourier}
\end{align}
Furthermore, $\lambda\mapsto \OpLacelam$ and $\lambda\mapsto \fOpLacelam(k)$ are all continuous in the operator norm topology and
\begin{align}
    \lambda\SupSpec{\Opconnf + \OpLacelam} &< 1,\\
    \lambda \SupSpec{\fOpconnf(k) + \fOpLacelam(k)} &< 1, \qquad\forall k\in\Rd.
\end{align}
\end{lemma}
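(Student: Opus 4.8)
The plan is to bootstrap off the Ornstein--Zernike equation~\eqref{eq:LE_identity_OZE} established in Proposition~\ref{thm:convergenceoflaceexpansion} together with the operator-norm bounds $\OpNorm{\OpLacelam}\leq \const\beta$ (resp.\ $\OpNorm{\fOpLacelam(k)}\leq \const\beta$) and the subcriticality input $\OpNorm{\Optlam}<\infty$, $\OpNorm{\fOptlam(k)}<\infty$ for $\lambda<\lambda_O$.

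First I would prove invertibility of $\Id - \lambda(\Opconnf+\OpLacelam)$. Rewrite the OZE~\eqref{eq:LE_identity_OZE} as $\Optlam\big(\Id - \lambda(\Opconnf+\OpLacelam)\big) = \Opconnf + \OpLacelam$ and also $\big(\Id - \lambda(\Opconnf+\OpLacelam)\big)\Optlam = \Opconnf+\OpLacelam$ (the second form follows since all operators here are self-adjoint integral operators and one can take adjoints, or simply re-derive the OZE with the convolution on the other side). Adding $\Id$ to both sides of $\lambda\Optlam(\Opconnf+\OpLacelam) = \Optlam - (\Opconnf+\OpLacelam)$ gives $\big(\Id + \lambda\Optlam\big)\big(\Id - \lambda(\Opconnf+\OpLacelam)\big) = \Id + \lambda\Optlam - \lambda(\Opconnf+\OpLacelam) - \lambda^2\Optlam(\Opconnf+\OpLacelam) = \Id$; symmetrically $\big(\Id - \lambda(\Opconnf+\OpLacelam)\big)\big(\Id + \lambda\Optlam\big) = \Id$. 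Since $\Id + \lambda\Optlam$ is a bounded operator (as $\OpNorm{\Optlam}<\infty$), it is a genuine two-sided bounded inverse of $\Id - \lambda(\Opconnf+\OpLacelam)$. Multiplying the OZE on the right by this inverse yields~\eqref{eqn:Optlam_Inverted}; the identical argument with $\fOpconnf(k),\fOpLacelam(k),\fOptlam(k)$ gives~\eqref{eqn:Optlam_Inverted_Fourier} for every $k\in\Rd$.

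Next, continuity of $\lambda\mapsto\OpLacelam$ (and $\lambda\mapsto\fOpLacelam(k)$) in the operator-norm topology: each $\OpLacelam^{(n)}$ is continuous in $\lambda$ — this follows because its kernel $\pi^{(n)}_\lambda$ is built from finitely many instances of $\tlam$ composed via $\nu$-integrals, and $\Optlam$ is operator-norm differentiable (hence continuous) in $\lambda$ on $(0,\lambda_O)$ by Corollary~\ref{thm:differentiabilityofOpT}, so compositions and the explicit probabilistic expressions propagate continuity; at $\lambda=0$ one checks directly. The series $\OpLacelam = \sum_n \OpLacelam^{(n)}$ converges uniformly on compact subsets of $[0,\lambda_O)$ because $\OpNorm{\OpLacelam^{(n)}}\leq \const''(\const''\beta)^n$ with $\const''$ increasing in $f(\lambda)$ (bounded on compacts by continuity of $f$) and $\const''\beta<1/2$, so the uniform limit of continuous maps is continuous.

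Finally, the spectral-supremum bounds. Since $\Id - \lambda(\Opconnf+\OpLacelam)$ has a bounded inverse, $1\notin\sigma\big(\lambda(\Opconnf+\OpLacelam)\big)$, equivalently $1/\lambda\notin\sigma(\Opconnf+\OpLacelam)$ for $\lambda>0$; as $\Opconnf+\OpLacelam$ is bounded self-adjoint its spectrum is a closed subset of $\R$ not containing $1/\lambda$, so we need to rule out $\SupSpec{\Opconnf+\OpLacelam}>1/\lambda$. Here I would use the intermediate-value/continuity argument hinted at in the lemma statement: by Lemma~\ref{lem:SupSpecTriangle} and $\OpNorm{\OpLacelam}\leq\const\beta$, $\SupSpec{\Opconnf+\OpLacelam}$ differs from $\SupSpec{\Opconnf}\leq 1$ by at most $\const\beta$, so for $d$ large $\SupSpec{\Opconnf+\OpLacelam}\leq 1+\const\beta$; combined with continuity of $\lambda\mapsto\SupSpec{\Opconnf+\OpLacelam}$ and the fact that at $\lambda=0$ we have $\lambda\SupSpec{\Opconnf+\OpLacelam}=0<1$, if $\lambda\SupSpec{\Opconnf+\OpLacelam}$ ever reached $1$ it would have to pass through $1$ at some $\lambda'\in(0,\lambda_O)$ — but there $1/\lambda'\in\sigma(\Opconnf+\OpLacelam)$, contradicting the just-proved invertibility at $\lambda'$. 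Hence $\lambda\SupSpec{\Opconnf+\OpLacelam}<1$ throughout $[0,\lambda_O)$, and the same reasoning with $\fOpconnf(k)+\fOpLacelam(k)$ (using $\SupSpec{\fOpconnf(k)}\leq\SupSpec{\fOpconnf(0)}=1$) gives $\lambda\SupSpec{\fOpconnf(k)+\fOpLacelam(k)}<1$ for all $k$.

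\medskip

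The main obstacle I anticipate is the continuity of $\lambda\mapsto\OpLacelam$ at the level of each individual coefficient $\OpLacelam^{(n)}$: one needs to argue that the kernel $\pi^{(n)}_\lambda(x,y)$ — defined through intersections of connection events in augmented point processes — depends continuously on $\lambda$ in a way that transfers to the operator norm, not just pointwise. The cleanest route is probably to bound $\OpNorm{\OpLacelam^{(n)} - \varPi^{(n)}_{\lambda'}}$ by an $L^{1,\infty}$-type difference and dominate $|\pi^{(n)}_\lambda - \pi^{(n)}_{\lambda'}|$ using the $\Psi$-diagrammatic machinery of Section~\ref{sec:diagrammaticbounds} together with the differentiability of $\Optlam$ from Corollary~\ref{thm:differentiabilityofOpT}, but spelling this out rigorously is the part requiring genuine care rather than routine bookkeeping.
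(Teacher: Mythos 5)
Your argument for invertibility is correct and in fact slightly cleaner than the paper's: you exhibit $\Id+\lambda\Optlam$ as an explicit two-sided bounded inverse of $\Id-\lambda\left(\Opconnf+\OpLacelam\right)$ (using self-adjointness to obtain the OZE with the factors in the other order), whereas the paper argues by contradiction with an approximate-kernel sequence $f_n$. Your treatment of the spectral bounds via the intermediate value theorem — using that $\SupSpec{\Opconnf+\OpLacelam}$ lies in $\sigma\left(\Opconnf+\OpLacelam\right)$, so that $\lambda\SupSpec{\Opconnf+\OpLacelam}=1$ would contradict invertibility — is essentially the paper's argument.

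The gap is exactly where you flagged it: continuity of $\lambda\mapsto\OpLacelam$. Your proposed route (term-by-term continuity of each $\OpLacelam^{(n)}$ plus uniform convergence of the series) founders on the first step. The kernel $\pi^{(n)}_\lambda$ is \emph{not} a finite composition of $\tlam$'s; it is the probability of an intersection of events involving thinnings, pivotal points and the clusters $\C_i$ (Definition~\ref{def:LE:lace_expansion_coefficients}), and the $\Psi$-diagrams of Section~\ref{sec:diagrammaticbounds} provide only upper bounds on it, not a representation. Establishing operator-norm continuity of $\OpLacelam^{(n)}$ directly would require a separate probabilistic argument (pointwise continuity of $\pi^{(n)}_\lambda$ in $\lambda$, plus a domination step to upgrade pointwise convergence to operator-norm convergence, in the spirit of what the paper only does later at $\lambda=\lambda_O$), and you have not supplied it. The paper sidesteps this entirely by exploiting the very identity you already established: since $\Id+\lambda\Optlam$ is the bounded inverse of $\Id-\lambda\left(\Opconnf+\OpLacelam\right)$, one has $\Opconnf+\OpLacelam=\left(\Id+\lambda\Optlam\right)^{-1}\Optlam$, and continuity of $\lambda\mapsto\Optlam$ (Corollary~\ref{thm:differentiabilityofOpT}) together with norm-continuity of operator inversion on the set of invertible operators yields continuity of $\OpLacelam$ at once. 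Replacing your continuity paragraph with this one-line deduction closes the gap; the rest of your proposal then goes through, including the IVT step, which needs that continuity as input.
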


\begin{proof}
We first prove that $\Id - \lambda\left(\Opconnf + \OpLacelam\right)$ has a bounded linear inverse. This is clear for $\lambda = 0$, so we only need to consider $\lambda\in\left(0,\lambda_O\right)$. Suppose for contradiction that $\Id - \lambda\left(\Opconnf + \OpLacelam\right)$ does not have a bounded linear inverse. Then there exists a sequence $f_n\in L^2(\X)$ such that $\norm*{f_n}_2 = 1$ and $\norm*{\left(\Id - \lambda\left(\Opconnf + \OpLacelam\right)\right)f_n}_2\to0$. This also means that $\lambda\norm*{\left(\Opconnf + \OpLacelam\right)f_n}\to 1$. Then from Lemma~\ref{thm:Spectrum_Subset} and sub-criticality we have $\OpNorm{\Optlam} \leq \OpNorm{\fOptlam(0)} < \infty$, and therefore the OZE equation \eqref{eq:LE_identity_OZE} implies that $\norm*{\left(\Opconnf + \OpLacelam\right)f_n}\to 0$, a contradiction. A similar argument proves the corresponding statement for the Fourier transformed operators.

Once we know that the inverses exist, it is only a matter of rearranging the OZE equations to get the expressions for $\Optlam$ and $\fOptlam(k)$ in terms of the other operators.

Since $\Opconnf + \OpLacelam$ is a bounded operator, a similar argument proves that $\Id + \lambda \Optlam$ has a bounded inverse, and therefore
\begin{equation}
    \Opconnf + \OpLacelam = \left(\Id + \lambda \Optlam\right)^{-1}\Optlam.
\end{equation}
From Corollary~\ref{thm:differentiabilityofOpT} we have the continuity of $\lambda\mapsto\Optlam$, and therefore the continuity $\Opconnf + \OpLacelam$ (and therefore $\OpLacelam$). We also have the continuity of $\lambda\mapsto\fOptlam(k)$ from Corollary~\ref{thm:differentiabilityofOpT}, and we similarly get the continuity of $\fOpLacelam(k)$.

The continuity of $\Opconnf + \OpLacelam$ implies the continuity of $\lambda\SupSpec{\Opconnf + \OpLacelam}$ (via Lemma~\ref{lem:SupSpecTriangle}). At $\lambda = 0$ it is clear that $\lambda\SupSpec{\Opconnf + \OpLacelam} = 0$, and our above argument implies that $\lambda\SupSpec{\Opconnf + \OpLacelam} \ne 1$. Therefore an intermediate value theorem argument proves that $\lambda\SupSpec{\Opconnf + \OpLacelam} < 1$. This argument also works for $\lambda \SupSpec{\fOpconnf(k) + \fOpLacelam(k)}$.
\end{proof}

\section{Uniform Convergence}
\label{sec:Bootstrap:ForbiddenRegion}

In Proposition~\ref{thm:convergenceoflaceexpansion} we proved that the lace expansion converges in the sub-critical regime. To show the convergence at criticality, we will need a stronger result. We will need uniform convergence.

\begin{observation}[Uniform convergence of the lace expansion] \label{obs:convergenceoflaceexpansion_uniform}
Suppose that there exists finite $C>0$ such that $f \leq C$ on $[0,\lambda_O)$. Then for $d>6$ sufficiently large there exits $c>0$ (independent of $\lambda$ and $d$) such that the bounds~\eqref{eq:BA:convLE_intPi_bds1},~\eqref{eq:BA:convLE_intPi_bds2},~\eqref{eq:BA:convLE_R_bd} hold with $\const$ replaced by $c$.
\end{observation}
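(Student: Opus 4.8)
\textbf{Proof proposal for Observation~\ref{obs:convergenceoflaceexpansion_uniform}.}

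The plan is to trace through the proof of Proposition~\ref{thm:convergenceoflaceexpansion} and check that every constant denoted $\const$ there is a fixed increasing function of $f(\lambda)$ and otherwise depends only on $d$ (and $m,n$ in the intermediate lemmas) but never on $\lambda$ in any other way. Under the hypothesis $f\le C$ on $[0,\lambda_O)$, one then simply replaces $f(\lambda)$ by $C$ everywhere and obtains a single $\lambda$-independent constant $c = c(C)$. Concretely, I would first recall that the diagrammatic bounds of Section~\ref{sec:diagrammaticbounds} (Propositions~\ref{thm:DB:Pi0_bounds},~\ref{prop:LaceCoefficientBound},~\ref{thm:DisplacementNgeq2},~\ref{thm:DisplacementNeq1}) express $\OpNorm{\OpLacelam^{(n)}}$, $\OpNorm{\fOpLacelam^{(n)}(k)}$ and $\OpNorm{\fOpLacelam^{(n)}(0)-\fOpLacelam^{(n)}(k)}$ in terms of the elementary quantities $\lambda$, $\trilam$, $\trilamooBar$, $\OneNorm{\Opconnf}$, $\Ulam$, $\Vlam$, $\WkBar$, $\HkBar$, $\trilamB$, $\OneNorm{\Opconnf_k}$, $\InfNorm{\Opconnf\Optlam\Optklam\Opconnf}$ and $\mathbb{B}$. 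Each of these has in turn been bounded in Section~\ref{sec:BootstrapBounds} (Proposition~\ref{thm:TauNbound}, Lemmas~\ref{lem:BoundTrilamBar},~\ref{lem:PTkTPBound},~\ref{lem:W_kBound}, \ref{lem:Martini_Bound}, Observation~\ref{obs:trilamB_bound}, and Lemma~\ref{thm:connfComparison}) by an expression of the form $\const(f(\lambda))\,\beta^{a}$ or $\const(f(\lambda))\,\beta^{a}(1-\SupSpec{\fOpconnf(k)})$ with $\const$ increasing in $f$ and $d$-independent, together with the trivially $\lambda$-independent bounds $\OneNorm{\Opconnf}\le C$ and $\mathbb{B}\le g(d)$.

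The key steps, in order, are: (i) fix $C$ with $f\le C$ on $[0,\lambda_O)$; (ii) since each cited bound has the constant $\const(f(\lambda))$ increasing in $f$, monotonicity gives $\const(f(\lambda))\le \const(C)$ uniformly in $\lambda\in[0,\lambda_O)$; in particular this makes $\lambda_O\le f(\lambda)\le C$ so $\lambda$ is itself uniformly bounded, $\trilam\le c\beta^2$, $\trilamooBar\le c$, hence $\Ulam = \LandauBigO{1}$ and $\Vlam = \LandauBigO{\beta}$ with $\lambda$-independent implied constants, and similarly $\WkBar, \HkBar, \trilamB, \OneNorm{\Opconnf_k}, \InfNorm{\Opconnf\Optlam\Optklam\Opconnf}$ all carry the appropriate uniform powers of $\beta$ and, where relevant, the factor $(1-\SupSpec{\fOpconnf(k)})$; (iii) substitute these into Propositions~\ref{thm:DB:Pi0_bounds}--\ref{thm:DisplacementNeq1} to get, for $d$ large enough that $\beta<1$, bounds of the shape $\OpNorm{\OpLacelam^{(n)}}\le c\,6\cdot 4^{n-1}\beta^{n+1}$ etc., with $c=c(C)$ independent of $\lambda$ and $d$; (iv) sum the geometric series $\sum_{n\ge1} 6\cdot 4^{n-1}\beta^{n+1}$, which converges and is $\LandauBigO{\beta^2}$ once $4\beta<1$, i.e.\ once $d$ is large, giving~\eqref{eq:BA:convLE_intPi_bds1}--\eqref{eq:BA:convLE_intPi_bds2} with the uniform $c$; (v) for~\eqref{eq:BA:convLE_R_bd}, feed the uniform bound on $\OpNorm{\OpLacelam^{(n)}}$ into Lemma~\ref{lem:remainderBound} and note $\OpNorm{\Optlam}$ is the only remaining factor, which is left explicit in the statement exactly as in the original; (vi) convergence of the series defining $\OpLacelam$ and $\fOpLacelam(k)$ then follows from completeness of the space of bounded operators, and the OZEs~\eqref{eq:LE_identity_OZE}--\eqref{eq:LE_identity_OZE_Fourier} follow from Proposition~\ref{thm:OperatorLaceExpansion} by letting $n\to\infty$ and using $\OpNorm{R_{\lambda,n}}\to0$ (since $\beta<1$) — but this last part is identical to the proof of Proposition~\ref{thm:convergenceoflaceexpansion} and needs no change, so the observation really only concerns the uniformity of the constant in~\eqref{eq:BA:convLE_intPi_bds1}--\eqref{eq:BA:convLE_R_bd}.

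In fact, since this is an observation rather than a standalone theorem, the cleanest write-up is simply to remark that the proof of Proposition~\ref{thm:convergenceoflaceexpansion} produces, at every step, a constant $\const$ that is an increasing function of $f(\lambda)$ and is otherwise $d$-independent; under the standing hypothesis $f\le C$, replacing $f(\lambda)$ by its upper bound $C$ in each such constant yields a single $c=c(C)$ with which~\eqref{eq:BA:convLE_intPi_bds1},~\eqref{eq:BA:convLE_intPi_bds2} and~\eqref{eq:BA:convLE_R_bd} hold uniformly for all $\lambda\in[0,\lambda_O)$ once $d$ is large enough that $4\beta(d)<1$. I expect the only mildly delicate point to be bookkeeping: one must confirm that no hidden $\lambda$-dependence sneaks in through $\lambda_O$ itself (it does not, because $\lambda\le f(\lambda)\le C$) and through the geometric prefactors $6\cdot4^{n-1}$ and $(n+1)^2 4^{n-1}$ in Propositions~\ref{prop:LaceCoefficientBound} and~\ref{thm:DisplacementNgeq2}, which are absorbed into a convergent series of $\beta$ precisely because $\beta=\beta(d)\to0$; hence the smallness of $\beta$ (equivalently, largeness of $d$) is what makes the sums finite, while $C$ controls the per-term constant. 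No genuinely new estimate is needed.
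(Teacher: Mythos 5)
Your proposal is correct and follows exactly the reasoning the paper intends: the Observation is stated without a separate proof precisely because every constant $\const$ in Proposition~\ref{thm:convergenceoflaceexpansion} is an increasing function of $f(\lambda)$ and otherwise independent of $\lambda$ and $d$, so under the hypothesis $f\le C$ one replaces $f(\lambda)$ by $C$ throughout to obtain a single uniform $c=c(C)$. Your additional bookkeeping (uniformity of $\lambda\le f(\lambda)\le C$, absorption of the combinatorial prefactors into the convergent geometric series once $\beta(d)$ is small) is exactly the right check and matches the paper's argument.
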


To show that $f$ is indeed uniformly bounded for $\lambda\in\left[0,\lambda_O\right)$. We do this by performing a forbidden-region argument. In Proposition~\ref{thm:bootstrapargument} we prove that $f(0)$ is bounded and $f$ is continuous on $\left[0,\lambda_O\right)$. However we also prove that $f$ is never in the region $\left(\kappa,\kappa+1\right]$, where 
\begin{equation}
\label{eqn:DefineKappa}
    \kappa:= 10 C \left(1+C\right)^2,
\end{equation}
$C$ being the constant appearing in \ref{Assump:2ndMoment}. Note that \eqref{eqn:fconnfIsL2} in \ref{Assump:2ndMoment} requires that $C\geq 1$. The intermediate value theorem then implies that $\kappa$ acts as an upper bound on the whole domain $\left[0,\lambda_O\right)$. One should not read too much into the value of $\kappa$ here. It is sufficient for our purposes, but not remotely optimal.

\begin{prop}[The forbidden-region argument] \label{thm:bootstrapargument}
The following three statements are all true:
\begin{enumerate}[label=\arabic*)]
\item \label{thm:bootstrapargument-1} $f$ satisfies $f(0) \leq \kappa$.
\item \label{thm:bootstrapargument-3}$f(\lambda) \notin (\kappa,\kappa+1]$ for all $\lambda \in [0,\lambda_O)$ provided that $d$ is sufficiently large.
\item \label{thm:bootstrapargument-2}$f$ is continuous on $[0, \lambda_O)$.
\end{enumerate}
Therefore $f(\lambda) \leq \kappa$ holds uniformly in $\lambda<\lambda_O$ for all $d$ sufficiently large.
\end{prop}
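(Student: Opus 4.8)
\textbf{Proof proposal for Proposition~\ref{thm:bootstrapargument}.}

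The plan is to verify the three numbered claims in turn and then invoke an intermediate-value-theorem (``forbidden-region'') argument to conclude. For claim~\ref{thm:bootstrapargument-1}, at $\lambda=0$ we have $\Optlam = \Opconnf$ and hence $\fOptlam(k) = \fOpconnf(k)$, $\fOptklam(l) = \fOpconnf_k(l)$; thus $f_1(0)=0$, $f_2(0) = \esssup_k \OpNorm{\fOpconnf(k)}/\fgmu(k)$, and $f_3(0) = \esssup_{k,l}\OpNorm{\fOpconnf_k(l)}/\widehat J_{\mu_0}(k,l)$ with $\mu_0 = 0$ (so $\fgmu$ at $\lambda=0$ equals the constant function $1$). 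Then $f_2(0) = \esssup_k \OpNorm{\fOpconnf(k)}\big(1-\SupSpec{\fOpconnf(k)}\big)^{-1}\!\cdot\! \ldots$ — more carefully, $\fgmu(k)|_{\lambda=0} = 1$ so $f_2(0) = \esssup_k\OpNorm{\fOpconnf(k)} \le \OpNorm{\fOpconnf(0)}\le C$ by \ref{Assump:2ndMoment} via \eqref{eqn:Sup-Exp_Ratio_2} and Lemma~\ref{thm:Spectrum_Subset}; and $\widehat J_{\mu_0}(k,l) = 3\big(1-\SupSpec{\fOpconnf(k)}\big)$, so $f_3(0)\le \tfrac13\esssup_{k,l}\OpNorm{\fOpconnf_k(l)}\big(1-\SupSpec{\fOpconnf(k)}\big)^{-1}\le \tfrac{C}{3}$ by Lemma~\ref{thm:connfComparison}. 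Each bound is at most $C\le\kappa$, so $f(0)\le\kappa$.

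For claim~\ref{thm:bootstrapargument-2}, continuity of $f_1$ is trivial. Continuity of $f_2$ and $f_3$ follows from: (i) continuity of $\lambda\mapsto\OpNorm{\fOptlam(k)}$ and $\lambda\mapsto\OpNorm{\fOptklam(l)}$ in the operator norm, which comes from Corollary~\ref{thm:differentiabilityofOpT} (differentiability, hence continuity, with $k$-uniform bounds on the derivatives by \eqref{eqn:differentiabilityofOpT -2} and \eqref{eqn:differentiabilityofOpT -3} and Lemma~\ref{thm:Spectrum_Subset}); (ii) continuity of $\lambda\mapsto\mulam$ and hence of $\fgmu(k)$ and $\widehat J_{\mulam}(k,l)$, uniformly in $k,l$, from Lemma~\ref{lem:mulam_bounds} together with the uniform lower bound $\fgmu(k)\ge (1+\OpNorm{\fOpconnf(0)})^{-1}$; and (iii) the fact that a supremum over $k$ (resp.\ $k,l$) of a family of functions that is equicontinuous in $\lambda$ (uniformly in the parameter) is itself continuous. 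The equicontinuity is exactly what the derivative bounds in Corollary~\ref{thm:differentiabilityofOpT} provide, once one checks that $\fgmu$ and $\widehat J_{\mulam}$ are bounded away from zero uniformly in the parameters on compact $\lambda$-intervals of $[0,\lambda_O)$; near $k=0$ or $l\in\{0,\pm k\}$ the denominators go to zero but the numerators $\OpNorm{\fOptlam(k)}$ resp.\ $\OpNorm{\fOptklam(l)}$ vanish at comparable rate, so the ratios extend continuously — this is the technical heart of the continuity claim and is handled using \ref{Assump:Bound} and Lemma~\ref{lem:tau-connf subcritical}.

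For claim~\ref{thm:bootstrapargument-3} — the forbidden-region step, and the main obstacle — one assumes $f(\lambda)\le\kappa+1$ and shows $f(\lambda)\le\kappa$, for $d$ sufficiently large. This is where all the machinery of Section~\ref{sec:diagrammaticbounds} and Section~\ref{sec:BootstrapBounds} is used: assuming $f\le\kappa+1$ feeds into the bounds of Proposition~\ref{thm:convergenceoflaceexpansion} (via Observation~\ref{obs:convergenceoflaceexpansion_uniform} with $C=\kappa+1$), giving $\sum_n\OpNorm{\fOpLacelam^{(n)}(k)}\le c\beta$ and $\sum_n\OpNorm{\fOpLacelam^{(n)}(0)-\fOpLacelam^{(n)}(k)}\le c\beta(1-\SupSpec{\fOpconnf(k)})$, with $c$ depending only on $\kappa+1$, not on $d$. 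Then one uses the OZE \eqref{eq:LE_identity_OZE_Fourier} in inverted form \eqref{eqn:Optlam_Inverted_Fourier} from Lemma~\ref{lem:Optlam-a} to estimate $\OpNorm{\fOptlam(k)}$: writing $\fOptlam(k) = (\fOpconnf(k)+\fOpLacelam(k))(\Id-\lambda(\fOpconnf(k)+\fOpLacelam(k)))^{-1}$, one bounds $\SupSpec{\fOpconnf(k)+\fOpLacelam(k)} \le \SupSpec{\fOpconnf(k)} + c\beta$ by Lemma~\ref{lem:SupSpecTriangle}, combines with $\lambda\le f_1(\lambda)\le\kappa$ and the self-adjointness to control the inverse, and compares against $\fgmu(k) = (1-\mulam\SupSpec{\fOpconnf(k)})^{-1}$; one must also relate $\mulam$ to $\lambda\SupSpec{\fOpconnf(0)+\fOpLacelam(0)}$. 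Since $\SupSpec{\fOpconnf(0)}=1$, for $d$ large enough $c\beta$ is small, and the bound on $\OpNorm{\fOptlam(k)}/\fgmu(k)$ improves from $\kappa+1$ to something like $\tfrac{1+c\beta}{1-c\beta}\le 2$, comfortably below $\kappa$ (recall $\kappa = 10C(1+C)^2\ge 40$). The analogous but more delicate estimate for $f_3$ uses the displaced OZE, the bound on $\sum_n\OpNorm{\fOpLacelam^{(n)}(0)-\fOpLacelam^{(n)}(k)}$, \ref{Assump:Bound} to control $\widehat J_{\mulam}(k,l)$ from below, and the split-of-cosines Lemma~\ref{lem:cosinesplitlemma}; the point is again that the ``error'' terms carry a factor $\beta$ which can be made arbitrarily small by taking $d\ge d^*$. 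Finally, combining claims~\ref{thm:bootstrapargument-1}--\ref{thm:bootstrapargument-3}: $f$ is continuous on the connected set $[0,\lambda_O)$, starts at $f(0)\le\kappa$, and cannot take values in $(\kappa,\kappa+1]$; hence $f(\lambda)\le\kappa$ for all $\lambda\in[0,\lambda_O)$. The main difficulty is bookkeeping in claim~\ref{thm:bootstrapargument-3}: one must verify that every constant appearing is genuinely $d$-independent (depending only on $C$ and $\kappa$) so that the $\beta\to 0$ mechanism closes the argument.
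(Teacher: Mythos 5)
Your overall strategy is the paper's: verify $f(0)\leq\kappa$ by direct computation at $\lambda=0$, prove continuity via equicontinuity of the families $\lambda\mapsto\OpNorm{\fOptlam(k)}/\fgmu(k)$ and $\lambda\mapsto\OpNorm{\fOptklam(l)}/\widehat J_{\mulam}(k,l)$ using the derivative bounds of Corollary~\ref{thm:differentiabilityofOpT}, and close the forbidden region by feeding the hypothesis $f\leq\kappa+1$ into Observation~\ref{obs:convergenceoflaceexpansion_uniform} and the inverted OZE. Claims~\ref{thm:bootstrapargument-1} and \ref{thm:bootstrapargument-2} are essentially right as you describe them, modulo one mischaracterization: for $f_2$ the denominator $\fgmu(k)$ never goes to zero --- it is bounded below by $(1+\OpNorm{\fOpconnf(0)})^{-1}$ uniformly in $k$ and $\lambda$ (Lemma~\ref{lem:mulam_bounds}), which is precisely what makes the equicontinuity argument work; the only genuinely small denominator is the factor $1-\SupSpec{\fOpconnf(k)}$ in $\widehat J_{\mulam}(k,l)$, and there your appeal to Lemma~\ref{lem:tau-connf subcritical} together with Lemma~\ref{thm:connfComparison} is the correct fix.

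The genuine gap is in claim~\ref{thm:bootstrapargument-3}, in the improvement of $f_2$. Your assertion that the bound ``improves to something like $\tfrac{1+c\beta}{1-c\beta}\leq 2$'' does not follow from the ingredients you list. Writing $\widehat a(k)=\fOpconnf(k)+\fOpLacelam(k)$, commutativity (which must itself be extracted from the OZE and the invertibility of $\Id-\lambda\widehat a(k)$, so that the Spectral Theorem applies simultaneously to both operators) gives $\SupSpec{\fOptlam(k)}=\SupSpec{\widehat a(k)}/(1-\lambda\SupSpec{\widehat a(k)})$, so the quantity to control is
\[
\frac{\SupSpec{\fOptlam(k)}}{\fgmu(k)}=\SupSpec{\widehat a(k)}\cdot\frac{1-\mulam\SupSpec{\fOpconnf(k)}}{1-\lambda\SupSpec{\widehat a(k)}}.
\]
For $k$ small and $\lambda$ near $\lambda_O$ \emph{both} $1-\mulam\SupSpec{\fOpconnf(k)}$ and $1-\lambda\SupSpec{\widehat a(k)}$ tend to zero, and a bound of the form $\OpNorm{\fOpLacelam(k)}\leq c\beta$ alone cannot show their ratio is close to $1$: one needs the exact identity $\mulam=\lambda+\delta_\lambda(0)/(1+\delta_\lambda(0))$ with $\delta_\lambda(k)=\SupSpec{\widehat a(k)}-\SupSpec{\fOpconnf(k)}$, plus the Lipschitz-type estimate $\abs{\delta_\lambda(k)-\delta_\lambda(0)}\leq 2\OpNorm{\fOpLacelam(k)-\fOpLacelam(0)}\leq c\beta(1-\SupSpec{\fOpconnf(k)})$, so that the difference of the two denominators is itself of order $\beta(1-\SupSpec{\fOpconnf(k)})$ and can be absorbed. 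This cancellation (the $f_5$/$f_6$ computation in the paper, split over the regions $\SupSpec{\fOpconnf(k)}\lessgtr\tfrac12$) is the heart of the forbidden-region step and is absent from your sketch; without it the argument does not close. The same bounded-ratio quantity is then also what you need for $f_3$, where the correct mechanism is the resolvent identity $\fOptlam(l)-\fOptlam(l+k)=A(l)(\widehat a(l)-\widehat a(l+k))A(l+k)$ with $A(k)=(\Id-\lambda\widehat a(k))^{-1}$ and $\OpNorm{A(k)}\leq(\text{const})\cdot\fgmu(k)$, rather than cosine-splitting as you suggest. Finally, note that $\OpNorm{\fOptlam(k)}$ and $\SupSpec{\fOptlam(k)}$ need not coincide; one must separately dispose of the negative part of the spectrum via $\OpNorm{\fOptlam(k)}\leq\SupSpec{\fOptlam(k)}\vee\SupSpec{-\widehat a(k)}$.
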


Before we embark on our proof of Proposition~\ref{thm:bootstrapargument}, we prove a lemma which will give us some properties of the components that went into the bootstrap function.

\begin{lemma}\label{lem:mulam_bounds}
The map $\lambda\mapsto \SupSpec{\fOptlam(0)}$ is continuous and bounded below by $\SupSpec{\fOpconnf(0)}$ on $\left[0,\lambda_O\right)$. Therefore the map $\lambda\mapsto \mulam$ is a continuous map from $\left[0,\lambda_O\right)\to \left[0,1\right)$. Furthermore, $\fgmu(k)$ is bounded below by $\left(1+\OpNorm{\fOpconnf(0)}\right)^{-1}$ for all $k\in\Rd$.
\end{lemma}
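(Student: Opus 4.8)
\emph{Setup.} The plan is to reduce the statement to the comparison and perturbation lemmas of Section~\ref{sec:Prelim:LinearOperators} and to Corollary~\ref{thm:differentiabilityofOpT}. Since adjacency implies connection, $\tlam(x,y)\ge\connf(x,y)\ge 0$ for all $x,y\in\X$, hence $\ftlam(0;a,b)\ge\fconnf(0;a,b)\ge 0$ for $\Pcal^{\otimes 2}$-almost every $(a,b)$; Lemma~\ref{lem:NormBounds} then yields $\SupSpec{\fOptlam(0)}\ge\SupSpec{\fOpconnf(0)}$, which equals $1$ throughout this section (and is strictly positive in general, as noted in the rescaling discussion). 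Conversely $\SupSpec{\fOptlam(0)}\le\OpNorm{\fOptlam(0)}<\infty$ for $\lambda<\lambda_O$, by Lemma~\ref{lem:BoundsonOperatorNorm} and the definition of $\lambda_O$. A routine path-counting estimate (iterating Mecke's equation over open self-avoiding paths) gives moreover $\OneNorm{\fOptlam(0)}\le(1-\lambda\OneNorm{\fOpconnf(0)})^{-1}\OneNorm{\fOpconnf(0)}$ whenever $\lambda\OneNorm{\fOpconnf(0)}<1$, so $\lambda_O\ge\OneNorm{\fOpconnf(0)}^{-1}>0$ (the last quantity finite by \ref{Assump:2ndMoment}). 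Thus $\SupSpec{\fOptlam(0)}$ takes values in $[1,\infty)$ on all of $[0,\lambda_O)$.

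\emph{Continuity of $\lambda\mapsto\SupSpec{\fOptlam(0)}$ on $[0,\lambda_O)$.} On the open interval $(0,\lambda_O)$ this is immediate: Corollary~\ref{thm:differentiabilityofOpT} makes $\lambda\mapsto\fOptlam(0)$ differentiable for $\OpNorm{\cdot}$ with $\OpNorm{\tfrac{\dd}{\dd\lambda}\fOptlam(0)}\le\OpNorm{\fOptlam(0)}^2$, which is locally bounded because $\lambda\mapsto\OpNorm{\fOptlam(0)}$ is non-decreasing (Lemma~\ref{lem:NormBounds}, the kernels being monotone in $\lambda$); hence $\lambda\mapsto\fOptlam(0)$ is locally Lipschitz for $\OpNorm{\cdot}$, and since $\SupSpec{\cdot}$ is $1$-Lipschitz for $\OpNorm{\cdot}$ (Lemma~\ref{lem:SupSpecTriangle}), the composite is continuous there. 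For right-continuity at $\lambda=0$ I would use $\tlam(x,y)-\connf(x,y)\le\lambda\int\connf(x,u)\tlam(u,y)\,\nu(\dd u)$, obtained by conditioning on whether $x$ and $y$ are directly joined and applying Mecke's equation with a union bound; integrating over position and marks and invoking the path-counting bound above gives $\OneNorm{\fOptlam(0)-\fOpconnf(0)}\le\lambda\OneNorm{\fOpconnf(0)}\OneNorm{\fOptlam(0)}=O(\lambda)$, and then $\OpNorm{\cdot}\le\OneNorm{\cdot}$ together with the $1$-Lipschitz property of $\SupSpec{\cdot}$ forces $\SupSpec{\fOptlam(0)}\to\SupSpec{\fOpconnf(0)}$ as $\lambda\downarrow 0$.

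\emph{Consequences.} Since $\SupSpec{\fOptlam(0)}\in[1,\infty)$ on $[0,\lambda_O)$ and $t\mapsto 1-1/t$ is continuous on $[1,\infty)$ with range in $[0,1)$, the map $\lambda\mapsto\mulam$ is continuous from $[0,\lambda_O)$ into $[0,1)$. For the bound on $\fgmu(k)$, reflection invariance lets us write $\fconnf(k;a,b)=\int\cos(k\cdot\xbar)\connf(\xbar;a,b)\,\dd\xbar$, so $|\fconnf(k;a,b)|\le\fconnf(0;a,b)$ and hence $|\SupSpec{\fOpconnf(k)}|\le\OpNorm{\fOpconnf(k)}\le\OpNorm{\fOpconnf(0)}$ by Lemmas~\ref{lem:BoundsonOperatorNorm} and~\ref{lem:NormBounds}; combined with $0\le\mulam<1$ and $\SupSpec{\fOpconnf(0)}=1$ this gives $0<1-\mulam\SupSpec{\fOpconnf(k)}\le 1+\OpNorm{\fOpconnf(0)}$, and taking reciprocals yields $\fgmu(k)\ge(1+\OpNorm{\fOpconnf(0)})^{-1}$ for every $k\in\Rd$ and $\lambda\in[0,\lambda_O)$.

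\emph{Main obstacle.} The only genuinely delicate point is continuity at the left endpoint $\lambda=0$, since Corollary~\ref{thm:differentiabilityofOpT} is stated only on the open interval; it is handled by the separate a priori estimate above (equivalently, by monotone convergence $\ftlam(0;\cdot,\cdot)\downarrow\fconnf(0;\cdot,\cdot)$ together with the $O(1)$ bound on $\OneNorm{\fOptlam(0)}$ near $0$). Everything else is bookkeeping with the comparison lemmas of Section~\ref{sec:Prelim:LinearOperators}.
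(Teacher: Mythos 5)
Your proof is correct and follows essentially the same route as the paper's: continuity via Corollary~\ref{thm:differentiabilityofOpT} combined with the Lipschitz property of $\SupSpec{\cdot}$ from Lemma~\ref{lem:SupSpecTriangle}, the lower bound via the kernel comparison of Lemma~\ref{lem:NormBounds}, and the bound on $\fgmu(k)$ via $\abs{\SupSpec{\fOpconnf(k)}}\leq\OpNorm{\fOpconnf(0)}$. Your additional care with right-continuity at $\lambda=0$ (where the corollary is stated only on the open interval) addresses a point the paper's proof passes over silently, and your a priori $O(\lambda)$ estimate handles it cleanly.
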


\begin{proof}
First note that it is proven in Appendix~\ref{sec:Prelim:differentiating} that for $\lambda<\lambda_O$ the map $\lambda \mapsto \fOptlam(0)$ is differentiable with respect to the operator norm (Corollary~\ref{thm:differentiabilityofOpT}), and is therefore continuous. Lemma~\ref{lem:SupSpecTriangle} then implies that $\lambda\mapsto \SupSpec{\fOptlam(0)}$ is continuous. For the bounded below property, we use Lemma~\ref{lem:NormBounds} to get $\SupSpec{\fOptlam(0)} \geq \SupSpec{\fOpconnf(0)} = 1$.

The continuity of $\mulam$ follows directly from the continuity and positivity of $\SupSpec{\fOptlam(0)}$. The bound $\SupSpec{\fOptlam(0)} \geq 1$ bounds $\mulam \geq 0$, and $\SupSpec{\fOptlam(0)} \leq \OpNorm{\fOptlam(0)} < \infty$ shows that $\mulam <1$ for all $\lambda \in \left(0,\lambda_O\right)$. The case $\mu_0 = 0$ follows immediately from $\mathcal{T}_0 = \Opconnf$.

Because $\mulam \in\left[0,1\right)$, to minimise $\fgmu(k)$ we want to find a lower bound for $\SupSpec{\fOpconnf(k)}$. The $k$-uniform bound on $\fgmu(k)$ then follows from $\SupSpec{\fOpconnf(k)} \geq -\OpNorm{\fOpconnf(k)} \geq -\OpNorm{\fOpconnf(0)}$.
\end{proof}

We now return to Proposition~\ref{thm:bootstrapargument}. We will deal with each of the three statements in Proposition~\ref{thm:bootstrapargument} in turn.

\begin{proof}[Proof of Proposition \ref{thm:bootstrapargument}~-\ref{thm:bootstrapargument-1}]
Firstly, it is clear $f_1\left(0\right) = 0$. Then note that $\mathcal{T}_0 = \Opconnf$, and hence 
\begin{equation}
    \OpNorm{\widehat{\mathcal{T}_0}\left(k\right)} = \OpNorm{\fOpconnf\left(k\right)} \leq \OpNorm{\fOpconnf(0)} \leq \OneNorm{\fOpconnf(0)} \leq C,
\end{equation}
where we use \ref{Assump:2ndMoment} (via \eqref{eqn:Sup-Exp_Ratio_2}) in the last inequality. Furthermore $\SupSpec{\widehat{\mathcal{T}}_0(0)} = \SupSpec{\fOpconnf(0)} =1$, and therefore $\widehat{G}_{\mu_{0}}\left(k\right) = \widehat{G}_0\left(k\right) = 1$ and $f_2\left(0\right) \leq C$. Similarly we find that
\begin{equation}
\label{eqn:f_3(0)}
    f_3\left(0\right) = \esssup_{k,l\in\Rd}\frac{\OpNorm{\fOpconnf_k\left(l\right)}}{3\left[1-\SupSpec{\fOpconnf\left(k\right)}\right]}.
\end{equation}
We use then Lemma~\ref{thm:connfComparison} to get the bound
\begin{equation}
    \OpNorm{\fOpconnf_k\left(l\right)} \leq C\left[1 - \SupSpec{\fOpconnf\left(k\right)}\right].
\end{equation}
Hence $f_3\left(0\right) \leq \tfrac{2}{3}C$.
\end{proof}

\begin{proof}[Proof of Proposition \ref{thm:bootstrapargument}~-\ref{thm:bootstrapargument-3}]
We will assume that $f\leq \kappa+1$ and show that this implies that $f\leq \kappa$. Most crucially, the assumption $f\leq \kappa+1$ allows us to apply Observation~\ref{obs:convergenceoflaceexpansion_uniform}. To highlight the $\kappa$-dependence, we write $\constKappaOne$ as the constant arising from this observation.

It will be convenient to introduce some temporary notation for this section. Define $\widehat{a}\left(k\right)\colon L^2\left(\Ecal\right)\to L^2\left(\Ecal\right)$ as $\widehat{a}\left(k\right) = \fOpconnf\left(k\right) + \fOpLacelam\left(k\right)$ for all $k\in\Rd$. We will also make use of the functions $\delta_\lambda\colon \Rd\to\R$ defined by
\begin{equation}
\label{eqn:delta_def}
    \delta_\lambda\left(k\right) := \SupSpec{\fOpconnf\left(k\right) + \fOpLacelam\left(k\right)} - \SupSpec{\fOpconnf\left(k\right)}
\end{equation}
It is clear from the triangle inequality and Lemma~\ref{lem:SupSpecTriangle} that
\begin{equation}
    \abs{\delta_\lambda\left(k\right)}\leq \OpNorm{\fOpLacelam\left(k\right)}\leq \constKappaOne\beta,
\end{equation}
but we will also need the following inequality. We will prove it at the end of this section.

\begin{lemma}
\label{thm:deltatoLacecoefficients}
Let $k_1,k_2\in\Rd$. Then
\begin{equation}
\label{eqn:deltatoLacecoefficients}
    \abs*{\delta_\lambda(k_1)-\delta_\lambda(k_2)} \leq 2 \OpNorm{\fOpLacelam\left(k_1\right) - \fOpLacelam\left(k_2\right)}
\end{equation}
\end{lemma}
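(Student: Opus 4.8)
The plan is to reduce the inequality $\abs{\delta_\lambda(k_1)-\delta_\lambda(k_2)} \leq 2\OpNorm{\fOpLacelam(k_1) - \fOpLacelam(k_2)}$ to a statement purely about how the spectral supremum of a sum of self-adjoint operators changes under perturbation. Recall that $\delta_\lambda(k) = \SupSpec{\fOpconnf(k) + \fOpLacelam(k)} - \SupSpec{\fOpconnf(k)}$ and that all of $\fOpconnf(k)$, $\fOpLacelam(k)$, and their sum are bounded self-adjoint operators on the separable Hilbert space $L^2(\Ecal)$ (self-adjointness of $\fOpLacelam(k)$ follows because it is the operator-norm limit of the $\fOpLacelam^{(n)}(k)$, each of which has a real symmetric kernel, as is clear from Definitions~\ref{def:LE:lace_expansion_coefficients} and~\ref{def:LE:lace_expansion_coefficients-Operators} together with the symmetry of $\connf$ and $\tlam$). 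So all the relevant quantities are genuine real numbers and $\SupSpec{\cdot}$ is well-defined on them.

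The key step is to write $\delta_\lambda(k_1) - \delta_\lambda(k_2)$ as a telescoping difference of four spectral suprema and bound the difference in two chunks using Lemma~\ref{lem:SupSpecTriangle}. Concretely, I would write
\begin{align*}
    \delta_\lambda(k_1) - \delta_\lambda(k_2) &= \Big(\SupSpec{\fOpconnf(k_1) + \fOpLacelam(k_1)} - \SupSpec{\fOpconnf(k_2) + \fOpLacelam(k_2)}\Big) \\
    &\qquad - \Big(\SupSpec{\fOpconnf(k_1)} - \SupSpec{\fOpconnf(k_2)}\Big).
\end{align*}
Applying Lemma~\ref{lem:SupSpecTriangle} to the first bracket, with $G = \fOpconnf(k_2) + \fOpLacelam(k_2)$ and the perturbation $H = \big(\fOpconnf(k_1) - \fOpconnf(k_2)\big) + \big(\fOpLacelam(k_1) - \fOpLacelam(k_2)\big)$, gives
\[
    \abs*{\SupSpec{\fOpconnf(k_1) + \fOpLacelam(k_1)} - \SupSpec{\fOpconnf(k_2) + \fOpLacelam(k_2)}} \leq \OpNorm{\fOpconnf(k_1) - \fOpconnf(k_2)} + \OpNorm{\fOpLacelam(k_1) - \fOpLacelam(k_2)},
\]
and applying Lemma~\ref{lem:SupSpecTriangle} to the second bracket with $G = \fOpconnf(k_2)$, $H = \fOpconnf(k_1) - \fOpconnf(k_2)$ gives $\abs*{\SupSpec{\fOpconnf(k_1)} - \SupSpec{\fOpconnf(k_2)}} \leq \OpNorm{\fOpconnf(k_1) - \fOpconnf(k_2)}$. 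Subtracting (via the triangle inequality for $\abs{\cdot}$) makes the two $\OpNorm{\fOpconnf(k_1) - \fOpconnf(k_2)}$ terms add rather than cancel, which is precisely why a factor of $2$ appears — but crucially it appears on $\OpNorm{\fOpLacelam(k_1)-\fOpLacelam(k_2)}$ only if we are more careful.

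The subtlety — and the one place the argument must be arranged correctly — is that the naive subtraction yields $2\OpNorm{\fOpconnf(k_1)-\fOpconnf(k_2)} + \OpNorm{\fOpLacelam(k_1)-\fOpLacelam(k_2)}$, which is \emph{not} the claimed bound. The fix is to instead perturb in the opposite order: apply Lemma~\ref{lem:SupSpecTriangle} with $G = \fOpconnf(k_1) + \fOpLacelam(k_1) - \big(\fOpLacelam(k_1)-\fOpLacelam(k_2)\big)$... more cleanly, observe that $\delta_\lambda(k) = \SupSpec{\fOpconnf(k)+\fOpLacelam(k)} - \SupSpec{\fOpconnf(k)}$ and use Lemma~\ref{lem:SupSpecTriangle} directly on $\delta_\lambda(k)$ itself: take $G = \fOpconnf(k)$ and $H = \fOpLacelam(k)$, so $\delta_\lambda(k) \in [-\OpNorm{\fOpLacelam(k)}, \OpNorm{\fOpLacelam(k)}]$ — this is just the bound already noted. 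To get the \emph{difference} bound, I would instead fix $G_0 = \fOpconnf(k_2) + \fOpLacelam(k_2)$ and note $\SupSpec{\fOpconnf(k_1)+\fOpLacelam(k_1)} = \SupSpec{G_0 + (\fOpconnf(k_1)-\fOpconnf(k_2)) + (\fOpLacelam(k_1)-\fOpLacelam(k_2))}$, while $\SupSpec{\fOpconnf(k_1)} = \SupSpec{(\fOpconnf(k_2)+\fOpLacelam(k_2)) - \fOpLacelam(k_2) + (\fOpconnf(k_1)-\fOpconnf(k_2))}$; choosing the \emph{same} reference point $G_0$ for both spectral suprema in $\delta_\lambda(k_1)$ and noting that the $\fOpconnf(k_1)-\fOpconnf(k_2)$ perturbation is common to both terms of $\delta_\lambda(k_1) - \delta_\lambda(k_2)$ when written relative to $\delta_\lambda(k_2)$, those contributions cancel and one is left with exactly $2\OpNorm{\fOpLacelam(k_1)-\fOpLacelam(k_2)}$. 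I expect the main obstacle to be bookkeeping: setting up the four applications of Lemma~\ref{lem:SupSpecTriangle} so that the $\fOpconnf$-displacement terms cancel in pairs and only the $\fOpLacelam$-displacement survives with its factor of $2$; once the correct pairing is identified the estimate is immediate.
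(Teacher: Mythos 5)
Your proposal contains a genuine gap, and it is exactly at the point you flag as ``bookkeeping.'' The first, honest computation via Lemma~\ref{lem:SupSpecTriangle} gives
\begin{equation*}
    \abs*{\delta_\lambda(k_1)-\delta_\lambda(k_2)} \leq 2\OpNorm{\fOpconnf(k_1)-\fOpconnf(k_2)} + \OpNorm{\fOpLacelam(k_1)-\fOpLacelam(k_2)},
\end{equation*}
and the attempted repair --- choosing a common reference operator so that the $\fOpconnf(k_1)-\fOpconnf(k_2)$ perturbation is ``common to both terms'' and therefore ``cancels'' --- cannot be made to work. Lemma~\ref{lem:SupSpecTriangle} is only a Lipschitz bound: it controls \emph{by how much at most} a perturbation $H$ moves $\SupSpec{G}$, but the actual shift depends on $G$. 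The same perturbation $\fOpconnf(k_1)-\fOpconnf(k_2)$ applied to the two different base operators $\fOpconnf(k_2)+\fOpLacelam(k_2)$ and $\fOpconnf(k_2)$ generically shifts their spectral suprema by different amounts (the operators need not commute, and the suprema are attained on different parts of the spectrum), so nothing cancels upon subtraction; the difference of those two shifts is essentially $\delta_\lambda(k_1)-\delta_\lambda(k_2)$ again and the argument is circular. Eliminating the $\fOpconnf$ term is not cosmetic: the lemma is used with $k_1=0$, $k_2=k$, where $\OpNorm{\fOpconnf(0)-\fOpconnf(k)}$ is only $\LandauBigO{1-\SupSpec{\fOpconnf(k)}}$ with no factor of $\beta$, and that would break the bootstrap step where the lemma is invoked.

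The paper's proof is a purely scalar rearrangement that never touches $\fOpconnf(k_1)-\fOpconnf(k_2)$. Its only operator-theoretic inputs are $\abs*{\delta_\lambda(k)}\leq\OpNorm{\fOpLacelam(k)}$ (which you also have) and the reverse triangle inequality $\abs*{\OpNorm{A}-\OpNorm{B}}\leq\OpNorm{A-B}$. Assuming WLOG $\OpNorm{\fOpLacelam(k_1)}\geq\OpNorm{\fOpLacelam(k_2)}$ with $\OpNorm{\fOpLacelam(k_1)}>0$ (the degenerate case being trivial), one writes
\begin{equation*}
    \delta_\lambda(k_1)-\delta_\lambda(k_2) = \frac{\delta_\lambda(k_1)}{\OpNorm{\fOpLacelam(k_1)}}\Big(\OpNorm{\fOpLacelam(k_1)}-\OpNorm{\fOpLacelam(k_2)}\Big) + \Big[\delta_\lambda(k_1)\tfrac{\OpNorm{\fOpLacelam(k_2)}}{\OpNorm{\fOpLacelam(k_1)}}-\delta_\lambda(k_2)\Big],
\end{equation*}
bounds each bracket by $\abs*{\OpNorm{\fOpLacelam(k_1)}-\OpNorm{\fOpLacelam(k_2)}}$ using the normalization $\abs*{\delta_\lambda(k_i)}\leq\OpNorm{\fOpLacelam(k_i)}$, and concludes with the reverse triangle inequality. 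If you want to complete your write-up you should abandon the telescoping/perturbation route and adopt an argument of this normalized-rearrangement type; no pairing of applications of Lemma~\ref{lem:SupSpecTriangle} will produce the stated bound.
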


\begin{figure}
    \centering
    \begin{tikzpicture}[scale = 1.5]
        \draw[->] (-3, 0) -- (2, 0) node[above] {$\widetilde a(k)(e)$};
        \draw[->] (0, -2) -- (0, 3) node[left] {$\widetilde{\tau}_\lambda\left(k\right)(e)$};
        \draw[dashed] (-3,-1) -- (2,-1)node[below]{$-1/\lambda$};
        \draw[dashed] (1,-2) node[right]{$1/\lambda$} -- (1,3);
        \draw[scale=1, domain=-3:0.75, smooth, variable=\x, thick] plot ({\x}, {\x/(1-\x)});
    \end{tikzpicture}
    \caption{If $\fOptlam(k)$ and $\widehat a (k)$ commute, then they are simultaneously diagonalizable in the sense of Theorem~\ref{thm:spectraltheorem}. The arguments of their diagonal functions ($\widetilde{\tau}_\lambda(k)$ and $\widetilde a(k)$ respectively) are related by the monotone increasing function $x\mapsto \tfrac{x}{1-\lambda x}$ depicted here.}
    \label{fig:fOptlamVSa}
\end{figure}
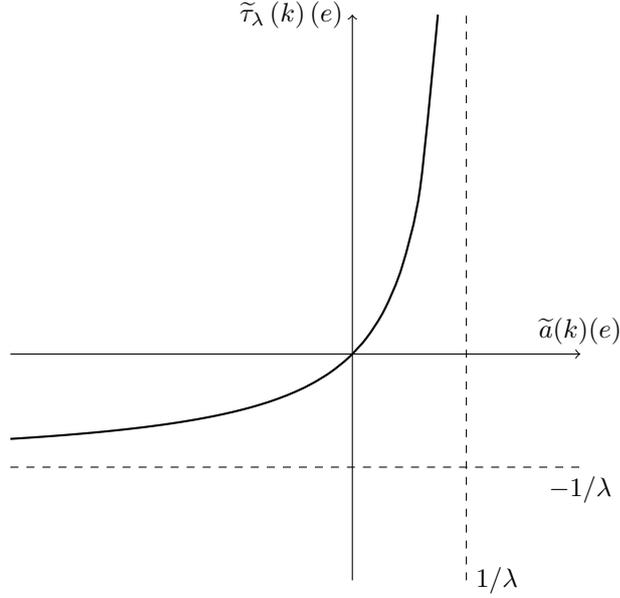

It will be important to note how $\widehat{a}(k)$ relates to $\fOptlam(k)$. From Observation~\ref{obs:convergenceoflaceexpansion_uniform}, the OZE gives us the following expression involving the commutator $\left[\fOptlam(k),\widehat{a}(k)\right] = \fOptlam(k) \widehat{a}(k) - \widehat{a}(k)\fOptlam(k)$:
\begin{equation}
\label{eqn:Commutator}
    \left[\fOptlam(k),\widehat{a}(k)\right]\left(\Id - \lambda \widehat{a}(k)\right) = 0.
\end{equation}
Lemma~\ref{lem:Optlam-a} implies that $\left(\Id - \lambda \widehat{a}(k)\right)$ has a bounded linear inverse and with \eqref{eqn:Commutator} implies that $\fOptlam(k)$ and $\widehat{a}(k)$ commute. Therefore there exists a single unitary map that `diagonalizes' them both in the sense of Theorem~\ref{thm:spectraltheorem}. By considering the diagonal functions, the OZE then implies that $\SupSpec{\fOptlam(k)} = \SupSpec{\widehat{a}(k)}/\left(1-\lambda\SupSpec{\widehat{a}(k)}\right)$. In particular, this relation allows us to write an alternative expression for $\mu_\lambda$:
\begin{equation}
\label{eq:mulamBSidentity}
    \mu_\lambda := 1 - \frac{1}{\SupSpec{\fOptlam(0)}} = \lambda + \frac{\SupSpec{\widehat{a}(0)}-1}{\SupSpec{\widehat{a}(0)}} = \lambda + \frac{\delta_\lambda\left(0\right)}{1+\delta_\lambda\left(0\right)}.
\end{equation}

\begin{itemize}
    \item We first address $f_1$. From the above discussion relating $\fOptlam(k)$ and $\widehat{a}(k)$, we have $\OpNorm{\fOptlam(0)} \geq \SupSpec{\fOptlam(0)} = \SupSpec{\widehat a(0)}/\left(1-\lambda \SupSpec{\widehat{a}(0)}\right)$. From Lemma~\ref{lem:SupSpecTriangle} and our choice of scaling we have $\SupSpec{\widehat a(0)} \geq 1 - \OpNorm{\fOpLacelam(0)} \geq 1 - \constKappaOne\beta$, and therefore we have $\SupSpec{\widehat a(0)}>0$ for sufficiently large $d$ and
\begin{equation}
\label{eqn:lambda_c_asymptotics}
    \lambda \leq \frac{1}{\SupSpec{\widehat a(k)}}\left(1 - \frac{\SupSpec{\widehat a(0)}}{\OpNorm{\fOptlam(0)}}\right) \leq \frac{1}{1-\constKappaOne\beta} \leq 1 + 2\constKappaOne\beta.
\end{equation}
Here we have used an idea that we will use very often in this part of the proof. For $d$ large enough that $\constKappaOne\beta <1$ we have $\left(1-\constKappaOne\beta\right)^{-1} = \sum^\infty_{n=0}\left(\constKappaOne\beta\right)^n$, and for $d$ sufficiently large enough ($\constKappaOne\beta \leq 1/2$) we have $\left(1-\constKappaOne\beta\right)^{-1}\leq 1 + 2\constKappaOne\beta$. We also use a similar idea of neglecting higher order terms by increasing an earlier term's coefficient in other cases.

\item For $f_2$ we first replace $\OpNorm{\fOptlam(k)}$. Since $\fOptlam(k)$ and $\widehat a (k)$ commute, they are simultaneously diagonalizable in the sense of Theorem~\ref{thm:spectraltheorem}, and therefore their spectra are related by the function $x\mapsto \tfrac{x}{1-\lambda x}$ (monotone increasing on $\left(-\infty,1/\lambda\right)$). Therefore we have $\OpNorm{\fOptlam(k)} = \sup_{x\in \sigma\left(\widehat a (k)\right)}\abs*{\frac{x}{1-\lambda x}}$. Now (perhaps with the aid of Figure~\ref{fig:fOptlamVSa}) it is easy to see that
\begin{equation}
    \OpNorm{\fOptlam\left(k\right)} \leq \frac{\SupSpec{\widehat a(k)}}{1- \lambda \SupSpec{\widehat a(k)}} \vee \SupSpec{-\widehat{a}(k)} = \SupSpec{\fOptlam(k)} \vee \SupSpec{-\widehat a (k)}.
\end{equation}
Now since $\SupSpec{-\widehat{a}(k)} \leq \OpNorm{\widehat{a}(k)}\leq 1 + \constKappaOne\beta$, and $\fgmu(k)^{-1} \leq  1+C$ (recall Lemma~\ref{lem:mulam_bounds} and $\OpNorm{\fOpconnf(0)} \leq \OneNorm{\fOpconnf(0)} \leq C$ from \ref{Assump:2ndMoment}), we only need to bound
\begin{equation}
    \esssup_{k\in\Rd}\frac{\SupSpec{\fOptlam(k)}}{\fgmu(k)}.
\end{equation}

We now partition $k\in\Rd$ into two. Define the subset of the Fourier space
\begin{equation}
    A_{\frac{1}{2}} := \left\{k\in\Rd:\SupSpec{\fOpconnf\left(k\right)}\leq \frac{1}{2}\right\}.
\end{equation}
Then we only need to bound the two functions $f_4,f_5$, where
\begin{equation}
    f_4\left(\lambda\right) := \esssup_{k\in A_{\frac{1}{2}}} \frac{\SupSpec{\fOptlam(k)}}{\fgmu(k)},\qquad \qquad f_5\left(\lambda\right):= \esssup_{k\in A_{\frac{1}{2}}^c} \frac{\SupSpec{\fOptlam(k)}}{\fgmu(k)}.
\end{equation}
We first deal with the simpler $f_4$. We consider $k\in A_{\frac{1}{2}}$ and choose $\beta$ small enough that 
\begin{equation}
    1-\lambda\SupSpec{\widehat{a}(k)} \geq 1 - \frac{\frac{1}{2}+\constKappaOne\beta}{1-\constKappaOne\beta} \geq \frac{1}{2} - 2\constKappaOne\beta > 0.
\end{equation}
Then
\begin{equation}
    \frac{1-\mulam \SupSpec{\fOpconnf\left(k\right)}}{1-\lambda \SupSpec{\widehat a(k)}} \leq \frac{1+\OpNorm{\fOpconnf(0)}}{\frac{1}{2} - 2\constKappaOne\beta} \leq 2\left(1+C\right)\left(1+5\constKappaOne\beta\right). \label{eq:bootstrap_f_6_first_bound}
\end{equation}
Since we have $\SupSpec{\widehat a(k)} \leq \SupSpec{\widehat{a}(0)} \leq 1 + \constKappaOne\beta$, we then get
\begin{equation}
    f_4(\lambda) \leq 2\left(1+C\right)\left(1+5\constKappaOne\beta\right)\left(1 + \constKappaOne\beta\right) \leq 2\left(1+C\right)\left(1+7\constKappaOne\beta\right).
\end{equation}
To address $f_5$ we will have $k\in A^c_\rho$. We define:
\begin{equation}
    \widehat N(k) = \frac{\SupSpec{\widehat a(k)}}{\SupSpec{\widehat{a}(0)}}, \qquad \widehat F(k) = \frac{1-\lambda \SupSpec{\widehat a(k)}}{\SupSpec{\widehat a(0)}}, \qquad \widehat Q(k) = \frac{1+\delta_\lambda(k)}{\SupSpec{\widehat{a}(0)}}.
\end{equation}
Note that $\SupSpec{\fOptlam(k)} = \widehat N(k) / \widehat F(k)$. Rearranging gets us to
\begin{align}
    \frac{\SupSpec{\fOptlam(k)}}{\SupSpec{\fOpconnf\left(k\right)}\fgmu(k)} &= \widehat N(k) \frac{1-\mulam\SupSpec{\fOpconnf\left(k\right)}}{\SupSpec{\fOpconnf\left(k\right)}\widehat F(k)}\nonumber \\&= \widehat Q(k) + \frac{\widehat N(k)}{\SupSpec{\fOpconnf\left(k\right)}\widehat F(k)} \left(1-\mulam\SupSpec{\fOpconnf\left(k\right)} - \frac{\widehat Q(k)}{\widehat N(k)}\SupSpec{\fOpconnf\left(k\right)}\widehat F(k)\right) \label{eq:f5rearranging}.
\end{align}
Since $\SupSpec{\fOpconnf(0)}=1$ (by our scaling choice), the extracted term $\widehat Q(k)$ satisfies $\abs*{\widehat Q(k)} \leq 1 + 3\constKappaOne\beta$. We further observe that
\begin{equation}
    \frac{\SupSpec{\fOpconnf\left(k\right)} \widehat Q(k)}{\widehat N(k)} = \frac{\SupSpec{\fOpconnf\left(k\right)} \left(1+ \delta_\lambda(k)\right)}{\SupSpec{\fOpconnf\left(k\right)} +\delta_\lambda(k)} = 1 - \frac{\left(1-\SupSpec{\fOpconnf\left(k\right)}\right) \delta_\lambda(k)}{\SupSpec{\fOpconnf\left(k\right)} + \delta_\lambda(k)} =: 1- \widehat b(k).
\end{equation}
Recalling identity~\eqref{eq:mulamBSidentity} for $\mulam$, we can rewrite the quantity $\left[1- \mulam \SupSpec{\fOpconnf\left(k\right)}- (1-\widehat b(k)) \widehat F(k)\right]$, appearing in \eqref{eq:f5rearranging}, as
\begin{multline}
    \frac{1+\delta_\lambda(0) - \left[\lambda + \delta_\lambda(0) + \lambda\delta_\lambda(0)\right] \SupSpec{\fOpconnf\left(k\right)} - 1 + \lambda\left(\SupSpec{\fOpconnf\left(k\right)}+\delta_\lambda(k)\right) + \widehat b(k)\left(1-\lambda\widehat a(k)\right)}{1+\delta_\lambda(0)} \\
		= \frac{ \left[1-\SupSpec{\fOpconnf\left(k\right)}\right]\left(\delta_\lambda(0) + \lambda \delta_\lambda(0)\right) + \lambda\left[\delta_\lambda(k)-\delta_\lambda(0)\right]}{1+\delta_\lambda(0)}+ \frac{\widehat b(k)(1-\lambda\widehat a(k))}{1+\delta_\lambda(0)}.
\end{multline}
Noting that $\abs{\delta_{\lambda}(0)-\delta_{\lambda}(k)} \leq 2\constKappaOne \left[1-\SupSpec{\fOpconnf\left(k\right)}\right] \beta$ by Lemma~\ref{thm:deltatoLacecoefficients} and Observation~\ref{obs:convergenceoflaceexpansion_uniform}, the first term can be bounded
\begin{align}
    &\abs*{\frac{ \left[1-\SupSpec{\fOpconnf\left(k\right)}\right]\left(\delta_\lambda(0) + \lambda \delta_\lambda(0)\right) + \lambda\left[\delta_\lambda(k)-\delta_\lambda(0)\right]}{1+\delta_\lambda(0)}} \nonumber\\
    &\qquad\leq \frac{ \constKappaOne\beta + \left(1 + 2\constKappaOne\beta\right)\constKappaOne\beta + 2\left(1+2\constKappaOne\beta\right)\constKappaOne\beta}{1 - \constKappaOne\beta}\left[1-\SupSpec{\fOpconnf\left(k\right)}\right] \nonumber\\
    &\qquad \leq 5\constKappaOne\beta\left(1+2\constKappaOne\beta\right)\left[1-\SupSpec{\fOpconnf\left(k\right)}\right] \nonumber\\
    &\qquad \leq 6\constKappaOne\beta\left[1-\SupSpec{\fOpconnf\left(k\right)}\right].
\end{align}
Using~\eqref{eqn:lambda_c_asymptotics}, the last term is
\begin{align}
    \abs*{\frac{\widehat b(k)(1-\lambda \SupSpec{\widehat a(k)})}{1+\delta_\lambda(0)}} &= \abs*{ \frac{\left(1-\SupSpec{\fOpconnf\left(k\right)}\right)\delta_\lambda(k)}{\left(1+\delta_\lambda(0)\right)\left(\SupSpec{\fOpconnf\left(k\right)} + \delta_\lambda(k)\right)}
				- \frac{\lambda\left[1-\SupSpec{\fOpconnf\left(k\right)}\right] \delta_\lambda(k)}{1+\delta_\lambda(0)}} \nonumber\\
		& = \frac{\abs*{\delta_{\lambda}(k)}}{\abs*{1+\delta_\lambda(0)}}\abs*{ \frac{1}{\SupSpec{\fOpconnf\left(k\right)} + \delta_\lambda(k)} -\lambda }\left[1-\SupSpec{\fOpconnf\left(k\right)}\right] \nonumber\\
		& \leq \frac{\constKappaOne\beta}{1-\constKappaOne\beta}\left( \frac{1}{\frac{1}{2}-\constKappaOne\beta} + \frac{1}{1-\constKappaOne\beta}\right)\left[1-\SupSpec{\fOpconnf\left(k\right)}\right] \nonumber\\
		&\leq \constKappaOne\beta\left(1+2\constKappaOne\beta\right)\left(3+8\constKappaOne\beta\right)\left[1-\SupSpec{\fOpconnf\left(k\right)}\right] \nonumber\\
		& \leq 4\constKappaOne\beta \left[1-\SupSpec{\fOpconnf\left(k\right)}\right].
\end{align}
Putting these bounds back into \eqref{eq:f5rearranging}, we can find
\begin{align}
    \frac{\SupSpec{\fOptlam(k)}}{\SupSpec{\fOpconnf\left(k\right)}\fgmu(k)} &\leq \widehat Q(k) + 10\constKappaOne\beta \frac{1}{\SupSpec{\fOpconnf\left(k\right)}}\frac{\widehat N(k)}{\widehat F(k)} \left[1-\SupSpec{\fOpconnf\left(k\right)}\right]\  \notag \\
		& \leq 1+ 3\constKappaOne\beta + 20\constKappaOne\beta \frac{\SupSpec{\fOptlam\left(k\right)}}{\fgmu(k)} \notag \\
		& \leq 1+ \left(20\kappa + 23\right)\constKappaOne\beta. \label{eq:bootstrap:f_5_improvement} 
\end{align}
Note that we have used the bound $\left[1-\SupSpec{\fOpconnf\left(k\right)}\right] \leq \left[1-\mulam \SupSpec{\fOpconnf\left(k\right)}\right]$ (since $\mulam\leq 1$ and $\SupSpec{\fOpconnf(k)}>\frac{1}{2} >0$) to get from $\widehat G_1(k)^{-1}$ to $\fgmu(k)^{-1}$. As $\SupSpec{\fOpconnf\left(k\right)} \in\left(\frac{1}{2}, 1\right]$, this concludes the improvement of $f_5$ and hence of $f_2$. 
\end{itemize}

Before we treat $f_3$, we introduce $f_6$ given by
\begin{equation}
    f_6(\lambda) := \esssup_{k\in\Rd} \frac{1-\mulam \SupSpec{\fOpconnf\left(k\right)}}{1- \lambda \SupSpec{\widehat{a}\left(k\right)}}.
\end{equation}
We show that $f(\lambda) \leq \kappa+1$ implies $f_6(\lambda) \leq M$ uniformly for some $\kappa$-independent constant $M$. For $k\in A_\frac{1}{2}$ we showed in \eqref{eq:bootstrap_f_6_first_bound} that we can bound uniformly with $3\left(1+C\right)$ for sufficiently large small $\beta$. For $k \in A_{\frac{1}{2}}^c$, we have
\begin{multline}
    \frac{1-\mulam \SupSpec{\fOpconnf\left(k\right)}}{1-\lambda \SupSpec{\widehat a(k)}} =  \frac{\SupSpec{\fOptlam\left(k\right)}}{\SupSpec{\fOpconnf\left(k\right)} \fgmu(k)} \cdot \frac{\SupSpec{\fOpconnf\left(k\right)}}{\SupSpec{\widehat a(k)}} \leq \left(1+ \left(20\kappa + 23\right)\constKappaOne\beta\right) \abs*{ 1- \frac{\delta_\lambda(k)}{\SupSpec{\widehat a(k)}}} \\ \leq \left(1+ \left(20\kappa + 23\right)\constKappaOne\beta\right) \left(1+\frac{\constKappaOne\beta}{\frac{1}{2}- \constKappaOne \beta} \right) \leq 1+ \left(20\kappa + 27\right)\constKappaOne\beta. \label{eq:bootstrap_f_6_second_bound}
\end{multline}
Note that for the first bound in~\eqref{eq:bootstrap_f_6_second_bound}, we used the estimate established in~\eqref{eq:bootstrap:f_5_improvement}, which is stronger than a bound on $f_5$. We therefore have the uniform bound $f_6(\lambda) \leq 3 + 3C$ for $\beta$ sufficiently small.

\begin{itemize}

    \item Let us now improve the bound on $f_3$. It will be notationally convenient to denote $A\left(k\right) := \left[\Id - \lambda \widehat{a}\left(k\right)\right]^{-1}$. Since $\widehat{a}\left(k\right)$ is self-adjoint, it can be ``diagonalized" in the sense of Theorem~\ref{thm:spectraltheorem}. Therefore $A\left(k\right)$ can be ``diagonalized" over the same space and with the same unitary operator - it will have the diagonal function $\widetilde{A}(k)(e) = 1/\left(1-\widetilde{a}\left(k\right)(e)\right)$. It is then clear that $A\left(k\right)$ and $\widehat{a}\left(k\right)$ commute (for each $k\in\Rd$). Using Observation~\ref{obs:convergenceoflaceexpansion_uniform}, we have
    \begin{align}
        \OpNorm{\fOptklam\left(l\right)} &= \OpNorm{\widehat{a}\left(l\right)A\left(l\right) - \widehat{a}\left(l+k\right)A\left(l+k\right)}\nonumber\\
        &= \OpNorm{A\left(l\right)\widehat{a}\left(l\right)\left[\Id - \lambda \widehat{a}\left(l+k\right)\right]A\left(l+k\right) - A\left(l\right)\left[\Id - \lambda \widehat{a}\left(l\right)\right]\widehat{a}\left(l+k\right)A\left(l+k\right)} \nonumber \\
        &= \OpNorm{A(l)\left(\widehat{a}(l)-\widehat{a}(l+k)\right)A(l+k)} \nonumber\\
        &\leq \OpNorm{\widehat{a}(l)-\widehat{a}(l+k)}\OpNorm{A(l)}\OpNorm{A(l+k)}. \label{eqn:taudisplacementBound}
    \end{align}
    For the $\OpNorm{\widehat{a}(l)-\widehat{a}(l+k)}$ factor, we use the triangle inequality to separate this into a term of $\fOpconnf$ and $\fOpLacelam$. Lemma~\ref{thm:connfComparison} immediately gives
    \begin{equation}
        \OpNorm{\fOpconnf\left(l\right) - \fOpconnf\left(l+k\right)} \leq C\left[1 - \SupSpec{\fOpconnf\left(k\right)}\right],
    \end{equation}
    for the $C>0$ arising from \ref{Assump:2ndMoment}. For the $\fOpLacelam$ term, a similar argument to \eqref{eqn:l-independence} gives
    \begin{equation}
        \OpNorm{\fOpLacelam\left(l\right) - \fOpLacelam\left(1+k\right)} \leq \OneNorm{\fOpLacelam\left(0\right) - \fOpLacelam\left( k\right)} \leq \constKappaOne\beta\left[1 - \SupSpec{\fOpconnf\left(k\right)}\right],
    \end{equation}
    where the last inequality follows from Observation~\ref{obs:convergenceoflaceexpansion_uniform}. We then deal with the remaining factors of \eqref{eqn:taudisplacementBound} by noting that for all $k\in\Rd$,
    \begin{equation}
        \OpNorm{A(k)} = \frac{1}{1-\lambda \SupSpec{\widehat a(k)}} = \frac{1-\mulam \SupSpec{\fOpconnf(k)}}{1-\lambda \SupSpec{\widehat a(k)}} \fgmu(k) \leq 3\left(1+C\right) \fgmu(k), \label{eq:f6application}
    \end{equation}
    employing our improved bound on $f_6$. The result of this is that we have the bound
    \begin{equation}
        \label{eqn:tauComparison}
        \OpNorm{\fOptlam\left(l\right) - \fOptlam\left(l+k\right)} \leq 9\left(1+C\right)^2\left(C + \constKappaOne\beta \right)\left[1 - \SupSpec{\fOpconnf\left(k\right)}\right]\fgmu\left(l\right)\fgmu\left(l+k\right).
    \end{equation}
    Then since $\fgmu(k)\geq0$ we have
    \begin{equation}
        \OpNorm{\fOptklam\left(l\right)} \leq 9\left(1+C\right)^2\left(C+\constKappaOne\beta\right)\widehat{J}_{\mulam}\left(k,l\right).
    \end{equation}

\end{itemize}
\end{proof}

\begin{proof}[Proof of Proposition \ref{thm:bootstrapargument}~-\ref{thm:bootstrapargument-2}]
The continuity of $f_1$ is obvious. For the other two functions we prove equicontinuity of a family of functions and use \cite[Lemma~5.13]{Sla06} to show the continuity of the desired functions.

The procedure is outlined here. Suppose we wish to show that $H\left(\lambda\right) := \sup_{\alpha\in B}\OpNorm{h_\alpha\left(\lambda\right)}$ is continuous on $\left[0,\lambda_O\right)$. For our purposes the parameter $\alpha$ will be either $k$ or $\left(k,l\right)$ and thus $B=\R^d$ or $B=\R^{2d}$. 
\begin{itemize}
    \item The continuity on the half-open interval is implied by having continuity on the closed interval $\left[0,\lambda_O-\rho\right]$ for any $\rho>0$.
    \item For closed intervals, \cite[Lemma~5.13]{Sla06} gives continuity if the family $\left\{\OpNorm{h_\alpha}\right\}_{\alpha\in B}$ is equicontinuous and $H\left(\lambda\right)<+\infty$ for all $\lambda$ in the closed interval.
    \item The family $\left\{\OpNorm{h_\alpha}\right\}_{\alpha\in B}$ is equicontinuous if for all $\varepsilon>0$ there exists $\delta>0$ such that $|s-t|<\delta$ implies $\abs*{\OpNorm{h_\alpha(s)} - \OpNorm{h_\alpha(t)}} \leq \varepsilon$ uniformly in $\alpha\in B$. In fact, the reverse triangle inequality implies that
    \begin{equation}
        \abs*{\OpNorm{h_\alpha(s)} - \OpNorm{h_\alpha(t)}} \leq \OpNorm{h_\alpha(s)-h_\alpha(t)},
    \end{equation}
    and therefore we only need to prove equicontinuity for the un-normed family $\left\{h_\alpha\right\}_{\alpha\in B}$.
    \item We prove equicontinuity by bounding the `near-derivative'
    \begin{equation}
        \limsup_{\varepsilon\to0}\frac{1}{\varepsilon}\OpNorm{h_\alpha(\lambda+\varepsilon)-h_\alpha(\lambda)}
    \end{equation}
    uniformly in $\alpha\in B$ for $\lambda\in\left[0,\lambda_O-\rho\right]$ and arbitrary $\rho>0$.
\end{itemize}

For $f_2$ the operator-valued functions $h_\alpha$ are $\lambda\mapsto \fOptlam\left(k\right)/\fgmu\left(k\right)$. By using a variation on the chain rule, we get
\begin{multline}
\label{eqn:Near-derivative_bound}
    \limsup_{\varepsilon\to0}\frac{1}{\varepsilon}\OpNorm{\frac{\widehat{\mathcal{T}}_{\lambda+\varepsilon}\left(k\right)}{\widehat{G}_{\mu_{\lambda+\varepsilon}}\left(k\right)} - \frac{\fOptlam\left(k\right)}{\fgmu\left(k\right)}} \leq \frac{1}{\fgmu\left(k\right)}\OpNorm{\frac{\dd}{\dd \lambda}\fOptlam\left(k\right)} \\+ \frac{\OpNorm{\fOptlam\left(k\right)}}{\fgmu\left(k\right)^2}\abs*{\left.\frac{\dd}{\dd \mu}\widehat{G}_\mu\left(k\right)\right|_{\mu=\mu_\lambda}}\limsup_{\varepsilon\to0}\frac{1}{\varepsilon}\abs*{\mu_{\lambda+\varepsilon} - \mu_\lambda}.
\end{multline}
Recall from Corollary~\ref{thm:differentiabilityofOpT} that $\fOptlam\left(k\right)$ is differentiable with the bound $\OpNorm{\frac{\dd}{\dd \lambda}\fOptlam\left(k\right)} \leq \OpNorm{\fOptlam\left(0\right)}^2$. We also have $\fgmu\left(k\right) \geq 1/\left(1 + \OpNorm{\fOpconnf(0)}\right)$ from Lemma~\ref{lem:mulam_bounds} and $\left.\frac{\dd}{\dd \mu}\widehat{G}_\mu\left(k\right)\right|_{\mu=\mu_\lambda} = \fgmu\left(k\right)^2\SupSpec{\fOpconnf\left(k\right)}$ immediately from the definition. It remains to deal with the $\mu_\lambda$ term. Recall that $\mu_\lambda := 1 - \SupSpec{\fOptlam(0)}^{-1}$. Therefore, using the reverse triangle inequality,
\begin{multline}
    \limsup_{\varepsilon\to0}\frac{1}{\varepsilon}\abs*{\mu_{\lambda+\varepsilon} - \mu_\lambda} = \frac{1}{\SupSpec{\fOptlam(0)}^2}\limsup_{\varepsilon\to0}\frac{1}{\varepsilon}\abs*{\SupSpec{\widehat{\mathcal{T}}_{\lambda+\varepsilon}(0)}-\SupSpec{\fOptlam(0)}} \\\leq \frac{1}{\SupSpec{\fOptlam(0)}^2}\limsup_{\varepsilon\to0}\frac{1}{\varepsilon}\OpNorm{\widehat{\mathcal{T}}_{\lambda+\varepsilon}(0)-\fOptlam(0)} = \frac{1}{\SupSpec{\fOptlam(0)}^2}\OpNorm{\frac{\dd}{\dd \lambda}\fOptlam(0)}\leq \OpNorm{\fOptlam(0)}^2,
\end{multline}
where we have used $\SupSpec{\fOptlam(0)} \geq 1$ from Lemma~\ref{lem:mulam_bounds} in the last inequality. Therefore
\begin{multline}
    \limsup_{\varepsilon\to0}\frac{1}{\varepsilon}\OpNorm{\frac{\widehat{\mathcal{T}}_{\lambda+\varepsilon}\left(k\right)}{\widehat{G}_{\mu_{\lambda+\varepsilon}}\left(k\right)} - \frac{\fOptlam\left(k\right)}{\fgmu\left(k\right)}} \leq \left(1 + \OpNorm{\fOpconnf(0)}\right)\OpNorm{\fOptlam\left(0\right)}^2 + \abs*{\SupSpec{\fOpconnf\left(k\right)}}\OpNorm{\fOptlam\left(k\right)}\OpNorm{\fOptlam(0)}^2\\
    \leq \left(1 + C\right)\OpNorm{\fOptlam\left(0\right)}^2 + C\OpNorm{\fOptlam(0)}^3.
\end{multline}
We therefore have a finite $k$-independent bound.

This has proven the equicontinuity. For the uniform boundedness, we note that for $\lambda\in\left[0,\lambda_O-\rho\right]$ we have $\OpNorm{\fOptlam\left(k\right)} \leq \OpNorm{\widehat{\mathcal{T}}_{\lambda_O-\rho}(0)}<\infty$. In conjunction with $\fgmu\left(k\right) \geq 1/\left(1 + \OpNorm{\fOpconnf(0)}\right)$ we have the required uniform boundedness and therefore the continuity of $f_2$.

We repeat this approach for $f_3$. The corresponding step to \eqref{eqn:Near-derivative_bound} now reads
\begin{multline}
\label{eqn:Near-derivative_boundf3}
    \limsup_{\varepsilon\to0}\frac{1}{\varepsilon}\OpNorm{\frac{\widehat{\mathcal{T}}_{\lambda+\varepsilon,k}\left(l\right)}{\widehat{J}_{\mu_{\lambda+\varepsilon}}\left(k,l\right)} - \frac{\fOptklam\left(l\right)}{\widehat{J}_{\mu_{\lambda}}\left(k,l\right)}} \leq \frac{1}{\widehat{J}_{\mu_{\lambda}}\left(k,l\right)}\OpNorm{\frac{\dd}{\dd \lambda}\fOptklam\left(l\right)} \\+ \frac{\OpNorm{\fOptklam\left(l\right)}}{\widehat{J}_{\mu_{\lambda}}\left(k,l\right)^2}\abs*{\left.\frac{\dd}{\dd \mu}\widehat{J}_{\mu}\left(k,l\right)\right|_{\mu=\mu_\lambda}}\limsup_{\varepsilon\to0}\frac{1}{\varepsilon}\abs*{\mu_{\lambda+\varepsilon} - \mu_\lambda}.
\end{multline}
Recall from Corollary~\ref{thm:differentiabilityofOpT} that $\fOptklam\left(l\right)$ is differentiable with the operator norm bound $\OpNorm{\frac{\dd}{\dd \lambda}\fOptklam\left(l\right)}\leq 4\OpNorm{\fOptlam\left(0\right) - \fOptlam\left(k\right)}\OpNorm{\fOptlam\left(0\right)}$. The operator itself has the similar bound $\OpNorm{\fOptklam\left(l\right)}\leq \OpNorm{\fOptlam\left(0\right) - \fOptlam\left(k\right)}$. We would like to have the bound \eqref{eqn:tauComparison}, but this was proven under the assumption $f\leq \kappa+1$, which we no longer assume. Fortunately, we only require our bound for $\lambda < \lambda_O -\rho$. Therefore we can use Lemma~\ref{lem:tau-connf subcritical} and Lemma~\ref{thm:connfComparison} to get
\begin{multline}
    \OpNorm{\fOptlam\left(l\right) - \fOptlam\left(l+k\right)} \leq \e^{4\lambda\OpNorm{\fOptlam(0)}}\OpNorm{\fOpconnf\left(0\right) - \fOpconnf\left(k\right)} \\\leq C\e^{4\left(\lambda_O-\rho\right)\OpNorm{\widehat{\mathcal T}_{\lambda_O-\rho}(0)}}\left[1 - \SupSpec{\fOpconnf\left(k\right)}\right].
\end{multline}
From this and $\fgmu\left(k\right) \geq \left(1+C\right)^{-1}$, there exists a constant $\widetilde M>0$ such that both
\begin{equation}
    \frac{1}{\widehat{J}_{\mu_{\lambda}}\left(k,l\right)}\OpNorm{\frac{\dd}{\dd \lambda}\fOptklam\left(l\right)} \leq \widetilde M \OpNorm{\fOptlam\left(0\right)}, \qquad \frac{1}{\widehat{J}_{\mu_{\lambda}}\left(k,l\right)}\OpNorm{\fOptklam\left(l\right)} \leq \widetilde M,
\end{equation}
for sufficiently large $d$.

Now recall $\left.\frac{\dd}{\dd \mu}\widehat{G}_\mu\left(k\right)\right|_{\mu=\mu_\lambda} = \fgmu\left(k\right)^2\SupSpec{\fOpconnf\left(k\right)}$ and $\left(1+C\right)^{-1}\leq \fgmu\left(k\right) \leq \left(1-\mulam\right)^{-1} = \SupSpec{\fOptlam(0)} \leq \OpNorm{\fOptlam(0)}$. Therefore an application of the chain rule gives the bound
\begin{equation}
    \abs*{\frac{1}{\widehat{J}_{\mu_{\lambda}}\left(k,l\right)}\left.\frac{\dd}{\dd \mu}\widehat{J}_{\mu}\left(k,l\right)\right|_{\mu=\mu_\lambda}} \leq 6\OpNorm{\fOptlam(0)}^4\OpNorm{\fOpconnf(0)},
\end{equation}
uniformly in $k$ and $l$. These properties, along with the bound on $\limsup_{\varepsilon\to0}\frac{1}{\varepsilon}\abs*{\mu_{\lambda+\varepsilon} - \mu_\lambda}$ from above, are sufficient to prove equicontinuity and the uniform boundedness for the required $\lambda$. Therefore we have proved that $f_3\left(\lambda\right)$ is continuous.

\end{proof}

It just remains to prove that we can indeed bound the difference of $\delta_\lambda(k)$ with the difference of $\fOpLacelam(k)$ as we claimed.

\begin{proof}[Proof of Lemma~\ref{thm:deltatoLacecoefficients}]
First note that there is nothing to prove if $\OpNorm{\fOpLacelam\left(k_1\right)}=\OpNorm{\fOpLacelam\left(k_2\right)}=0$, because the triangle inequality then forces $\delta_\lambda(k_1) = \delta_\lambda(k_2)=0$.

Without loss of generality, assume $\OpNorm{\fOpLacelam\left(k_1\right)} \geq \OpNorm{\fOpLacelam\left(k_2\right)}$ with $\OpNorm{\fOpLacelam\left(k_1\right)}>0$. Then
\begin{equation}
\label{eqn:deltaDifference}
    \delta_\lambda(k_1)-\delta_\lambda(k_2) = \frac{\delta_\lambda(k_1)}{\OpNorm{\fOpLacelam(k_1)}}\left(\OpNorm{\fOpLacelam\left(k_1\right)} - \OpNorm{\fOpLacelam\left(k_2\right)}\right) + \left[\delta_\lambda(k_1)\frac{\OpNorm{\fOpLacelam\left(k_2\right)}}{\OpNorm{\fOpLacelam\left(k_1\right)}} - \delta_\lambda(k_2)\right]
\end{equation}
The second factor is the more troublesome:
\begin{multline}
    \abs*{\delta_\lambda(k_1)\frac{\OpNorm{\fOpLacelam\left(k_2\right)}}{\OpNorm{\fOpLacelam\left(k_1\right)}} - \delta_\lambda(k_2)}  = \frac{1}{\OpNorm{\fOpLacelam\left(k_1\right)}}\abs*{\delta_\lambda(k_1)\OpNorm{\fOpLacelam\left(k_2\right)} - \delta_\lambda(k_2)\OpNorm{\fOpLacelam\left(k_1\right)}} \\
    \leq \frac{\abs*{\delta_\lambda(k_1)} \vee \abs*{\delta_\lambda(k_2)}}{\OpNorm{\fOpLacelam\left(k_1\right)}}\abs*{\OpNorm{\fOpLacelam\left(k_1\right)} - \OpNorm{\fOpLacelam\left(k_2\right)}}.
\end{multline}
Since $\OpNorm{\fOpLacelam\left(k_1\right)} \geq \OpNorm{\fOpLacelam\left(k_2\right)}$, we have $\abs*{\delta_\lambda(k_1)} \vee \abs*{\delta_\lambda(k_2)} \leq \OpNorm{\fOpLacelam\left(k_1\right)}$. Therefore
\begin{equation}
    \abs*{\delta_\lambda(k_1)-\delta_\lambda(k_2)} \leq 2\abs*{\OpNorm{\fOpLacelam\left(k_1\right)} - \OpNorm{\fOpLacelam\left(k_2\right)}} \leq 2\OpNorm{\fOpLacelam\left(k_1\right)-\fOpLacelam\left(k_1\right)},
\end{equation}
where the last inequality holds because of the reverse triangle inequality.
\end{proof}


\section{Proof of Main Theorems}
\label{sec:ConcludeProofs}

\begin{prop}
\label{lem:Triangle_Critical}
Let $d>6$ be sufficiently large. Then for $\lambda\leq \lambda_O$ there exists $c>0$ such that $\trilam < c\beta^2$.
\end{prop}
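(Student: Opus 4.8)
The plan is to read the bound off for $\lambda<\lambda_O$ directly from the bootstrap and the $n$-gon estimate, and then push it to $\lambda=\lambda_O$ by a monotone-convergence argument.

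\textbf{Step 1 (subcritical $\lambda$).} Recall from Definition~\ref{def:triangles} that $\trilam=\lambda^2\InfNorm{\Optlam^3}$. By Proposition~\ref{thm:bootstrapargument}, for all sufficiently large $d$ one has $f(\lambda)\le\kappa$ uniformly in $\lambda\in[0,\lambda_O)$, and in particular $\lambda=f_1(\lambda)\le f(\lambda)\le\kappa$; taking the supremum over $\lambda<\lambda_O$ also gives $\lambda_O\le\kappa<\infty$. Proposition~\ref{thm:TauNbound} with $n=3$ (valid since $d>6=2n$) then yields $\InfNorm{\Optlam^3}\le \const\,g(d)^{\frac12-3\overline{\epsilon}(d)}$, where $\const$ is increasing in $f(\lambda)$ and independent of $d$; since $f(\lambda)\le\kappa$ we may replace $\const$ by its value at $f(\lambda)=\kappa$, which is now independent of both $\lambda$ and $d$. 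Because $\beta(d)=g(d)^{\frac14-\frac32\overline{\epsilon}(d)}$ we have $g(d)^{\frac12-3\overline{\epsilon}(d)}=\beta^2$, and therefore
\begin{equation*}
  \trilam=\lambda^2\InfNorm{\Optlam^3}\le\kappa^2\,\const\,\beta^2\qquad\text{for all }\lambda\in[0,\lambda_O).
\end{equation*}
Setting $c:=\kappa^2\const+1$ (the $+1$ only to make the inequality strict) proves the claim for every $\lambda<\lambda_O$.

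\textbf{Step 2 (critical $\lambda$).} Here I would invoke the standard monotone coupling of the RCM across $\lambda$, under which increasing $\lambda$ only adds Poisson points, so that $\tlam(x,y)$ is non-decreasing in $\lambda$; since any connecting path uses only finitely many vertices and the edge marks do not depend on $\lambda$, this also gives $\tlam(x,y)\uparrow\tau_{\lambda_O}(x,y)$ as $\lambda\uparrow\lambda_O$ for every $x,y\in\X$. Fix a sequence $\lambda_m\uparrow\lambda_O$. For each $m$, Step~1 supplies a $\nu^{\otimes2}$-null set off which
\begin{equation*}
  \lambda_m^2\int\tau_{\lambda_m}(x,u)\tau_{\lambda_m}(u,v)\tau_{\lambda_m}(v,y)\,\nu^{\otimes2}(\dd u,\dd v)\le\triangle_{\lambda_m}\le c\beta^2 .
\end{equation*}
Taking the union of these countably many null sets and applying the monotone convergence theorem in the $\nu^{\otimes2}(\dd u,\dd v)$ integral (the integrand and $\lambda_m^2\le\lambda_O^2<\infty$ are non-decreasing in $m$, with limit the critical integrand) gives, for $\nu^{\otimes2}$-a.e.\ $(x,y)$,
\begin{equation*}
  \lambda_O^2\int\tau_{\lambda_O}(x,u)\tau_{\lambda_O}(u,v)\tau_{\lambda_O}(v,y)\,\nu^{\otimes2}(\dd u,\dd v)=\lim_{m\to\infty}\lambda_m^2\int\tau_{\lambda_m}(x,u)\tau_{\lambda_m}(u,v)\tau_{\lambda_m}(v,y)\,\nu^{\otimes2}(\dd u,\dd v)\le c\beta^2 ,
\end{equation*}
whence $\triangle_{\lambda_O}\le c\beta^2$ as well, completing the proof.

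The proof is essentially bookkeeping: the analytic content is already contained in the $n$-gon bound (Proposition~\ref{thm:TauNbound}) and the bootstrap (Proposition~\ref{thm:bootstrapargument}). The only point needing a little care is the passage to criticality — namely the identification $\tau_{\lambda_O}=\lim_{\lambda\uparrow\lambda_O}\tlam$ and the interchange of this limit with the essential suprema defining $\trilam$ — which I would handle exactly as above by working along a countable sequence $\lambda_m\uparrow\lambda_O$ and invoking monotone convergence; the finiteness $\lambda_O\le\kappa<\infty$ needed throughout is itself a by-product of the bootstrap bound $\lambda\le f(\lambda)\le\kappa$.
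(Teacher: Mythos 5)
Your proof is correct and follows the same overall architecture as the paper's: a uniform subcritical bound $\trilam\le\const\beta^2$ from Proposition~\ref{thm:TauNbound} made $\lambda$-independent by the bootstrap bound $f\le\kappa$ of Proposition~\ref{thm:bootstrapargument}, followed by a monotone passage to $\lambda=\lambda_O$. The one step where you diverge is the justification of the left-continuity $\tlam(x,y)\uparrow\tau_{\lambda_O}(x,y)$ as $\lambda\uparrow\lambda_O$. You obtain it from the standard monotone coupling of the Poisson processes across intensities together with the observation that a connection uses only finitely many vertices, each of which is a.s.\ present already at some subcritical intensity. The paper instead works with the spatially truncated functions $\tau_\lambda^n(x,y)$ (connections inside $\Lambda_n(y)$): these increase to $\tlam(x,y)$ in $n$, are continuous in $\lambda$ because they live on a finite region, hence $\lambda\mapsto\tlam(x,y)$ is lower semicontinuous, and combined with monotonicity in $\lambda$ this yields the same left-continuity. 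Both routes are sound; yours is the more directly probabilistic one (at the cost of invoking the coupling construction, which the paper does not set up explicitly for varying $\lambda$), while the paper's avoids the coupling entirely at the cost of the truncation detour. Your subsequent handling of the essential suprema via a countable sequence $\lambda_m\uparrow\lambda_O$ and a union of null sets is careful and matches what the paper's terser ``take the supremum over $x,y$'' step implicitly requires.
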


\begin{proof}
Proposition~\ref{thm:TauNbound} gives us that there exists $c_f$ (increasing in $f$) such that $\trilam \leq \const\beta^2$ for all $\lambda<\lambda_O$. Proposition~\ref{thm:bootstrapargument} then implies that there exists $c$ such that $\const\leq c$ uniformly for all $\lambda < \lambda_O$. 

It remains to prove the assertion for $\lambda=\lambda_O$. 
Recall from Section~\ref{sec:Prelim:differentiating} the definition of $\tlam^n(x,y)$ and $\Lambda_n(x)$. Furthermore, recall that if $\conn{x}{y}{\xi^{x,y}}$ then such a connection is achieved in finitely many steps and the bound in $\ref{Assump:2ndMoment}$ then implies that
\begin{equation}
    \left\{\conn{x}{y}{\xi^{x,y}}\right\} = \bigcup^\infty_{n=1}\left\{\conn{x}{y}{\xi^{x,y}_{\Lambda_n(y)}}\right\}.
\end{equation}
Therefore by monotone convergence we have the pointwise limit $\tlam^n(x,y) \to \tlam(x,y)$ as $n\to\infty$ for all $\lambda>0$ and $x,y\in\X$. Since $\tlam^n(x,y)$ only depends upon the finite region $\Lambda_n(y)$, the functions $\lambda \mapsto \tlam^n(x,y)$ are continuous for all $x,y\in\X$. This, with the monotonicity $\tlam^n(x,y) \leq \tlam^{n+1}(x,y)$, implies that the function $\lambda \mapsto \tlam(x,y)$ is lower-semicontinuous. Since $\tlam(x,y)$ is non-decreasing in $\lambda$, this lower semi-continuity implies that the pointwise limit $\tlam(x,y)\uparrow\tau_{\lambda_O}(x,y)$ as $\lambda\uparrow\lambda_O$ holds, and the limit holds monotonically.

This monotonic pointwise convergence implies that the integral
\begin{equation}
    \int \tlam(x,u)\tlam(u,v)\tlam(v,y)\nu^{\otimes2}\left(\dd u, \dd v\right) \to \int \tau_{\lambda_O}(x,u)\tau_{\lambda_O}(u,v)\tau_{\lambda_O}(v,y)\nu^{\otimes2}\left(\dd u, \dd v\right)
\end{equation}
for all $x,y\in\X$ as $\lambda\uparrow\lambda_O$, and that this convergence is monotone increasing. If we take the supremum over $x,y\in\X$, we get that $\lim_{\lambda\uparrow\lambda_O}\InfNorm{\Optlam^3} = \InfNorm{\OptlamT^3}$. The uniform bound for $\lambda<\lambda_O$ then implies that this bound also holds at $\lambda=\lambda_O$.
\end{proof}

The following proposition uses the bound $\triangle_{\lambda_O} < c\beta^2$ to show that percolation does not occur at criticality. Another element of the proof is that there is almost surely at most one infinite cluster. There are general considerations that show that this is the case. Indeed \cite{gandolfi1992uniqueness} establishes that on the discrete space $\Z^d$ there is at most one infinite cluster if the edge occupation measure is stationary and obeys the ‘finite energy property’, and an analogous result for Poisson processes in the continuum applies in our case (see \cite{burton1993long,MeeRoy96}).

\NoCriticalPercolation*

\begin{proof}
Note that the strong form of irreducibility in Assumption~\ref{assumption:StrongIrreducible} implies the more general form of irreducibility in \cite{chebunin2024uniqueness}. We can therefore use their result that the infinite cluster is almost surely unique when it exists.

Suppose for contradiction that $\theta_{\lambda_O}(a)>0$ for some positive measure of marks. Then there almost surely exists a unique infinite cluster. Assume that $(\zerobar,a),(\xbar,b)\in\X$ are in this almost surely unique infinite cluster, and are therefore connected. Then the FKG inequality implies that
\begin{equation}
    \theta_{\lambda_O}(a)\theta_{\lambda_O}(b) 
    \leq
    \p_{\lambda_O}\big(|\C(\zerobar,a)|=\infty,|\C(\xbar,b)|=\infty\big)
    \le
    \tau_{\lambda_O}((\zerobar,a),(\xbar,b)).
\end{equation}
Since $\triangle_{\lambda_O} \leq c\beta^2$, for $\Pcal$-a.e.\ $a,b\in\Ecal$ there exists a sequence $\xbar^{(a,b)}_n$ such that $\tau_{\lambda_O}((\zerobar,a),(\xbar^{(a,b)}_n,b))\to0$. Therefore for $\Pcal$-a.e. $a\in\Ecal$ we have $\theta_{\lambda_O}(a)=0$.
\end{proof}

We can define the linear operator $\OptlamT$ by its action on $f\in L^2(\X)$:
\begin{equation}
    \left(\OptlamT f\right)(x) = \int \tau_{\lambda_O}(x,y)f(y)\nu\left(\dd y\right).
\end{equation}
Contrary to $\Optlam$ for $\lambda<\lambda_O$, the operator $\OptlamT$ may be an unbounded linear operator.

\begin{prop}[The operator OZE at the critical point]\label{lem:convergence_of_lace_expansion_corollary_lambda_T}
Let $d>6$ be sufficiently large. The Ornstein-Zernike equation then extends to $\lambda_O$ in the sense that the sum of (unbounded) linear operators vanishes:
\begin{equation}
\label{eqn:OZE_operator_critical}
    \OptlamT - \Opconnf - \OpLacelamT - \lambda_O\OptlamT\left(\Opconnf + \OpLacelamT\right) = 0.
\end{equation}
Furthermore, for $k\in\R^d\setminus\left\{0\right\}$ the linear operator $\fOptlamT(k)$ is a bounded linear operator and the following equality holds:
\begin{equation}
    \fOptlamT\left(k\right) - \fOpconnf\left(k\right) - \fOpLacelamT\left(k\right) - \lambda_O\fOptlamT\left(k\right)\left(\fOpconnf\left(k\right) + \fOpLacelamT\left(k\right)\right) = 0.
\end{equation}
Finally, there exists $c>0$ such that for all $k\in\Rd$
\begin{equation}
    \OpNorm{\OpLacelamT} \leq c\beta, \qquad \OpNorm{\fOpLacelamT(k)}\leq c\beta.
\end{equation}
\end{prop}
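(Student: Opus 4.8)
The plan is to transfer the subcritical conclusions of Proposition~\ref{thm:convergenceoflaceexpansion} to the endpoint $\lambda_O$ by monotone and dominated convergence, in the spirit of the proof of Proposition~\ref{lem:Triangle_Critical}; the role of the bootstrap (Proposition~\ref{thm:bootstrapargument}, via Observation~\ref{obs:convergenceoflaceexpansion_uniform}) is precisely that it makes all relevant bounds \emph{uniform} in $\lambda<\lambda_O$, so that they survive the limit. Throughout I use that $\tlam(x,y)\uparrow\tau_{\lambda_O}(x,y)$ monotonically and pointwise as $\lambda\uparrow\lambda_O$ (established in the proof of Proposition~\ref{lem:Triangle_Critical}), that $0<\lambda_O\le\kappa<\infty$ (from $f_1\le\kappa$), that $\mulam<1$ for $\lambda<\lambda_O$, and that $\SupSpec{\fOpconnf(k)}<\SupSpec{\fOpconnf(0)}=1$ for all $k\ne0$ by Assumption~\ref{Assump:Bound}.

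\emph{Step 1 (the lace operators).} The coefficients $\widehat{\pi}^{(n)}_\lambda$ come from functions $\pi^{(n)}_\lambda\ge0$ that are non-decreasing in $\lambda$ and, uniformly in $\lambda<\lambda_O$, dominated by the monotone integrable diagrammatic bound of Proposition~\ref{thm:DB:Pi_bound_Psi}; hence the pointwise limits $\pi^{(n)}_{\lambda_O}:=\lim_{\lambda\uparrow\lambda_O}\pi^{(n)}_\lambda$ exist, with kernels finite $\nu$-a.e., and define operators $\OpLacelamT^{(n)}$, $\fOpLacelamT^{(n)}(k)$. Since $\int\pi^{(n)}_{\lambda'}(\xbar;a,b)\,\dd\xbar\,\Pcal(\dd a)\le\OneNorm{\OpLace_{\lambda'}^{(n)}}\le\constKappaOne(\constKappaOne\beta)^n$ for every $\lambda'<\lambda_O$ and $\Pcal$-a.e.\ $b$, monotone convergence gives $\OneNorm{\OpLacelamT^{(n)}}\le\constKappaOne(\constKappaOne\beta)^n$, whence $\OpNorm{\OpLacelamT^{(n)}},\OpNorm{\fOpLacelamT^{(n)}(k)}\le\OneNorm{\OpLacelamT^{(n)}}$ by Lemmas~\ref{lem:BoundsonOperatorNorm},~\ref{lem:NormBounds}; and passing the displaced bounds of Propositions~\ref{thm:DB:Pi0_bounds},~\ref{thm:DisplacementNgeq2},~\ref{thm:DisplacementNeq1} through monotone convergence (each building block — $\trilam$, $\trilamooBar$, $\WkBar$, $\HkBar$, $\trilamB$, \dots\ — is a convolution of $\tlam,\tlamo,\connf$, hence converges monotonically) yields $\OpNorm{\fOpLacelamT^{(n)}(0)-\fOpLacelamT^{(n)}(k)}$ bounded exactly as in Proposition~\ref{thm:convergenceoflaceexpansion}, proportional to $1-\SupSpec{\fOpconnf(k)}$. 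Summing these absolutely summable bounds, the series $\OpLacelamT:=\sum_n(-1)^n\OpLacelamT^{(n)}$ and $\fOpLacelamT(k):=\sum_n(-1)^n\fOpLacelamT^{(n)}(k)$ converge in operator norm to bounded operators with $\OpNorm{\OpLacelamT}\le c\beta$, $\OpNorm{\fOpLacelamT(k)}\le c\beta$, and $\OpNorm{\fOpLacelamT(0)-\fOpLacelamT(k)}\le c\beta\,(1-\SupSpec{\fOpconnf(k)})$, with $\fOpLacelamT(k)$ the Fourier transform of $\OpLacelamT$; this already yields the final display of the proposition. Moreover, for each fixed $g\in L^2(\Ecal)$ one has $\fOpLacelam^{(n)}(k)g\to\fOpLacelamT^{(n)}(k)g$ in $L^2(\Ecal)$ by dominated convergence (using $|\widehat{\pi}^{(n)}_{\lambda_O}(k;a,b)-\widehat{\pi}^{(n)}_\lambda(k;a,b)|\le\int(\pi^{(n)}_{\lambda_O}-\pi^{(n)}_\lambda)(\xbar;a,b)\,\dd\xbar\downarrow0$ and boundedness of $\fOpLacelamT^{(n)}(0)$), and summing over $n$ gives $\fOpLacelam(k)\to\fOpLacelamT(k)$ in the strong operator topology as $\lambda\uparrow\lambda_O$.

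\emph{Step 2 ($\fOptlamT(k)$ and the Fourier OZE for $k\ne0$).} The decisive a priori estimate is that $f_2\le\kappa$ gives $\OpNorm{\fOptlam(k)}\le\kappa\,\fgmu(k)\le\kappa\bigl((1-\SupSpec{\fOpconnf(k)})^{-1}\vee1\bigr)$ uniformly in $\lambda<\lambda_O$, since $\mulam<1$; so for fixed $k\ne0$ the family $\{\fOptlam(k)\}_{\lambda<\lambda_O}$ is norm-bounded. By separability of $L^2(\Ecal)$, extract $\lambda_j\uparrow\lambda_O$ with $\widehat{\mathcal T}_{\lambda_j}(k)\to T_k$ in the weak operator topology. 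Passing the two-sided Ornstein--Zernike identities $\fOptlam(k)=\widehat a_\lambda(k)+\lambda\fOptlam(k)\widehat a_\lambda(k)=\widehat a_\lambda(k)+\lambda\widehat a_\lambda(k)\fOptlam(k)$, with $\widehat a_\lambda(k):=\fOpconnf(k)+\fOpLacelam(k)$, to the limit — the products cause no trouble because $\widehat a_{\lambda_j}(k)\to\widehat a(k):=\fOpconnf(k)+\fOpLacelamT(k)$ \emph{strongly} by Step~1 while $\widehat{\mathcal T}_{\lambda_j}(k)$ is norm-bounded and weakly convergent — yields $T_k=\widehat a(k)+\lambda_O T_k\widehat a(k)=\widehat a(k)+\lambda_O\widehat a(k)T_k$. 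Hence $T_k$ commutes with $\Id-\lambda_O\widehat a(k)$ and $(\Id+\lambda_O T_k)(\Id-\lambda_O\widehat a(k))=\Id=(\Id-\lambda_O\widehat a(k))(\Id+\lambda_O T_k)$, so $\Id-\lambda_O\widehat a(k)$ is invertible and $T_k=\widehat a(k)(\Id-\lambda_O\widehat a(k))^{-1}$ is uniquely determined; thus $\fOptlam(k)\to T_k=:\fOptlamT(k)$ in the weak operator topology along the whole net, $\fOptlamT(k)$ is a bounded operator whose matrix elements coincide with the corresponding integrals against $\tau_{\lambda_O}$ (by Fubini and monotone convergence, as in Step~1), and rearranging $T_k=\widehat a(k)+\lambda_O T_k\widehat a(k)$ is exactly the claimed Fourier OZE at $\lambda_O$.

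\emph{Step 3 (raw OZE) and the main obstacle.} Since $\OptlamT$ may genuinely be unbounded, \eqref{eqn:OZE_operator_critical} is read on the natural common domain. For $f\in L^2(\X)$ with $f\ge0$ in that domain I would apply \eqref{eq:LE_identity_OZE} for $\lambda<\lambda_O$ and pass each term to the limit $\lambda\uparrow\lambda_O$: $\Opconnf f$ is fixed; $\OpLacelam f\to\OpLacelamT f$ in $L^2(\X)$ by the $L^2(\X)$-analogue of the strong-convergence argument of Step~1; and, expanding $\OpLacelam=\sum_n(-1)^n\OpLacelam^{(n)}$ into operators with non-negative kernels, $\Optlam(\Opconnf+\OpLacelam^{(n)})f\uparrow\OptlamT(\Opconnf+\OpLacelamT^{(n)})f$ pointwise by monotone convergence (using $\tlam\uparrow\tau_{\lambda_O}$ and $\pi^{(n)}_\lambda\uparrow\pi^{(n)}_{\lambda_O}$), after which dominated convergence over $n$ — using $\OneNorm{\OpLacelam^{(n)}}\le\constKappaOne(\constKappaOne\beta)^n$ and the domain hypothesis on $f$ — gives $\lambda\Optlam(\Opconnf+\OpLacelam)f\to\lambda_O\OptlamT(\Opconnf+\OpLacelamT)f$ in $L^2(\X)$; splitting $f=f^+-f^-$ yields \eqref{eqn:OZE_operator_critical}. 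The genuinely delicate part is Step~1 together with the limit passage in Step~2: one must control the lace coefficients and the two-point operator uniformly up to criticality, which is exactly what the bootstrap supplies — the crucial enabling facts being $\OpNorm{\fOptlam(k)}\le\kappa\,\fgmu(k)$ and the boundedness of $\fgmu(k)$ for $k\ne0$ (from $\mulam<1$) — and one must accommodate the possible unboundedness of $\OptlamT$, which is why the raw identity can only be stated on a domain; everything else is functional analysis plus monotone/dominated convergence.
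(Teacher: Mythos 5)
There is a genuine gap at the foundation of your Step 1: you assert that the lace-expansion coefficients $\pi^{(n)}_\lambda$ are non-decreasing in $\lambda$, and you use this both to define $\pi^{(n)}_{\lambda_O}$ as a monotone limit and to obtain the pointwise convergence $\pi^{(n)}_\lambda\to\pi^{(n)}_{\lambda_O}$ on which everything downstream (strong convergence of $\fOpLacelam(k)$ in Step~2, convergence of $\OpLacelam f$ in Step~3, and the lower-semicontinuity extension of the norm bounds) depends. Monotonicity holds for $\pi^{(0)}_\lambda$, whose defining event $\{\dconn{y}{x}{\xi^{y,x}}\}$ is increasing, but for $n\geq 1$ the events $E(u_{i-1},u_i;\C_{i-1},\xi_i)$ are built from thinning events $\{\xconn{\cdot}{\cdot}{\xi}{A}\}$ — the connection must exist \emph{and} be destroyed by an $A$-thinning — and these are not monotone in the point configuration, so the probabilities in \eqref{eq:LE:Pin_def} are not monotone in $\lambda$. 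The paper's proof is careful on exactly this point: it does \emph{not} use monotonicity of $\pi^{(n)}_\lambda$, but instead (i) dominates $\pi^{(n)}_\lambda$ uniformly in $\lambda\leq\lambda_O$ by the functions $h^{(n)}_{\lambda_O}$ coming from Proposition~\ref{thm:DB:Pi_bound_Psi} and the proof of Proposition~\ref{thm:DB:Pi0_bounds} (these \emph{are} monotone in $\lambda$, being built from $\tlam$ and $\connf$, and are integrable by Schur's test plus the critical triangle bound), and (ii) imports the pointwise continuity $\pi_{\lambda_O}-\pi_\lambda\to 0$ as a separate, non-trivial result from \cite[Corollary~6.1]{HeyHofLasMat19}. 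Without replacing your monotonicity claim by these two ingredients, the limits you take in Steps 1--3 are not justified.

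A secondary but related soft spot is your identification of the weak limit $T_k$ with "the integrals against $\tau_{\lambda_O}$ by Fubini and monotone convergence": for $k\neq 0$ the integrand $\e^{\i k\cdot\xbar}\tau_\lambda(\xbar;a,b)$ is not monotone, and the natural dominating function $\tau_{\lambda_O}(\cdot;a,b)$ need not be integrable (indeed $\OpNorm{\fOptlamT(0)}$ may be infinite), so neither monotone nor dominated convergence applies directly; the paper instead works at the operator level, inverting $\Fourier{\Id-\lambda_O(\Opconnf+\OpLacelamT)}(k)$ via Lemma~\ref{lem:Bounded_Linear_Inverse}. That said, your derivation of a two-sided bounded inverse of $\Id-\lambda_O\widehat a(k)$ directly from the two limiting OZE identities is a genuinely nice alternative to the paper's minimum-modulus/upper-semicontinuity argument, and would be worth keeping once the convergence of the lace coefficients is repaired along the lines above.
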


\begin{proof}
Let $f\in L^2(\X)$. Then by the triangle inequality, for $\lambda<\lambda_O$
\begin{multline}
    \norm{\left(\OptlamT - \Opconnf - \OpLacelamT - \lambda_O\OptlamT\left(\Opconnf + \OpLacelamT\right)\right)f}_2 \leq \norm{\left(\Optlam - \Opconnf - \OpLacelam - \lambda\Optlam\left(\Opconnf + \OpLacelam\right)\right)f}_2 \\+ \norm{\left(\OptlamT - \Optlam - \OpLacelamT + \OpLacelam - \lambda_O\OptlamT\left(\Opconnf + \OpLacelamT\right) + \lambda\Optlam\left(\Opconnf +\OpLacelam\right)\right)f}_2.
\end{multline}
The first norm on the right hand side vanishes, since the OZE holds for $\lambda<\lambda_O$. To prove that the left hand side vanishes, we aim to show that the second norm on the right hand side vanishes as $\lambda\uparrow\lambda_O$. To do this, the triangle inequality implies that we only need to show that the following limits hold as $\lambda\uparrow\lambda_O$:
\begin{align}
    \norm{\left(\OptlamT - \Optlam\right)f}_2 &\to 0, \label{eqn:taulimit}\\
    \norm{\left(\OpLacelamT - \OpLacelam\right)f}_2 &\to 0,\label{eqn:pilimit}\\
    \norm{\left(\OptlamT\OpLacelamT - \Optlam\OpLacelam\right)f}_2 &\to 0\label{eqn:taupilimit}.
\end{align}

We begin with \eqref{eqn:taulimit}. Recall from the proof of Proposition~\ref{lem:Triangle_Critical} that $\left(\tau_{\lambda_O} - \tlam\right)(x,y) := \tau_{\lambda_O}(x,y) - \tlam(x,y)$ converges monotonically and pointwise to zero. From the definition of the $\norm{\cdot}_2$ norm,
\begin{align}
     \norm{\left(\OptlamT - \Optlam\right)f}_2^2 &= \int\left(\int \left(\tau_{\lambda_O} - \tlam\right)(x,y)f(y)\nu\left(\dd y\right)\right)^2\nu\left(\dd x\right)\\
     &\leq \int\left(\tau_{\lambda_O} - \tlam\right)(x,y)\left(\tau_{\lambda_O} - \tlam\right)(x,z)\abs*{f(y)}\abs*{f(z)}\nu^{\otimes(3)}\left(\dd x,\dd y, \dd z\right).
\end{align}
The monotone convergence of $\left(\tau_{\lambda_O}-\tlam\right)(x,y)\downarrow0$ then implies that the limit \eqref{eqn:taulimit} holds.

For \eqref{eqn:pilimit}, we use the functions $h^{(n)}_\lambda\colon \X^2\to\R_+$ defined by
\begin{equation}
    h^{(n)}_\lambda(x,y) := \begin{cases}
    \lambda^{-1}\int  \psi_n(x,w_{n-1},u_{n-1}) \left( \prod_{i=1}^{n-1} \psi(\vec v_i) \right) \psi_0(w_0,u_0,y) \nu^{\otimes(2n)}\left(\dd\left( \left(\vec w, \vec u\right)_{[0,n-1]} \right)\right) &:n\geq 1\\
    \tfrac 12 \lambda^2 \left(\int\tlam\left(x,w\right)\connf\left(w,y\right)\nu\left(\dd w\right)\right)^2 &: n=0.
    \end{cases}
\end{equation}
From Proposition~\ref{thm:DB:Pi_bound_Psi}
and the proof of Proposition~\ref{thm:DB:Pi0_bounds}, we know that $0\leq \pi^{(n)}_\lambda(x,y) \leq h^{(n)}_\lambda(x,y)$ for all $n\geq 0$, $x,y\in\X$, and $\lambda\in\left(0,\lambda_O\right)$ - the non-negativity follows from the definition of $\pi^{(n)}_\lambda$. The functions $ h^{(n)}_\lambda$ can also be defined for $\lambda=\lambda_O$, and since they are monotone increasing in $\lambda$, we have the $\lambda$-independent bound $\pi^{(n)}_\lambda(x,y) \leq h^{(n)}_{\lambda_O}(x,y)$ for all $\lambda<\lambda_O$ and $x,y\in\X$. For the single-mark version of the model it is proven in \cite[Corollary~6.1]{HeyHofLasMat19} that $\pi_{\lambda_O}-\pi_{\lambda}$ converges pointwise to zero. This same argument works for our multi-mark version, and therefore $\pi^{(n)}_\lambda(x,y) \leq h^{(n)}_{\lambda_O}(x,y)$ for all $\lambda\leq\lambda_O$. The triangle inequality then implies that
\begin{equation}
    \abs*{\left(\pi_{\lambda_O} - \pi_{\lambda}\right)(x,y)} = \abs*{\sum^\infty_{n=0}\left(-1\right)^n\left( \pi^{(n)}_{\lambda_O}(x,y) - \pi^{(n)}_{\lambda}(x,y)\right)} \leq 2\sum^\infty_{n=0}h^{(n)}_{\lambda_O}(x,y).
\end{equation}
It will be convenient to define $h_{\lambda}\colon \X^2\to\R_+$ as $h_\lambda(x,y) := \sum^\infty_{n=0}h^{(n)}_{\lambda}(x,y)$. Then
\begin{align}
     \norm{\left(\OpLacelamT - \OpLacelam\right)f}_2^2 &= \int\left(\int \left(\pi_{\lambda_O} - \pi_{\lambda}\right)(x,y)f(y)\nu\left(\dd y\right)\right)^2\nu\left(\dd x\right) \nonumber\\
     &= \int\left(\pi_{\lambda_O} - \pi_{\lambda}\right)(x,y)\left(\pi_{\lambda_O} - \pi_{\lambda}\right)(x,z)f(y)f(z)\nu^{\otimes3}\left(\dd x,\dd y, \dd z\right) \label{eqn:diff-pi_integral}\\
     & \leq 4\int h_{\lambda_O}(x,y)h_{\lambda_O}(x,z)\abs*{f(y)}\abs*{f(z)}\nu^{\otimes3}\left(\dd x,\dd y, \dd z\right)\nonumber\\
     & = 4\int\left(\int h_{\lambda_O}(x,y)\abs*{f(y)}\nu\left(\dd y\right)\right)^2\nu\left(\dd x\right). \label{eqn:diff-pi_integral_2}
\end{align}
By Schur's test, this integral is finite for all $f\in L^2(\X)$ if both the values
\begin{equation}
    \esssup_{y\in\X}\int h_{\lambda_O}(x,y) \nu\left(\dd x\right) \qquad \text{ and } \qquad \esssup_{x\in\X}\int h_{\lambda_O}(x,y) \nu\left(\dd y\right)
\end{equation}
are finite. To prove this for the former, we repeat the arguments of Proposition~\ref{thm:DB:Pi0_bounds} and Section~\ref{sec:diagrammaticbounds} to bound it in terms of $\lambda_O$, the triangle diagrams, and other $\lambda$-independent terms. The argument of Proposition~\ref{lem:Triangle_Critical} with the uniform bounds of Observation~\ref{obs:convergenceoflaceexpansion_uniform} then proves that $\esssup_{y}\int h_{\lambda_O}(x,y) \nu\left(\dd x\right)$ is indeed finite. A similar argument also holds for $\esssup_{x}\int h_{\lambda_O}(x,y) \nu\left(\dd y\right)$.

We now have an integrable function $h_{\lambda_O}(x,y)h_{\lambda_O}(x,z)\abs*{f(y)}\abs*{f(z)}$ that dominates the integrand of \eqref{eqn:diff-pi_integral}. Recall that $\pi_{\lambda_O}-\pi_{\lambda}$ converges pointwise to zero. Thus the dominated convergence theorem implies that the limit \eqref{eqn:pilimit} holds.

For \eqref{eqn:taupilimit}, we can use $h_{\lambda_O}$ to get the bound
\begin{multline}
    \norm{\left(\OptlamT\OpLacelamT - \Optlam\OpLacelam\right)f}_2^2 \\\leq \int\left(\tau_{\lambda_O} - \tlam\right)(x,u)h_{\lambda_O}(u,y)\left(\tau_{\lambda_O} - \tlam\right)(x,v)h_{\lambda_O}(v,z)\abs*{f(y)}\abs*{f(z)} \nu^{\otimes5}\left(\dd u,\dd v, \dd x, \dd y, \dd z\right).
\end{multline}
The integrand converges monotonically to zero, and therefore the integral vanishes in the limit.

We now consider the Fourier version of the OZE. We use the notation that, given a bounded linear operator $A\colon L^2(\X) \to L^2(\X)$, the Fourier transform is given by $\Fourier{A}(k)\colon L^2(\Ecal) \to L^2(\Ecal)$. By hypothesis, $\Opconnf$ is a bounded operator, and the finiteness of $\esssup_{y}\int h_{\lambda_O}(x,y) \nu\left(\dd x\right)$ implies that $\OpLacelamT$ is also bounded. Then \eqref{eqn:OZE_operator_critical} implies that
\begin{equation}
    \Fourier{\OptlamT\left(\Id - \lambda_O\left(\Opconnf + \OpLacelamT\right)\right)}(k) = \Fourier{\Opconnf + \OpLacelamT}(k)
\end{equation}
for all $k\in\Rd$.

We now make the claim that $\Fourier{\Id - \lambda_O\left(\Opconnf + \OpLacelamT\right)}(k)$ has a bounded linear inverse for all $k\in\Rd\setminus\left\{0\right\}$. This is proven in Lemma~\ref{lem:Bounded_Linear_Inverse} below. This claim then implies that for $k\ne0$ we have
\begin{align}
    &\Fourier{\Opconnf + \OpLacelamT}(k)\Fourier{\Id - \lambda_O\left(\Opconnf + \OpLacelamT\right)}(k)^{-1} \nonumber\\& \hspace{3cm} = \Fourier{\OptlamT\left(\Id - \lambda_O\left(\Opconnf + \OpLacelamT\right)\right)}(k) \Fourier{\Id - \lambda_O\left(\Opconnf + \OpLacelamT\right)}(k)^{-1} \nonumber\\& \hspace{3cm} = \Fourier{\OptlamT\left(\Id - \lambda_O\left(\Opconnf + \OpLacelamT\right)\right)\left(\Id - \lambda_O\left(\Opconnf + \OpLacelamT\right)\right)^{-1}}(k) =  \Fourier{\OptlamT}(k).
\end{align}
That is, $\Fourier{\OptlamT}(k)$ is a bounded linear operator for $k\neq 0$.

The limit \eqref{eqn:pilimit} also allows us to extend our sub-critical bound on $\OpLacelam$ to apply at criticality. The limit \eqref{eqn:pilimit} may be phrased as saying that the function $\lambda\mapsto\norm*{\OpLacelam f}_2$ is continuous at $\lambda=\lambda_O$ (from the left) for all $f\in L^2\left(\X\right)$. Since $\lambda\mapsto \OpNorm{\OpLacelam}$ can be written as a supremum of such functions, it is a lower semi-continuous function. Our bound $\OpNorm{\OpLacelam} \leq c\beta$ for $\lambda < \lambda_O$ (from Proposition~\ref{thm:convergenceoflaceexpansion}) then implies
\begin{equation}
    \OpNorm{\OpLacelamT} \leq \liminf_{\lambda\uparrow\lambda_O}\OpNorm{\OpLacelam} \leq c\beta.
\end{equation}

The limit $\norm*{\left(\fOpLacelamT^{(n)}(0) - \fOpLacelam^{(n)}(0)\right)f}_2 \to 0$ for each $f\in L^2\left(\Ecal\right)$ can be derived in essentially the same way as \eqref{eqn:pilimit} above. The above lower semi-continuity argument can then be used to show that the bound from Proposition~\ref{thm:convergenceoflaceexpansion} extends to also hold at $\lambda=\lambda_O$. From Lemma~\ref{lem:NormBounds} and the positivity of $\pi^{(n)}_{\lambda_O}(x,y)$ this bound also holds for all $k\in\Rd$. By summing over $n$, we then get the desired bound on $\OpNorm{\fOpLacelamT(k)}$.
\end{proof}

\begin{lemma}
\label{lem:Bounded_Linear_Inverse}
For $\lambda = \lambda_O$ and $k\in\Rd\setminus\left\{0\right\}$, the bounded linear operator $\left(\Id - \lambda_O\left(\fOpconnf(k) + \fOpLacelamT(k)\right)\right)\colon L^2(\Ecal) \to L^2(\Ecal)$ has a bounded linear inverse.
\end{lemma}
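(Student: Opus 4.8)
The plan is to reduce the statement to the single spectral inequality $\lambda_O\,\SupSpec{\fOpconnf(k)+\fOpLacelamT(k)}<1$ for $k\neq0$, and then to establish that inequality by a limiting argument from the subcritical regime $\lambda<\lambda_O$. First I would record that $\widehat a_{\lambda_O}(k):=\fOpconnf(k)+\fOpLacelamT(k)$ is a bounded self-adjoint operator: $\OpNorm{\fOpconnf(k)}\le\OneNorm{\fOpconnf(0)}\le C$ by \ref{Assump:2ndMoment}, and $\OpNorm{\fOpLacelamT(k)}\le\OneNorm{\fOpLacelamT(k)}\le\OneNorm{\OpLacelamT}<\infty$ by the finiteness of $\OpLacelamT$ established earlier in the proof of Proposition~\ref{lem:convergence_of_lace_expansion_corollary_lambda_T}, while self-adjointness follows from the symmetry of the lace coefficients. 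Consequently $\sigma\big(\Id-\lambda_O\widehat a_{\lambda_O}(k)\big)=1-\lambda_O\,\sigma\big(\widehat a_{\lambda_O}(k)\big)\subset\R$, and once we know $\lambda_O\,\SupSpec{\widehat a_{\lambda_O}(k)}<1$ we get $\sigma\big(\lambda_O\widehat a_{\lambda_O}(k)\big)\subset(-\infty,1)$, hence $0\notin\sigma\big(\Id-\lambda_O\widehat a_{\lambda_O}(k)\big)$ and the inverse is a bounded operator. So everything comes down to the spectral bound.

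For the spectral bound I would fix $k\neq0$ and assemble subcritical facts. From Lemma~\ref{lem:Optlam-a}, $\lambda\,\SupSpec{\fOpconnf(k)+\fOpLacelam(k)}<1$ for all $\lambda<\lambda_O$; from the commutation of $\fOptlam(k)$ with $\fOpconnf(k)+\fOpLacelam(k)$ (derived in the proof of Proposition~\ref{thm:bootstrapargument} via Theorem~\ref{thm:spectraltheorem}) one has the identity $\SupSpec{\fOptlam(k)}=\SupSpec{\fOpconnf(k)+\fOpLacelam(k)}\big/\big(1-\lambda\,\SupSpec{\fOpconnf(k)+\fOpLacelam(k)}\big)$; and from the bootstrap bound $f\le\kappa$ of Proposition~\ref{thm:bootstrapargument}, $\OpNorm{\fOptlam(k)}\le\kappa\,\fgmu(k)$. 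Since $\mulam\in[0,1)$ (Lemma~\ref{lem:mulam_bounds}) and, by \ref{Assump:Bound}, $\SupSpec{\fOpconnf(k)}\le C_1\vee(1-C_2\abs{k}^2)$, we obtain $\fgmu(k)=\big(1-\mulam\SupSpec{\fOpconnf(k)}\big)^{-1}\le\big((1-C_1)\wedge C_2\abs{k}^2\big)^{-1}$, a bound \emph{uniform in $\lambda<\lambda_O$}; hence $\SupSpec{\fOptlam(k)}$ is bounded uniformly in $\lambda<\lambda_O$. Feeding this into the spectral identity: if $\lambda_n\,\SupSpec{\fOpconnf(k)+\fOpLace_{\lambda_n}(k)}\to1$ along some $\lambda_n\uparrow\lambda_O$, then (using that $\SupSpec{\fOpconnf(k)+\fOpLace_{\lambda_n}(k)}$ stays bounded) $\SupSpec{\fOpconnf(k)+\fOpLace_{\lambda_n}(k)}\to\lambda_O^{-1}>0$ and $\SupSpec{\widehat{\mathcal T}_{\lambda_n}(k)}\to+\infty$, contradicting the uniform bound. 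Therefore $\delta_k:=1-\limsup_{\lambda\uparrow\lambda_O}\lambda\,\SupSpec{\fOpconnf(k)+\fOpLacelam(k)}>0$, so $\SupSpec{\fOpconnf(k)+\fOpLacelam(k)}\le\big(1-\tfrac12\delta_k\big)/\lambda$ for $\lambda$ sufficiently close to $\lambda_O$.

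Finally I would pass to the limit through quadratic forms. For each unit $f\in L^2(\Ecal)$, dominated convergence gives $\inner{f}{(\fOpconnf(k)+\fOpLacelam(k))f}\to\inner{f}{(\fOpconnf(k)+\fOpLacelamT(k))f}$ as $\lambda\uparrow\lambda_O$: this is the argument used for \eqref{eqn:pilimit}, now applied to $\int\overline{f(a)}\,\widehat\pi_\lambda(k;a,b)f(b)\,\Pcal^{\otimes2}(\dd a,\dd b)$ with dominating kernel $2\sum_n\widehat h^{(n)}_{\lambda_O}(0;a,b)$, which is Schur-integrable against $\abs{f}\otimes\abs{f}$ by the $h_{\lambda_O}$ estimates already in hand, together with $\widehat\pi_\lambda(k;a,b)\to\widehat\pi_{\lambda_O}(k;a,b)$ pointwise (itself another dominated convergence over $\R^d$). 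Since $\inner{f}{(\fOpconnf(k)+\fOpLacelam(k))f}\le\SupSpec{\fOpconnf(k)+\fOpLacelam(k)}\le(1-\tfrac12\delta_k)/\lambda$ for $\lambda$ near $\lambda_O$, the limit yields $\inner{f}{(\fOpconnf(k)+\fOpLacelamT(k))f}\le(1-\tfrac12\delta_k)/\lambda_O$ for every unit $f$, hence $\lambda_O\,\SupSpec{\fOpconnf(k)+\fOpLacelamT(k)}\le1-\tfrac12\delta_k<1$, completing the reduction.

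I expect the main obstacle to be the step isolating $\delta_k>0$: one must argue that the \emph{only} route by which $\lambda\,\SupSpec{\fOpconnf(k)+\fOpLacelam(k)}$ could creep up to $1$ is through the divergence of $\SupSpec{\fOptlam(k)}$, and that \ref{Assump:Bound} keeps $\fgmu(k)$ (hence $\SupSpec{\fOptlam(k)}$) bounded \emph{uniformly in $\lambda<\lambda_O$} for each fixed $k\neq0$ — it is this uniformity, not mere finiteness for each separate $\lambda$, that drives the contradiction. A secondary point, easy but worth stating carefully, is that the quadratic-form convergence from $\lambda<\lambda_O$ to $\lambda_O$ and the self-adjointness of $\fOpLacelamT(k)$ hold for all $k$, not just $k=0$; both follow from the symmetry and domination properties of the lace coefficients used in the $k=0$ case.
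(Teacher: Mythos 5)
Your proposal is correct, and it reaches the same target as the paper — the uniform spectral bound $\lambda\,\SupSpec{\fOpconnf(k)+\fOpLacelam(k)}\leq 1-\varepsilon(k)$ for $\lambda<\lambda_O$, followed by a limiting argument — but it gets there by a genuinely different mechanism. The paper writes
\begin{equation*}
  1-\lambda\SupSpec{\fOpconnf(k)+\fOpLacelam(k)}
  = \bigl[1-\lambda\SupSpec{\fOpconnf(0)+\fOpLacelam(0)}\bigr]
  + \lambda\bigl[\SupSpec{\fOpconnf(0)}-\SupSpec{\fOpconnf(k)}\bigr]
  + \lambda\bigl[\delta_\lambda(0)-\delta_\lambda(k)\bigr],
\end{equation*}
uses Lemma~\ref{lem:Optlam-a} for the positivity of the first bracket and controls the last via Lemma~\ref{thm:deltatoLacecoefficients} together with the displaced lace-coefficient bound \eqref{eq:BA:convLE_intPi_bds2}; this is quantitative ($\gtrsim 1-\SupSpec{\fOpconnf(k)}$, uniformly in $\lambda$) and is essentially the same computation that later drives the infrared bound in Theorem~\ref{thm:InfraRed}. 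You instead derive the uniform bound by contradiction from the already-established infrared estimate $\OpNorm{\fOptlam(k)}\leq\kappa\,\fgmu(k)$ together with \ref{Assump:Bound} (which keeps $\fgmu(k)$ bounded uniformly in $\lambda$ for fixed $k\neq0$) and the spectral identity $\SupSpec{\fOptlam(k)}=\SupSpec{\widehat a(k)}/(1-\lambda\SupSpec{\widehat a(k)})$; your argument is softer (your $\delta_k$ has no explicit form) but shorter, and it cleanly isolates the point that it is the \emph{$\lambda$-uniformity} of the $f_2$-bound that does the work. For the passage to $\lambda=\lambda_O$ you use quadratic-form convergence of $\SupSpec{\cdot}$, whereas the paper uses upper semicontinuity of the minimum modulus $\MinModulus{\cdot}$; both rest on the same dominated-convergence machinery as \eqref{eqn:pilimit}, and your reduction of invertibility to $\lambda_O\SupSpec{\widehat a_{\lambda_O}(k)}<1$ via self-adjointness and closedness of the spectrum is sound. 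Two small points to state explicitly if you write this up: the bound $\OpNorm{\fOptlam(k)}\leq\kappa\,\fgmu(k)$ is an essential supremum in $k$, so for a fixed $k$ you should either invoke continuity of $k\mapsto\fOptlam(k)$ or phrase the whole lemma for a.e.\ $k$ (the paper faces the same convention issue); and the limit $\SupSpec{\widehat a_{\lambda_n}(k)}\to\lambda_O^{-1}$ in your contradiction step uses $\lambda_O>0$, which is standard but worth a word.
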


\begin{proof}
We first introduce some notation. Let $A\colon L^2(\Ecal) \to L^2(\Ecal)$ be a self-adjoint linear operator. Then we define the \emph{minimum modulus} to be
\begin{equation}
    \MinModulus{A} := \inf_{0\ne f\in L^2(\Ecal)}\frac{\norm*{Af}_2}{\norm{f}_2}.
\end{equation}
Note that it follows from the Spectral Theorem (Theorem~\ref{thm:spectraltheorem}) that $A$ has a bounded linear inverse if and only if $\MinModulus{A}>0$. Now suppose that $A=A_\lambda$, and that for all $f\in L^2(\Ecal)$ the map $\lambda \mapsto \norm*{A_\lambda f}_2$ is continuous on $\left[0,\lambda_O\right]$. Then the map $\lambda\mapsto \MinModulus{A_\lambda}$ is upper semicontinuous on $\left[0,\lambda_O\right]$. That is, for any sequence $\lambda_n$ in $\left[0,\lambda_O\right]$ such that $\lambda_n\to \lambda$,
\begin{equation}
    \limsup_{n\to\infty}\MinModulus{A_{\lambda_n}} \leq \MinModulus{A_\lambda}.
\end{equation}

We now consider $A_\lambda = \Id - \lambda\left(\fOpconnf(k) + \fOpLacelam(k)\right)$. A similar argument to the way we proved \eqref{eqn:pilimit} shows that $\lambda \mapsto \norm*{A_\lambda f}$ is continuous on $\left[0,\lambda_O\right]$. Our result is then proven if we can show that for all $k\in\Rd\setminus\{0\}$ we have an $\varepsilon = \varepsilon(k)>0$ such that $\MinModulus{ \Id - \lambda\left(\fOpconnf(k) + \fOpLacelam(k)\right)} \geq \varepsilon$ uniformly in $\lambda\in\left(0,\lambda_O\right)$. We will in fact show the stronger result that $\lambda\SupSpec{\fOpconnf(k) + \fOpLacelam(k)} \leq 1 - \varepsilon$ uniformly in $\lambda\in\left(0,\lambda_O\right)$.

Recalling the definition of $\delta_\lambda(k)$ from \eqref{eqn:delta_def}, we have
\begin{align}
    1 - \lambda\SupSpec{\fOpconnf(k) + \fOpLacelam(k)} &= 1 - \lambda\SupSpec{\fOpconnf(k)} - \lambda\delta_{\lambda}(k) \nonumber\\
    &= \underbrace{1 - \lambda\SupSpec{\fOpconnf(0) + \fOpLacelam(0)}}_{>0 \text{ from Lemma~\ref{lem:Optlam-a}}} + \lambda\left(\SupSpec{\fOpconnf(0)} - \SupSpec{\fOpconnf(k)}\right) + \lambda\left(\delta_{\lambda}(0) - \delta_{\lambda}(k)\right) \nonumber\\
    & > \lambda\left(\SupSpec{\fOpconnf(0)} - \SupSpec{\fOpconnf(k)}\right)\left(1+\LandauBigO{\beta}\right).
\end{align}
Here we have used Lemma  \ref{thm:deltatoLacecoefficients} to bound the difference of $\delta$ with the difference of $\fOpLacelamT$. For sufficiently large $d$, \ref{Assump:Bound} then proves $\lambda\SupSpec{\fOpconnf(k) + \fOpLacelam(k)} \leq 1 - \varepsilon$ uniformly in $\lambda\in\left(0,\lambda_O\right)$.
\end{proof}

Note that in conjunction, Proposition~\ref{thm:convergenceoflaceexpansion}, Proposition~\ref{thm:bootstrapargument} and Proposition~\ref{lem:convergence_of_lace_expansion_corollary_lambda_T} prove the OZE equations of Theorem~\ref{thm:OZEtheorem}. In particular, the former two prove that the OZE holds in the subcritical regime, and the latter extends this to criticality where possible. Furthermore, Proposition~\ref{thm:convergenceoflaceexpansion} gives the bounds on $\OpLacelam$ and $\fOpLacelam(k)$ subcritically whilst Proposition~\ref{lem:convergence_of_lace_expansion_corollary_lambda_T} extends this bound to criticality.

Regarding Theorem~\ref{thm:InfraRed}, the bound on $\trilam$ and the lack of percolation at criticality were proven at the beginning of this section. The infrared bound component of the theorem is proven below.

\begin{proof}[Proof of Theorem~\ref{thm:InfraRed}]
First note that there is nothing to prove for $k=0$. From Proposition~\ref{lem:convergence_of_lace_expansion_corollary_lambda_T} and Lemma~\ref{lem:Bounded_Linear_Inverse}, we can write
\begin{equation}
    \fOptlam(k) = \left(\fOpconnf(k) + \fOpLacelam(k)\right)\left(\Id - \lambda\left(\fOpconnf(k) + \fOpLacelam(k)\right)\right)^{-1},
\end{equation}
for all $k\in\Rd$ if $\lambda<\lambda_O$, and for all $k\in\Rd\setminus\{0\}$ if $\lambda = \lambda_O$. 

If we use the Spectral Theorem (Theorem~\ref{thm:spectraltheorem}) to `diagonalize' the operator $\widehat{a}(k) =\fOpconnf(k) + \fOpLacelam(k)$, we get a multiplication operator on some Hilbert space $L^2(\mathfrak{E}_k,\mu_k)$ taking values $\widetilde{a}(k)(e)$ for $e\in\mathfrak{E}_k$. Since, $\fOptlam(k)$ is formed from $\widehat{a}(k) =\fOpconnf(k) + \fOpLacelam(k)$, it can also be diagonalized over $L^2(\mathfrak{E}_k,\mu_k)$, and takes values
\begin{equation}
\label{eqn:TauDiagonal}
    \widetilde{\tau}_\lambda(k)(e) = \frac{\widetilde{a}(k)(e)}{1 - \lambda\widetilde{a}(k)(e)}.
\end{equation}
Since $\lambda\SupSpec{\fOpconnf(k) + \fOpLacelam(k)} < 1$ (from the proof of Lemma~\ref{lem:Bounded_Linear_Inverse}), we know $\lambda\widetilde{a}(k)(e) >1$ and $\lambda\widetilde{\tau}_\lambda(k)(e) \geq -1$ for $\mu_k$-almost every $e\in\mathfrak{E}_k$. It therefore only remains to bound $\widetilde{\tau}_\lambda(k)(e)$ from above. Equivalently, we only need to bound $\SupSpec{\fOptlam(k)}$ from above. 

We also note that the proof of \eqref{eqn:pilimit} can be adapted to show that $\lambda\mapsto \left<f,\left(\fOpconnf(0) + \fOpLacelam(0)\right) f\right>$ is continuous on $\left[0,\lambda_O\right]$ for all $f\in L^2(\Ecal)$. Then because $\SupSpec{\fOpconnf(0) + \fOpLacelam(0)} = \sup_{f}\tfrac{\left<f,\left(\fOpconnf(0) + \fOpLacelam(0)\right) f\right>}{\left<f,f\right>}$, this proves that $\lambda\mapsto \SupSpec{\fOpconnf(0) + \fOpLacelam(0)}$ is lower-semicontinuous on $\left[0,\lambda_O\right]$. The bound $\lambda\SupSpec{\fOpconnf(0) + \fOpLacelam(0)} <1$ for $\lambda < \lambda_O$ (from Lemma~\ref{lem:Optlam-a}) then implies that $\lambda_O\SupSpec{\fOpconnf(0) + \fOpLacelamT(0)}\leq 1$.

From \eqref{eqn:TauDiagonal} the calculation proceeds:
\begin{align}
    \lambda \SupSpec{\fOptlam(k)} &=  \frac{\lambda\SupSpec{\fOpconnf(k) + \fOpLacelam(k)}}{1 - \lambda\SupSpec{\fOpconnf(k) + \fOpLacelam(k)}} \nonumber\\
    & = \frac{\lambda\SupSpec{\fOpconnf(k) + \fOpLacelam(k)}}{\underbrace{1 - \lambda\SupSpec{\fOpconnf(0) + \fOpLacelam(0)}}_{\geq 0} + \lambda\SupSpec{\fOpconnf(0) + \fOpLacelam(0)} - \lambda\SupSpec{\fOpconnf(k) + \fOpLacelam(k)}} \nonumber\\
    & \leq  \frac{\SupSpec{\fOpconnf(k) + \fOpLacelam(k)}}{\SupSpec{\fOpconnf(0)} + \delta_{\lambda}(0) - \SupSpec{\fOpconnf(k)} - \delta_{\lambda}(k)} \nonumber\\
    & \leq  \frac{\SupSpec{\fOpconnf(k)} + \LandauBigO{\beta}}{\left(\SupSpec{\fOpconnf(0)}  - \SupSpec{\fOpconnf(k)}\right)\left(1 + \LandauBigO{\beta}\right)} = \frac{\SupSpec{\fOpconnf(k)} + \LandauBigO{\beta}}{\SupSpec{\fOpconnf(0)}  - \SupSpec{\fOpconnf(k)}},
\end{align}
where we have bounded $\delta_{\lambda}(0)-\delta_{\lambda}(k)$ using Lemma~\ref{thm:deltatoLacecoefficients} and Observation~\ref{obs:convergenceoflaceexpansion_uniform}.
\end{proof}

\begin{appendix}

\section{Critical Intensities}
\label{sec:Critical_intensities}

\begin{proof}[Proof of Proposition~\ref{prop:lambda_T=lambda_0}]

By the Mecke equation \eqref{eq:prelim:mecke_n}, we find
\begin{align}
    \norm*{\chi_\lambda}_1 & = 1 + \lambda\int\tlam(\xbar;a,b)\dd \xbar\Pcal^{\otimes 2}\left(\dd a,\dd b\right) = 1 + \lambda\int\ftlam(0;a,b)\Pcal^{\otimes 2}\left(\dd a,\dd b\right),\\
    \norm*{\chi_\lambda}_\infty & = 1 + \lambda\esssup_{b}\int\tlam(\xbar;a,b)\dd \xbar \Pcal\left(\dd a\right) = 1 + \lambda\esssup_{b}\int\ftlam(0;a,b) \Pcal\left(\dd a\right) = 1 + \lambda\OneNorm{\fOptlam(0)}.
\end{align}
From Lemma~\ref{lem:BoundsonOperatorNorm} we have $\int\ftlam(0;a,b)\Pcal^{\otimes 2}\left(\dd a,\dd b\right) \leq \OpNorm{\fOptlam(0)} \leq \OneNorm{\fOptlam(0)}$. Therefore $\lambda^{(1)}_T \geq \lambda_O \geq \lambda_T^{(\infty)}$.

Also recall from the discussion in Section~\ref{sec:Results_criticalintensities} that $\lambda_c \geq \lambda^{(1)}_T$. Therefore we only now need to prove that $\lambda^{(1)}_T = \lambda^{(\infty)}_T$. We do this by proving that if $\norm*{\chi_\lambda}_1<\infty$ then $\norm*{\chi_\lambda}_\infty<\infty$.

Fix $b\in\Ecal$. Then by considering the vertices in the cluster $\C(\zerobar,b)$ adjacent to $\left(\zerobar,b\right)$, we find
\begin{equation}
    \E_\lambda\left[\abs*{\C(\zerobar,b)}\right] \leq 1 + \E_\lambda\left[\sum_{\left(\xbar,a\right)\in\eta:\left(\xbar,a\right)\sim\left(\zerobar,b\right)}\abs*{\C(\xbar,a)}\right]    = 1 + \lambda \int \E_\lambda\left[\abs*{\C(\xbar,a)}\right]\connf\left(\xbar;a,b\right)\dd \xbar \Pcal\left(\dd a\right).
\end{equation}
In this equality we have used Mecke's equation \eqref{eq:prelim:mecke_n}. From the spatial translation invariance of the model, $\E_\lambda\left[\abs*{\C(\xbar,a)}\right] = \E_\lambda\left[\abs*{\C(\zerobar,a)}\right]$ for all $\xbar\in\Rd$. Therefore an application of a supremum bound to the $a$-integral gives
\begin{multline}
    \norm*{\chi_\lambda}_\infty = \esssup_{b\in\Ecal}\E_\lambda\left[\abs*{\C(\zerobar,b)}\right] \leq 1 + \lambda \left(\esssup_{a,b\in\Ecal}\int \connf\left(\xbar;a,b\right)\dd \xbar\right)\int\E_\lambda\left[\abs*{\C(\zerobar,c)}\right]\Pcal\left(\dd c\right)\\
     = 1 + \lambda \left(\esssup_{a,b\in\Ecal}\int \connf\left(\xbar;a,b\right)\dd \xbar\right)\norm*{\chi_\lambda}_1.
\end{multline}
From the finiteness of the parenthesised factor, $\norm*{\chi_\lambda}_1<\infty$ implies $\norm*{\chi_\lambda}_\infty<\infty$ and the result is proven.
\end{proof}

\begin{proof}[Proof of Proposition~\ref{prop:Boolean_Critical_Intensities}]
The equality $\lambda_c\left(\dagger\right)= \lambda_T\left(\dagger\right)$ holds from \cite{men1988percolation}. The equality $\lambda_O = \lambda_T^{(p)}$ follows from Proposition~\ref{prop:lambda_T=lambda_0} because the condition \eqref{eqn:supDegree} clearly holds for the bounded radii model. Also recall from the discussion in Section~\ref{sec:Results_criticalintensities} that $\lambda_c \geq \lambda^{(p)}_T$ for all $p\in\left[1,\infty\right]$.

Now note that for all $a,b\in\Ecal$ we have that if $\left(\zerobar,\dagger\right)\sim \left(\xbar,b\right)$ then $\left(\zerobar,a\right)\sim \left(\xbar,b\right)$. Therefore $\lambda_c\left(\dagger\right) \geq \lambda_c$, and the equality $\lambda_c\left(\dagger\right)= \lambda_T\left(\dagger\right)$ then means we only need to prove $\lambda_T\left(\dagger\right) \leq \lambda_T^{(p)}$ for some $p\in\left[1,\infty\right]$. We will do so for $p=1$.

Suppose that $\lambda>\lambda_T^{(1)}$, so that $\int\E_\lambda\left[\C\left(\zerobar;a\right)\right]\Pcal\left(\dd a\right) = \infty$. That is, the expected cluster size of a vertex with a random ($\Pcal$-distributed) mark is infinite. Note that we have an ordering of the marks in that if two radii $a,b$ satisfy $a\geq b$, then $\connf\left(\xbar;a,c\right) \geq \connf\left(\xbar;b,c\right)$ for all $\xbar\in\Rd$ and $c\in\Ecal$. This also implies that $a\mapsto \E_\lambda\left[\abs*{\C\left(\zerobar,a\right)}\right]$ is a non-decreasing function. This in turn implies that if $\lambda>\lambda_T^{(1)}$ and $a$ satisfies $\int\Id\left\{b\geq a\right\}\Pcal\left(\dd b\right)>0$, then $\int \E_\lambda\left[\abs*{\C\left(\zerobar;b\right)}\right]\Id\left\{b\geq a\right\}\Pcal\left(\dd b\right) = \infty$.

Given some radius $a\in\Ecal$ such that $a>0$ and $\int\Id\left\{b\geq a\right\}\Pcal\left(\dd b\right)>0$, we define a new mark, $\star_a$, that forms connections according to
\begin{equation}
    \connf\left(\xbar;\star_a,b\right) = \Id\left\{\abs*{\xbar}\leq a\right\},
\end{equation}
for all $b\in\Ecal$. Note that if $b\geq a$ then $\connf\left(\xbar;\star_a,c\right) \leq \connf\left(\xbar;b,c\right)$ for all $c\in\Ecal$ and $\E_\lambda\left[\abs*{\C\left(\zerobar;\star_a\right)}\right] \leq \E_\lambda\left[\abs*{\C\left(\zerobar;b\right)}\right]$. We now consider the expected cluster size of vertices adjacent to $\left(\zerobar,\star_a\right)$ with radius $\geq a$. We also restrict to cases for which there is an unique neighbour, and that neighbour has radius $\geq a$. Since the probability of a connection is independent of the radius of the proposed neighbour, the distribution of the unique neighbour is equal to $\Pcal$ - conditioned upon having radius $\geq a$. Therefore we have the lower bound
\begin{equation}
    \E_\lambda\left[\abs*{\C\left(\zerobar;\star_a\right)}\right] \geq 1 + \pla\left(\exists ! \left(\xbar,b\right)\in\eta:\left(\xbar,b\right)\sim\left(\zerobar,\star_a\right),b\geq a\right)\int \E_\lambda\left[\abs*{\C\left(\zerobar;b\right)}\right]\Id\left\{b\geq a\right\}\Pcal\left(\dd b\right).
\end{equation}
As noted above, $\int \E_\lambda\left[\abs*{\C\left(\zerobar;b\right)}\right]\Id\left\{b\geq a\right\}\Pcal\left(\dd b\right)=\infty$. Furthermore, as $a>0$ and $\int\Id\left\{b\geq a\right\}\Pcal\left(\dd b\right)>0$, the random variable $\#\left\{\left(\xbar,b\right)\in\eta:\left(\xbar,b\right)\sim\left(\zerobar,\star_a\right),b\geq a\right\}$ obeys a Poisson distribution with strictly positive mean. Therefore $\pla\left(\exists ! \left(\xbar,b\right)\in\eta:\left(\xbar,b\right)\sim\left(\zerobar,\star_a\right),b\geq a\right)>0$ and $\E_\lambda\left[\abs*{\C\left(\zerobar;\star_a\right)}\right]=\infty$. This in turn implies that $\E_\lambda\left[\abs*{\C\left(\zerobar;a\right)}\right] =\infty$ for all $a>0$.

To relate this to $\E_\lambda\left[\abs*{\C\left(\zerobar,\dagger\right)}\right]$, we perform a similar lower bound. Given a radius $a>0$ and $\int\Id\left\{b\geq a\right\}\Pcal\left(\dd b\right)>0$, we restrict to cases where there is an unique neighbour of $\left(\zerobar,\dagger\right)$ with radius $\geq a$ and bound 
\begin{equation}
    \E_\lambda\left[\abs*{\C\left(\zerobar,\dagger\right)}\right] \geq 1+ \pla\left(\exists ! \left(\xbar,b\right)\in\eta:\left(\xbar,b\right)\sim\left(\zerobar,\dagger\right),b\geq a\right) \E_\lambda\left[\abs*{\C\left(\zerobar;a\right)}\right].
\end{equation}
As before, the probability factor is strictly positive and the expected cluster size term is infinite. Hence $\E_\lambda\left[\abs*{\C\left(\zerobar,\dagger\right)}\right]=\infty$ and $\lambda \geq \lambda\left(\dagger\right)$. Therefore $\lambda_T\left(\dagger\right)\leq \lambda^{(1)}_T$ as required.

\end{proof}

\section{Model Properties}
\label{sec:Model_properties}

Here we will prove that the models outlined in Section~\ref{subsection:Examples} do indeed satisfy Assumption~\ref{Model_Assumption}.

First we consider the single mark ``finite variance models'' considered by \cite{HeyHofLasMat19}. As proven in that reference, this includes the Poisson blob and Gaussian connection models.

\begin{lemma}
Single mark finite variance models satisfy the conditions of Assumption~\ref{Model_Assumption}.
\end{lemma}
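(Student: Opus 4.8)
The plan is to observe that, because $\Ecal$ is a singleton, Assumption~\ref{Model_Assumption} collapses to a handful of statements about the single function $\connf\colon\Rd\to[0,1]$, and that these are precisely the ``finite variance'' hypotheses already verified in \cite{HeyHofLasMat19}.

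First I would record the reduction. Since $\Pcal$ is a point mass, $L^2\left(\Ecal\right)$ is one-dimensional and each operator $\fOpconnf(k)$ is multiplication by the scalar $\fconnf(k)=\int\cos\left(k\cdot\xbar\right)\connf\left(\xbar\right)\dd\xbar\in\R$, which is real by the reflection symmetry of $\connf$. Consequently $\OpNorm{\fOpconnf(k)}=\InfNorm{\fOpconnf(k)}=\abs*{\fconnf(k)}$ and $\SupSpec{\fOpconnf(k)}=\fconnf(k)$, with $\SupSpec{\fOpconnf(0)}=\int\connf\left(\xbar\right)\dd\xbar$ and $\fconnf(0)\geq\fconnf(k)$ for every $k$ (since $\connf\geq0$ and $1-\cos\geq0$). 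Moreover $\nu=\Leb$ turns the iterated integrals in \ref{Assump:BallDecay} into ordinary convolutions over $\Rd$.

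Condition \ref{Assump:2ndMoment} is then immediate: $\SupSpec{\fOpconnf(0)}=\int\connf<\infty$ is part of the definition of a finite variance model, and \eqref{eqn:fconnfIsL2}, \eqref{eqn:directional2ndMoment} hold with $C=1$ because $\InfNorm{\fOpconnf(0)}=\fconnf(0)=\SupSpec{\fOpconnf(0)}$ and $\InfNorm{\fOpconnf(0)-\fOpconnf(k)}=\fconnf(0)-\fconnf(k)=\SupSpec{\fOpconnf(0)}-\SupSpec{\fOpconnf(k)}$. For \ref{Assump:Bound} I would split on $\abs{k}$: for $\abs{k}$ small, finiteness of the (suitably scaled) second moment of $\connf$ together with $1-\cos t\geq ct^{2}$ for $\abs{t}\leq\pi$ gives $\fconnf(0)-\fconnf(k)\geq C_2\fconnf(0)\abs{k}^2$; for $\abs{k}$ bounded away from $0$, positivity of $\connf$ on a set of positive measure and the Riemann--Lebesgue lemma give $\sup_{\abs{k}\geq\delta}\fconnf(k)/\fconnf(0)=:C_1<1$. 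For \ref{Assump:BallDecay} I would choose $g\colon\N\to\R_{\geq0}$ as follows: both required inequalities become easier as $g(d)$ grows (the sets $B(x)$ shrink), so it suffices to take $g(d)$ a common majorant of $\fconnf(0)^{-3}\esssup_{\xbar}\left(\connf\star\connf\star\connf\right)\left(\xbar\right)$ and of $\esssup_x\nu\left(\{y\colon\fconnf(0)^{-2}(\connf\star\connf)(x-y)>g(d)\}\right)$ that still tends to $0$. The normalised triple convolution tends to $0$ as $d\to\infty$ --- for the Gaussian model $\left(\connf\star\connf\star\connf\right)\left(\orig\right)=(6\pi)^{-d/2}$ and $\fconnf(0)=1$, while for the Poisson blob model $\fconnf(0)^{-3}\left(\connf\star\connf\star\connf\right)\left(\xbar\right)$ is bounded by the probability that $\abs{U+U'}\leq r_d$ for $U,U'$ uniform on the radius-$r_d$ unit-volume ball, which is exponentially small in $d$ by concentration around $\E\abs{U+U'}^2\approx2r_d^{2}$ --- and the super-level sets of $\connf\star\connf$ at heights tending to $0$ have Lebesgue measure tending to $0$ (layer-cake, respectively concentration of the lens-intersection volume of two unit-volume balls), so such a $g$ exists.

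The main obstacle is the $d$-uniformity: one needs $C_1<1$, $C_2>0$ and the decay rates of $\esssup\left(\connf\star\connf\star\connf\right)$ and of the super-level sets of $\connf\star\connf$ to be controlled uniformly in the dimension. For the Gaussian model these are all explicit; for the Poisson blob model they rest on the high-dimensional ball (Bessel-function) asymptotics, which were carried out in \cite{HeyHofLasMat19}, so the lemma follows from the reduction above together with the estimates of that reference. The operator-theoretic content of Assumption~\ref{Model_Assumption} is vacuous in the single-mark setting.
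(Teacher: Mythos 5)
Your proposal is correct and follows essentially the same route as the paper: since $\Ecal$ is a singleton the operators $\fOpconnf(k)$ are scalars, so \ref{Assump:2ndMoment} holds trivially (with $C=1$), and \ref{Assump:Bound} and \ref{Assump:BallDecay} reduce to the finite-variance conditions of \cite{HeyHofLasMat19}, whose dimension-uniform verification for the Poisson blob and Gaussian models is deferred to that reference. The extra detail you supply (Riemann--Lebesgue for $C_1<1$, the choice of $g(d)$ as a common majorant, the explicit Gaussian and ball computations) goes somewhat beyond what the paper writes out, but the logical content is the same.
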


\begin{proof}
First note that since $\Ecal$ is a singleton, the operators $\fOpconnf(k)$ are simply scalars. In particular, this means that the conditions \eqref{eqn:fconnfIsL2}, and \eqref{eqn:directional2ndMoment} of \ref{Assump:2ndMoment} hold trivially. The finiteness of $\SupSpec{\fOpconnf(0)}$ is also equivalent to the finiteness of $\int\connf(x)\dd x$ required by \cite{HeyHofLasMat19}.

Since $\fOpconnf(k)$ is simply a scalar, assumption \ref{Assump:Bound} follows directly from the third finite variance condition of \cite{HeyHofLasMat19}.

The condition \ref{Assump:BallDecay} is similarly a generalisation of the second finite variance condition, and therefore follows. Specifically, the sets $B(x)$ are the $\varepsilon$-balls around $x\in\Rd$ where $0<\varepsilon<r_d$ where $r_d$ is the radius of the ball of volume $1$.
\end{proof}

\begin{lemma}
Space-mark factorisation models satisfy the conditions of Assumption~\ref{Model_Assumption}.
\end{lemma}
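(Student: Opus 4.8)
The plan is to verify the three conditions of Assumption~\ref{Model_Assumption} for the space-mark factorisation model $\connf(\xbar;a,b) = \overline{\connf}(\xbar)K(a,b)$ by reducing each condition to the corresponding property of the single-mark finite variance model $\overline{\connf}$ and a bound on the mark kernel operator $\mathcal{K}$. The central structural observation is that the Fourier transform factorises as $\fconnf(k;a,b) = \widehat{\overline{\connf}}(k)K(a,b)$, so the operator $\fOpconnf(k)$ equals the scalar $\widehat{\overline{\connf}}(k)$ times the fixed operator $\mathcal{K}$. Since $\mathcal{K}$ is bounded self-adjoint with $\OpNorm{\mathcal{K}}\leq 1$ and (by hypothesis) $\SupSpec{\mathcal{K}}=\OpNorm{\mathcal{K}}$, and since $\widehat{\overline{\connf}}(k)$ is real with $\widehat{\overline{\connf}}(0) = \esssup_k |\widehat{\overline{\connf}}(k)| > 0$ (and in fact $\widehat{\overline{\connf}}(k) \le \widehat{\overline{\connf}}(0)$ with equality only at $k=0$, from the single-mark conditions), we obtain $\SupSpec{\fOpconnf(k)} = \widehat{\overline{\connf}}(k)\SupSpec{\mathcal{K}}$ for $k$ in a neighbourhood of $0$, and $\OpNorm{\fOpconnf(k)} = |\widehat{\overline{\connf}}(k)|\OpNorm{\mathcal{K}}$ everywhere. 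One small subtlety: $\widehat{\overline{\connf}}(k)$ can become negative for some $k$, in which case $\SupSpec{\fOpconnf(k)} = \widehat{\overline{\connf}}(k)\inf\sigma(\mathcal K)$, which only makes $\SupSpec{\fOpconnf(k)}$ smaller; this does not affect any of the inequalities we need, since all of them are upper bounds on $\SupSpec{\fOpconnf(k)}$ relative to $\SupSpec{\fOpconnf(0)}$, or bounds involving $1-\SupSpec{\fOpconnf(k)}$ which only increases.

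For \ref{Assump:2ndMoment}, I would argue as follows. Finiteness of $\SupSpec{\fOpconnf(0)}=\widehat{\overline{\connf}}(0)\SupSpec{\mathcal K}$ is immediate since $\widehat{\overline{\connf}}(0) = \int\overline{\connf} < \infty$ (this being the defining finiteness condition of a single-mark finite variance model) and $\SupSpec{\mathcal K}\leq 1$. For \eqref{eqn:fconnfIsL2}: $\InfNorm{\fOpconnf(0)} = \widehat{\overline{\connf}}(0)\,\esssup_{a,b}|K(a,b)| \le \widehat{\overline{\connf}}(0)$, while $\SupSpec{\fOpconnf(0)} = \widehat{\overline{\connf}}(0)\SupSpec{\mathcal K}$, so the ratio $\InfNorm{\fOpconnf(0)}/\SupSpec{\fOpconnf(0)} \le 1/\SupSpec{\mathcal K}$; this is bounded by a $d$-independent constant \emph{provided} $\SupSpec{\mathcal K}$ is bounded away from $0$, which is guaranteed since $\mathcal K$ is $d$-independent and $\SupSpec{\mathcal K} = \OpNorm{\mathcal K} > 0$ (it cannot be zero, else $K\equiv 0$ a.e.\ and the model is trivial). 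For \eqref{eqn:directional2ndMoment}: $\InfNorm{\fOpconnf(0)-\fOpconnf(k)} = (\widehat{\overline{\connf}}(0)-\widehat{\overline{\connf}}(k))\esssup_{a,b}|K(a,b)| \le \widehat{\overline{\connf}}(0)-\widehat{\overline{\connf}}(k)$, whereas (for $k$ near $0$, hence $\widehat{\overline{\connf}}(k)\ge 0$) $\SupSpec{\fOpconnf(0)}-\SupSpec{\fOpconnf(k)} = (\widehat{\overline{\connf}}(0)-\widehat{\overline{\connf}}(k))\SupSpec{\mathcal K}$, giving the ratio $\le 1/\SupSpec{\mathcal K}$ again; for $k$ with $\widehat{\overline{\connf}}(k)<0$ the right side only increases while the left side is bounded by $2\widehat{\overline{\connf}}(0)$, which is handled by enlarging the constant (alternatively one reduces to $|k|\le\varepsilon_0$ and uses \ref{Assump:Bound} for large $k$). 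So \ref{Assump:2ndMoment} holds with $C = \max(1, 1/\SupSpec{\mathcal K}, \dots)$, which is $d$-independent since everything entering it is.

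For \ref{Assump:Bound}: $\SupSpec{\fOpconnf(k)}/\SupSpec{\fOpconnf(0)} = \widehat{\overline{\connf}}(k)/\widehat{\overline{\connf}}(0)$ for $k$ near $0$ (the $\SupSpec{\mathcal K}$ cancels), and $\le |\widehat{\overline{\connf}}(k)|/\widehat{\overline{\connf}}(0) \le \OpNorm{\mathcal K}^{-1}\cdot(\text{something})$ — actually more directly, $\SupSpec{\fOpconnf(k)} \le \OpNorm{\fOpconnf(k)} = |\widehat{\overline{\connf}}(k)|\OpNorm{\mathcal K}$, so $\SupSpec{\fOpconnf(k)}/\SupSpec{\fOpconnf(0)} \le |\widehat{\overline{\connf}}(k)|\OpNorm{\mathcal K}/(\widehat{\overline{\connf}}(0)\SupSpec{\mathcal K}) = |\widehat{\overline{\connf}}(k)|/\widehat{\overline{\connf}}(0)$ using $\OpNorm{\mathcal K}=\SupSpec{\mathcal K}$. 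The single-mark version of \ref{Assump:Bound} for $\overline{\connf}$ gives precisely $|\widehat{\overline{\connf}}(k)|/\widehat{\overline{\connf}}(0) \le C_1 \vee (1-C_2|k|^2)$, so we inherit it verbatim. For \ref{Assump:BallDecay}: the three-fold convolution is $\big(\connf(\cdot;a_1,a_2)\star\connf(\cdot;a_3,a_4)\star\connf(\cdot;a_5,a_6)\big)(\xbar) = K(a_1,a_2)K(a_3,a_4)K(a_5,a_6)\,\overline{\connf}^{\star 3}(\xbar) \le \overline{\connf}^{\star 3}(\xbar)$, and dividing by $\SupSpec{\fOpconnf(0)}^3 = \widehat{\overline{\connf}}(0)^3\SupSpec{\mathcal K}^3$ gives at most $\SupSpec{\mathcal K}^{-3}$ times the corresponding single-mark quantity, which decays in $d$; thus \eqref{eqn:ThreeAdjStep} holds with $g(d)$ equal to $\SupSpec{\mathcal K}^{-3}$ times the single-mark bounding function (still $\to 0$, and $\SupSpec{\mathcal K}^{-3}$ is a harmless $d$-independent factor). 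Finally, for the sets $B(x)$: $\int\connf(x,u)\connf(u,y)\nu(\dd u) = \big(\int K(a,c)K(c,b)\Pcal(\dd c)\big)\,\overline{\connf}^{\star 2}(\xbar_x-\xbar_y) \le \overline{\connf}^{\star 2}(\xbar_x - \xbar_y)$, so $B(x) \subseteq B^{\text{single}}(\xbar_x)\times\Ecal$ up to the same rescaling by $\SupSpec{\mathcal K}^{-2}$, and $\nu(B(x)) = \Leb(B^{\text{single}}(\xbar_x))$ is controlled by the single-mark condition; absorbing the constant factors into a suitably enlarged $g(d)$ (taking the maximum of the three bounding functions, each a $d$-independent multiple of a single-mark decay function) completes the verification.

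The main obstacle — really a bookkeeping point rather than a deep one — is ensuring all constants ($C$, $C_1$, $C_2$, and the function $g$) come out genuinely $d$-independent: this hinges entirely on $\SupSpec{\mathcal K} = \OpNorm{\mathcal K}$ being a fixed positive number independent of $d$, which is exactly the hypothesis imposed on the model (the kernel $K$ is $d$-independent), so I would state this explicitly at the start of the proof and use it freely. A secondary point requiring a line of care is the sign of $\widehat{\overline{\connf}}(k)$: I would handle the region where $\widehat{\overline{\connf}}(k)$ could be negative either by noting it only helps all the upper bounds, or by first using \ref{Assump:Bound} for $\overline{\connf}$ to restrict attention to a ball $\{|k|\le\varepsilon_0\}$ on which $\widehat{\overline{\connf}}(k) \ge (1-C_2\varepsilon_0^2)\widehat{\overline{\connf}}(0) > 0$ when $\varepsilon_0$ is small, and treating the complement by the crude bound $\OpNorm{\fOpconnf(k)}\le \OpNorm{\mathcal K}|\widehat{\overline{\connf}}(k)| \le C_1\SupSpec{\fOpconnf(0)}$.
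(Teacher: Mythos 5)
Your proposal is correct and follows essentially the same route as the paper's proof: factorise $\fOpconnf(k) = \widehat{\overline{\connf}}(k)\mathcal{K}$, use $\SupSpec{\mathcal{K}}=\OpNorm{\mathcal{K}}\geq\SupSpec{-\mathcal{K}}$ to control the sign of $\widehat{\overline{\connf}}(k)$, and let the $d$-independence of $\mathcal{K}$ transfer the single-mark bounds and decay to the marked model. You spell out the constant-tracking (in particular the role of $\SupSpec{\mathcal{K}}>0$) in more detail than the paper does, but the argument is the same.
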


\begin{proof}
Note that $\fOpconnf(k) = \widehat{\overline{\connf}}(k)\mathcal{K}$, where $\mathcal{K}\colon L^2(\Ecal) \to L^2(\Ecal)$ is the self-adjoint linear integral operator with kernel function $K$. Since $\OneNorm{\mathcal{K}} = \esssup_{b}\int K(a,b)\Pcal(\dd a)\leq 1$, $\mathcal{K}$ is a bounded operator and
\begin{equation}
    \SupSpec{\fOpconnf(k)} = \begin{cases}
        \widehat{\overline{\connf}}(k)\SupSpec{\mathcal{K}} &: \widehat{\overline{\connf}}(k) \geq 0\\
        -\widehat{\overline{\connf}}(k)\SupSpec{-\mathcal{K}} &: \widehat{\overline{\connf}}(k) < 0.
    \end{cases}
\end{equation}

Since $\OpNorm{\mathcal{K}}\leq 1$, the finiteness of $\widehat{\overline{\connf}}(0)$ implies that $\SupSpec{\fOpconnf(0)}$ is finite, and the $d$-independence of $\mathcal{K}$ implies that \ref{Assump:2ndMoment} holds.

Since $\overline{\connf}(\xbar)\geq 0$, we have $\abs*{\widehat{\overline{\connf}}(k)} < \widehat{\overline{\connf}}(0)$ for all $k\ne 0$. Furthermore, $\SupSpec{\mathcal{K}} = \OpNorm{\mathcal{K}}$ implies that $\SupSpec{\mathcal{K}}\geq \SupSpec{-\mathcal{K}}$. Then $\SupSpec{\fOpconnf(k)}$ inherits the properties required by \ref{Assump:Bound} from $\widehat{\overline{\connf}}(k)$.

For \ref{Assump:BallDecay}, we note that the $d$-independence of $\mathcal{K}$ means that required decay properties are again inherited from $\widehat{\overline{\connf}}(k)$.
\end{proof}

\begin{lemma}
The marked multivariate Gaussian model satisfies the conditions of Assumption~\ref{Model_Assumption}.
\end{lemma}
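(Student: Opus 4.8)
The plan is to reduce every requirement of Assumption~\ref{Model_Assumption} to an explicit Gaussian computation, exploiting that the Fourier transform of a Gaussian is again a Gaussian. First I would record, with the paper's convention, the identity $\fconnf(k;a,b)=\mathcal{A}\exp\big(-\tfrac12 k^{\intercal}\Sigma(a,b)k\big)$. The key structural observation is that $\fconnf(0;a,b)=\mathcal{A}$ is \emph{constant} in the marks, so $\fOpconnf(0)$ is the rank-one operator $f\mapsto \mathcal{A}\inner{\mathbf 1}{f}\,\mathbf 1$ (with $\mathbf 1$ the constant function on $\Ecal$), whence $\SupSpec{\fOpconnf(0)}=\InfNorm{\fOpconnf(0)}=\mathcal{A}<\infty$ for every $d$; this is already \eqref{eqn:fconnfIsL2} with $C=1$ (and hence \eqref{eqn:Sup-Exp_Ratio}).

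Next I would sandwich $\SupSpec{\fOpconnf(k)}$. From $\Sigma_{\min}\Id\preceq\Sigma(a,b)\preceq\Sigma_{\max}\Id$ one gets $\mathcal{A}\e^{-\frac12\Sigma_{\max}\abs{k}^2}\le\fconnf(k;a,b)\le\mathcal{A}\e^{-\frac12\Sigma_{\min}\abs{k}^2}$ uniformly in $a,b$, and since all three are non-negative symmetric kernels, comparing $\fOpconnf(k)$ with the corresponding constant-kernel operators (whose spectral supremum equals the constant, because $\inner{\mathbf 1}{\mathbf 1}=1$) via Lemma~\ref{lem:NormBounds} yields $\mathcal{A}\e^{-\frac12\Sigma_{\max}\abs{k}^2}\le\SupSpec{\fOpconnf(k)}\le\mathcal{A}\e^{-\frac12\Sigma_{\min}\abs{k}^2}$. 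Combined with $\SupSpec{\fOpconnf(0)}=\mathcal{A}$ and $\InfNorm{\fOpconnf(0)-\fOpconnf(k)}=\esssup_{a,b}\mathcal{A}\big(1-\e^{-\frac12 k^{\intercal}\Sigma(a,b)k}\big)\le\mathcal{A}\big(1-\e^{-\frac12\Sigma_{\max}\abs{k}^2}\big)$, the elementary inequality $1-\e^{-at}\le (a/b)(1-\e^{-bt})$ valid for $a\ge b\ge0$, $t\ge0$, gives \eqref{eqn:directional2ndMoment} with $C=\Sigma_{\max}/\Sigma_{\min}$; and the elementary estimate $\e^{-\frac12\Sigma_{\min}\abs{k}^2}\le \e^{-1}\vee(1-\tfrac14\Sigma_{\min}\abs{k}^2)$ gives \ref{Assump:Bound}. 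All constants here depend only on $\Sigma_{\min},\Sigma_{\max}$.

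For \ref{Assump:BallDecay} I would use the convolution identity $\mathcal{N}(0,\Sigma_1)\star\mathcal{N}(0,\Sigma_2)=\mathcal{N}(0,\Sigma_1+\Sigma_2)$, so that $\connf(\cdot;a_1,a_2)\star\cdots\star\connf(\cdot;a_{2m-1},a_{2m})$ equals $\mathcal{A}^m$ times the density of $\mathcal{N}\big(0,\sum_{i=1}^m\Sigma(a_{2i-1},a_{2i})\big)$, whose supremum (at the origin) is at most $\mathcal{A}^m(2\pi m\Sigma_{\min})^{-d/2}$ by Weyl's inequality $\lambda_{\min}(\sum_i\Sigma_i)\ge m\Sigma_{\min}$. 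Taking $m=3$ and dividing by $\SupSpec{\fOpconnf(0)}^3=\mathcal{A}^3$ bounds the left side of \eqref{eqn:ThreeAdjStep} by $(6\pi\Sigma_{\min})^{-d/2}$; likewise $\SupSpec{\fOpconnf(0)}^{-2}\int\connf(x,u)\connf(u,y)\nu(\dd u)=\int_{\Ecal}\mathcal{N}\big(0,\Sigma(a,c)+\Sigma(c,b)\big)(\xbar-\ybar)\,\Pcal(\dd c)\le (4\pi\Sigma_{\min})^{-d/2}\exp\big(-\abs{\xbar-\ybar}^2/(4\Sigma_{\max})\big)$. Choosing $g(d):=(4\pi\Sigma_{\min})^{-d/2}$ (which dominates the $m=3$ bound) makes the sets $B(x)$ of \eqref{eqn:BallDecay} empty, so $\esssup_{x}\nu(B(x))=0\le g(d)$. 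The hypotheses on $\mathcal{A}$ enter exactly here: $\mathcal{A}^2\le(2\pi)^d\det\Sigma(a,b)$ is precisely the condition $\connf(0;a,b)\le1$ ensuring $\connf$ is a legitimate connection function, and $\limsup_d\mathcal{A}^2(4\pi\Sigma_{\min})^{-d/2}=0$ is what forces $g(d)\to0$ after the normalising rescaling of Section~\ref{sec:bootstrapanalysis} (which replaces $\Sigma(a,b)$ by $\mathcal{A}^{-2/d}\Sigma(a,b)$ and leaves the ratio $\Sigma_{\max}/\Sigma_{\min}$, hence all constants above, unchanged); for the concrete example $\Ecal=[\tfrac1{4\pi},1]$, $\Sigma(a,b)=(a+b)\Id$ one has $\Sigma_{\min}=\tfrac1{2\pi}$ and $g(d)=2^{-d/2}\to0$ outright.

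The first two steps are routine explicit calculations; the only genuinely delicate point is the last one, namely verifying that the prescribed decay of $\mathcal{A}$ relative to $(4\pi\Sigma_{\min})^{d/2}$, together with the Section~\ref{sec:bootstrapanalysis} rescaling, really does produce a $g$ with $g(d)\to0$, while simultaneously confirming that this rescaling does not spoil the $d$-independence of $C$, $C_1$, $C_2$ — which it does not, since it multiplies $\Sigma_{\min}$ and $\Sigma_{\max}$ by a common factor.
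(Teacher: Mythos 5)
Your proposal is correct and follows essentially the same route as the paper: identify $\fOpconnf(0)=\mathcal{A}\mathbf 1$ so that $\SupSpec{\fOpconnf(0)}=\InfNorm{\fOpconnf(0)}=\mathcal{A}$ (giving \eqref{eqn:fconnfIsL2} with $C=1$), deduce \ref{Assump:Bound} from a uniform Gaussian decay bound on $\fconnf(k;a,b)$, and verify \ref{Assump:BallDecay} via the Gaussian convolution identity with $g(d)=(4\pi\Sigma_{\min})^{-d/2}$ and $B(x)=\emptyset$. Two points of divergence are worth recording. First, for \eqref{eqn:directional2ndMoment} the paper argues via the second-moment bound $1-\cos(k\cdot\xbar)\le\tfrac12(k\cdot\xbar)^2$ near $k=0$ together with the triangle inequality for large $k$, whereas you use the clean elementary inequality $1-\e^{-at}\le(a/b)(1-\e^{-bt})$ for $a\ge b$; both are valid, and yours is arguably tidier since it handles all $k$ at once. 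Second, you use the correct Fourier transform $\fconnf(k;a,b)=\mathcal{A}\exp(-\tfrac12 k^{\intercal}\Sigma(a,b)k)$, while the paper's displayed formula carries $\Sigma(a,b)^{-1}$ (and a spurious $\pi^2$); as a consequence your decay rates are governed by $\Sigma_{\min}$ where the paper's are governed by $\Sigma_{\max}^{-1}$.

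This second point creates the one caveat you should make explicit. The model definition permits $\Sigma_{\min}=\Sigma_{\min}(d)$ to depend on $d$, so your constants $C=\Sigma_{\max}/\Sigma_{\min}$ in \eqref{eqn:directional2ndMoment} and $C_2=\tfrac14\Sigma_{\min}$ in \ref{Assump:Bound} are $d$-independent, as the assumption requires, only under the additional hypothesis $\inf_d\Sigma_{\min}(d)>0$ (which holds for the concrete example $\Sigma(a,b)=(a+b)\Id$ on $[\tfrac1{4\pi},1]$, where $\Sigma_{\min}=\tfrac1{2\pi}$, and is preserved by the rescaling of Section~\ref{sec:bootstrapanalysis} as you note). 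Indeed, with the correct Fourier formula this restriction is unavoidable: taking $\Sigma(a,b)\equiv\Sigma_{\min}(d)\Id$ gives $\SupSpec{\fOpconnf(k)}=\mathcal{A}\e^{-\frac12\Sigma_{\min}(d)\abs{k}^2}$, which forces any admissible $C_2$ to be of order $\Sigma_{\min}(d)$. So either add the standing hypothesis $\inf_d\Sigma_{\min}(d)>0$, or note that the claim is being verified for the class of models satisfying it; the remainder of your argument, including the verification that $g(d)\to0$ and that $g(d)$ dominates the triple-convolution bound $(6\pi\Sigma_{\min})^{-d/2}$, is sound.
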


\begin{proof}
Firstly note that for each $\left(a,b\right)\in\Ecal^2$ the Gaussian structure of $\connf$ means that it factorises over the $d$ eigenvector directions of $\Sigma(a,b)$. This factorisation ensures that that the $d$-dimensional Fourier transform is the product of the $1$-dimensional Fourier transforms. We have
\begin{equation}
\label{eqn:GaussianFourierTransform}
    \fconnf(k;a,b) = \mathcal{A} \exp\left(-\frac{1}{2}k^{\intercal}\Sigma(a,b)k\right).
\end{equation}
In particular, when $k=0$ the Fourier transform is $a,b$-independent. This means $\fOpconnf(0) = \mathcal{A}\mathbf{1}$ where $\mathbf{1}$ is the integral operator with constant kernel function $1$. For $f\in L^2(\Ecal)$ we have
\begin{align}
    \inner{f}{\mathbf{1}f} &= \int f(a) \left(\int f(b) \Pcal\left(\dd b\right)\right) \Pcal\left(\dd a\right) = \E_\Pcal\left[f\right]^2,\\
    \inner{f}{f} &= \int f(a)^2 \Pcal\left(\dd a\right) = \E_\Pcal\left[f^2\right].
\end{align}
By Jensen's inequality we therefore have $\SupSpec{\mathbf{1}} = 1$ (equality follows by considering the test function $f(a)\equiv1$). It therefore follows that $\SupSpec{\fOpconnf(0)} = \mathcal{A}<\infty$. From \eqref{eqn:GaussianFourierTransform} we have
\begin{equation}
    \InfNorm{\fOpconnf(k)} = \mathcal{A}\esssup_{a,b\in\Ecal} \exp\left(-\frac{1}{2}k^{\intercal}\Sigma(a,b)k\right) \leq \mathcal{A}\exp\left(-\frac{1}{2}\Sigma_{\min}\abs*{k}^2\right).
\end{equation}
This bound then implies the condition \eqref{eqn:fconnfIsL2} of \ref{Assump:2ndMoment} holds with $C=1$.

Before we address \eqref{eqn:directional2ndMoment} of \ref{Assump:2ndMoment}, we turn to \ref{Assump:Bound}. From our above bounds we already have
\begin{equation}
    \SupSpec{\fOpconnf(k)} \leq \OneNorm{\fOpconnf(k)} \leq \InfNorm{\fOpconnf(k)} \leq \mathcal{A}\exp\left(-\frac{1}{2}\Sigma_{\min}\abs*{k}^2\right) = \SupSpec{\fOpconnf(0)} \exp\left(-\frac{1}{2}\Sigma_{\min}\abs*{k}^2\right).
\end{equation}
This exponential term ensures that \ref{Assump:Bound} is satisfied.

We now return to \eqref{eqn:directional2ndMoment} of \ref{Assump:2ndMoment}. For all $x\in\R$ we have $1-\cos(x) \leq \frac{1}{2}x^2$, and therefore
\begin{equation}
\label{eqn:OneInfNormQuadraticExpansion}
    \InfNorm{\fOpconnf(0) - \fOpconnf(k)} \leq \frac{1}{2}\abs*{k}^2 \esssup_{a,b\in\Ecal}\int \left(e\cdot \xbar\right)^2\connf\left(\xbar;a,b\right)\dd \xbar,
\end{equation}
for some unit vector $e\in\Rd$. Since the vector $e$ picks out the second moment of $\connf$ in only one direction, a standard calculation gives
\begin{equation}
    \esssup_{a,b\in\Ecal}\int \left(e\cdot \xbar\right)^2\connf\left(\xbar;a,b\right)\dd \xbar  \leq \Sigma_{\max}^2\InfNorm{\fOpconnf(0)}.
\end{equation}
Therefore $\InfNorm{\fOpconnf(0) - \fOpconnf(k)}$ is bounded by a $d$-independent quadratic function. In conjunction with the quadratic bound of \ref{Assump:Bound}, this then proves \eqref{eqn:directional2ndMoment} in the $k\to0$ regime. The remaining regime then holds because the triangle inequality implies $\InfNorm{\fOpconnf(0) - \fOpconnf(k)} \leq  2\InfNorm{\fOpconnf(0)} =  2\SupSpec{\fOpconnf(0)}$.

To address \ref{Assump:BallDecay}, recall that the spatial convolution of two multivariate Gaussian functions with means $\zerobar$ and covariance matrices $\Sigma_1$ and $\Sigma_2$ is a multivariate Gaussian function with mean $\zerobar$ and covariance matrix $\Sigma_1+\Sigma_2$. Therefore for all $\xbar\in\Rd$ we have
\begin{align}
    &\esssup_{a_1,a_2,a_3,a_4\in\Ecal}\int \connf(\xbar-\ybar;a_1,a_2)\connf(\ybar;a_3,a_4)\dd \ybar \nonumber \\ 
    &\hspace{0.5cm}= \mathcal{A}^2\left(2\pi\right)^{-d/2}\esssup_{a_1,a_2,a_3,a_4\in\Ecal}\left(\det \left(\Sigma(a_1,a_2) + \Sigma(a_3,a_4)\right)\right)^{-1/2}\exp\left(-\frac{1}{2}\xbar^{\intercal}\left(\Sigma(a_1,a_2) + \Sigma(a_3,a_4)\right)^{-1}\xbar\right) \nonumber\\
    & \hspace{0.5cm} \leq \mathcal{A}^2\left(4\pi \Sigma_{\min}\right)^{-d/2}\exp\left(-\frac{\abs*{\xbar}^2}{4\Sigma_{\min}}\right) \leq \mathcal{A}^2\left(4\pi \Sigma_{\min}\right)^{-d/2} \to 0.
\end{align}
In particular, this bounds the integral appearing in the definition of the sets $B(x)$ (see \eqref{eqn:BallDecay}) and therefore proves they are empty. For the bound of the convolution of three $\connf$ functions, we can get this immediately from the above calculation and $\InfNorm{\fOpconnf(0)} \leq C\SupSpec{\fOpconnf(0)}$. Specifically \ref{Assump:BallDecay} holds with $g(d) = \left(4\pi \Sigma_{\min}(d)\right)^{-d/2}$ and the sets $B(x) = \emptyset$.
\end{proof}

\begin{lemma}
The bounded-volume Boolean disc model satisfies the conditions of Assumption \ref{Model_Assumption}.
\end{lemma}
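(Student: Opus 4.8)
The plan is to use that $\connf_d(\xbar;a,b)=\mathds 1\{|\xbar|\le R_d(a)+R_d(b)\}$ depends on the marks only through $R_d(a)+R_d(b)$, so that every operator $\fOpconnf(k)$ has kernel $\phi_k\big(R_d(a)+R_d(b)\big)$, where $\phi_k(\rho):=\widehat{\mathds 1_{B_\rho}}(k)$ is the Fourier transform of the indicator of the centred ball $B_\rho\subset\Rd$; thus $\phi_0(\rho)=\omega_d\rho^d$ with $\omega_d:=\pi^{d/2}/\Gamma(\tfrac d2+1)$ the unit-ball volume, and, writing $\Lambda_\nu(z):=\Gamma(\nu+1)(2/z)^\nu J_\nu(z)$ for the normalised Bessel function, $\phi_k(\rho)=\phi_0(\rho)\,\Lambda_{d/2}(\rho|k|)$. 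First I would record the $k=0$ estimates. By \eqref{eqn:RadiusBounds}, $R_d(a)+R_d(b)\le 2\Rmax$, and $\omega_d(2\Rmax)^d=\Vmax$ by the definition of $\Rmax$, so $\InfNorm{\fOpconnf(0)}=\omega_d\esssup_{a,b}(R_d(a)+R_d(b))^d\le\Vmax$. Testing $\SupSpec{\fOpconnf(0)}$ against the constant function $1$ and using \eqref{eqn:MeanRadiusBound} gives $\SupSpec{\fOpconnf(0)}\ge\omega_d\int(R_d(a)+R_d(b))^d\,\Pcal^{\otimes2}(\dd a,\dd b)\ge\epsilon^2(1-c_2/d)^d\,\Vmax$; since $(1-c_2/d)^d$ is bounded below for large $d$ and $\SupSpec{\fOpconnf(0)}>0$ for every $d$, we get $\SupSpec{\fOpconnf(0)}$ finite and comparable to $\Vmax$ with $d$-independent constants, which is the content of \ref{Assump:2ndMoment} regarding $\SupSpec{\fOpconnf(0)}$ together with \eqref{eqn:fconnfIsL2}. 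I would also note that \eqref{eqn:RadiusBounds}, $c_1<1/\sqrt{8\pi\mathrm e}$, and Stirling give $\Rmin/\Rmax\to c_1\sqrt{8\pi\mathrm e}\in(0,1)$, so $\Rmin\asymp\Rmax\asymp\sqrt d$ and all admissible sums $R_d(a)+R_d(b)$ lie in a $d$-independent ratio interval; this comparability is the key structural input for everything that follows.

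For the quadratic branches of \ref{Assump:Bound} and \eqref{eqn:directional2ndMoment} I would prove the pointwise kernel bound
\[
\phi_k(\rho)\ \le\ \bigl(1-c|k|^2\bigr)\,\phi_0(\rho)\qquad\text{for all }\rho\in[2\Rmin,2\Rmax]\text{ and }|k|\le K_0,
\]
with $c>0$ and $K_0$ both $d$-independent. Here $\phi_0(\rho)-\phi_k(\rho)=\phi_0(\rho)\,\E\!\left[1-\cos(|k|\,Y_1)\right]$, where $Y_1$ is a single coordinate of a uniform point in $B_\rho$; since $\rho\asymp\sqrt d$, the variable $Y_1$ has variance $\rho^2/(d+2)\asymp1$ and sub-exponential tails, so $1-\cos u\le\tfrac12u^2$ gives an upper bound $\phi_0-\phi_k\le C'|k|^2\phi_0$, while restricting to $|k|\,|Y_1|\le u_0$, using $1-\cos u\ge c_u u^2$ there, and using the concentration of $Y_1$ gives a lower bound $\phi_0-\phi_k\ge c|k|^2\phi_0$. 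On this range both $\phi_k(\rho)$ and $\phi_0(\rho)$ are positive (because $2\Rmax K_0\asymp\sqrt d$ lies well below the first positive zero of $J_{d/2}$, which is $\sim d/2$), so Lemma~\ref{lem:NormBounds} upgrades the pointwise inequality to $\SupSpec{\fOpconnf(k)}\le(1-c|k|^2)\SupSpec{\fOpconnf(0)}$. This is the quadratic branch of \ref{Assump:Bound} with $C_2=c$; moreover $\SupSpec{\fOpconnf(0)}-\SupSpec{\fOpconnf(k)}\ge c|k|^2\SupSpec{\fOpconnf(0)}\gtrsim|k|^2\Vmax$, whereas $\InfNorm{\fOpconnf(0)-\fOpconnf(k)}\le\phi_0(2\Rmax)-\phi_k(2\Rmax)\le C'|k|^2\Vmax$ (using that $\rho\mapsto\phi_0(\rho)-\phi_k(\rho)$ is increasing), which is \eqref{eqn:directional2ndMoment} for $|k|\le K_0$.

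For $|k|\ge K_0$ I would estimate $\SupSpec{\fOpconnf(k)}\le\OneNorm{\fOpconnf(k)}\le\esssup_{\rho\in[2\Rmin,2\Rmax]}|\phi_k(\rho)|\le\Vmax\cdot\sup_{z\ge 2c_1K_0\sqrt d}|\Lambda_{d/2}(z)|$, using Lemma~\ref{lem:BoundsonOperatorNorm} and $\phi_0(\rho)\le\Vmax$. High-dimensional Bessel estimates show that $\sup_{z\ge 2c_1K_0\sqrt d}|\Lambda_{d/2}(z)|$ is bounded, uniformly in $d$, by a constant that tends to $0$ as $K_0\to\infty$; choosing $K_0$ large enough, and consistently with $K_0\ge\sqrt{(1-C_1)/C_2}$ (possible since the left side grows while the right side stays bounded), then yields $\SupSpec{\fOpconnf(k)}\le C_1\SupSpec{\fOpconnf(0)}$ with some $C_1\in(0,1)$, the other branch of \ref{Assump:Bound}; and \eqref{eqn:directional2ndMoment} follows here from $\SupSpec{\fOpconnf(0)}-\SupSpec{\fOpconnf(k)}\ge(1-C_1)\SupSpec{\fOpconnf(0)}\gtrsim\Vmax\ge\tfrac12\InfNorm{\fOpconnf(0)-\fOpconnf(k)}$, since $\InfNorm{\fOpconnf(0)-\fOpconnf(k)}\le 2\InfNorm{\fOpconnf(0)}\le2\Vmax$. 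I expect this $|k|\ge K_0$ estimate to be the main technical obstacle: it needs a uniform-in-$d$ upper bound, strictly below $1$, on the high-dimensional Fourier transform of a ball evaluated past the Gaussian scale $z\asymp\sqrt d$, i.e.\ careful control of $z^{-d/2}J_{d/2}(z)$ through the transition and oscillatory regimes, whereas everything else reduces to elementary estimates plus the radius comparability. (Were one willing to settle for $d>10$ as in the Remark after Theorem~\ref{thm:InfraRed}, the $\InfNorm{\cdot}$ on the left of \eqref{eqn:fconnfIsL2}--\eqref{eqn:directional2ndMoment} could be replaced by $\TwoNorm{\cdot}$ and $\OneNorm{\cdot}$, and this step would be easier.)

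Finally, for \ref{Assump:BallDecay} I would choose a suitable $g(d)\to0$ and argue by monotone domination. Any $m$-fold convolution $\connf(\cdot;a_1,a_2)\star\cdots\star\connf(\cdot;a_{2m-1},a_{2m})$ is dominated pointwise by $(\mathds 1_{B_{2\Rmax}})^{\star m}$, which is radially decreasing and hence maximised at $\orig$; the scaling $2\Rmax=(\Vmax)^{1/d}r_d$, with $r_d$ the unit-volume radius, gives $(\mathds 1_{B_{2\Rmax}})^{\star3}(\orig)=(\Vmax)^2(\mathds 1_{B_{r_d}})^{\star3}(\orig)$, and $(\mathds 1_{B_{r_d}})^{\star3}(\orig)\to0$ by the single-mark Poisson-blob estimates already established; dividing by $\SupSpec{\fOpconnf(0)}^{3}\gtrsim(\Vmax)^3$ gives \eqref{eqn:ThreeAdjStep} with $g(d)$ a fixed multiple of $(\mathds 1_{B_{r_d}})^{\star3}(\orig)$, enlarged if necessary. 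For the sets $B(x)$ of \eqref{eqn:BallDecay}, the two-step kernel $\int\connf(x,u)\connf(u,y)\,\nu(\dd u)$ is, uniformly in the marks, at most $(\mathds 1_{B_{2\Rmax}})^{\star2}(\xbar-\ybar)$, a radially decreasing function of $|\xbar-\ybar|$; hence $B(x)$ is contained in a ball of radius $\rho^*(d)$ (times $\Ecal$), where $\rho^*(d)$ is where this quantity crosses $g(d)\SupSpec{\fOpconnf(0)}^2$, so $\nu(B(x))\le\omega_d\,\rho^*(d)^d$. Because the two-ball intersection volume concentrates sharply in high dimensions and $\omega_d\to0$ super-exponentially, the map $g\mapsto\omega_d\,\rho^*(g)^d$ decreases from $+\infty$ to $0$ over the valid range of $g$, so it has a fixed point $g^*(d)\to0$; taking $g(d):=\max\{g^*(d),\ \text{the bound needed in }\eqref{eqn:ThreeAdjStep}\}$ makes all of \ref{Assump:BallDecay} hold at once, with $B(x)$ in fact a small ball. (For the finitely many small $d$, if any, for which $[\Rmin,\Rmax]$ is delicate, the bare finiteness statement of \ref{Assump:2ndMoment} is immediate from the boundedness of $\connf$.)
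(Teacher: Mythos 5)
Your proposal reaches the same conclusions by essentially the same decomposition as the paper: the $k=0$ bounds for \ref{Assump:2ndMoment} (upper bound $\Vmax$ via $\InfNorm{\cdot}$, lower bound via the constant test function and \eqref{eqn:MeanRadiusBound}), the radial-monotonicity/rescaling reduction of \ref{Assump:BallDecay} to ball-intersection volumes at distance $\gtrsim\kappa\sqrt d$, and the identification of $B(x)$ as a Euclidean ball of radius $\asymp\sqrt d$ all match the paper's proof (your fixed-point phrasing for choosing $g(d)$ is an unnecessary flourish; the paper just takes $B(x)=\{|\xbar-\ybar|\le\kappa d^{1/2}\}$ with $\kappa<1/\sqrt{2\pi\e}$ and lets $g(d)$ be the larger of the two exponentially small quantities). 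The one place you genuinely diverge is the small-$|k|$ regime of \ref{Assump:Bound} and \eqref{eqn:directional2ndMoment}, where you use the probabilistic representation $\phi_k(\rho)/\phi_0(\rho)=\E[\cos(|k|Y_1)]$ with $Y_1$ a coordinate of a uniform point in $B_\rho$; the paper instead reads the quadratic decay off the power series of $J_{d/2}$ and the second-moment identity $\int(e\cdot\xbar)^2\connf\,\dd\xbar=V_{a,b}R_{a,b}^2/(d+2)$. These are the same computation in different clothing, and both hinge on $R_{a,b}\asymp\sqrt d$ making the coordinate variance order one.

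The step you correctly flag as the main obstacle — a uniform-in-$d$ bound $\sup_{z\ge 2c_1K_0\sqrt d}\abs{\Lambda_{d/2}(z)}\le C_1<1$ — is asserted but not proved, and this is precisely the content the paper has to supply. Be aware that a crude bound $\abs{J_{d/2}(z)}\le Md^{-1/3}$ alone does not suffice in the range $2c_1K_0\sqrt d\le z\lesssim d/2$, because there the prefactor $\Gamma(\tfrac d2+1)(2/z)^{d/2}$ is enormous. The paper closes this by a three-regime argument: (i) past the first stationary point $j'_{d/2,1}\sim d/2$ the prefactor evaluates to $(2\e^{-1}+o(1))^{d/2}$, which beats $Md^{-1/3}$; (ii) on $(0,j_{d/2+1,1}/R_{a,b}]$ the map $|k|\mapsto\fconnf(k;a,b)$ is positive and decreasing, by the derivative identity $\tfrac{\dd}{\dd|k|}\fconnf(k;a,b)=-R_{a,b}(2\pi R_{a,b}/|k|)^{d/2}J_{d/2+1}(R_{a,b}|k|)$ and $j_{d/2+1,1}>j_{d/2,1}>j'_{d/2,1}$, so the intermediate range is controlled by the value at its left endpoint; (iii) the left endpoint is handled by the quadratic expansion. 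If you intend your Gaussian-approximation route instead, you would need a quantitative, $d$-uniform version of $\E[\cos(|k|Y_1)]\le\e^{-c|k|^2}$ on the monotone range together with the exponential smallness past $j'_{d/2,1}$ — i.e.\ you end up reproving the paper's regime split. With that step filled in, your argument is complete and correct.
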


\begin{proof}
Crucial to the conditions of \ref{Assump:2ndMoment} holding is the upper bound on $R_d(a)$ in \eqref{eqn:RadiusBounds}. We use the shorthand
\begin{equation}
    R_{a,b} := R_d(a) + R_d(b),\qquad V_{a,b} := \frac{\pi^{\frac{d}{2}}}{\Gamma\left(\frac{d}{2}+1\right)}R^d_{a,b},
\end{equation}
so $V_{a,b}$ equals the Lebesgue volume of the $d$-dimensional Euclidean ball with radius $R_{a,b}$. Note that the upper bound $\Rmax$ on $R_d(a)$ in \eqref{eqn:RadiusBounds} ensures $V_{a,b} \leq \Vmax$ uniformly in $d$. We first find
\begin{equation}
    \SupSpec{\fOpconnf(0)} \leq \OneNorm{\fOpconnf(0)} = \esssup_{b\in\left[0,1\right]}\int\connf\left(\xbar;a,b\right)\dd \xbar\Pcal\left(\dd a\right) = \esssup_{b\in\left[0,1\right]}\int V_{a,b}\Pcal\left(\dd a\right) \leq \Vmax.
\end{equation}
This proves the statement that $\SupSpec{\fOpconnf(0)}$ is finite in \ref{Assump:2ndMoment}. From the condition \eqref{eqn:MeanRadiusBound} we also get
\begin{multline}
    \SupSpec{\fOpconnf(0)} \geq \int\connf\left(\xbar;a,b\right)\dd \xbar\Pcal^{\otimes2}\left(\dd a, \dd b\right) \geq \epsilon^2\frac{d\pi^\frac{d}{2}}{\Gamma\left(\frac{d}{2}+1\right)} \int^{2\left(1-\frac{c_2}{d}\right)\Rmax}_0 r^{d-1}\dd r \\= \epsilon^2\Vmax\left(1-\frac{c_2}{d}\right)^d \to \epsilon^2\e^{-c_2}\Vmax,
\end{multline}
where we have used the test function $f(a) \equiv 1$ to lower bound $\SupSpec{\fOpconnf(0)}$.  We then prove \eqref{eqn:fconnfIsL2} by bounding
\begin{equation}
    \InfNorm{\fOpconnf(0)} =\esssup_{a,b\in\left[0,1\right]}\int\connf\left(\xbar;a,b\right)\dd \xbar  = \esssup_{a,b\in\left[0,1\right]}V_{a,b} \leq \Vmax.
\end{equation}
We will return for \eqref{eqn:directional2ndMoment} after addressing \ref{Assump:Bound}.

For \ref{Assump:Bound}, it will be useful to have an expression for $\fconnf(k;a,b)$ for each $a,b\in\left[0,1\right]$. Suppose we are able to find $C_1\in\left(0,1\right)$ and $C_2 >0$ such that for all $a,b\in\left[0,1\right]$
\begin{equation}
\label{eqn:fconnf_a-b_bound}
    \abs*{\fconnf(k;a,b)} \leq \fconnf(0;a,b) \left(C_1 \vee \left(1 - C_2\abs*{k}^2\right)\right).
\end{equation}
Then since the $k$-dependent factor is $a,b$-independent we can use Lemma~\ref{lem:NormBounds} to show that
\begin{equation}
    \SupSpec{\fOpconnf(k)} \leq \SupSpec{\fOpconnf(0)} \left(C_1 \vee \left(1 - C_2\abs*{k}^2\right)\right),
\end{equation}
and therefore prove that \ref{Assump:Bound} is satisfied.

In finding an expression for $\fconnf(k;a,b)$ we are assisted by the spherical symmetry of the connection function and follow \cite[Appendix~B]{grafakos2008classical}. It transpires that
\begin{equation}
    \fconnf(k;a,b) = \left(\frac{2\pi R_{a,b}}{\abs*{k}}\right)^\frac{d}{2}J_{\frac{d}{2}}\left(R_{a,b}\abs*{k}\right),
\end{equation}
where $J_{\frac{d}{2}}$ is the Bessel function of the first kind of order $\frac{d}{2}$. This function has the expansion
\begin{equation}
\label{eqn:Bessel_Asymp}
    J_{\frac{d}{2}}\left(r\right) = \sum^\infty_{m=0} \frac{\left(-1\right)^{m}}{m!\Gamma\left(\frac{d}{2}+m+1\right)}\left(\frac{r}{2}\right)^{\frac{d}{2}+2m},
\end{equation}
converging for all $r\geq 0$. We will consider three different regimes for $\abs*{k}$. For $r \ll \sqrt{d}$ the expansion \eqref{eqn:Bessel_Asymp} is asymptotic, and since $\Rmax$ and $\Rmin$ are both asymptotically proportional to $\sqrt{d}$ we have
\begin{equation}
    \fconnf(k;a,b) \sim \frac{\pi^\frac{d}{2}}{\Gamma\left(\frac{d}{2}+1\right)}R_{a,b}^d - \frac{\pi^\frac{d}{2}}{4\Gamma\left(\frac{d}{2}+2\right)}R_{a,b}^{d+2}\abs*{k}^2 = V_{a,b}\left(1 - \frac{R^2_{a,b}}{2\left(d+2\right)}\abs*{k}^2\right)
\end{equation}
for $\abs*{k}\ll 1$. This proves the quadratic part of the desired bound.

The Bessel function $J_{\frac{d}{2}}$ is bounded and achieves its global maximum (in absolute value) at its first non-zero stationary point, denoted $j'_{\frac{d}{2},1}$. From \cite[p.371]{abramowitz1970handbook}, we have $j'_{\frac{d}{2},1} = \frac{d}{2} + \gamma_1\left(\frac{d}{2}\right)^\frac{1}{3} + \LandauBigO{d^{-\frac{1}{3}}}$ for a given $\gamma_1 \approx 0.81$, and $\abs*{J_{\frac{d}{2}}\left(j'_{\frac{d}{2},1}\right)} \leq M d^{-\frac{1}{3}}$ for a $d$-independent $M>0$. Since $R_{a,b} \in  \left[2c_1,1/\sqrt{2\pi\e}+o(1)\right]d^\frac{1}{2}$ (for $c_1\in\left(0,1/\sqrt{8\pi\e}\right]$ from \eqref{eqn:RadiusBounds}) and $j'_{\frac{d}{2},1} / R_{a,b} \in \left[\sqrt{\frac{\pi \e}{2}}+o(1),\frac{1}{4c_1}+o(1)\right]d^{\frac{1}{2}}$ we have
\begin{equation}
    \abs*{\fconnf(k;a,b)} \leq \left(\frac{2\pi R_{a,b}}{\abs*{k}}\right)^\frac{d}{2} M d^{-\frac{1}{3}} \leq \left(2e^{-1} + o(1)\right)^\frac{d}{2} M d^{-\frac{1}{3}} \to 0
\end{equation}
for $\abs*{k} \geq j'_{\frac{d}{2},1} / R_{a,b}$.

We are now left with the intermediate range for $\abs*{k}$. The first positive zero of $J_\frac{d}{2}$ occurs at $j_{\frac{d}{2},1}$, where $j_{\frac{d}{2},1} = \frac{d}{2} + \gamma_2\left(\frac{d}{2}\right)^\frac{1}{3} + 1+  \LandauBigO{d^{-\frac{1}{3}}}$ for a given $\gamma_2 \approx 1.86$ (see \cite{abramowitz1970handbook}). In particular, we will always have $j'_{\frac{d}{2},1} < j_{\frac{d}{2},1}$. From differential identities relating Bessel functions (see \cite{grafakos2008classical}), we have
\begin{equation}
    \frac{\dd }{\dd \abs*{k}} \fconnf(k;a,b) = -R_{a,b}\left(\frac{2\pi R_{a,b}}{\abs*{k}}\right)^\frac{d}{2}J_{\frac{d}{2}+1}\left(R_{a,b}\abs*{k}\right).
\end{equation}
The Bessel function $J_{\frac{d}{2}+1}(r)$ is positive for $r>0$ until its first positive zero at $j_{\frac{d}{2}+1,1}$, and $j_{\frac{d}{2}+1,1} > j_{\frac{d}{2},1}$. Therefore $\fconnf(k;a,b)$ is positive and decreasing on the whole region $\abs*{k}\in\left(0,j_{\frac{d}{2},1}/R_{a,b}\right]$. Since this overlaps with the high $\abs*{k}$ range, and we have the uniform quadratic behaviour near $k=0$, the function $k\mapsto \abs*{\fconnf(k;a,b)}$ can never increase and approach $\fconnf(0;a,b)$ again after leaving a neighbourhood of $k=0$. We therefore have the bound \eqref{eqn:fconnf_a-b_bound} and thus have proven that \ref{Assump:Bound} is satisfied.

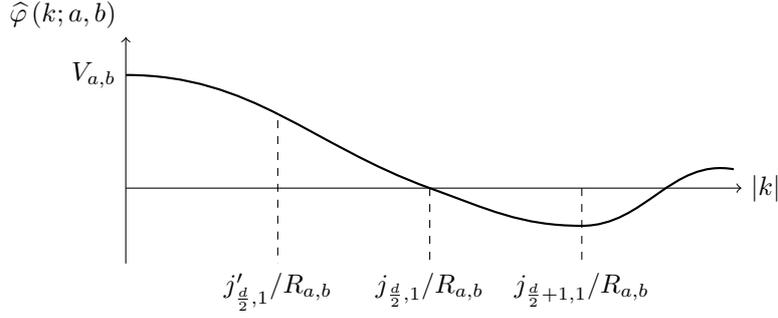
\begin{figure}
    \centering
    \begin{tikzpicture}
        \draw[->] (0,0) -- (8.1,0)node[right]{$\abs*{k}$};
        \draw[->] (0,-1) -- (0,2)node[above left]{$\fconnf\left(k;a,b\right)$};
        \draw[thick] (0,1.5)node[left]{$V_{a,b}$} to [out=0,in=160] (4,0) to [out=-20,in=180] (6,-0.5) to [out=0,in=170] (8,0.25);
        \draw[dashed] (4,0) -- (4,-1)node[below]{$j_{\frac{d}{2},1}/R_{a,b}$};
        \draw[dashed] (6,0) -- (6,-1)node[below]{$j_{\frac{d}{2}+1,1}/R_{a,b}$};
        \draw[dashed] (2,0.9) -- (2,-1)node[below]{$j'_{\frac{d}{2},1}/R_{a,b}$};
    \end{tikzpicture}
    \caption{Sketch of $\fconnf\left(k;a,b\right)$ against $\abs*{k}$. It approaches its maximum quadratically as $\abs*{k}\to0$. The first local maximum of $J_{\frac{d}{2}}$ occurs at $j'_{\frac{d}{2},1}\sim \frac{d}{2}+\gamma_1\left(\frac{d}{2}\right)^\frac{1}{3}$. The first zero of $\fconnf\left(k;a,b\right)$ occurs at $\abs*{k}R_{a,b} = j_{\frac{d}{2},1}\sim \frac{d}{2}+\gamma_2\left(\frac{d}{2}\right)^\frac{1}{3}$ where $\gamma_2>\gamma_1$. Furthermore, $\fconnf\left(k;a,b\right)$ is strictly decreasing until $\abs*{k}R_{a,b} = j_{\frac{d}{2}+1,1}\sim \frac{d}{2}+\gamma_2\left(\frac{d}{2}\right)^\frac{1}{3} + \frac{1}{2}$. }
    \label{fig:my_label}
\end{figure}

We return to \eqref{eqn:directional2ndMoment} in \ref{Assump:2ndMoment}. We first calculate the second moment:
\begin{equation}
    \int\abs*{\xbar}^2\connf\left(\xbar;a,b\right)\dd \xbar = \frac{d}{d+2}V_{a,b}R^2_{a,b}.
\end{equation}
From $\abs*{\xbar}^2 = \xbar_1^2 + \ldots + \xbar^2_d$ (for an orthonormal basis) and the rotational symmetry, we have
\begin{equation}
    \int\left(e\cdot\xbar\right)^2\connf\left(\xbar;a,b\right)\dd \xbar = \frac{1}{d+2}V_{a,b}R^2_{a,b}
\end{equation}
for all unit vectors $e\in\Rd$. Therefore
\begin{equation}
    \esssup_{a,b\in\Ecal}\int\left(e\cdot \xbar\right)^2\connf\left(\xbar;a,b\right)\dd \xbar \leq \frac{4}{d+2}\Vmax\left(\Rmax\right)^2.
\end{equation}
While $\Vmax$ is a constant, $\Rmax\sim \sqrt{d/8\pi\e}$ and therefore we have a bound uniform in $d$. The argument now proceeds similarly to the corresponding part of the proof for the multivariate Gaussian model. From \eqref{eqn:OneInfNormQuadraticExpansion} and \ref{Assump:Bound} we have both sides of \eqref{eqn:directional2ndMoment} satisfying quadratic bounds around $k=0$, and the triangle inequality with our other bounds prove that the bound also holds on the remainder.

For \ref{Assump:BallDecay} we first want to bound the convolution of three $\connf$ functions. Given marks $a_1,\ldots,a_6\in\left[0,1\right]$ and $\xbar\in\Rd$, let $\Bb_{a,b}(\xbar)$ denote the ball of radius $R_{a,b}$ centred on $\xbar$, and let $\Id_{a,b}\left(\xbar\right) = \Id\left\{\abs*{\xbar}\leq R_{a,b}\right\}$. Then
\begin{align}
    \left(\connf(\cdot;a_1,a_2)\star \connf(\cdot;a_3,a_4) \star \connf(\cdot;a_5,a_6)\right)\left(\xbar\right)
    &= \int \Id_{a_1,a_2}(\ybar)\Id_{a_3,a_4}(\zbar-\ybar)\Id_{a_5,a_6}(-\zbar)\dd \ybar \dd \zbar  \nonumber\\
    &= \int_{\Bb_{a_1,a_2}\left(\zerobar\right)}\left(\int_{\Bb_{a_5,a_6}\left(\zerobar\right)} \Id_{a_3,a_4}(\zbar-\ybar)\dd \zbar\right) \dd \ybar \nonumber \\
    &= \int_{\Bb_{a_1,a_2}\left(\zerobar\right)} \abs*{\Bb_{a_5,a_6}\left(\zerobar\right)\cap \Bb_{a_3,a_4}\left(\ybar\right)}\dd \ybar.
\end{align}
Then for $\delta\in\left(0,1\right)$ we have
\begin{align}
    &\int_{\Bb_{a_1,a_2}\left(\zerobar\right)} \abs*{\Bb_{a_5,a_6}\left(\zerobar\right)\cap \Bb_{a_3,a_4}\left(\ybar\right)}\dd \ybar \nonumber \\
    &\qquad\leq \int_{\delta\Bb_{a,b}\left(\zerobar\right)}\abs*{\Bb_{a_3,a_4}\left(\zerobar\right)}\dd\ybar + \abs*{\Bb_{a_1,a_2}\left(\zerobar\right)\setminus\delta\Bb_{a_1,a_2}\left(\zerobar\right)} \esssup_{\ybar\in\Bb_{a_1,a_2}\left(\zerobar\right)\setminus\delta\Bb_{a_1,a_2}\left(\zerobar\right)} \abs*{\Bb_{a_5,a_6}\left(\zerobar\right)\cap \Bb_{a_3,a_4}\left(\ybar\right)} \nonumber\\
    & \qquad\leq \left(\Vmax\right)^2\delta^d + \Vmax\esssup_{\ybar\in\Bb_{a_1,a_2}\left(\zerobar\right)\setminus\delta\Bb_{a_1,a_2}\left(\zerobar\right)} \abs*{\Bb_{a_5,a_6}\left(\zerobar\right)\cap \Bb_{a_3,a_4}\left(\ybar\right)}.
\end{align}
Let $\overline{e}_1\in\Rd$ be a unit vector. Then by spherical symmetry we have
\begin{equation}
\label{eqn:Ball-Ball_intersection}
    \esssup_{\ybar\in\Bb_{a_1,a_2}\left(\zerobar\right)\setminus\delta\Bb_{a_1,a_2}\left(\zerobar\right)} \abs*{\Bb_{a_5,a_6}\left(\zerobar\right)\cap \Bb_{a_3,a_4}\left(\ybar\right)} = \abs*{\Bb_{a_5,a_6}\left(\zerobar\right)\cap \Bb_{a_3,a_4}\left(\delta R_{a_1,a_2}\overline{e}_1\right)}.
\end{equation}
We bound the volume of this intersection with the volume of the $d$-ball with radius equal to the radius of the $d-1$-sphere formed by the intersection of their boundaries. This radius is equal to the length $h$ in Figure~\ref{fig:Triangle_of_Intersection_Spheres1}, which is maximised when $R_{a_5,a_6}$ and $R_{a_3,a_4}$ are maximised and $\delta R_{a_1,a_2}$ is minimised. Recall we defined the constant $c_1\in\left(0,1/\sqrt{8\pi\e}\right)$ when we defined the model in Section~\ref{subsection:Examples}. Since we have $R_{a_5,a_6},R_{a_3,a_4} \leq 2\Rmax$ and $\delta R_{a_1,a_2} \geq \delta c_1 d^\frac{1}{2}$, we have $h\leq \sqrt{\left(2 \Rmax\right)^2 - \delta^2 c^2_1 d}$ - we have used the fact that the triangle is isosceles if both the $R_{a_5,a_6}$ and $R_{a_3,a_4}$ edges attain their maximum. The ball with this radius then has the volume
\begin{equation}
    \frac{\pi^\frac{d}{2}}{\Gamma\left(\frac{d}{2}+1\right)}h^d \leq \Vmax\left(1 - \frac{\pi \delta^2 c^2_1 d}{\left(\Vmax\right)^\frac{2}{d}\Gamma\left(\frac{d}{2}+1\right)^\frac{2}{d}}\right)^\frac{d}{2}.
\end{equation}
Since $\frac{\pi \delta^2 c^2_1 d}{\left(\Vmax\right)^\frac{2}{d}\Gamma\left(\frac{d}{2}+1\right)^\frac{2}{d}} = 2\pi\e\delta^2c_1^2\left(1+o(1)\right)$ and $c_1>0$, this shows that 
\begin{equation}
    \abs*{\Bb_{a_5,a_6}\left(\zerobar\right)\cap \Bb_{a_3,a_4}\left(\delta R_{a_1,a_2}\overline{e}_1\right)} \leq \Vmax\left(1 - \pi\e\delta^2c_1^2\right)^\frac{d}{2}
\end{equation}
for $d$ sufficiently large. Sine we now have a mark independent bound with the required decay, we have proved the bound for the convolution of three $\connf$ functions.

\begin{figure}
    \centering
    \begin{subfigure}[b]{0.45\textwidth}
    \centering
    \begin{tikzpicture}
        \draw[thick] (0,0) -- (2,2) -- (5,0) -- (0,0);
        \draw[dashed] (2,2) -- (2,0);
        \draw (2.2,0) -- (2.2,0.2) -- (2,0.2);
        \draw (1,1) circle (0pt) node[above left]{$R_{a_5,a_6}$};
        \draw (3.5,1) circle (0pt) node[above right]{$R_{a_3,a_4}$};
        \draw (2.5,0) circle (0pt) node[below]{$\delta R_{a_1,a_2}$};
        \draw (2,1) circle (0pt) node[right]{$h$};
    \end{tikzpicture}
    \subcaption{Diagram demonstrating the radius $h$ in the three $\connf$ function argument.}
    \label{fig:Triangle_of_Intersection_Spheres1}
    \end{subfigure}
    \hfill
    \begin{subfigure}[b]{0.45\textwidth}
    \centering
    \begin{tikzpicture}
        \draw[thick] (0,0) -- (2,2) -- (5,0) -- (0,0);
        \draw[dashed] (2,2) -- (2,0);
        \draw (2.2,0) -- (2.2,0.2) -- (2,0.2);
        \draw (1,1) circle (0pt) node[above left]{$R_{a,c}$};
        \draw (3.5,1) circle (0pt) node[above right]{$R_{c,b}$};
        \draw (2.5,0) circle (0pt) node[below]{$\abs*{\xbar}$};
        \draw (2,1) circle (0pt) node[right]{$h$};
    \end{tikzpicture}
    \subcaption{Diagram demonstrating the radius $h$ in the two $\connf$ function argument.}
    \label{fig:Triangle_of_Intersection_Spheres2}
    \end{subfigure}
    \caption{For both the arguments, we bound the intersection of two $d$-balls with a new $d$-ball that has the radius $h$ in these diagrams.}
\end{figure}
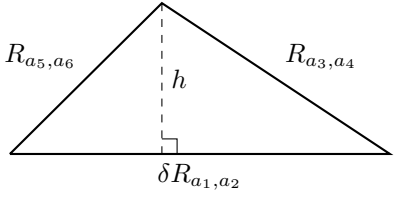
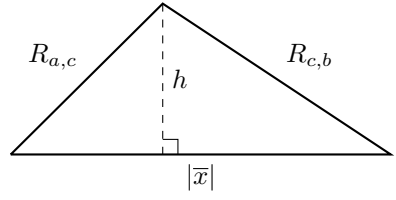

For finding the sets $B(x)$ in \eqref{eqn:BallDecay}, we write
\begin{equation}
    \int\connf\left(\xbar- \ybar;a,c\right)\connf\left(\ybar;c,b\right)\dd \ybar\Pcal\left(\dd c\right) = \int \abs*{\Bb_{a,c}\left(\xbar\right)\cap\Bb_{c,b}\left(\zerobar\right)}\Pcal\left(\dd c \right).
\end{equation}
As before, we can bound the volume of this intersection independently of the mark $c$. We can bound it with the volume of the ball with radius $h$ where $h\leq \sqrt{\left(2\Rmax\right)^2 - \frac{1}{4}\abs*{\xbar}^2}$ (as can be seen in Figure~\ref{fig:Triangle_of_Intersection_Spheres2}). As before, if $\abs*{\xbar} \geq \kappa d^\frac{1}{2}$ for some $\kappa>0$, then the volume of the ball with radius $h$ vanishes in the $d\to\infty$ limit. Furthermore, if $\kappa\leq \frac{1}{\sqrt{2\pi\e}}$ then the volume of the ball with radius $\kappa d^\frac{1}{2}$ vanishes in the $d\to\infty$ limit. Therefore assumption \ref{Assump:BallDecay} is satisfied with the sets $B(x)\equiv B\left(\xbar,a\right)=\left\{\left(\ybar,b\right)\in\X:\abs*{\xbar-\ybar} \leq \kappa d^\frac{1}{2}\right\}$ for any $\kappa\in\left(0,\frac{1}{\sqrt{2\pi\e}}\right)$.

\end{proof}

\section{Proofs for Linear Operator Lemmas}
\label{appendix:LinearOperatorProofs}

\begin{proof}[Proof of Lemma~\ref{lem:BoundsonOperatorNorm}]
The first inequality in \eqref{eqn:OpnormBounds} holds from the expressions of $\SupSpec{\cdot}$ and $\OpNorm{\cdot}$ in terms of the spectrum (see \eqref{eqn:Opnorm_Supspec_Spectrum}). The boundedness and self-adjointness of $H$ implies that the second inequality follows from an application of the Schur Test and the conjugate symmetry of its kernel function (see, for example \cite[Theorem~5.2]{halmos1978bounded}). For the lower bound on $\SupSpec{\cdot}$, we use the definition \eqref{eqn:SupSpecDef} and consider the test function $f(a) = 1$.
\end{proof}

\begin{proof}[Proof of Lemma~\ref{lem:NormBounds}]
For the $\OneNorm{\cdot}$, $\TwoNorm{\cdot}$, and $\InfNorm{\cdot}$ norms the inequality is clear from the definitions. For the $\OpNorm{\cdot}$ version, let $f\in L^2\left(\Xcal\right)$, and use $\rm Abs$ to denote the operator $f\mapsto \abs*{f}$. While this operator is not linear, it is clear that $\norm*{{\rm Abs}f}_2 = \norm*{f}_2$. With this setup,
\begin{multline}
    \norm*{Gf}^2_2 = \int \abs*{\int g\left(x,y\right) f\left(y\right) \nu\left(\dd y\right)}^2\nu\left(\dd x\right) \leq \int \left(\int \abs*{g\left(x,y\right)} \abs*{f\left(y\right)} \nu\left(\dd y\right)\right)^2\nu\left(\dd x\right)\\
    \leq \int \left(\int h\left(x,y\right) \abs*{f\left(y\right)} \nu\left(\dd y\right)\right)^2\nu\left(\dd x\right) = \norm*{H{\rm Abs}f}^2_2.
\end{multline}
Therefore 
\begin{equation}
    \OpNorm{G} = \sup_f\frac{\norm*{Gf}_2}{\norm*{f}_2} \leq \sup_f\frac{\norm*{H{\rm Abs}f}_2}{\norm*{f}_2} = \sup_f\frac{\norm*{H{\rm Abs}f}_2}{\norm*{{\rm Abs}f}_2}\leq \sup_f\frac{\norm*{Hf}_2}{\norm*{f}_2} =\OpNorm{H}.
\end{equation}
Here we used that the image of $\rm Abs$ is a subset of $L^2\left(\Xcal\right)$.

A similar argument works for the $\SupSpec{\cdot}$ inequality. Given $f\in L^2(\Xcal)$,
\begin{equation}
    \abs*{\left<f,Gf\right>}  \leq \int\abs*{f(x)}\abs*{g(x,y)}\abs*{f(y)}\nu^{\otimes2}\left(\dd x,\dd y\right) \leq \left<{\rm Abs}f,H{\rm Abs}f\right>.
\end{equation}
Therefore
\begin{equation}
    \SupSpec{G} = \sup_f\frac{\left<f,Gf\right>}{\norm*{f}^2_2} \leq \sup_f\frac{\left<{\rm Abs}f,H{\rm Abs}f\right>}{\norm*{f}^2_2} = \sup_f\frac{\left<{\rm Abs}f,H{\rm Abs}f\right>}{\norm*{{\rm Abs}f}^2_2}\leq \sup_f\frac{\left<f,Hf\right>}{\norm*{f}^2_2} =\SupSpec{H}.
\end{equation}
\end{proof}

\begin{proof}[Proof of Lemma~\ref{lem:SupSpecTriangle}]
It is a well-known result (for example see \cite{kato1995perturbation}) that $\sigma(G+H)$ is contained in the closed $\OpNorm{H}$-neighbourhood of $\sigma(G)$. Perturbing $G$ by $+H$, and perturbing $G+H$ by $-H$ then implies the result.
\end{proof}

\begin{proof}[Proof of Lemma~\ref{thm:Spectrum_Subset}]
We present the proof of \eqref{eqn:spectrum_equality} for the connection operator case. The two-point operator case follows similarly.

First suppose $\zeta\in \Ess \bigcup_{k\in\Rd}\sigma\left(\fOpconnf(k)\right)$. Therefore for all $\epsilon>0$ there exists $P_\epsilon\subset\Rd$ such that $\Leb\left(P_\epsilon\right)>0$ and for all $k\in P_\epsilon$ there exists $z\in\sigma\left(\fOpconnf\left(k\right)\right)$ such that $\abs*{\zeta-z}<\epsilon$. Furthermore, given $z\in\sigma\left(\fOpconnf\left(k\right)\right)$ there exists $g\in L^2\left(\Ecal\right)$ with $\norm*{g}_2=1$ such that $\norm*{\left(z\Id - \fOpconnf\left(k\right)\right)g}_2<\epsilon$. For all $k\in P_\epsilon$ we let $z^{(k)}$ and $g^{(k)}$ denote such choices. Now let $\widehat{f}$ be any element of $L^2\left(\Rd\right)$ supported on $P_\epsilon$ with $\norm*{\widehat{f}}_2=1$. Then define
\begin{equation}
    h\left(\xbar,a\right) := \int_{P_\epsilon} \e^{-i k\cdot \xbar}\widehat{f}(k)g^{(k)}(a)\frac{\dd k}{\left(2\pi\right)^{d}}.
\end{equation}
By the Fourier Inversion Theorem, the Fourier transform of $h$ is given by $\widehat{h}\left(k,a\right) = \widehat{f}(k)g^{(k)}(a)$ for $k\in P_\epsilon$ and vanishes elsewhere. By Plancherel's Theorem (that is, the Fourier transform is unitary), $h$ and $\widehat{h}$ have the same $L^2$-norm:
\begin{multline}
    \norm*{h}^2_2= \norm*{\widehat{h}}^2_2 = \int_{P_\epsilon}\abs*{\widehat{f}(k)}^2\left(\int \abs*{g^{(k)}(a)}^2\Pcal\left(\dd a\right) \right)\frac{\dd k}{\left(2\pi\right)^{d}} \\= \int_{P_\epsilon}\abs*{\widehat{f}(k)}^2\norm*{g^{(k)}}^2_2\frac{\dd k}{\left(2\pi\right)^{d}} = \int_{P_\epsilon}\abs*{\widehat{f}(k)}^2\frac{\dd k}{\left(2\pi\right)^{d}} = \norm*{\widehat{f}}^2_2 = 1.
\end{multline}
By applying Plancherel's Theorem once again, we can write 
\begin{multline}
    \norm*{\left(\zeta\Id - \Opconnf\right)h}^2_2  = \int_{P_\epsilon}\abs*{\widehat{f}(k)}^2\int\abs*{\zeta g^{(k)}(a) - \int\fconnf(k;a,b)g^{(k)}(b)\Pcal\left(\dd b\right)}^2\Pcal\left(\dd a\right) \frac{\dd k}{\left(2\pi\right)^{d}} \\
    = \int_{P_\epsilon}\abs*{\widehat{f}(k)}^2\norm*{\left(\zeta\Id - \fOpconnf(k)\right)g^{(k)}}^2_2 \frac{\dd k}{\left(2\pi\right)^{d}} < \int_{P_\epsilon}\abs*{\widehat{f}(k)}^2\left(\epsilon + \underbrace{\norm*{\left(z(k)\Id - \fOpconnf(k)\right)g^{(k)}}_2}_{<\epsilon}\right)^2 \frac{\dd k}{\left(2\pi\right)^{d}} < 4\epsilon^2.
\end{multline}
Here we use the triangle inequality and $\norm*{g^{(k)}}_2=1$ to replace $\zeta$ with $z(k)$ at the cost of an $\epsilon$. Since for each $\epsilon>0$ we have constructed a unit $h\in L^2\left(\X\right)$ such that $\norm*{\left(\zeta\Id - \Opconnf\right)h}_2<2\epsilon$, we have proven that $\left(\zeta\Id - \Opconnf\right)$ has no bounded linear inverse and therefore $\zeta\in\sigma\left(\Opconnf\right)$.

We now show the converse. Suppose $\zeta\in\sigma\left(\Opconnf\right)$. Then for all $\epsilon>0$ there exists $h \in L^2\left(\X\right)$ such that $\norm*{h}_2=1$ and $\norm*{\left(\zeta\Id - \Opconnf\right)h}_2 < \epsilon$. By using Plancherel's Theorem to replace the original functions with their Fourier transforms, we get
\begin{multline}
    \norm*{\left(\zeta\Id - \Opconnf\right)h}^2_2 = \int\abs*{\zeta h\left(\xbar,a\right) - \int\connf\left(\xbar-\ybar;a,b\right)h\left(\ybar,b\right)\dd\ybar\Pcal\left(\dd b\right)}^2\dd\xbar\Pcal\left(\dd a\right) \nonumber\\
    = \int\abs*{\zeta \widehat{h}\left(k,a\right) - \int\fconnf\left(k;a,b\right)\widehat{h}\left(k,b\right)\Pcal\left(\dd b\right)}^2\frac{\dd k}{\left(2\pi\right)^d}\Pcal\left(\dd a\right).
\end{multline}
Now let us define the family of functions $g^{(k)}\colon \Ecal \to \Complex$ by $g^{(k)}\left(a\right) := \widehat{h}\left(k,a\right)$ for all $k\in\Rd$. Then we can again re-write the equality above as
\begin{equation}
\label{eqn:kintegralofnorm}
    \norm*{\left(\zeta\Id - \Opconnf\right)h}^2_2 = \int\norm*{\left(\zeta\Id - \fOpconnf\left(k\right)\right)g^{(k)}}^2_2\frac{\dd k}{\left(2\pi\right)^d}.
\end{equation}
For each $k\in\Rd$, $\fOpconnf(k)$ is self-adjoint and therefore by the Spectral Theorem (Theorem~\ref{thm:spectraltheorem}) there exists a Hilbert space $L^2\left(\Efrak_k,\mu_k\right)$, a unitary operator $U_k\colon L^2\left(\Ecal\right)\to L^2\left(\Efrak_k\right)$, and a measurable function $e\mapsto \widetilde{\connf}\left(k\right)\left(e\right)$ such that $\fOpconnf(k)$ is unitarily equivalent to pointwise multiplication by $\widetilde{\connf}\left(k\right)\left(e\right)$ on $L^2\left(\Efrak_k\right)$. If we define $f^{\left(k\right)\left(e\right)} := \left(U_kg^{\left(k\right)}\right)\left(e\right)\in \Complex$, then \eqref{eqn:kintegralofnorm} becomes
\begin{equation}
    \norm*{\left(\zeta\Id - \Opconnf\right)h}^2_2 = \int\left(\int\abs*{\zeta - \widetilde{\connf}\left(k\right)\left(e\right)}^2\abs*{f^{(k)(e)}}^2\mu_k\left(\dd e\right)\right)\frac{\dd k}{\left(2\pi\right)^d}.
\end{equation}
Note that since the Fourier transform and the maps $\left\{U_k\right\}_k$ are all unitary, the normalisation of $h$ passes on to $g^{(k)}$ and on to $f^{(k)(e)}$ so that
\begin{equation}
    \int\left(\int\abs*{f^{(k)(e)}}^2\mu_k\left(\dd e\right)\right)\frac{\dd k}{\left(2\pi\right)^d}=1.
\end{equation}

Now suppose for contradiction that $\abs*{\zeta - \widetilde{\connf}\left(k\right)\left(e\right)}\geq \epsilon$ for $\mu_k$-a.e. $e\in\Efrak_k$, for $\Leb$-a.e. $k\in\Rd$. Then the normalisation of $f^{(k)(e)}$ implies that
\begin{equation}
    \int\left(\int\abs*{\zeta - \widetilde{\connf}\left(k\right)\left(e\right)}^2\abs*{f^{(k)(e)}}^2\mu_k\left(\dd e\right)\right)\frac{\dd k}{\left(2\pi\right)^d} \geq \epsilon^2.
\end{equation}
However this contradicts the condition that $\norm*{\left(\zeta\Id - \Opconnf\right)h}_2 < \epsilon$. Instead, for each $\epsilon>0$ there exists $P_\epsilon\subset\Rd$ with $\Leb\left(P_\epsilon\right)>0$ such that for all $k\in P_\epsilon$ there exists $Q^{(k)}_\epsilon\subset \Efrak_k$ with $\mu_k\left(Q^{(k)}_\epsilon\right)>0$ such that for all $e\in Q^{(k)}_\epsilon$ we have $\abs*{\zeta - \widetilde{\connf}\left(k\right)\left(e\right)}< \epsilon$.

For each $k\in\Rd$, the operator $U_k\fOpconnf(k)U_k^{-1}$ is a multiplication operator on $L^2\left(\Efrak_k\right)$ with function $e\mapsto\widetilde{\connf}(k)(e)$, and therefore it is easy to calculate its spectrum. Since the operator is unchanged by changing $\widetilde{\connf}(k)(e)$ on a $\mu_k$-null set, the spectrum is given by the essential image of $\widetilde{\connf}(k)$:
\begin{equation}
    \EssIm{\widetilde{\connf}\left(k\right)} = \left\{z\in\Complex:\forall\epsilon>0,\mu_k\left(e\in\Efrak_k:\abs*{\widetilde{\connf}\left(k\right)\left(e\right)-z}>\epsilon\right)>0\right\}.
\end{equation}
Since the unitary equivalence preserves the spectrum, this gives us an expression for $\sigma\left(\fOpconnf\left(k\right)\right)$. In particular, we can write our previous conclusion as the following. For all $\epsilon>0$ there exists $P_\epsilon\subset\Rd$ with $\Leb\left(P_\epsilon\right)>0$ such that for all $k\in P_\epsilon$ there exists $z\in \sigma\left(\fOpconnf\left(k\right)\right)$ such that $\abs*{\zeta-z}<\epsilon$. Equivalently, this means that $\zeta\in \Ess \bigcup_{k\in\Rd}\sigma\left(\fOpconnf(k)\right)$.

Since $\OpNorm{A} = \sup\left\{\abs*{z}:z\in\sigma\left(A\right)\right\}$ and $\SupSpec{A} = \sup\left\{z:z\in\sigma\left(A\right)\subset \R\right\}$ for self-adjoint operator $A$, the remaining equalities follow from \eqref{eqn:spectrum_equality}.
\end{proof}

\section{Differentiating the Two-Point Operator}

\label{sec:Prelim:differentiating}

The ideas of this section can be found in \cite{HeyHofLasMat19}, but we will need extra care to account for the fact that we are dealing with operators and marks and not just functions on $\R^d$.

Recall $\tlam\left(x,y\right) := \pla(\conn{x}{y}{\xi^{x,y}})$. Using the notation $x = \left(\overline{x},a\right)\in\R^d\times \Ecal$, define $\Lambda_n\left(x\right) = \left(\overline{x} + \left[-n,n\right)^d\right)\times\Ecal$. Then we can define the truncated two-point function
\begin{equation}
    \tlam^n\left(x,y\right) := \pla\left(\conn{x}{y}{\xi^{x,y}_{\Lambda_n\left(y\right)}}\right).
\end{equation}

We will want to give meaning to the event $\left\{\conn{x}{\Lambda_n^c\left(y\right)}{\xi^x_{\Lambda_n\left(y\right)}}\right\}$. To do so we add a ``ghost vertex'' $g$ in the same way we added deterministic vertices, and add an edge between $v \in \xi_{\Lambda_n\left(y\right)}$ and $g$ with probability $1-\exp\left(-\int_{\Lambda_n^c\left(y\right)} \connf(u,v) \nu\left(\dd u\right)\right)$. We now identify $\Lambda_n^c\left(y\right)$ with $g$.

\begin{lemma}[Differentiability of $\tlam$]\label{lem:tlam_differentiability}
Let $x,y\in\X$ and $\varepsilon>0$ be arbitrary. The function $\lambda\mapsto\tau_\lambda^n(x,y)$ is differentiable on $[0, \lambda_O-\varepsilon]$ for any $n\in\N$. Furthermore, $ \tau_\lambda^n(x,y)$ converges to $\tlam(x,y)$ uniformly in $\lambda$ and $\tfrac{\dd}{\dd\lambda} \tau_\lambda^n(x,y)$ converges to a limit uniformly in $\lambda$. Consequently, $\tlam(x,y)$ is differentiable w.r.t.~$\lambda$ on $[0,\lambda_O)$ and
\begin{equation}\label{eq:prelim:tlam_derivative_limit_exchange}
    \lim_{n\to\infty} \frac{\dd}{\dd\lambda} \tau_\lambda^n(x,y) = \frac{\dd}{\dd\lambda}\tlam(x,y) =
							\int \pla(\conn{y}{x}{\xi^{x,y,u}}, \nconn{y}{x}{\xi^{x,y}}) \nu\left(\dd u\right).
\end{equation}
\end{lemma}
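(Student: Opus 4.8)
# Proof Proposal for Lemma 3.17 (Differentiability of $\tau_\lambda$)

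The plan is to build on the Margulis--Russo formula \eqref{eq:prelim:russo}, which applies to functions that \emph{live} on a $\nu$-finite set, and then pass to the limit $n\to\infty$. The truncated two-point function $\tau_\lambda^n(x,y)$ is a probability of the event $\{\conn{x}{y}{\xi^{x,y}_{\Lambda_n(y)}}\}$, and this event is measurable with respect to the edges contained in the bounded set $\Lambda_n(y)$ (together with the added vertices $x,y$ and the ghost vertex identified with $\Lambda_n^c(y)$). So as a function of $\xi^{x,y}$ it lives on $\Lambda_n(y)$, which has finite $\nu$-measure since $\Ecal$ carries a probability measure. Moreover, for $\lambda \le \lambda_O - \varepsilon$ the expected cluster size is finite (this is where $\lambda_O$ and Proposition~\ref{prop:lambda_T=lambda_0} enter, giving $\lambda_O \le \lambda_T^{(1)}$, so that $\chi_\lambda(a)<\infty$ for $\Pcal$-a.e.\ $a$ when $\lambda<\lambda_O$; one checks the integrability hypothesis $\E_{\lambda_0}[|f|]<\infty$ holds). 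Hence Margulis--Russo gives
\begin{equation}
    \frac{\dd}{\dd\lambda}\tau_\lambda^n(x,y) = \int \pla\!\left(\conn{x}{y}{(\xi^{x,y}_{\Lambda_n(y)})^u} \right) - \pla\!\left(\conn{x}{y}{\xi^{x,y}_{\Lambda_n(y)}}\right) \nu(\dd u),
\end{equation}
and by the standard decomposition of the difference of these two increasing events (adding a Poisson point can only create a connection), the integrand equals $\pla(\conn{x}{y}{(\xi^{x,y}_{\Lambda_n(y)})^u},\; \nconn{x}{y}{\xi^{x,y}_{\Lambda_n(y)}})$. This establishes differentiability of $\lambda \mapsto \tau_\lambda^n(x,y)$ on $[0,\lambda_O-\varepsilon]$.

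Next I would prove the two uniform-convergence claims. For $\tau_\lambda^n(x,y) \to \tlam(x,y)$ uniformly in $\lambda$: the event $\{\conn{x}{y}{\xi^{x,y}}\}\setminus\{\conn{x}{y}{\xi^{x,y}_{\Lambda_n(y)}}\}$ forces the existence of an occupied path from $x$ to $y$ using at least one vertex outside $\Lambda_n(y)$, so its probability is bounded (via Mecke/BK-type estimates) by a tail quantity that vanishes as $n\to\infty$ and is dominated by something like $\lambda_O$-uniform multiples of $\sum_{k}\int_{\Lambda_n^c(y)}(\text{$k$-step connection bounds})$; uniformity in $\lambda$ on $[0,\lambda_O-\varepsilon]$ follows because all the relevant series are uniformly summable there (again using $\chi_\lambda$ uniformly bounded on that interval). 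For the derivatives: one shows $\frac{\dd}{\dd\lambda}\tau_\lambda^n(x,y)$ is Cauchy in $n$ uniformly in $\lambda$ by estimating the difference of the integrands for $n$ and $m>n$ — the discrepancy again requires a connection path (now with an extra pivotal-type structure because of the $u$ vertex) using a vertex outside $\Lambda_n(y)$, and the same tail bound applies.

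Finally I would invoke the elementary analysis fact: if $f_n \to f$ pointwise, each $f_n$ is differentiable, and $f_n'$ converges uniformly, then $f$ is differentiable with $f' = \lim f_n'$. Applying this on each $[0,\lambda_O-\varepsilon]$, and letting $\varepsilon\downarrow 0$, yields differentiability of $\tlam(x,y)$ on $[0,\lambda_O)$ together with
\begin{equation}
    \frac{\dd}{\dd\lambda}\tlam(x,y) = \lim_{n\to\infty}\int \pla\!\left(\conn{x}{y}{(\xi^{x,y}_{\Lambda_n(y)})^u},\ \nconn{x}{y}{\xi^{x,y}_{\Lambda_n(y)}}\right)\nu(\dd u) = \int \pla\!\left(\conn{y}{x}{\xi^{x,y,u}},\ \nconn{y}{x}{\xi^{x,y}}\right)\nu(\dd u),
\end{equation}
the last equality by monotone/dominated convergence as the truncation is removed (and using symmetry of $\connf$ to swap the roles of $x$ and $y$ in the notation). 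The main obstacle, I expect, is the uniform-in-$\lambda$ control of the tail error terms: one must produce explicit, $\lambda$-uniform bounds on probabilities of "long" connections out of $\Lambda_n(y)$ that are summable, which rests on the finiteness of the expected cluster size $\chi_\lambda$ for $\lambda<\lambda_O$ and its (locally) uniform boundedness — this is the step that genuinely uses the definition of $\lambda_O$ and the relation $\lambda_O\le\lambda_T^{(1)}$ rather than being purely formal.
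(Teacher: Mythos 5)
Your proposal follows essentially the same route as the paper: Margulis--Russo applied to the truncated function $\tau_\lambda^n$ (which lives on the bounded set $\Lambda_n(y)$), uniform-in-$\lambda$ convergence of both $\tau_\lambda^n$ and its derivative via Mecke/BK tail estimates resting on the finiteness of $\int\tau_{\lambda_O-\varepsilon}$ for $\lambda<\lambda_O$, and the standard exchange of limit and derivative. The one step you leave implicit --- that the error in the derivative must be factored, via the BK inequality, into a small $u$-independent probability of reaching $\Lambda_n^c(y)$ times an integrable function of $u$ such as $\tlam(u,y)$ or $\tlam(x,u)$, so that the integral over the growing region $\Lambda_n(y)$ remains controlled --- is precisely how the paper closes the argument, so your plan is sound.
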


\begin{proof}
    The proof of this lemma follows as in \cite[Lemma~2.2]{HeyHofLasMat19}. The key idea is to uniformly bound the probabilities that a path that leaves $\Lambda_n(y)$ is used by the various events. The importance of $\lambda_O$ arises from the observation that for $\lambda<\lambda_O$
    \begin{equation}
    \label{eqn:OneOneNormFinite}
    \int\tlam(\xbar;a,b)\dd \xbar \Pcal^{\otimes 2}\left(\dd a,\dd b\right) \leq \SupSpec{\fOptlam(0)}\leq \OpNorm{\fOptlam(0)} <\infty,
\end{equation}
which is how we know that for $\nu$-almost every $y\in\X$ we have $\int_{\Lambda_n^c\left(y\right)} \tau_{\lambda_O-\varepsilon}(u,y) \nu\left(\dd u\right)\to0$ as $n\to\infty$.
\end{proof}

A family of operators $\left\{H_x\right\}_{x\in\R}$ is differentiable at $x$ if there exists an operator $F_x$ such that
\begin{equation}
    \lim_{\varepsilon\to0}\OpNorm{\frac{1}{\varepsilon}\left(H_{x+\varepsilon}-H_x\right) - F_x} = 0.
\end{equation}
Using the notation $x = \left(\xbar,a\right)\in\R^d\times \Ecal$, recall that we require that the model has $\R^d$-translation invariance and so we can write $\tlam\left(x,y\right) = \tlam\left(\xbar-\ybar;a,b\right)$. Also recall that we defined the Fourier transform $\fOptlam\left(k\right)\colon L^2\left(\Ecal\right) \to L^2\left(\Ecal\right)$ as the integral operator with kernel $\ftlam\left(k;a,b\right) = \int\cos\left(k\cdot\xbar\right)\tlam\left(\xbar;a,b\right)\dd \xbar$. We now define the \emph{displaced} Fourier transform $\fOptklam\left(l\right)\colon L^2\left(\Ecal\right)\to L^2\left(\Ecal\right)$ as the integral operator with kernel $\ftklam\left(l;a,b\right) = \int\left(1-\cos\left(k\cdot \xbar\right)\right)\cos\left(l\cdot\xbar\right)\tlam\left(\xbar;a,b\right)\dd \xbar$.

\begin{corollary}[Differentiability of $\Optlam$]
\label{thm:differentiabilityofOpT}
The operators $\Optlam$, $\fOptlam\left(k\right)$, and $\fOptklam\left(l\right)$ are differentiable w.r.t.~$\lambda$ on $\left(0,\lambda_O\right)$ and their derivatives are the bounded linear operators $\frac{\dd}{\dd\lambda}\Optlam\colon L^2\left(\X\right) \to L^2\left(\X\right)$, $\frac{\dd}{\dd\lambda}\fOptlam\left(k\right)\colon L^2\left(\Ecal\right) \to L^2\left(\Ecal\right)$, and $\frac{\dd}{\dd\lambda}\fOptklam\left(l\right)\colon L^2\left(\Ecal\right) \to L^2\left(\Ecal\right)$ with kernel functions $\frac{\dd}{\dd \lambda}\tlam\left(x,y\right)$, $\frac{\dd}{\dd \lambda}\ftlam\left(k;a,b\right)$, and $\frac{\dd}{\dd \lambda}\ftklam\left(l;a,b\right)$ respectively. Specifically, we have the following bounds on the derivatives:
\begin{align}
    \OpNorm{\frac{\dd}{\dd \lambda}\Optlam} &\leq \OpNorm{\Optlam}^2,\label{eqn:differentiabilityofOpT -1}\\
    \OpNorm{\frac{\dd}{\dd \lambda}\fOptlam\left(k\right)} &\leq \OpNorm{\fOptlam\left(0\right)}^2,\label{eqn:differentiabilityofOpT -2}\\
    \OpNorm{\frac{\dd}{\dd \lambda}\fOptklam\left(l\right)} &\leq 4\OpNorm{\fOptlam\left(0\right) - \fOptlam\left(k\right)}\OpNorm{\fOptlam\left(0\right)}. \label{eqn:differentiabilityofOpT -3}
\end{align}
\end{corollary}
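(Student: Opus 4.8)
The plan is to reduce everything to Lemma~\ref{lem:tlam_differentiability}, which supplies the pointwise $\lambda$-derivative of $\tlam(x,y)$ on $[0,\lambda_O)$ together with its probabilistic representation, and to Lemma~\ref{lem:NormBounds}, which turns a pointwise domination of kernels into a domination of operator norms. All three asserted derivative bounds will come from a single pointwise estimate on $\tfrac{\dd}{\dd\lambda}\tlam$, while identifying the candidate operators as the genuine operator-norm derivatives is where the real work lies.

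First I would prove the pointwise estimate
\[
  0\;\le\;\tfrac{\dd}{\dd\lambda}\tlam(x,y)\;\le\;\int\tlam(x,u)\tlam(u,y)\,\nu(\dd u),
\]
whose right-hand side is the kernel of $\Optlam^2$. Non-negativity is immediate from the representation in Lemma~\ref{lem:tlam_differentiability} (which also shows $\tlam$ is non-decreasing in $\lambda$). For the upper bound: by that lemma $\tfrac{\dd}{\dd\lambda}\tlam(x,y)=\int\pla(\conn{x}{y}{\xi^{x,y,u}},\nconn{x}{y}{\xi^{x,y}})\,\nu(\dd u)$, and on the event in the integrand the added point $u$ lies on every path from $x$ to $y$ in $\xi^{x,y,u}$; splitting one self-avoiding such path at $u$ yields edge-disjoint witnesses for $\{\conn{x}{u}{\xi^{x,u}}\}$ and $\{\conn{u}{y}{\xi^{u,y}}\}$ (the two halves avoid $y$ and $x$ respectively, so they are paths in the reduced graphs). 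Hence the event lies in $\{\conn{x}{u}{\xi^{x,u}}\}\circ\{\conn{u}{y}{\xi^{u,y}}\}$ and the BK inequality (Theorem~\ref{lem:prelimbk_inequality}) bounds its probability by $\tlam(x,u)\tlam(u,y)$; integrating over $u$ gives the claim.

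Next, let the three candidate derivative operators be the integral operators with kernels $\tfrac{\dd}{\dd\lambda}\tlam(x,y)$, $\tfrac{\dd}{\dd\lambda}\ftlam(k;a,b)$ and $\tfrac{\dd}{\dd\lambda}\ftklam(l;a,b)$; the interchange of $\tfrac{\dd}{\dd\lambda}$ with the spatial Fourier integrals is justified by monotone convergence since $\tlam$ is non-decreasing in $\lambda$. Using $\tfrac{\dd}{\dd\lambda}\tlam\ge0$ and $|\cos(\cdot)|\le1$, the second kernel is dominated in absolute value by $\tfrac{\dd}{\dd\lambda}\ftlam(0;a,b)$ and the third by $\tfrac{\dd}{\dd\lambda}\bigl(\ftlam(0;a,b)-\ftlam(k;a,b)\bigr)$; note that $\ftlam(0;a,b)-\ftlam(k;a,b)=\int(1-\cos k\cdot\xbar)\tlam(\xbar;a,b)\,\dd\xbar$ is itself a non-negative kernel, so $\fOptlam(0)-\fOptlam(k)$ has non-negative kernel. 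Integrating the estimate of the previous paragraph over $\xbar$ bounds $\tfrac{\dd}{\dd\lambda}\ftlam(0;a,b)$ by the kernel of $\fOptlam(0)^2$; and, after inserting that estimate and applying the elementary inequality $1-\cos(s+t)\le 2(1-\cos s)+2(1-\cos t)$ with $s=k\cdot(\xbar-\vbar)$, $t=k\cdot\vbar$, it bounds $\tfrac{\dd}{\dd\lambda}(\ftlam(0;\cdot)-\ftlam(k;\cdot))$ by twice the kernel of $(\fOptlam(0)-\fOptlam(k))\fOptlam(0)+\fOptlam(0)(\fOptlam(0)-\fOptlam(k))$. Each dominating operator has non-negative kernel, is self-adjoint, and is bounded for $\lambda<\lambda_O$ (the estimates reduce to $\OpNorm{\fOptlam(0)}<\infty$ via Lemmas~\ref{lem:NormBounds} and~\ref{thm:Spectrum_Subset}); so by Lemma~\ref{lem:NormBounds} the three candidate operators are bounded and self-adjoint, and Lemma~\ref{lem:NormBounds} with sub-multiplicativity of $\OpNorm{\cdot}$ gives the three asserted bounds — the third via $\OpNorm{2AB+2BA}\le 4\OpNorm{A}\OpNorm{B}$ with $A=\fOptlam(0)-\fOptlam(k)$ and $B=\fOptlam(0)$.

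Finally I must check that these operators really are the derivatives, e.g.\ $\OpNorm{\tfrac1\varepsilon(\mathcal T_{\lambda+\varepsilon}-\Optlam)-F}\to0$ where $F$ has kernel $\tfrac{\dd}{\dd\lambda}\tlam$; this is the step I expect to demand most care. By the pointwise fundamental theorem of calculus (valid by Lemma~\ref{lem:tlam_differentiability}), this difference operator has kernel $\tfrac1\varepsilon\int_\lambda^{\lambda+\varepsilon}\bigl(\tfrac{\dd}{\dd\mu}\tau_\mu(x,y)-\tfrac{\dd}{\dd\lambda}\tau_\lambda(x,y)\bigr)\,\dd\mu$, which vanishes pointwise as $\varepsilon\to0$ and is dominated, uniformly for small $\varepsilon>0$, by twice the fixed kernel of $\mathcal T_{\lambda_0}^2$ for any chosen $\lambda_0\in(\lambda,\lambda_O)$. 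Upgrading this pointwise, dominated convergence of kernels to operator-norm convergence is the main obstacle: I would carry it out on $L^2(\Ecal)$ first, where $\Pcal$ is a probability measure and the Fourier slices of the difference operator carry a $k$-uniform kernel bound, so that the problem reduces to $\OpNorm{H_\varepsilon}\to0$ for self-adjoint $H_\varepsilon$ on $L^2(\Ecal)$ with non-negative kernels tending to $0$ pointwise and dominated by the bounded kernel of $\widehat{\mathcal T}_{\lambda_0}(0)^2$, whose a.e.\ finiteness follows from \eqref{eqn:OneOneNormFinite}; one then transfers to $L^2(\X)$ through the Fourier-decomposition argument of Lemma~\ref{thm:Spectrum_Subset}, applied to the translation-invariant operators $\Optlam$ and $\mathcal T_{\lambda+\varepsilon}$, which identifies their Fourier slices with $\fOptlam(k)$ and $\widehat{\mathcal T}_{\lambda+\varepsilon}(k)$ and hence yields the $\fOptlam(k)$- and $\fOptklam(l)$-statements at the same time.
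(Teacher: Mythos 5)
Your derivation of the three norm bounds is essentially the paper's own argument: the BK-inequality estimate $0\le\tfrac{\dd}{\dd\lambda}\tlam(x,y)\le\int\tlam(x,u)\tlam(u,y)\,\nu(\dd u)$ (the paper proves it for $\tlam^n$ via \eqref{eq:prelim:tlam_finite_derivative_bound} and passes to the limit, you argue directly at the limit via pivotality of $u$ — both are fine), followed by kernel domination, Lemma~\ref{lem:NormBounds} together with the $\mathrm{Abs}$ trick, and cosine splitting with $m=2$ for the factor $4$ in \eqref{eqn:differentiabilityofOpT -3}. Where you diverge from the paper is in identifying the candidate operators as derivatives. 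The paper is content with the Leibniz integral rule: it dominates $y\mapsto\tlam(x,y)f(y)$ by the integrable $\Theta_{\varepsilon,x}$ and exchanges $\tfrac{\dd}{\dd\lambda}$ with $\int\cdot\,\nu(\dd y)$, i.e.\ it verifies $\tfrac{\dd}{\dd\lambda}[\Optlam f](x)=[(\tfrac{\dd}{\dd\lambda}\Optlam)f](x)$ pointwise in $x$ and then separately checks boundedness of the candidate. You instead aim for genuine operator-norm convergence of the difference quotients, which is the stronger statement the paper's definition of operator differentiability actually calls for.

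The gap is in your final reduction. You reduce to: $\OpNorm{H_\varepsilon}\to0$ for self-adjoint $H_\varepsilon$ on $L^2(\Ecal)$ whose non-negative kernels $h_\varepsilon(a,b)$ tend to $0$ pointwise and are dominated by the (a.e.\ finite) kernel of $\widehat{\mathcal T}_{\lambda_0}(0)^2$. That implication is false as stated. The only route you have to the operator norm is $\OpNorm{H_\varepsilon}\le\OneNorm{H_\varepsilon}=\esssup_b\int h_\varepsilon(a,b)\,\Pcal(\dd a)$; dominated convergence gives $\int h_\varepsilon(a,b)\,\Pcal(\dd a)\to0$ for ($\Pcal$-a.e.) \emph{fixed} $b$, but the essential supremum over $b$ of a family of functions converging pointwise to zero need not converge to zero — you would need uniformity in $b$ (e.g.\ a $b$-uniform modulus of continuity of $\mu\mapsto\tfrac{\dd}{\dd\mu}\tau_\mu$), which nothing in Lemma~\ref{lem:tlam_differentiability} supplies. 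A Hilbert--Schmidt bound $\OpNorm{H_\varepsilon}^2\le\int h_\varepsilon(a,b)^2\,\Pcal^{\otimes2}(\dd a,\dd b)$ would rescue the dominated-convergence argument, but only if the dominating kernel were in $L^2(\Pcal^{\otimes2})$, and a.e.\ finiteness (which is all \eqref{eqn:OneOneNormFinite} and $\OpNorm{\widehat{\mathcal T}_{\lambda_0}(0)}<\infty$ give you) is not enough for that. The simplest repair is to retreat to what the paper actually proves and uses: establish the Leibniz identity pointwise in $x$ (with the domination you already have), define the derivative operators by their kernels, and prove the three norm bounds exactly as you did; alternatively, if you insist on operator-norm differentiability, you must supply the missing uniformity rather than invoke dominated convergence of kernels.
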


\begin{proof}

We use Lemma~\ref{lem:tlam_differentiability} in conjunction with the Leibniz integral rule to differentiate the two-point operator.

Fix $f\in L^2\left(\X\right)$ and let $\varepsilon\in\left(0,\lambda_O\right)$. From the reasoning of \eqref{eqn:OneOneNormFinite} we know that the subcritical integral $\int \tau_{\lambda_O-\varepsilon}(\xbar;a,b)\dd \xbar \Pcal^{(\otimes 2)}\left(\dd a,\dd b\right)<\infty$, and therefore 
\begin{equation}
    \int\tau_{\lambda_O-\varepsilon}\left(x,u\right)^2\nu\left(\dd u\right) \leq \int\tau_{\lambda_O-\varepsilon}\left(x,u\right)\nu\left(\dd u\right) < \infty,
\end{equation}
for $\nu$-almost every $x\in\X$. Here we have also used that $\tlam(x,u)$ is $\left[0,1\right]$-valued. This implies that $\tau_{\lambda_O-\varepsilon}\left(x,\cdot\right)\in L^2\left(\X\right)$ for $\nu$-almost every $x\in\X$. In conjunction with $f\in L^2\left(\X\right)$, H{\"o}lder's inequality implies that the function $\Theta_{\varepsilon,x}\colon y\mapsto \tau_{\lambda_O-\varepsilon}\left(x,y\right)\abs*{f\left(y\right)}$ is in $L^1\left(\X\right)$ for $\nu$-almost every $x\in\X$. Furthermore, $\tlam\left(x,y\right)$ is non-decreasing in $\lambda$ and so for all $\lambda<\lambda_O$ there exists $\varepsilon>0$ such that $\Theta_{\varepsilon,x}$ dominates $y\mapsto \tlam\left(x,y\right)f\left(y\right)$. This domination by an $L^1\left(\X\right)$ function allows us to use the Leibniz integral rule to exchange the integral and derivative to get (for $\nu$-almost every $x\in\X$)
\begin{equation}
    \frac{\dd}{\dd \lambda}\left(\left[\Optlam f\right]\left(x\right)\right) = \frac{\dd}{\dd \lambda}\int \tlam\left(x,u\right)f\left(u\right)\nu\left(\dd u\right) = \int \left(\frac{\dd}{\dd \lambda} \tlam\left(x,u\right)\right)f\left(u\right)\nu\left(\dd u\right) = \left[\left(\frac{\dd}{\dd \lambda}\Optlam\right)f\right]\left(x\right),
\end{equation}
where Lemma~\ref{lem:tlam_differentiability} gives the differentiability of the function $\tlam\left(x,y\right)$. 

We now prove that the operator $\frac{\dd}{\dd \lambda}\Optlam$ is bounded. From Lemma~\ref{lem:tlam_differentiability} and the BK inequality, we get the bound
\begin{multline}
    \label{eq:prelim:tlam_finite_derivative_bound}
    0\leq \frac{\dd}{\dd\lambda} \tlam^n(x,y) \leq \int \pla \left( \{\conn{y}{u}{\xi^{y,u}_{\Lambda_n\left(y\right)}} \}\circ\{ \conn{u}{x}{\xi^{u,x}_{\Lambda_n\left(y\right)}} \} \right) \nu\left(\dd u\right) \\\leq \int \tlam^n(u,y)\pla\left(\conn{u}{x}{\xi^{u,x}_{\Lambda_n\left(y\right)}}\right) \nu\left(\dd u\right) \leq \int \tlam\left(u,y\right)\tlam\left(x,u\right)\nu\left(\dd u\right),
\end{multline}
where the last inequality holds because we are removing the $\Lambda_n\left(y\right)$ restriction. Note that the right hand side of this equation is the kernel function of the operator $\Optlam^2$. Choose $f\in L^2\left(\X\right)$, and use $\rm Abs$ to denote the map $f\mapsto \abs*{f}$. Now the above inequality implies that
\begin{align}
    \norm*{\left[\frac{\dd}{\dd \lambda}\Optlam\right] f}^2_2 & \leq \int \left(\int\abs*{\frac{\dd}{\dd \lambda}\tlam\left(x,y\right)}\abs*{f\left(y\right)}\nu\left(\dd y\right)\right)^2 \nu\left(\dd x\right) \nonumber\\
    & \leq \int \left(\int\int \tlam\left(x,u\right)\tlam\left(u,y\right)\abs*{f\left(y\right)}\nu\left(\dd u\right)\nu\left(\dd y\right)\right)^2 \nu\left(\dd x\right) \nonumber\\
    &= \int \left(\left[\Optlam^2 {\rm Abs}{f}\right]\left(x\right)\right)^2 \nu\left(\dd x\right) = \norm*{\Optlam^2 {\rm Abs}f}^2_2.
\end{align}
Since $\norm*{{\rm Abs}f}_2 = \norm*{f}_2$, this then gives us
\begin{equation}
\label{eqn:Remove_Abs}
    \OpNorm{\frac{\dd}{\dd \lambda}\Optlam} \leq \sup_f\frac{\norm*{\Optlam^2 {\rm Abs}f}_2}{\norm*{f}_2} = \sup_f\frac{\norm*{\Optlam^2 {\rm Abs}f}_2}{\norm*{{\rm Abs}f}_2}\leq \OpNorm{\Optlam^2} = \OpNorm{\Optlam}^2 < \infty.
\end{equation}

Now we address the Fourier transform. Pick $\varepsilon\in(0,\lambda_O-\lambda)$. Recall $\abs*{\e^{ik\cdot\xbar}\tlam\left(\xbar;a,b\right)}$ is bounded by $\tau_{\lambda_O-\varepsilon}\left(\xbar;a,b\right)$ for all $\xbar\in\R^d$ and $a,b\in\Ecal$. From the reasoning of \eqref{eqn:OneOneNormFinite}, we know that $\tau_{\lambda_O-\varepsilon}\left(\cdot;a,b\right)$ is Lebesgue integrable for $\Pcal$-a.e. $a,b\in\Ecal$. We are then justified in using the Leibniz integral rule to say that for every $b\in\Ecal$ and $\Pcal$-a.e. $a\in\Ecal$, $\tfrac{\dd}{\dd\lambda} \int\e^{ik\cdot\xbar} \tlam(\xbar;a,b) \dd \xbar  = \int \e^{ik\cdot\xbar}\tfrac{\dd}{\dd\lambda} \tlam(\xbar;a,b) \dd \xbar$. Applying Lemma~\ref{lem:tlam_differentiability} as well as~\eqref{eq:prelim:tlam_finite_derivative_bound}, we derive that for such $a,b$ we have
\begin{equation}
    \abs*{\frac{\dd}{\dd \lambda} \ftlam(k;a,b)} \leq \int \lim_{n\to\infty} \frac{\dd}{\dd\lambda} \tau_\lambda^n(\xbar;a,b) \dd \xbar 
				 \leq \int \tlam(\xbar-\ybar;a,c) \tlam(\ybar;c,b) \,\dd \xbar  \,\dd \ybar \,\Pcal\left(\dd c \right).
\end{equation}
Note that the right hand side of this equation is the kernel function of the operator $\fOptlam\left(0\right)^2$. By repeating the argument for $\frac{\dd}{\dd \lambda}\Optlam$ - with $\X$ replaced by $\Ecal$ - we similarly arrive at
\begin{equation}
    \OpNorm{\frac{\dd}{\dd \lambda}\fOptlam\left(k\right)}\leq \OpNorm{\fOptlam\left(0\right)}^2.
\end{equation}

For the displaced Fourier transform, note that $0\leq 1-\cos\left(k\cdot\xbar\right)\leq 2$, and so the same argument as for the Fourier transform allows us to use the Leibniz integral rule to exchange derivative and integral. We can also improve on the immediate bound of $2\OpNorm{\fOptlam\left(0\right)}^2$ for the operator norm of the derivative. Let $f\in L^2\left(\Ecal\right)$,
\begin{align}
    \norm*{\left[\frac{\dd}{\dd \lambda}\fOptklam\left(l\right)\right]f}^2_2 &\leq \int\left(\int\abs*{\e^{il\cdot \xbar}}\left(1 - \cos\left(k\cdot \xbar\right)\right)\tlam(\xbar-\ybar;a,c) \tlam(\ybar;c,b) \abs*{f\left(b\right)}\dd \xbar  \,\dd \ybar \,\Pcal\left(\dd c \right)\Pcal\left(\dd b\right)\right)^2\Pcal\left(\dd a\right) \nonumber\\
    & \leq \int\left(\int2\Big[\left(1 - \cos\left(k\cdot \left(\xbar-\ybar\right)\right)\right)\tlam(\xbar-\ybar;a,c) \tlam(\ybar;c,b) \right. \nonumber\\ 
    &\qquad + \left(1 - \cos\left(k\cdot\ybar\right)\right)\tlam(\xbar-\ybar;a,c) \tlam(\ybar;c,b)  \Big]\abs*{f\left(b\right)}\,\dd \xbar  \,\dd \ybar \,\Pcal\left(\dd c \right)\Pcal\left(\dd b\right)\Big)^2\Pcal\left(\dd a\right) \nonumber\\
    & = 4 \norm*{\left[\left(\fOptlam\left(0\right) - \fOptlam\left(k\right)\right)\fOptlam\left(0\right) + \fOptlam\left(0\right)\left(\fOptlam\left(0\right) - \fOptlam\left(k\right)\right)\right]{\rm Abs} f}^2_2,
\end{align}
where we have used the cosine-splitting lemma (Lemma~\ref{lem:cosinesplitlemma}) to divide the displacement factor $\left(1-\cos\left(k\cdot \xbar\right)\right)$ into one spanning $\xbar-\ybar$ and one spanning $\ybar$. Using the above approach of \eqref{eqn:Remove_Abs} and the triangle inequality then produces the required bound.
\end{proof}

We use the following lemma in the proof of Proposition~\ref{thm:bootstrapargument} to bound the displacement on the two-point operator using the displacement of the connection operator when working in the sub-critical regime away from the critical threshold.

\begin{lemma}
\label{lem:tau-connf subcritical}
For $\lambda<\lambda_O$,
\begin{equation}
    \OpNorm{\fOptlam\left(l\right) - \fOptlam\left(l+k\right)} \leq \OpNorm{\fOptlam\left(0\right) - \fOptlam\left(k\right)} \leq \e^{4\lambda \OpNorm{\fOptlam\left(0\right)}}\OpNorm{\fOpconnf\left(0\right) - \fOpconnf\left(k\right)}.
\end{equation}
\end{lemma}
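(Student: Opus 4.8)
The plan is to prove the two inequalities separately. The left-hand inequality $\OpNorm{\fOptlam(l) - \fOptlam(l+k)} \leq \OpNorm{\fOptlam(0) - \fOptlam(k)}$ should follow from a pointwise domination of kernels together with Lemma~\ref{lem:NormBounds}. Indeed, the kernel of $\fOptlam(l) - \fOptlam(l+k)$ is $\int (\cos(l\cdot\xbar) - \cos((l+k)\cdot\xbar))\tlam(\xbar;a,b)\dd\xbar$; writing $\cos(l\cdot\xbar) - \cos((l+k)\cdot\xbar)$ using a product-to-sum identity and noting $\tlam \geq 0$, one bounds the absolute value of this kernel by $\int (1-\cos(k\cdot\xbar))\tlam(\xbar;a,b)\dd\xbar$, which is (up to the symmetric, real, nonnegative representative) the kernel of $\fOptlam(0) - \fOptlam(k)$. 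Since $\fOptlam(0) - \fOptlam(k)$ is self-adjoint with real nonnegative kernel dominating $|{\cdot}|$ of the kernel of the other operator, Lemma~\ref{lem:NormBounds} gives the operator-norm inequality. (One must check that $\fOptlam(l)-\fOptlam(l+k)$ is self-adjoint, which holds because $\tlam(\xbar;a,b)$ is symmetric and even in $\xbar$, making its kernel real and symmetric in $a,b$.)

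For the right-hand inequality, the strategy is a Grönwall-type argument in $\lambda$ using the differentiability results of Corollary~\ref{thm:differentiabilityofOpT} and the derivative formula of Lemma~\ref{lem:tlam_differentiability}. Set $\phi(\lambda) := \OpNorm{\fOptlam(0) - \fOptlam(k)}$. First I would establish a lace-expansion-free differential inequality: from the expression for $\tfrac{\dd}{\dd\lambda}\tlam$ in~\eqref{eq:prelim:tlam_derivative_limit_exchange}, together with the BK bound~\eqref{eq:prelim:tlam_finite_derivative_bound} showing $\tfrac{\dd}{\dd\lambda}\tlam(x,y) \leq \int \tlam(x,u)\tlam(u,y)\nu(\dd u)$, one obtains that the kernel of $\tfrac{\dd}{\dd\lambda}[\fOptlam(0) - \fOptlam(k)]$ is dominated (after a cosine-split via Lemma~\ref{lem:cosinesplitlemma} applied to $1-\cos(k\cdot\xbar) = 1-\cos(k\cdot(\xbar-\ybar) + k\cdot\ybar)$) in absolute value by the kernel of $2[(\fOptlam(0)-\fOptlam(k))\fOptlam(0) + \fOptlam(0)(\fOptlam(0)-\fOptlam(k))]$ — this is exactly the computation already carried out in the proof of Corollary~\ref{thm:differentiabilityofOpT} for the displaced Fourier transform, and I would reuse it. Combining with submultiplicativity of $\OpNorm{\cdot}$ and Lemma~\ref{lem:NormBounds} yields
\begin{equation}
    \OpNorm{\frac{\dd}{\dd\lambda}\bigl(\fOptlam(0) - \fOptlam(k)\bigr)} \leq 4\,\OpNorm{\fOptlam(0)}\,\phi(\lambda).
\end{equation}

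Then I would integrate this differential inequality. At $\lambda = 0$ we have $\fOptlam[0](0) = \fOpconnf(0)$ and $\fOptlam[0](k) = \fOpconnf(k)$ since $\tau_0 = \connf$, so $\phi(0) = \OpNorm{\fOpconnf(0) - \fOpconnf(k)}$. Grönwall's inequality, using that $\OpNorm{\fOptlam(0)}$ is nondecreasing in $\lambda$ (it increases with $\lambda$ since $\tlam$ is monotone), gives $\phi(\lambda) \leq \e^{4\lambda\OpNorm{\fOptlam(0)}}\phi(0)$, which is the claimed bound. The main obstacle I anticipate is the bookkeeping needed to make the differential inequality rigorous: one needs the map $\lambda \mapsto \fOptlam(0) - \fOptlam(k)$ to be norm-differentiable (provided by Corollary~\ref{thm:differentiabilityofOpT}), and one needs to justify that $\phi$ is locally Lipschitz / absolutely continuous so that Grönwall applies — this follows since $|\phi(\lambda_1) - \phi(\lambda_2)| \leq \OpNorm{(\fOptlam[\lambda_1](0)-\fOptlam[\lambda_2](0)) - (\fOptlam[\lambda_1](k)-\fOptlam[\lambda_2](k))}$ and the right side is controlled by the (bounded, on compact subintervals of $[0,\lambda_O)$) derivative bounds. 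A secondary care point is that all these manipulations take place for $\lambda$ in compact subintervals $[0,\lambda_O - \varepsilon]$ where $\OpNorm{\fOptlam(0)} < \infty$; the stated inequality for all $\lambda < \lambda_O$ then follows by letting $\varepsilon \downarrow 0$.
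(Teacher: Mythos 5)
Your proposal is correct and follows essentially the same route as the paper: the first inequality is obtained by the kernel-domination/$\mathrm{Abs}$ argument (reducing $\fOptlam(l)-\fOptlam(l+k)$ to $\fOptlam(0)-\fOptlam(k)$ after symmetrizing away the odd part), and the second by combining the reverse triangle inequality with the derivative bound \eqref{eqn:differentiabilityofOpT -3} from Corollary~\ref{thm:differentiabilityofOpT} at $l=0$, the monotonicity of $\OpNorm{\fOptlam(0)}$ in $\lambda$, and a Gr\"onwall integration from $\widehat{\mathcal{T}}_0(k)=\fOpconnf(k)$. No gaps.
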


\begin{proof}
For the inequality removing $l$, let $f\in L^2\left(\Ecal\right)$. Then
\begin{align}
    \norm*{\left(\fOptlam\left(l\right) - \fOptlam\left(l+k\right)\right)f}^2_2  &\leq \int\left(\int\abs*{\e^{il\cdot \xbar}}\left(1 - \cos\left(k\cdot \xbar\right)\right)\tlam(\xbar;a,b) \abs*{f\left(b\right)}\,\dd \xbar\, \Pcal\left(\dd b\right)\right)^2\Pcal\left(\dd a\right) \nonumber \\
    & = \norm*{\left(\fOptlam\left(0\right) - \fOptlam\left(k\right)\right){\rm Abs} f}^2_2.
\end{align}
Since $\norm*{{\rm Abs}f}_2=\norm*{f}_2$, we can use the approach of \eqref{eqn:Remove_Abs} to get the first inequality.

For the second inequality, we first note that $\widehat{\mathcal T}_0\left(k\right) = \fOpconnf\left(k\right)$. We then use the differential inequality \eqref{eqn:differentiabilityofOpT -3} to extend to $\lambda<\lambda_O$. Using the reverse triangle inequality, we have
\begin{align}
    \limsup_{\varepsilon\downarrow 0}\frac{1}{\varepsilon}\abs*{\OpNorm{\widehat{\mathcal T}_{s+\varepsilon}\left(0\right) - \widehat{\mathcal T}_{s+\varepsilon}\left(k\right)} - \OpNorm{\widehat{\mathcal T}_{s}\left(0\right) - \widehat{\mathcal T}_{s}\left(k\right)}} &\leq \OpNorm{\left.\frac{\dd}{\dd \lambda}\left(\widehat{\mathcal T}_{\lambda}\left(0\right) - \widehat{\mathcal T}_{\lambda}\left(k\right)\right)\right|_{\lambda=s}} \nonumber \\
    &\leq 4 \OpNorm{\widehat{\mathcal T}_{s}\left(0\right)}\OpNorm{\widehat{\mathcal T}_{s}\left(0\right) - \widehat{\mathcal T}_{s}\left(k\right)}.
\end{align}
Since $\OpNorm{\widehat{\mathcal T}_{s}\left(0\right)}$ is monotone increasing for $s \in\left[0,\lambda\right]$, the right hand side of the inequality is bounded by $4 \OpNorm{\widehat{\mathcal T}_{\lambda}\left(0\right)}\OpNorm{\widehat{\mathcal T}_{s}\left(0\right) - \widehat{\mathcal T}_{s}\left(k\right)}$, giving the exponential factor in the result.
\end{proof}

\section{Proofs of Diagrammatic Bounds}
\label{appendix:DiagrammaticBoundsProofs}

\remainderBound*

\begin{proof}
As $\pla(\xconn{u_n}{x}{\xi^{u_n,x}_{n+1}}{A} ) \leq \tlam(x,u_n)$ for an arbitrary locally finite set $A$, the definition \eqref{eq:LE:Rn_def} gives
\begin{equation}
\label{eqn:remainderfunctionbound}
    \abs*{r_{\lambda,n}(x)} \leq \int \tlam(x,u)\pi_\lambda^{(n)}(u,y)  \nu\left(\dd u\right).
\end{equation}
Let $f\in L^2\left(\X\right)$. Then this gives
\begin{align}
    \norm*{R_{\lambda,n}f}^2_2 &\leq   \int\left(\int\abs*{r_{\lambda,n}(x,y)}\abs*{f\left(y\right)}\nu\left(\dd y\right)\right)^2\nu\left(\dd x\right) \nonumber \\
    &\leq   \lambda^2\int\left(\int\left(\int \tlam(x,u)\pi_\lambda^{(n)}(u,y)  \nu\left(\dd u\right)\right)\abs*{f\left(y\right)}\nu\left(\dd y\right)\right)^2\nu\left(\dd x\right) \nonumber\\
    &=   \lambda^2\int\left(\int \tlam(x,u)\left(\int\pi_\lambda^{(n)}(u,y)\abs*{f\left(y\right)}\nu\left(\dd y\right)\right)  \nu\left(\dd u\right)\right)^2\nu\left(\dd x\right) \nonumber\\
    & = \lambda^2 \norm*{\Optlam\OpLacelam^{(n)}{\rm Abs}f}^2_2.
			\label{eq:LE:Rn_Pin_bound-Operator} 
\end{align}
Note that we were able to use Tonelli's theorem to swap the $u$ and $y$ integrals because $\tlam$ and $\pi^{(n)}_\lambda$ are both non-negative and measurable. Finally, the approach of \eqref{eqn:Remove_Abs} implies the result.

For each $n\geq 0$, either $r_{\lambda,n}(x)\geq 0$ for all $x\in\X$ or $r_{\lambda,n}(x)\leq 0$ for all $x\in\X$. Therefore $\abs*{\widehat{r}_{\lambda,n}(k;a,b)}\leq \abs*{\widehat{r}_{\lambda,n}(0;a,b)}$ for all $a,b\in\Ecal$ and $\OpNorm{\widehat{R}_{\lambda,n}(k)}\leq \OpNorm{\widehat{R}_{\lambda,n}(0)}$ (by Lemma~\ref{lem:NormBounds}) for all $k\in\Rd$. From \eqref{eqn:remainderfunctionbound} we have
\begin{equation}
    \abs*{\widehat{r}_{\lambda,n}(0;a,b)} \leq \int \ftlam(0;a,c)\widehat{\pi}_\lambda^{(n)}(0;c,b)  \Pcal\left(\dd c\right).
\end{equation}
Therefore the same argument as above gives the second bound in the result.
\end{proof}

\DBPiObounds*

\begin{proof}
Our first comment relates to these bounds and every bound we perform hereafter in Section~\ref{sec:diagrammaticbounds}. From the positivity of $\pi^{(0)}_{\lambda}(x,y)$ and Lemma~\ref{lem:NormBounds} we have $\OpNorm{\fOpLacelam^{(0)}(k)}\leq \OpNorm{\fOpLacelam^{(0)}(0)}$, and by Lemma~\ref{lem:BoundsonOperatorNorm} we have $\OpNorm{\OpLacelam^{(0)}} \leq \OneNorm{\OpLacelam^{(0)}}$ and $\OpNorm{\fOpLacelam^{(0)}(0)} \leq \OneNorm{\fOpLacelam^{(0)}(0)}$. However from the definitions of the norm we also have $\OneNorm{\fOpLacelam^{(0)}(0)} = \OneNorm{\OpLacelam^{(0)}}$, and so we will be interested in bounding this last norm.

Now note that for the event $\{ \dconn{y}{x}{\xi^{y,x}} \}$ to hold, either there is a direct edge between $y$ and $x$, or there exist vertices $w,z$ in $\eta$ that are direct neighbours of $y$ and have respective disjoint paths to $x$ that both do not contain $y$. Hence, by the Mecke equation~\eqref{eq:prelim:mecke_n},
\begin{align}
    \pla( \dconn{y}{x}{\xi^{\orig,x}}) &\leq \connf(x,y) + \tfrac {1}{2} \E_\lambda\Big[ \sum_{(w,z) \in \eta^{(2)}} \mathds 1_{(\{y\sim w \text{ in } \xi^{y}\}
			\cap \{\conn{w}{x}{\xi^x}\}) \circ (\{y \sim z \text{ in } \xi^{y}\} \cap \{ \conn{z}{x}{\xi^x} \})} \Big] \nonumber\\
		& = \connf(x,y) + \tfrac {1}{2} \lambda^2 \int \pla\big( (\{y\sim y \text{ in } \xi^{y, w}\} \cap \{\conn{w}{x}{\xi^{x,w}}\})\nonumber \\
		& \hspace{3.5cm} \circ (\{y \sim z \text{ in } \xi^{y, z}\} \cap \{ \conn{z}{x}{\xi^{x,z}} \}) \big) \nu^{\otimes2}\left(\dd w,\dd z\right). 
\end{align}
After applying the BK inequality to the above probability,
\begin{multline}
    \pla\left( \dconn{y}{x}{\xi^{y,x}}\right) \leq \connf(x,y) + \tfrac {1}{2} \lambda^2 \left( \int \pla\left( \left\{y\sim w \text{ in } \xi^{y,w}\right\} \cap \left\{\conn{w}{x}{\xi^{w,x}}\right\}\right) \nu\left(\dd w\right) \right)^2 \notag \\
	= \connf(x,y) + \tfrac {1}{2} \lambda^2 \left(\int\tlam\left(x,w\right)\connf\left(w,y\right)\nu\left(\dd w\right)\right)^2. \label{eq:DB:Pi0_dconn_bound}
\end{multline}
Thus, recalling that $\pi_\lambda^{(0)}(x,y) = \pla(\dconn{y}{x}{\xi^{y,x}})-\connf(x,y) \geq 0$, and using the symmetry of $\connf$ and $\tlam$,
\begin{align}
    \OneNorm{\OpLacelam^{(0)}} &= \esssup_{y\in\X}\int\pi^{(0)}_\lambda\left(x,y\right)\nu\left(\dd x\right)\nonumber\\
    & \leq \tfrac{1}{2}\lambda^2\esssup_{y\in\X}\int \left(\int\tlam\left(x,w\right)\connf\left(w,y\right)\nu\left(\dd w\right)\right)^2\nu\left(\dd x\right)\nonumber\\
    & =  \tfrac{1}{2}\lambda^2\esssup_{y\in\X}\int \connf\left(y,u\right)\tlam\left(u,x\right)\tlam\left(x,w\right)\connf\left(w,y\right)\nu^{\otimes3}\left(\dd x,\dd w,\dd u\right)\nonumber\\
    &\leq \tfrac{1}{2}\lambda^2\InfNorm{\Opconnf\Optlam^2\Opconnf}.
\end{align}
We then use a supremum bound on this integral to split one of the $\Opconnf$ off from the others, and bound the other from above with $\Optlam$ to get the first two bounds of the result.

For the last bound of Proposition~\ref{thm:DB:Pi0_bounds}, we apply~\eqref{eq:DB:Pi0_dconn_bound} and obtain
\begin{align}
    &\OpNorm{\fOpLacelam^{(0)}\left(0\right) - \fOpLacelam^{(0)}\left(k\right)}  \leq \OneNorm{\fOpLacelam^{(0)}\left(0\right) - \fOpLacelam^{(0)}\left(k\right)}\nonumber \\
    &\qquad= \esssup_{b\in\Ecal}\int \left(1 - \cos\left(k\cdot \xbar\right)\right)\pi^{(0)}_\lambda\left(\xbar;a,b\right) \dd \xbar \Pcal\left(\dd a\right)\nonumber\\
    & \qquad\leq \tfrac{1}{2}\lambda^2\esssup_{b\in\Ecal}\int \left(1 - \cos\left(k\cdot \xbar\right)\right)\left(\int\tlam\left(\xbar-\ubar;a,c\right)\connf\left(\ubar;c,b\right)\dd \ubar \Pcal\left(\dd c\right)\right)^2 \dd \xbar \Pcal\left(\dd a\right)\nonumber\\
    & \qquad= \tfrac{1}{2}\lambda^2\esssup_{b\in\Ecal}\int \left(1 - \cos\left(k\cdot \xbar\right)\right)\connf\left(-\ubar^\prime;b,c^\prime\right)\tlam\left(\ubar^\prime-\xbar;c^\prime,a\right)\tlam\left(\xbar-\ubar;a,c\right)\connf\left(\ubar;c,b\right)\nonumber \\
    &\qquad\hspace{10cm}\times\dd \ubar \dd \ubar^\prime \dd \xbar \Pcal^{\otimes3}\left(\dd c,\dd c^\prime,\dd a\right)\nonumber\\
    & \qquad\leq \lambda^2\esssup_{b,b^\prime\in\Ecal}\int \Big[\left(1 - \cos\left(k\cdot \left(\xbar-\ubar\right)\right)\right)\connf\left(-\ubar^\prime;b^\prime,c^\prime\right)\tlam\left(\ubar^\prime-\xbar;c^\prime,a\right)\tlam\left(\xbar-\ubar;a,c\right)\connf\left(\ubar;c,b\right)\nonumber \\
    &\qquad\hspace{3cm}+ \left(1 - \cos\left(k\cdot \ubar\right)\right)\connf\left(-\ubar^\prime;b,c^\prime\right)\tlam\left(\ubar^\prime-\xbar;c^\prime,a\right)\tlam\left(\xbar-\ubar;a,c\right)\connf\left(\ubar;c,b\right)\Big] \nonumber \\
    &\qquad\hspace{10cm}\times\dd \ubar \dd \ubar^\prime \dd \xbar \Pcal^{\otimes3}\left(\dd c,\dd c^\prime,\dd a\right)\nonumber\\
    & \qquad= \lambda^2\InfNorm{\Opconnf\Optlam\Optklam\Opconnf + \Opconnf\Optlam\Optlam\Opconnf_k}.
\end{align}
Here we have again used the symmetry of $\connf$ and $\tlam$, and also used  Lemma~\ref{lem:cosinesplitlemma} to split the cosine factor over $\xbar-\ubar$ and $\ubar$.

We can consider each term individually by using the triangle inequality. By using a supremum bound, the symmetry of $\connf$, and by bounding $\connf$ with $\tlam$, we get the bound
\begin{equation}
    \InfNorm{\Opconnf\Optlam\Optlam\Opconnf_k} \leq \InfNorm{\Opconnf\Optlam\Optlam}\OneNorm{\Opconnf_k} \leq \InfNorm{\Optlam^3}\OneNorm{\Opconnf_k}.
\end{equation}
\end{proof}

\singleandpairbounds*

\begin{proof}
We begin with the single operators. For $\Psi^{(1)}$ we find
\begin{align}
    \OneNorm{\Psi^{(1)}} &= \lambda^4\esssup_{r,s\in\X}\int\tlam(w,u)\tlamo(t,s) \tlam(t,w)\tlam(u,z)\tlam(z,t)\tlam(z,r)\nu^{\otimes4}\left(\dd z, \dd t, \dd w, \dd u\right), \nonumber\\
    & \leq \lambda^4\left(\esssup_{r,s\in\X}\int\tlam(r,z) \tlam(z,t)\tlamo(t,s)\nu^{\otimes2}\left(\dd z, \dd t \right)\right) \nonumber \\
    &\hspace{5cm} \times\left(\esssup_{t',z'\in\X}\int\tlam(t',w)\tlam(w,u)\tlam(u,z')\nu^{\otimes2}\left(\dd w, \dd u\right)\right),\nonumber\\
    & = \trilamo\trilam,
\end{align}
where we have used a supremum bound on the $t$ and $z$ integrals to split them into $L^1$ and $L^\infty$ bounds. It is also clear that $\OneNorm{\Psi^{(3)}}= \trilam$ and $\OneNorm{\Psi^{(4)}} \leq \trilamo$. The $\OneNorm{\Psi^{(2)}}$ bound is more involved and we return to it in a moment.

We are able to represent these $\OneNorm{\cdot}$ norms and their bounds pictorially. The vertices of the diagrams represent the variables $w,u,z,t,a,b$, etc.\ appearing in the $\OneNorm{\cdot}$ expression. The variables over which a $\nu$-integral is taken are represented by the \IntegralDot~vertices, and the variables over which a supremum is taken are represented by the \SupremumDot~vertices. The presence of a $\tlam$ connecting two variables is then represented by a standard edge \tlamline~between their vertices, and the presence of a $\tlamo$ connecting two variables is correspondingly represented by an edge \tlamoline~between their vertices. When using a supremum bound to split the diagrams, an integrated vertex gets split into two new vertices, precisely one integrated vertex and one supremum vertex. Each edge connected to such a split vertex can independently choose which of the new vertices to associate to. The calculations producing the bounds on $\OneNorm{\Psi^{(j)}}$ for $j=1,3,4$ can then be represented by:
\begin{align}
    \OneNorm{\Psi^{(1)}} \quad&=\quad \psioneintegral \quad\leq\quad \DiagramTriangleO \quad\times\quad \DiagramTriangle \\
    \OneNorm{\Psi^{(3)}} + \OneNorm{\Psi^{(4)}} \quad &= \quad \DiagramTriangleO \quad.
\end{align}

Our strategy for $\OneNorm{\Psi^{(2)}}$ is to use the spatial translation invariance to shift the origin to the bottom right vertex, and then use a supremum bound to split the diagram in two. Unfortunately, this spatial shift decouples the spatial and mark components of the vertices so - for example - at some vertices we end up taking suprema over marks whilst integrating over space. Let us introduce some more notation. The vertex \MarkSupremumDot~indicates that the spatial component is integrated over whilst the mark component is fixed and its supremum is taken after all the integrals. We also introduce the edge-vertex combination \supline~which indicates that there is no term connecting the associated vertices, but there is a supremum taken over the spatial displacement. Note that in principle we could use a vertex that had a spatial supremum and a mark integral, but we will always bound that probability integral by the supremum. The bound for $\OneNorm{\Psi^{(2)}}$ can therefore be expressed in the following:
\begin{equation}
\label{eqn:translate}
    \OneNorm{\Psi^{(2)}} = \quad\psitwointegralPtOne \quad \leq \quad\psitwointegralPtOneTranslate \quad \leq \quad \psitwointegralPtOneTranslatePtOne \quad \times \quad \DiagramTriangleClosed.
\end{equation}
To clarify the calculation for $\OneNorm{\Psi^{(2)}}$, we write out the integral here:
\begin{align}
    &\OneNorm{\Psi^{(2)}} = \esssup_{\sbar\in\Rd,a_1,a_2\in\Ecal}\int \tlamo(\wbar-\sbar;a_3,a_1)\tlamo(\tbar - \wbar; a_4,a_3)\tlam(\ubar-\tbar;a_6,a_4)\tlam(\zbar-\ubar;a_5,a_6) \nonumber\\
    &\hspace{5cm}\times\tlam(\tbar - \zbar; a_4,a_5)\tlam(\zbar; a_5,a_2) \dd \tbar \dd \ubar \dd \wbar \dd \zbar \Pcal^{\otimes 4}\left(\dd a_3, \dd a_4, \dd a_5, \dd a_6\right)\nonumber\\
    &\hspace{0.5cm}\leq \esssup_{\sbar\in\Rd,a_1,a_2,a_6\in\Ecal}\int \tlamo(\wbar'-\sbar+\ubar;a_3,a_1)\tlamo(\tbar' - \wbar'; a_4,a_3)\tlam(-\tbar';a_6,a_4)\tlam(\zbar';a_5,a_6) \nonumber\\
    &\hspace{2cm}\times\tlam(\tbar' - \zbar'; a_4,a_5)\tlam(\zbar' + \ubar; a_5,a_2) \dd \tbar' \dd \ubar \dd \wbar' \dd \zbar' \Pcal^{\otimes 3}\left(\dd a_3, \dd a_4, \dd a_5\right)\nonumber\\
    &\hspace{0.5cm}\leq \left(\esssup_{\sbar,\tbar',\zbar'\in\Rd,a_1,a_2,a_4,a_5\in\Ecal}\int \tlamo(\wbar'-\sbar+\ubar;a_3,a_1)\tlamo(\tbar' - \wbar'; a_4,a_3)\tlam(\zbar' + \ubar; a_5,a_2) \dd \ubar \dd \wbar' \Pcal\left(\dd a_3\right)\right)\nonumber\\
    &\hspace{2cm}\times\left(\esssup_{a_6\in\Ecal}\int \tlam(-\tbar';a_6,a_4)\tlam(\zbar';a_5,a_6)\tlam(\tbar' - \zbar'; a_4,a_5) \dd \tbar' \dd \zbar' \Pcal^{\otimes 2}\left(\dd a_4, \dd a_5\right)\right).
\end{align}
Then the two components of the first factor that are joined by the $\supline$~edge can be spatially translated together to form three consecutive edges. Note however, that the marks on either side of this join are not necessarily equal. We can therefore bound this first term by $\trilamooBar$. In summary, these manipulations then imply that $\OneNorm{\Psi^{(2)}}\leq \trilamooBar\trilam$. 

For $\Psi^{(1)}_0$ and $\Psi^{(2)}_0$ the calculations are identical to each other, and we can once again represent the calculations pictorially:
\begin{equation}
    \OneNorm{\Psi^{(1)}_0} = \OneNorm{\Psi^{(2)}_0} =\quad \psionenoughtintegral \quad\leq\quad \DiagramTriangle \quad = \trilam,
\end{equation}
where splitting the single supremum into two produces an upper bound. For the third case we have
\begin{equation}
    \OneNorm{\Psi^{(3)}} = \quad \psiZerothreeintegral \quad = \lambda \OneNorm{\Opconnf}.
\end{equation}

The diagrams representing the calculations for $\Psi_n$ are as follows:
\begin{align}
    \OneNorm{\Psi_n^{(1)}} & = \quad\psioneNintegral \quad = \quad \psioneNintegralFull \quad + \quad \psitwointegralPtThree \nonumber\\ &\hspace{4cm}\leq \quad \DiagramTriangle \quad\times\quad \psitwoNintegral \qquad + \quad \DiagramTriangle \quad \leq \trilam\trilamoo, \nonumber\\
    \OneNorm{\Psi^{(2)}_n} & = \quad\psitwoNintegral \quad \leq \trilamo.
\end{align}
In the first case we expanded a $\tlamo$ edge into a $\tlam$ edge and a contraction. Then in the contracted case we bound $\tlam\leq 1$ on the diagonal edge.

We have now proven the bounds for the single operator norms. For most of the operator pairs, we can then use the sub-multiplicity of $\OneNorm{\cdot}$ to immediately get sufficient bounds. The only pairs for which this strategy fails are: $\OneNorm{\Psi^{(4)}\Psi^{(4)}_0}$, $\OneNorm{\Psi^{(2)}_n\Psi^{(3)}_0}$, $\OneNorm{\Psi^{(4)}\Psi^{(4)}}$, and $\OneNorm{\Psi^{(2)}_n\Psi^{(4)}}$. We now deal with these cases by hand:
\begin{align}
    \OneNorm{\Psi^{(4)}\Psi^{(3)}_0} = \OneNorm{\Psi^{(2)}_n\Psi^{(3)}_0} &= \quad \PairThreeFive \quad \leq \trilam, \\
    \OneNorm{\Psi^{(4)}\Psi^{(4)}} = \OneNorm{\Psi^{(2)}_n\Psi^{(4)}} &= \quad\PairFiveFive\quad \leq \trilam.
\end{align}
For the first bound we used $\connf \leq \tlam$, and then split the single supremum into two. For the second bound we used $\tlam\leq 1$ on the diagonal edge.
\end{proof}

\LaceCoefficientBound*

\begin{proof}
Recall from Lemma~\ref{thm:DB:Pi_bound_Psi} that $\lambda\OneNorm{\OpLacelam^{(n)}} \leq \OneNorm{\Psi_n\Psi^{n-1}\Psi_0}$. We therefore aim to bound $\OneNorm{\Psi^{(j_n)}_n\Psi^{(j_{n-1})}\ldots \Psi^{(j_1)}\Psi^{(j_0)}_0}$ for $j_0\in\left\{1,2,3\right\}$, $j_1,\ldots,j_{n-1}\in\left\{1,2,3,4\right\}$, and $j_n\in\left\{1,2\right\}$. We have two cases depending upon the parity of $n$.

If $n$ is odd then we have an even number of operators. We pair them off, use the sub-multiplicity of $\OneNorm{\cdot}$, and Lemma~\ref{lem:singleandpairbounds} to get
\begin{equation}
    \OneNorm{\Psi^{(j_n)}_n\Psi^{(j_{n-1})}\ldots \Psi^{(j_1)}\Psi^{(j_0)}_0} \leq \left(\Vlam^2\right)^{\frac{n+1}{2}} = \Vlam^{n+1}.
\end{equation}
If $n$ is even, then we pair off the first $n$ operators and treat the last $\Psi^{(j_n)}_n$ operator by itself. We then get
\begin{equation}
    \OneNorm{\Psi^{(j_n)}_n\Psi^{(j_{n-1})}\ldots \Psi^{(j_1)}\Psi^{(j_0)}_0} \leq \Ulam \left(\Vlam^2\right)^{\frac{n}{2}} = \Ulam\Vlam^{n}.
\end{equation}

Since $\Ulam\geq \Vlam$, we have the bound $\Ulam\Vlam^{n}$ for all $n\geq 1$. Since there are $3$ choices for $j_0$, $4$ choices for each of $j_1,\ldots,j_{n-1}$, and $2$ choices for $j_n$, we get the pre-factor of $6\times4^{n-1}$ in our result by applying the triangle inequality.
\end{proof}

\StartEndBounds*

\begin{proof}
The argument for $\OneNorm{\Psi^{(j_m)}\ldots\Psi^{(j_1)}\Psi^{(j_0)}_0}$ and $\OneNorm{\Psi^{(j_n)}_n\Psi^{(j_{n-1})}\ldots\Psi^{(j_{m})}}$ is the same as that in Lemma~\ref{lem:singleandpairbounds} and Proposition~\ref{prop:LaceCoefficientBound}. We consider bounds for single operators and pairs of operators, and then use sub-multiplicity to get a bound for each diagram.

For $\OneNorm{\OpEndBlock^{(j_m)}\ldots\OpEndBlock^{(j_1)}\OpEndBlock^{(j_0)}_0}$ the argument is a little more complicated. For $m=0,1$ we can easily check each diagram satisfies the bound. For $m\geq 2$ we have the issue that there exists one pair of operators which we can't bound by $\Vlam^2$:
\begin{equation}
    \OneNorm{\OpEndBlock^{(4)}\OpEndBlock^{(2)}_0} = \quad \ProblemFiveTwo \quad \leq \quad \ProblemFiveTwoPtOne \quad \leq\trilamo \leq \Ulam^2.
\end{equation}
If $m\geq 2$ is even (an odd number of operators) then we use sub-multiplicity to pull off the $\OpEndBlock_0$ term which we bound with $\Ulam$. We then pull off pairs which we can bound with $\Vlam^2$ to get the bound $\Ulam\Vlam^m$. If $m\geq 2$ is odd (an even number of operators) then we use sub-multiplicity to pull off the $\OpEndBlock\OpEndBlock_0$ pair which we bound with $\Ulam^2$. We then pull off pairs which we can bound with $\Vlam^2$ to get the bound $\Ulam^2\Vlam^{m-1}$.
\end{proof}

\DisplacementNgeqTwo*

\begin{proof}[Proof of Proposition~\ref{thm:DisplacementNgeq2}]
In bounding the diagrams with one displacement segment, we have three broad cases. Let $i\in\left\{0,1,\ldots,n\right\}$ denote which segment the displacement lies upon. Our three cases are then:
\begin{enumerate}[label=\textbf{\arabic*)}]
    \item \label{Case_i=n} the displacement lies on the $\psi_n$ segment (i.e. $i=n$),
    \item \label{Case_i=0} the displacement lies on the $\psi_0$ segment (i.e. $i=0$),
    \item \label{Case_i=i} the displacement lies on a $\psi$ segment (i.e. $i\in\left\{1,\ldots,n-1\right\}$).
\end{enumerate}
We will aim to get a single bound for each diagram having a displacement crossing a single segment. This will be uniform in the sequence $\left(j_0,j_1,\ldots,j_n\right)$. We then account for the number of possible choices of $\left(j_0,j_1,\ldots,j_n\right)$, and finally include the factor of $(n+1)^2$ arising from using cosine-splitting (the explicit factor and the number of diagrams).

Case~\ref{Case_i=n} is the simplest. We use a supremum bound to pull off the displaced $\Psi_n$ term. We then bound this by hand and use Lemma~\ref{lem:StartEndBounds} to bound the remainder. In representing this scheme we use a shorthand $\Psi^{n-1}\Psi_0$ to denote a particular sequence $\Psi^{(j_{n-1})}\ldots\Psi^{(j_1)}\Psi^{(j_0)}_0$ rather than actually the operators $\Psi$ and $\Psi_0$ defined above. We also use labelled grey shapes to represent these groupings of diagrams, highlighting only end vertices or vertices that connect to neighbouring segments. This scheme look like:
\begin{align}
    &\CaseTwoFirst \quad +\quad \CaseTwoFirstPtTwo \nonumber\\ &\hspace{1cm}\leq \Ulam\Vlam^{n-1} \left(2\left( \quad \CaseTwoPtOne \quad + \quad \CaseTwoPtTwo\quad+ \quad \CaseTwoPtThree \quad\right) + \quad \DiagramWk\right)\nonumber\\
    &\hspace{1cm}\leq 2\Ulam\Vlam^{n-1}\left(\WkBar\trilamoo + \Wk \trilamo\right)\nonumber\\
    &\hspace{1cm}\leq 4\WkBar\Ulam^2\Vlam^{n-1}.
\end{align}
Note that this bound also holds if the displacement runs along the bottom of the $\Psi_n$ segment. Since there are $3\times 4^{n-1}$ possible sequences of indices for the $\Psi^{n-1}\Psi_0$ diagram, this case contributes that as a pre-factor.

For Case~\ref{Case_i=0}, we need a few more tricks. Since we are taking the displacement across the top of the diagram, the only term that will make a contribution is $\Psi^{(1)}_0$. To tackle this case we first use the observation of \cite{HeyHofLasMat19} that the diagram we want to bound can in turn be bounded by a diagram using $\OpEndBlock^{n-1}\OpEndBlock_0$. As above, we use $\OpEndBlock^{n-1}\OpEndBlock_0$ as a shorthand for a particular sequence $\OpEndBlock^{(j_{n-1})}\ldots\OpEndBlock^{(j_1)}\OpEndBlock^{(j_0)}_0$. We are also imprecise in this notation over whether it is this operator or its adjoint. Nevertheless the diagrams we use are clear on which vertices are integrated over and which have suprema, and this will avoid ambiguity. We then spatially translate the supremum to the far end and pull off the $\OpEndBlock^{n-2}\OpEndBlock_0$ diagram using a supremum bound. This looks like
\begin{multline}
\label{eqn:CaseOneFirst}
    \CaseOneFirst \quad\leq\quad \CaseOneFirstShifted \\\leq\quad \CaseOneFirstShiftedPtOne \quad\times\quad \CaseOneFirstShiftedPtTwo.
\end{multline}
For $\OpEndBlock^{(j)}$ with $j=1,2,3$, a simple supremum bound shows that this starting diagram can be bounded by $\WkBar\trilamo\OneNorm{\OpEndBlock^{(j)}}\leq  \WkBar\trilamo\trilam\trilamooBar \leq \WkBar\Vlam^2$. For $j=4$ a little more care is needed:
\begin{equation}
    \CaseOneFive \quad \leq \quad \CaseOneFivePtOne \quad + \quad \CaseOneFivePtTwo \quad \leq \WkBar\trilam\trilamoo \leq \WkBar\Vlam^2.
\end{equation}
We therefore find that 
\begin{equation}
    \CaseOneFirst\quad \leq \begin{cases}
    \WkBar\Vlam^{n-1}\Ulam^2 &: n\geq 3\\
    \WkBar\Vlam^2\Ulam &: n=2.
    \end{cases}
\end{equation}
Since there are $4$ choices for the appended $\OpEndBlock^{(j)}$ term, and $4^{n-2}\times2$ for the remaining $\OpEndBlock^{n-2}\OpEndBlock_0$ diagram, this case contributes $2\times 4^{n-1}$ as a pre-factor.

For Case~\ref{Case_i=i}, we shall find we have $3$ sub-cases, depending not only on the $j$-index of the displaced segment, but on which term we are looking at once it has been expanded. Sub-cases $\SubCaseOne$ and $\SubCaseTwo$ deal with $j=1,2,3$. In Appendix~\ref{appendix:DisplacedBounds} we expand out the various displaced segments using cosine-splitting and by expanding $\tlamo$ edges into $\tlam$ edges and points, and label each of the resulting diagrams as sub-case $\SubCaseOne$ and $\SubCaseTwo$. These sub-cases are distinguished by the strategy we use to bound them. Sub-case $\SubCaseThree$ then addresses the case with $j=4$.

For sub-case $\SubCaseOne$, we just use the $\Psi$ structure of the diagrams. We first split off segments from the left of the displaced term like we did in Case~\ref{Case_i=n}. If $i=n-1$ we can then bound the remaining two pair of segments with $\WkBar\Vlam^2$ (Appendix~\ref{appendix:DisplacedBounds} explains how this bound is found). This can be described diagrammatically as
\begin{multline}
\label{eqn:explanationcaseOnePtTwo}
    \ExplanationCaseOnePtTwo \quad \leq \Ulam\Vlam^{n-2} \quad \ExplanationCaseOnePtTwoPtOne \\ \leq \WkBar\Ulam\Vlam^{n}.
\end{multline}
If $i\leq n-2$, then we also split off the segments to the right of this pair, and then bound the pair by $\WkBar\Vlam^2$ again. Diagrammatically this looks like
\begin{multline}
\label{eqn:explanationcaseOnePtOne}
    \ExplanationCaseOnePtOne \quad \leq \Ulam^2\Vlam^{n-3} \quad \ExplanationCaseOnePtOnePtOne \\ \leq \WkBar\Ulam^2\Vlam^{n-1}.
\end{multline}
Note that the since $\Ulam\geq \Vlam$, the bound in \eqref{eqn:explanationcaseOnePtOne} is greater than or equal to the bound in \eqref{eqn:explanationcaseOnePtTwo}. It will be convenient to find an $i$-independent bound. To this end, since the $i\leq n-2$ case only exists for $n\geq 3$, we can bound an instance of a displaced diagram in Case~\ref{Case_i=i}, sub-case $\SubCaseOne$ with 
\begin{equation}
    \begin{cases}
    \WkBar\Ulam^2\Vlam^{n-1}&: n\geq 3\\
    \WkBar\Ulam\Vlam^{2}&: n=2,
    \end{cases}
\end{equation}
uniformly in $i\in\left\{1,\ldots,n-1\right\}$.

For sub-case $\SubCaseTwo$, we use a different strategy. First we split off the earlier (left) segments as we did for the first sub-case and in Case~\ref{Case_i=n}. However we arrange the later terms differently - more like we did in Case~\ref{Case_i=0}. We associate our displaced $\Psi$ term with $\tlam$ and $\tlamo$ edges from the subsequent term. If the subsequent term would be $\Psi^{(2)}_n$, these are actually two $\tlam$ edges, but in our schematic diagrams we will draw a $\tlamo$ edge and bear this special case in mind when it is relevant. While the displacement portrayed on the `top' of the segment could equally well be on the `bottom', the $\tlamo$ edge will always be attached to the bottom in the usual orientation. Everything to the right of these $\tlam$ and $\tlamo$ edges can then be described as a sequence of $\OpEndBlock$ and $\OpEndBlock_0$ terms. If $i=n-1$, then we pair our augmented displaced segment with the subsequent $\OpEndBlock_0$ segment. We then perform a spatial translation to make the right-most vertex a supremum vertex like we did in \eqref{eqn:CaseOneFirst}. The remaining diagram of a $\Psi^{(j)}$ segment and $\OpEndBlock_0$ segment connected by a $\tlamo$ and $\tlam$ edge can then be bounded by $\left(\WkBar\Vlam^2 + \HkBar\Ulam\right)$ (see Appendix~\ref{appendix:DisplacedBounds}). Recall that if $\OpEndBlock_0$ is in fact $\OpEndBlock^{(2)}_0$ then we have two $\tlam$ edges connecting the terms, not a $\tlam$ edge and a $\tlamo$ edge. We need to note this to get the $\left(\WkBar\Vlam^2 + \HkBar\Ulam\right)$ bound. The scheme is then represented diagrammatically as
\begin{multline}
    \ExplanationCaseTwoPtTwo \quad \leq \Ulam\Vlam^{n-2} \quad \ExplanationCaseTwoPtTwoPtOne \\ \leq \left(\WkBar\Vlam^2 + \HkBar\Ulam\right)\Ulam\Vlam^{n-2}.
\end{multline}
If $i\leq n-2$, we also perform a spatial translation to move a supremum to the right-most vertex, but then also split off all $\OpEndBlock$ and $\OpEndBlock_0$ segments after the $\OpEndBlock$ segment immediately following the displaced term. Once again the remaining pair can be bounded by $\left(\WkBar\Vlam^2 + \HkBar\Ulam\right)$ (see Appendix~\ref{appendix:DisplacedBounds}) and the split-off diagrams bounded using Lemma~\ref{lem:StartEndBounds}. The scheme can be represented diagrammatically as
\begin{align}
    &\ExplanationCaseTwoPtOne \nonumber\\
    &\hspace{6cm}\leq \quad \ExplanationCaseTwoPtOnePtOne \quad\times\begin{cases}
    \Ulam^2 \Vlam^{n-4}&: i\leq n-3 \\
    \Ulam \Vlam^{n-3}&: i=n-2
    \end{cases} \nonumber\\
    &\hspace{6cm}\leq \left(\WkBar\Vlam^2 + \HkBar\Ulam\right)\begin{cases}
    \Ulam^2 \Vlam^{n-4}&: i\leq n-3 \\
    \Ulam \Vlam^{n-3}&: i=n-2
    \end{cases}\label{eqn:SubCaseTwoExplanation}
\end{align}
Again for simplicity, we would like to get and $i$-independent bound for such diagrams. Since the $i = n-2$ case only exists for $n\geq 3$, and the $i\leq n-3$ case only exists for $n\geq 4$, we can bound an instance of a displaced diagram in Case~\ref{Case_i=i}, sub-case $\SubCaseTwo$ with 
\begin{equation}
    \begin{cases}
    \left(\WkBar\Vlam^2 + \HkBar\Ulam\right)\Ulam^3\Vlam^{n-4}&: n\geq 4\\
    \left(\WkBar\Vlam^2 + \HkBar\Ulam\right)\Ulam^2&: n=3\\
    \left(\WkBar\Vlam^2 + \HkBar\Ulam\right)\Ulam&: n=2,
    \end{cases}
\end{equation}
uniformly in $i\in\left\{1,\ldots,n-1\right\}$.

We now address sub-case $\SubCaseThree$, that is when the displacement crosses a $\Psi^{(4)}$ segment. This is the simplest sub-case because when we take a displacement across $\Psi^{(4)}$ in isolation, we get exactly
\begin{equation}
    \DisplacedFive \quad =\Wk.
\end{equation}
We will be able to use the $\Psi$-arrangement of segments here. For $n\geq 4$ we split off the earlier and later segments without pairing the displaced term with any of them. With Lemma~\ref{lem:StartEndBounds} this produces the bound
\begin{equation}
    \left(\Ulam\Vlam^{i-1}\right)\Wk\left(\Ulam\Vlam^{n-i-1}\right) = \WkBar\Ulam^2\Vlam^{n-2} \qquad:n\geq 4.
\end{equation}
For $n=3$ we split off two segments from one side and one from the other. This gives the finer bound
\begin{equation}
    \Wk\Ulam\Vlam^2 \quad:n=3.
\end{equation}
For $n=2$, we use $j_0$ to indicate which $\Psi_0$ term we have and $j_2$ to denote which $\Psi_n$ term we have. We note that if $j_0=1,2$ then $\OneNorm{\Psi^{(j_0)}_0}\leq \trilam\trilamoo \leq \Vlam$, and if $j_2 = 1$ then $\OneNorm{\Psi^{(j_2)}_n}\leq \trilam\trilamoo \leq \Vlam$. Therefore if $\left(j_0,j_2\right) \ne \left(3,2\right)$ we have the bound $\Wk\Ulam\Vlam$. If $\left(j_0,j_2\right) = \left(3,2\right)$ we proceed by hand. We bound $\connf \leq \tlam$ and spatially shift one vertex to show
\begin{equation}
    \ExtraThreeFiveTwo \quad \leq \quad \ExtraThreeFiveTwoPtTwo \quad\leq \WkBar\trilam \leq \WkBar \Vlam.
\end{equation}

Carefully comparing the above bounds gives the required bounds for $n\leq 2$. Including the multiplicity of each diagram (including that arising from cosine-splitting), we find that we can bound the displaced and subsequent non-displaced pair by $\left(24\WkBar\Vlam^2 + 2\HkBar\right)$. Note that this includes summing over the $j$-index of the displaced term but not the index of the subsequent term. Since there are $3\times4^{n-2}\times2$ options for the $j$-indices of the other segments, this case contributes this to the counting pre-factors in our result.
\end{proof}

\DisplacementNeqOne*

\begin{proof}[Proof of Proposition~\ref{thm:DisplacementNeq1}]
For $n=1$, we can consider each of the six cases by hand. Most of the diagrams can be dealt with routinely using cosine-splitting, supremum bounds, and spatial shifts. We enumerate the diagrams by their $\left(j_0,j_1\right)$ values corresponding to the constituent segments.
\begin{eqnarray*}
    (1,1):& \quad \ExtraOneOne \quad &\leq 3\left(\WkBar\trilamo\trilam + \trilam\WkBar\trilam + \trilam\trilamo\Wk\right)\\
    (1,2):& \quad \ExtraOneTwo \quad &\leq 2\left(\WkBar\trilam + \trilam\Wk\right)\\
    (2,1):& \quad \ExtraTwoOne \quad &\leq 2\left(\trilam\WkBar\trilamo + \trilam\trilamo\Wk\right)\\
    (2,2):& \quad \ExtraTwoTwo \quad &\leq \trilam\Wk\\
    (3,1):& \quad \ExtraThreeOne \quad &\leq 2\left(\lambda\OneNorm{\Opconnf}\WkBar\trilam + \WkBar\trilam + \lambda\OneNorm{\Opconnf}\trilam\Wk + \trilam\Wk\right)
\end{eqnarray*}
These terms sum to no more than
\begin{equation}
    13\WkBar\Vlam\Ulam.
\end{equation}

For the $\left(3,2\right)$ diagram, we use a different `trick.' In Lemma~\ref{lem:tau-phi-Expansion} we will prove the bound $\tlam\left(x,y\right) \leq \connf(x,y) + \lambda\int \tlam(x,u)\connf(u,y)\nu\left(\dd u\right)$. When applied to this diagram, this looks like
\begin{equation}
    \CaseThreeTwo \quad \leq \quad \ThreeTwoDiagramPtOne \quad + \quad 2\left(\quad \ThreeTwoDiagramPtTwo \quad + \quad \ThreeTwoDiagramPtThree \quad\right),
\end{equation}
Here we have also used the cosine-splitting result on the second part. For the first of these $4$-vertex diagrams, we split the supremum in two and get the bound $\lambda^3\InfNorm{\Opconnf\Optlam\Optlam\Opconnf_k} \leq \lambda\trilam \OneNorm{\Opconnf_k}$. For the second of the $4$-vertex diagrams, we also split the supremum in two and get the bound $\lambda^3\InfNorm{\Opconnf\Optlam\Optklam\Opconnf}$. For the $3$-vertex diagram we condition on whether two of the vertices are in each other's $B$-set (defined in Assumption~\ref{Assump:BallDecay}). In the following diagrams \tlamBline~indicates that an indicator of the form $\mathds{1}_{\left\{w \in B\left(u\right)\right\}}$ relates two variables, and \NOTtlamBline~indicates that an indicator of the form $\mathds{1}_{\left\{w \not\in B\left(u\right)\right\}}$ relates them:
\begin{align}
    \ThreeTwoDiagramPtOne \quad &= \quad \ThreeTwoDiagramPtFour \quad + \quad \ThreeTwoDiagramPtFive  \nonumber\\ &\leq \quad \ThreeTwoDiagramPtSix \quad + 2\left(\quad \ThreeTwoDiagramPtSeven \quad + \quad\ThreeTwoDiagramPtEight\quad\right) \nonumber\\
    &\leq \left(\quad\ThreeTwoDiagramPtNine \quad \times \quad \psiNrighttwointegral\quad\right) +4\left(\quad\psiZeroleftthreeintegralPtOne\quad\times \quad \DiagramWk\quad\right)\nonumber\\
    & \leq \lambda\OneNorm{\Opconnf_k}\trilamB + 4\lambda\mathbb{B}W_k.
\end{align}

\end{proof}

\subsection{Displacement Diagram Case Distinctions}
\label{appendix:DisplacedBounds}

This section provides the details omitted from the proof of Proposition~\ref{thm:DisplacementNgeq2}. Here we show which diagrams arise from displacements crossing a $\Psi$ segment, and whether they can be dealt with by the first sub-case, $\SubCaseOne$, or by the second sub-case, $\SubCaseTwo$. Recall that $\SubCaseOne$ are those that use a $\Psi$ decomposition of the larger diagram only (and the displacement is not crossing a $\Psi^{(4)}$ segment), while $\SubCaseTwo$ are those that will require a combination of a $\Psi$ decomposition and a $\OpEndBlock$ decomposition.

We first expand out the displaced $\Psi$ terms and label which are in sub-case $\SubCaseOne$ and which are in sub-case $\SubCaseTwo$. Note that the displacement may be across the `top' or the `bottom' of the segments in the usual orientation. If we consider a displacement across the `top' or the `bottom' of a $\Psi^{(1)}$ term, we use cosine-splitting and sometimes expand $\tlamo$ edges into a $\tlam$ edge and a contraction to get
\begin{align}
    \DisplacementPsiOneTop \quad &\leq 2\left(\quad \underbrace{\DisplacementPsiOneTopPtOne}_{\SubCaseTwo} \quad + \quad \underbrace{\DisplacementPsiOneTopPtTwo}_{\SubCaseTwo} \quad\right)\\
    \DisplacementPsiOneBottom \quad &\leq 2\left(\quad \underbrace{\DisplacementPsiOneBottomPtOne}_{\SubCaseOne} \quad + \quad \underbrace{\DisplacementPsiOneBottomPtTwo}_{\SubCaseTwo} \quad + \quad \underbrace{\DisplacementPsiOneBottomPtThree}_{\SubCaseTwo}\quad\right).
\end{align}
Using the same ideas, if we consider a displacement across the `top' or the `bottom' of a $\Psi^{(2)}$ term we get
\begin{align}
    \DisplacementPsiTwoTop \quad &\leq \quad \underbrace{\DisplacementPsiTwoTopPtOne}_{\SubCaseTwo} \quad + \quad \underbrace{\DisplacementPsiTwoTopPtTwo}_{\SubCaseOne}\\
    \DisplacementPsiTwoBottom \quad &\leq 2\left(\quad \underbrace{\DisplacementPsiTwoBottomPtOne}_{\SubCaseTwo} \quad+\quad 2\times\underbrace{\DisplacementPsiTwoBottomPtTwo}_{\SubCaseOne} \quad+\quad \underbrace{\DisplacementPsiTwoBottomPtThree}_{\SubCaseOne}\right.\nonumber\\
    &\hspace{1cm}\left.\underbrace{\DisplacementPsiTwoBottomPtFour}_{\SubCaseTwo} \quad+\quad 2\times\underbrace{\DisplacementPsiTwoBottomPtFive}_{\SubCaseOne} \quad+\quad \underbrace{\DisplacementPsiTwoBottomPtSix}_{\SubCaseOne}\quad\right). 
\end{align}
If we consider a displacement across the `top' or the `bottom' of a $\Psi^{(3)}$ term, then by symmetry in both cases we get
\begin{equation}
    \underbrace{\DisplacementPsiFourTop}_{\SubCaseTwo}.
\end{equation}

We first show the bounds for the single segments in sub-case $\SubCaseOne$. Recall from the proof of Proposition~\ref{thm:DisplacementNgeq2} that in this sub-case the aim is prove that the schematic diagrams can be bounded:
\begin{equation}
    \ExplanationCaseOnePtTwoPtOne \quad\vee\quad \ExplanationCaseOnePtOnePtOne \quad\leq \WkBar\Vlam^2.
\end{equation}
We first consider the displaced $\Psi^{(j)}$ term by itself. By using supremum bounds and spatial translations we arrive at
\begin{eqnarray*}
    \DisplacementPsiOneBottomPtOne \quad\leq \Wk\trilam, & \DisplacementPsiTwoTopPtTwo \quad \leq \WkBar\trilam, & \DisplacementPsiTwoBottomPtTwo \quad \leq \WkBar\trilam, \\
    \DisplacementPsiTwoBottomPtThree \quad \leq \Wk\trilamo, & \DisplacementPsiTwoBottomPtFive \quad \leq \Wk\trilam, & \DisplacementPsiTwoBottomPtSix \quad \leq \Wk\trilamo.
\end{eqnarray*}
For most of these, it is sufficient to separate the pair using a supremum bound and use the individual bound above for the displaced $\Psi^{(j)}$ term and the bound on the $\Psi$ or $\Psi_n$ term from Lemma~\ref{lem:singleandpairbounds}. The only pairs that need a more careful approach are those that have the bound $\Wk\trilamo$ above for the displaced $\Psi^{(j)}$, followed by a $\Psi^{(4)}$ or $\Psi^{(2)}_n$ term. In these cases we consider the pair together and get the diagram bounds
\begin{equation*}
    \DisplacedTwoPtTwoPlusFive \quad \leq \Wk\trilam \leq \Wk\Vlam^2,\qquad
    \DisplacedTwoPtFourPlusFive \quad \leq \Wk\trilam \leq \Wk\Vlam^2.
\end{equation*}
In both of these cases we bounded a $\tlam$ edge by $1$ (essentially omitting the edge) before applying a supremum bound. In the first we omit the diagonal edge and split the upper supremum vertex in two. In the second we omit the left-most vertical edge and apply a supremum bound at the bottom right vertex.

We now turn our attention to sub-case $\SubCaseTwo$. Recall from the proof of Proposition~\ref{thm:DisplacementNgeq2} that in this sub-case the aim is prove that the schematic diagrams can be bounded:
\begin{equation}
    \ExplanationCaseTwoPtTwoPtOne \quad\vee\quad \ExplanationCaseTwoPtOnePtOne \quad\leq \WkBar\Vlam^2 + \HkBar\Ulam.
\end{equation}
Also recall that if the $\OpEndBlock_0$ term is actually a $\OpEndBlock^{(2)}_0$ term, then the $\tlamo$ edge connecting them is actually just a $\tlam$ edge. We first investigate bounds for the displaced $\Psi^{(j)}$ term with the connecting $\tlam$ and $\tlamo$ edges. By using supremum bounds, spatial translations, splitting supremum vertices in two, and sometimes expanding $\tlamo$ edges into a $\tlam$ edge and a contraction, we get
\begin{align*}
    \DisplacementPsiOneTopPtOneAug \quad &\leq \WkBar\trilam\trilamo,\\
    \DisplacementPsiOneTopPtTwoAug \quad &\leq \WkBar\trilamo\trilamo,\\
    \DisplacementPsiOneBottomPtTwoAug \quad &\leq \WkBar\trilam\trilamo,\\
    \DisplacementPsiOneBottomPtThreeAug \quad &\leq \WkBar\trilamo\trilamo,\\
    \DisplacementPsiTwoTopPtOneAug \quad &\leq \WkBar\trilam\trilamooBar,\\
    \DisplacementPsiTwoBottomPtOneAug \quad &\leq \WkBar\trilam\trilamoo,\\
    \DisplacementPsiTwoBottomPtFourAug \quad &\leq \quad \DisplacedTwoPtFivePtOne \quad+\quad \DisplacedTwoPtFivePtTwo \nonumber\\
    &\leq \HkBar + \Wk\trilam\trilamooBar,\\
    \DisplacementPsiFourTopAugVOne \quad &\leq \WkBar\trilamo,\\
    \DisplacementPsiFourTopAugVTwo \quad &\leq \WkBar\trilamo.
\end{align*}
For most of these, it is clear from a crude supremum bound that we have the bound $\WkBar\Vlam^2 + \HkBar\Ulam$ for the pair of these with the subsequent $\OpEndBlock$ or $\OpEndBlock_0$ term (like we did for sub-case $\SubCaseOne$). The only pairs that need a more careful approach are those that have the bound $\WkBar\trilamo$ or $\WkBar\trilamo\trilamo$ above, followed by a $\OpEndBlock^{(4)}$ or $\OpEndBlock^{(2)}_0$ term. We first consider the cases where they are followed by a $\OpEndBlock^{(4)}$ term. Here we use supremum bounds, spatial translations, and expanding $\tlamo$ edges into a $\tlam$ edge and a contraction to get
\begin{align*}
    \DisplacementPsiOneTopPtTwoAugPlusFive \quad &\leq \trilamo\WkBar\trilam\trilamo + \trilamo\WkBar\trilam = \WkBar\trilam\trilamo\trilamoo \leq \WkBar\Vlam^2,\\
    \DisplacementPsiOneBottomPtThreeAugPlusFive \quad &\leq \WkBar\trilam\trilamo\trilamoo\leq \WkBar\Vlam^2,\\
    \DisplacementPsiFourTopAugPlusFiveVOne \quad &\leq \WkBar\trilam\trilamo + \WkBar\trilam = \WkBar\trilam\trilamooBar\leq \WkBar\Vlam^2,\\
    \DisplacementPsiFourTopAugPlusFiveVTwo \quad &\leq \WkBar\trilam\trilamooBar\leq \WkBar\Vlam^2.
\end{align*}
We now consider the cases where they are followed by a $\OpEndBlock^{(2)}_0$ term. Recall that for this case, the connecting $\tlamo$ edge is in fact only a $\tlam$ edge. We then get
\begin{align*}
    \DisplacementPsiOneTopPtTwoAugPlusTwo \quad &\leq \WkBar\trilam\trilamo\leq \WkBar\Vlam^2,\\
    \DisplacementPsiOneBottomPtThreeAugPlusTwo \quad &\leq \WkBar\trilam\trilamo\leq \WkBar\Vlam^2,\\
    \DisplacementPsiFourTopAugPlusTwoVOne \quad &\leq \WkBar\trilam\leq \WkBar\Vlam^2,\\
    \DisplacementPsiFourTopAugPlusTwoVTwo \quad &\leq \WkBar\trilam\leq \WkBar\Vlam^2.
\end{align*}

The remaining sub-case $\SubCaseThree$ was fully dealt with in the main text.

\section{Bounding diagrams with the Bootstrap Function}
\label{appendix:BoundingwithBootstrap}

\begin{lemma}
    \label{lem:tau-phi-Expansion}
Let $x,y \in \X$. Then
\begin{align}
\label{eqn:tau-phi-Expansionfunction}
    \tlam(x,y) &\leq \connf(x,y) + \lambda \int \connf(x,u)\tlam(u,y)\nu\left(\dd u\right),\\
\label{eqn:tau-phi-Expansionfunction_2}
    \tlam(x,y) &\leq \connf(x,y) + \lambda \int \tlam(x,u)\connf(u,y)\nu\left(\dd u\right).
\end{align}
Then for $n\geq 2$,
\begin{equation}
\label{eqn:tau-phi-Expansion}
    \InfNorm{\Optlam^n} \leq \sum_{m=0}^n\genfrac(){0pt}{0}{n}{m}\lambda^{m} \InfNorm{\Opconnf^{n-1}\Optlam^m\Opconnf}.
\end{equation}
\end{lemma}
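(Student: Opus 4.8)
The plan is to prove the two pointwise inequalities \eqref{eqn:tau-phi-Expansionfunction} and \eqref{eqn:tau-phi-Expansionfunction_2} first, and then bootstrap them to the operator-norm bound \eqref{eqn:tau-phi-Expansion}. For \eqref{eqn:tau-phi-Expansionfunction_2}, I would start from the observation that if $\conn{x}{y}{\xi^{x,y}}$ holds, then either there is a direct edge between $x$ and $y$ (contributing $\connf(x,y)$), or the connection uses at least one other vertex; in the latter case let $u$ be the last vertex on some path before $y$, so that $u$ is a direct neighbour of $y$ and $u$ is connected to $x$ (in the appropriate restricted configuration). Applying the Mecke equation \eqref{eq:prelim:mecke_n} with $m=1$ to sum over the choice of $u\in\eta$, and bounding the probability of the joint event by the product $\tlam(x,u)\connf(u,y)$ via independence of the edge $\{u,y\}$ (or more carefully a BK-type bound, exactly as in the $n=0$ diagrammatic bound of Proposition~\ref{thm:DB:Pi0_bounds}), yields $\tlam(x,y)\le\connf(x,y)+\lambda\int\tlam(x,u)\connf(u,y)\nu(\dd u)$. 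The inequality \eqref{eqn:tau-phi-Expansionfunction} then follows from \eqref{eqn:tau-phi-Expansionfunction_2} by the symmetry of $\connf$ and $\tlam$ in their two arguments (equivalently, by taking $u$ to be the first vertex after $x$ instead of the last vertex before $y$).

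For \eqref{eqn:tau-phi-Expansion}, I would iterate \eqref{eqn:tau-phi-Expansionfunction}. Writing the bound in operator/kernel form, \eqref{eqn:tau-phi-Expansionfunction} says $\tlam \le \connf + \lambda\,\connf\star\tlam$ as kernels on $\X$ (with $\star$ meaning $\X$-convolution, i.e. operator composition at the kernel level), where the inequality is pointwise between non-negative kernels. Then $\Optlam^n$ has kernel $\tlam^{\star n}$, and substituting the bound for the \emph{first} factor of $\tlam$ and expanding gives $\tlam^{\star n} \le (\connf + \lambda\,\connf\star\tlam)\star\tlam^{\star(n-1)}$. The cleanest route is to substitute the bound into \emph{each} of the $n$ factors of $\tlam$ one at a time, or — more efficiently — to note that $\tlam \le \connf\star(\delta/\lambda + \lambda\tlam)$ type manipulations let one peel off $\connf$ factors; but the simplest bookkeeping is: apply \eqref{eqn:tau-phi-Expansionfunction} to the leftmost $\tlam$ repeatedly. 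Concretely, induct on the number of $\connf$ factors extracted: each application of \eqref{eqn:tau-phi-Expansionfunction} to a leading $\tlam$ either terminates that factor as a $\connf$ or replaces it by $\lambda\,\connf\star\tlam$, and after processing all positions one obtains a sum over subsets $S\subseteq\{1,\dots,n\}$ of terms of the form $\lambda^{|S|}\,\connf^{\star(n-|S|)}\star\tlam^{\star|S|}\star\connf$ — wait, I need to be careful about which end the trailing $\connf$ sits on. Here is the clean version: apply \eqref{eqn:tau-phi-Expansionfunction_2} (the right-sided version) to the \emph{last} factor of $\tlam$, then to the new last factor, and so on; after $n$ steps one has extracted, for each $m\in\{0,\dots,n\}$ and each way of choosing which $m$ of the original factors survive as $\tlam$'s, a term $\lambda^{m}\,\connf^{\star(n-1-?)}\dots$. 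Since all kernels are non-negative, after collecting by the number $m$ of surviving $\tlam$ factors and using sub-additivity and monotonicity of $\InfNorm{\cdot}$ (Lemma~\ref{lem:NormBounds}) together with the multinomial count $\binom{n}{m}$ of arrangements, one gets $\InfNorm{\Optlam^n}\le\sum_{m=0}^{n}\binom{n}{m}\lambda^m\InfNorm{\Opconnf^{n-1}\Optlam^m\Opconnf}$. (The specific placement $\Opconnf^{n-1}\Optlam^m\Opconnf$ reflects extracting all $\connf$ factors to the left except for one forced $\connf$ on the right coming from the final base case $\tlam\le\connf$; this matches the statement.)

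The main obstacle I anticipate is purely combinatorial bookkeeping: getting the iteration of \eqref{eqn:tau-phi-Expansionfunction}/\eqref{eqn:tau-phi-Expansionfunction_2} to land \emph{exactly} on the operator pattern $\Opconnf^{n-1}\Optlam^{m}\Opconnf$ with coefficient $\binom{n}{m}$, rather than some rearranged but equivalent bound. The subtlety is that the two inequalities extract a $\connf$ on opposite sides, so one must commit to using only one of them (say the right-sided \eqref{eqn:tau-phi-Expansionfunction_2}) throughout the iteration, track that the base case $\tlam(x,y)\le\connf(x,y)$ always deposits the final $\connf$ on the same side, and then invoke the monotonicity $\InfNorm{\Opconnf^{a}\Optlam^{m}\Opconnf^{b}}\le\InfNorm{\Opconnf^{a+b}\Optlam^{m}\Opconnf}$ or simply $\le \InfNorm{\Opconnf^{n-1}\Optlam^m\Opconnf}$ after re-collecting powers — but care is needed since $\Opconnf$ and $\Optlam$ need not commute in the marked setting, so one cannot freely reorder; one must genuinely produce the stated ordering. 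I expect this to be routine once the right-sided iteration is set up carefully, with the non-negativity of all kernels and Lemma~\ref{lem:NormBounds} doing all the analytic work. The first two inequalities are standard percolation arguments (Mecke plus BK, exactly as in the already-proven Proposition~\ref{thm:DB:Pi0_bounds}) and should present no difficulty.
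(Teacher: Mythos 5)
Your treatment of the two pointwise inequalities is correct and matches the paper: Mecke's equation applied to the first (resp.\ last) intermediate vertex $u$, with the two events $\{x\sim u\}$ and $\{\conn{u}{y}{\xi^{y,u}}\}$ factorising (the paper uses their independence, which follows since $\nu$ is non-atomic and the edge mark of $\{x,u\}$ is independent of the rest), gives \eqref{eqn:tau-phi-Expansionfunction}, and \eqref{eqn:tau-phi-Expansionfunction_2} is the mirror image. No issue there.

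The gap is in the iteration for \eqref{eqn:tau-phi-Expansion}, and it is exactly the point you flagged as the ``main obstacle'' but then resolved incorrectly. Your ``clean version'' — committing to the right-sided inequality \eqref{eqn:tau-phi-Expansionfunction_2} only, applied repeatedly to the last surviving $\Optlam$ factor — deposits every extracted $\Opconnf$ on the \emph{right}, so the terminal terms have the form $\lambda^{m}\,\Optlam^{m}\Opconnf^{n}$, not $\lambda^{m}\,\Opconnf^{n-1}\Optlam^{m}\Opconnf$. As you yourself observe, $\Opconnf$ and $\Optlam$ need not commute in the marked setting, so this cannot be reordered afterwards; and the ordering genuinely matters downstream, because Lemma~\ref{lem:phi-tau-phi_bounds} needs at least one $\fOpconnf(k)$ on \emph{each} side of the $\fOptlam(k)^{m}$ block to run its Cauchy--Schwarz step (the test functions $g_i$ themselves have uncontrolled $L^2$-norm). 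The paper's resolution uses \emph{both} inequalities asymmetrically: apply \eqref{eqn:tau-phi-Expansionfunction_2} exactly once, to the rightmost $\Optlam$ in $\Optlam^{n}$, which forces a single $\Opconnf$ at the right end (yielding $\Optlam^{n-1}\Opconnf + \lambda\Optlam^{n}\Opconnf$); then repeatedly apply the left-sided inequality \eqref{eqn:tau-phi-Expansionfunction} to the leftmost remaining $\Optlam$, so that the other $n-1$ factors of $\Opconnf$ all accumulate on the left, stopping each branch once $n$ factors of $\Opconnf$ have appeared. Every terminal term then has precisely the shape $\lambda^{m}\,\Opconnf^{n-1}\Optlam^{m}\Opconnf$, and counting the expansion paths leading to a given $m$ gives the $\binom{n}{m}$; monotonicity of $\InfNorm{\cdot}$ for non-negative kernels (Lemma~\ref{lem:NormBounds}) finishes the proof. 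So the fix is small, but your stated scheme as written does not produce the claimed bound.
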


\begin{proof}
By combining Mecke's formula and the BK inequality, we obtain
\begin{align}
    \tlam(x,y) & \leq  \connf(x,y) + \E_\lambda\left[\sum_{u \in \eta} \mathds 1_{\{x \sim u \text{ in } \xi^{x} \} \cap \{\conn{u}{y}{\xi^{y}}\}} \right] \nonumber\\
		& = \connf(x,y) + \lambda \int \pla\left(\{x \sim u \text{ in } \xi^{x,u} \} \cap \{\conn{u}{y}{\xi^{y,u}}\}\right) \nu\left(\dd u\right) \nonumber\\
		& = \connf(x,y) +\lambda \int \connf(x,u) \tlam(u,y) \nu\left(\dd u\right).
\end{align}
In the last line, we have used that the two intersected events are independent. This is due to the fact that $\nu$ is non-atomic and $x\notin \eta^{y,u}$ a.s. 

The second inequality is then proven nearly identically, but with the adjacency to the intermediate point $u$ holding for $y$ instead of $x$.

To get \eqref{eqn:tau-phi-Expansion}, we first use \eqref{eqn:tau-phi-Expansionfunction_2} to expand out the right-most $\Optlam$. This produces one term with $\Opconnf$ replacing the $\Optlam$, and one term with $\lambda\Optlam\Opconnf$ replacing it. We then repeatedly use \eqref{eqn:tau-phi-Expansionfunction} to expand the left-most $\Optlam$ to get a $\Opconnf$ factor and a $\lambda\Opconnf\Optlam$ factor until we have a total of $n$ factors of $\Opconnf$ in the expression. Counting the number of ways to get to each possible end term then gives the binomial $\genfrac(){0pt}{1}{n}{m}$ factor as required.
\end{proof}

\begin{lemma}\label{lem:ApproxPointWithL2}
Let $H\colon L^2\left(\Ecal\right) \to L^2\left(\Ecal\right)$ be an integral linear operator with kernel function $h\colon\Ecal^2\to\R$. Then for all $h\left(a,b\right)\in \EssIm{h}$  and $\varepsilon>0$, there exist $\Pcal$-positive sets $E_1,E_2\subset\Ecal$ such that $g_i(c) := \frac{1}{\Pcal\left(E_i\right)}\Id_{E_i}\left(c\right)$ for $i=1,2$ satisfy
\begin{equation}
    \abs*{h\left(a,b\right) - \inner{g_1}{Hg_2}} \leq \varepsilon.
\end{equation}
\end{lemma}

\begin{proof}
Since $h$ is measurable, the set $A=\left\{c,c'\in\Ecal:\abs*{h\left(c,c'\right)-h\left(a,b\right)}\leq \varepsilon\right\}$ is measurable. Since $h\left(a,b\right)\in \EssIm{h}$, it has positive measure: $\Pcal\left(A\right)>0$. Furthermore, by considering the $\pi$-system of product sets $\left\{E_1\times E_2: E_1,E_2\subset \Ecal \text{ measurable}\right\}$ that generates the $\sigma$-algebra on $\Ecal^2$, we can find $B = E_1\times E_2$ such that $E_1,E_2\subset \Ecal$ are both measurable, $B\subset A$, and $\Pcal^{\otimes 2}\left(B\right) = \Pcal\left(E_1\right)\Pcal\left(E_2\right)>0$. Then use $E_1,E_2$ to define $g_1,g_2$ as in the statement on the lemma. Since $E_1,E_2$ are both measurable and have positive measure, both $g_1,g_2\in L^2\left(\Ecal\right)$. We also have
\begin{equation}
    \inner{g_1}{Hg_2} = \frac{1}{\Pcal^{\otimes 2}\left(B\right)}\int_B h\left(c,c'\right)\Pcal^{\otimes 2}\left(\dd c,\dd c'\right) \in \left[h(a,b) - \varepsilon,h(a,b) +\varepsilon\right],
\end{equation}
proving the result.
\end{proof}

\begin{lemma}
\label{lem:phi-tau-phi_bounds}
Let $\lambda<\lambda_O$, $n\geq 2$, and $d>2m$. Then there exists finite $\const= \const(m,n,f(\lambda))$ (increasing in $f(\lambda)$ and independent of $d$) such that
\begin{equation}
    \InfNorm{\Opconnf^{n-1}\Optlam^m\Opconnf} \leq \begin{cases}
    \const &: n=2\\
    \const g(d)^{\frac{1}{2}-m\overline{\epsilon}(d)} &: n\geq 3.
    \end{cases}
\end{equation}
\end{lemma}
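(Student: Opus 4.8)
The plan is to bound $\InfNorm{\Opconnf^{n-1}\Optlam^m\Opconnf}$ by first reducing it to a spatial convolution estimate, then using the bootstrap function $f(\lambda)$ to replace the unknown $\tlam$ factors by the explicit Green's function $\gmu$, and finally invoking the decay hypothesis \ref{Assump:BallDecay} via \eqref{eqn:ThreeAdjStep}. First I would handle the easy case $n=2$: here there are no $\connf$-blocks in excess of the number needed to make the whole expression a spatial convolution of $\connf$ and $\tlam$ functions that is not controlled by $g(d)$. Writing out the kernel of $\Opconnf\Optlam^m\Opconnf$ as an iterated $\X$-integral and taking the essential supremum over the end points and all marks, one uses a supremum bound on the intermediate $\tlam$-blocks of the form $\tlam\leq$ (something controlled by $f(\lambda)$): more precisely, since $\lambda\leq f(\lambda)$, the two-point function satisfies $\lambda\int\tlam(x,u)\nu(\dd u)\leq \OpNorm{\fOptlam(0)}\leq f(\lambda)\OpNorm{\fgmu(0)}$-type bounds coming from $f_2$, so each of the $m$ intermediate $\Optlam$-integrals contributes a bounded factor. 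The two flanking $\Opconnf$'s and the $\OneNorm{\cdot}$-type bounds from \eqref{eqn:Sup-Exp_Ratio_2} finish the $n=2$ case with a constant $\const(2,m,f(\lambda))$ increasing in $f(\lambda)$ and independent of $d$.

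For $n\geq 3$ the extra factors of $\connf$ are used to extract the decaying quantity. The strategy is: among the $n-1$ left $\connf$-blocks and the single right $\connf$-block, single out a sub-block consisting of (at least) three consecutive $\connf$ factors — for instance $\Opconnf^3$ sitting at one end, or $\Opconnf^2\Optlam\Opconnf$ after one application of \eqref{eqn:tau-phi-Expansionfunction} (via Lemma~\ref{lem:tau-phi-Expansion}) to convert a $\tlam$-block adjacent to a $\connf$-block. Taking the essential supremum, one uses a supremum bound to split the whole diagram into a product: a three-fold adjacency convolution $\esssup_{\xbar,a_i}(\connf\star\connf\star\connf)(\xbar)$, which by \eqref{eqn:ThreeAdjStep} (with $\SupSpec{\fOpconnf(0)}=1$ in force in this section) is at most $g(d)$, times a remainder which is a bounded convolution of the remaining $\connf$ and $\tlam$ factors controlled as in the $n=2$ case. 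When $m\geq 1$, replacing $\tlam$-blocks inside the remainder by $\gmu$ at the cost of a power of $f(\lambda)$ is done using $f_2$ from \eqref{eq:def-f}; the resulting $\gmu$-convolutions are estimated by Fourier/$L^2$ methods, and here the refinement $g(d)^{\frac12-m\overline\epsilon(d)}$ rather than just $g(d)^{1/2}$ appears: each $\gmu$-block that has to be estimated spatially (using Lemma~\ref{lem:ApproxPointWithL2} to pass to the mark-space Hilbert space and then the explicit form $\fgmu(k)=(1-\mulam\SupSpec{\fOpconnf(k)})^{-1}$) loses a factor $g(d)^{-\overline\epsilon(d)}$ compared with the naive bound, because the Green's function convolution is only slightly worse than the bare adjacency convolution when $g(d)$ decays very fast (the regime where $\overline\epsilon(d)>0$). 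Collecting: $\InfNorm{\Opconnf^{n-1}\Optlam^m\Opconnf}\leq \const(m,n,f(\lambda))\, g(d)^{1/2}\, g(d)^{-m\overline\epsilon(d)}$, which is the claimed bound.

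The main obstacle I expect is the bookkeeping in the $n\geq 3$, $m\geq 1$ case: one must carefully choose \emph{which} three $\connf$ factors to group (so that the three-fold adjacency convolution bound \eqref{eqn:ThreeAdjStep} genuinely applies — the grouped factors must be consecutive in the convolution so that the spatial displacements add up correctly after translating the origin), and then verify that the leftover convolution, after using Lemma~\ref{lem:tau-phi-Expansion} to trade some $\tlam$'s for $\connf$'s where needed and using $f_2$ to replace the remaining $\tlam$'s by $\gmu$, really is bounded by a $d$-independent constant times $g(d)^{-m\overline\epsilon(d)}$. The point where one must be careful is that the $m$ factors of $\Optlam$ are genuinely coupled in the mark variables (they cannot be commuted past the $\Opconnf$ factors because of inhomogeneity), so the reduction to a product of a pure-spatial convolution and a mark-space operator norm must be done via a supremum bound on the spatial variables while keeping all mark integrals intact, and the resulting mark-space quantity must then be shown bounded using \eqref{eqn:Sup-Exp_Ratio_2} and the bootstrap control $f_2(\lambda)$; doing this so that the constant does not secretly depend on $d$ is the delicate part. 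Once that is in place, the $n=2$ case is a special (simpler) instance, and the $d>2m$ hypothesis is exactly what is needed for the $\gmu$-convolutions ($L^2$-integrability of $\fgmu$, i.e. $\fgmu\in L^2(\Rd)$ requires $d>4$, and higher powers require larger $d$) to make sense.
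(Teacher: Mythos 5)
There is a genuine gap, and it appears already in your $n=2$ step. You propose to bound the kernel of $\Opconnf\Optlam^m\Opconnf$ in real space by sup-bounding each intermediate $\Optlam$-integral via ``$\lambda\int\tlam(x,u)\nu(\dd u)\leq\OpNorm{\fOptlam(0)}$-type bounds''. But $\int\tlam(x,u)\nu(\dd u)$ is a $\OneNorm{\cdot}$-type quantity, and the Schur test only gives $\OpNorm{\cdot}\leq\OneNorm{\cdot}$, not the reverse; indeed $\OneNorm{\fOptlam(0)}$ is finite only for $\lambda<\lambda_T^{(\infty)}$, which by Proposition~\ref{prop:lambda_T=lambda_0} may be strictly smaller than $\lambda_O$ absent \eqref{eqn:supDegree}. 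This is exactly why the paper never performs a real-space supremum split on the $\Optlam$ factors: it approximates $\InfNorm{\cdot}$ by inner products $\inner{g_1}{\cdot\,g_2}$ with normalized indicators (Lemma~\ref{lem:ApproxPointWithL2}), passes to Fourier space, and only \emph{then} extracts $\OpNorm{\fOptlam(k)}^m\leq f(\lambda)^m\fgmu(k)^m$ by Cauchy--Schwarz inside the $k$-integral, where operator norms legitimately apply.

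Your $n\geq3$ step has a second problem: you want to isolate three \emph{consecutive} $\connf$ factors to invoke \eqref{eqn:ThreeAdjStep}, but for $n=3$ the operator $\Opconnf^{2}\Optlam^m\Opconnf$ has only two consecutive $\connf$'s on the left (and expanding a $\tlam$ via Lemma~\ref{lem:tau-phi-Expansion} to manufacture a third reintroduces the uncontrolled real-space $\Optlam$ remainder). The paper's mechanism is different: after Cauchy--Schwarz in $k$, the $\fOpconnf$ contribution becomes $\bigl(\int\inner{g_1}{\fOpconnf(k)^{2n-2}g_1}\bigr)^{1/2}\bigl(\int\inner{g_2}{\fOpconnf(k)^{2}g_2}\bigr)^{1/2}$, i.e.\ the $\connf$-block is \emph{doubled} to length $2n-2\geq4$, and \ref{Assump:BallDecay} then yields $g(d)^{1/2}$. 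Finally, the exponent correction $-m\overline{\epsilon}(d)$ does not arise from per-block losses in spatial estimates as you suggest; it comes from splitting the $k$-integral into $B_\varepsilon(0)$ and its complement (this is where $d>2m$ enters, to make $\int_{B_\varepsilon(0)}|k|^{-2m}\dd k$ finite) and then optimizing $\varepsilon=\varepsilon(d)\to0$ to balance $\mathfrak{S}_{d-1}\varepsilon^{d-2m}/(2\pi)^d$ against $\varepsilon^{-2m}g(d)^{1/2}$ in the regime where $g(d)$ decays super-exponentially; the balanced value gives $g(d)^{1/2-m/d}d^{-m}=g(d)^{1/2-m\overline{\epsilon}(d)}$. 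Without these three mechanisms your outline does not close.
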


The condition $\lambda < \lambda_O$ here is required to allow us to swap integrals over the mark space with integrals over the Fourier space. We are able to use Fubini's Theorem because sub-criticality implies that the relevant integrals are finite.

\begin{proof}
For clarity, let us introduce the notation
\begin{equation}
\label{eqn:phi^n_tau^n_notation}
    \connf^{(n)}(x,y) = \int\prod^{n}_{j=1}\connf(u_{j-1},u_{j})\nu^{\otimes (n-1)}\left(\dd u_{[1,n-1]}\right),\quad \tlam^{(n)}(x,y) = \int\prod^{n}_{j=1}\tlam(u_{j-1},u_{j})\nu^{\otimes (n-1)}\left(\dd u_{[1,n-1]}\right),
\end{equation}
where $u_0=x$ and $u_n=y$. Furthermore, for each $\xbar\in\Rd$ let us define the integral linear operator $\left(\Opconnf^{n-1}\Optlam^m\Opconnf\right)\left(\xbar\right) \colon L^2\left(\Ecal\right) \to L^2\left(\Ecal\right)$ as the integral operator with kernel function 
\begin{equation}
(a,b)\mapsto\left(\Opconnf^{n-1}\Optlam^m\Opconnf\right)\left(\xbar;a,b\right) := \int\connf^{(n-1)}\left(\xbar-\ubar;a,c\right)\tlam^{(m)}\left(\ubar-\ubar';c,c'\right)\connf\left(\ubar';c',b\right)\dd \ubar\dd\ubar'\Pcal^{\otimes 2}\left(\dd c,\dd c'\right).
\end{equation}
Given $\delta>0$, $\xbar\in\Rd$, and $a,b\in\Ecal$ such that $\left(\Opconnf^{n-1}\Optlam^m\Opconnf\right)\left(\xbar;a,b\right)\in \EssIm{\left(\Opconnf^{n-1}\Optlam^m\Opconnf\right)\left(\xbar;\cdot,\cdot\right)}$, we can use Lemma~\ref{lem:ApproxPointWithL2} to get measurable and $\Pcal$-positive sets $E^{(\xbar,a,b,\delta)}_1,E^{(\xbar,a,b,\delta)}_2\subset \Ecal$ such that
\begin{equation}
    \left(\Opconnf^{n-1}\Optlam^m\Opconnf\right)\left(\xbar;a,b\right) \leq \inner*{g^{(\xbar,a,b,\delta)}_1}{\left(\Opconnf^{n-1}\Optlam^m\Opconnf\right)\left(\xbar\right) g^{(\xbar,a,b,\delta)}_2} + \delta,
\end{equation}
where $g^{(\xbar,a,b,\delta)}_i(c) = \frac{1}{\Pcal\left(E^{(\xbar,a,b,\delta)}_i\right)}\Id_{E^{(\xbar,a,b,\delta)}_i}\left(c\right)$ for $i=1,2$.

Now let $E_1,E_2\subset \Ecal$ be arbitrary measurable $\Pcal$-positive sets and $g_1,g_2$ be their associated functions. We aim to produce bounds that are independent of the choice of $E_1,E_2$. We first use the Fourier inversion theorem to reformulate the position behaviour in terms of a $k$-integral of the Fourier transforms, and use \ref{Assump:2ndMoment} and $\lambda<\lambda_O$ (so the relevant integrals are finite) to allow us to swap the $k$-integral and the mark integrals. This produces
\begin{multline}
    \abs*{\inner*{g_1}{\left(\Opconnf^{n-1}\Optlam^m\Opconnf\right)\left(\xbar\right) g_2}} = \abs*{\int\e^{-i\xbar\cdot k}\inner*{g_1}{\fOpconnf(k)^{n-1}\fOptlam(k)^m\fOpconnf(k)g_2}\frac{\dd k}{\left(2\pi\right)^d}}\\ \leq \int\abs*{\inner*{g_1}{\fOpconnf(k)^{n-1}\fOptlam(k)^m\fOpconnf(k)g_2}}\frac{\dd k}{\left(2\pi\right)^d}.
\end{multline}
Recall that $\fOpconnf(k)$ is self-adjoint, so we can move $\fOpconnf(k)^{n-1}$ to the other side of the inner product, and then we can use Cauchy-Schwarz and the definition of the operator norm to get
\begin{multline}
\label{eqn:ExtractNormT}
    \inner*{g_1}{\fOpconnf(k)^{n-1}\fOptlam(k)^m\fOpconnf(k)g_2} = \inner*{\fOpconnf(k)^{n-1}g_1}{\fOptlam(k)^m\fOpconnf(k)g_2} \\\leq \norm*{\fOpconnf(k)^{n-1}g_1}_2\norm*{\fOptlam(k)^m\fOpconnf(k)g_2}_2 \leq \OpNorm{\fOptlam(k)}^m\norm*{\fOpconnf(k)^{n-1}g_1}_2\norm*{\fOpconnf(k)g_2}_2.
\end{multline}
We can then use the definition of the bootstrap function $f(\lambda)$ to replace the $\OpNorm{\fOptlam(k)}$ factors with factors of $f(\lambda)$ and $\fgmu(k)$. Also writing the norm $\norm*{\cdot}_2$ as the square root of an inner product and using the self-adjoint property of $\fOpconnf(k)$ gives
\begin{equation}
    \inner*{g_1}{\fOpconnf(k)^{n-1}\fOptlam(k)^m\fOpconnf(k)g_2} \leq f(\lambda)^m\fgmu(k)^m\inner*{g_1}{\fOpconnf(k)^{2n-2}g_1}^\frac{1}{2}\inner*{g_2}{\fOpconnf(k)^2g_2}^\frac{1}{2}.
\end{equation}
We therefore want to derive bounds for the integral
\begin{equation}\label{eqn:fullkintegral}
    \int\fgmu(k)^m\inner*{g_1}{\fOpconnf(k)^{2n-2}g_1}^\frac{1}{2}\inner*{g_2}{\fOpconnf(k)^2g_2}^\frac{1}{2}\frac{\dd k}{\left(2\pi\right)^d}.
\end{equation}

We first derive pointwise and integral bounds for the $\fOpconnf$ factors. Given $N\geq 1$, and $g(c)=\frac{1}{\Pcal\left(E\right)}\Id_E(c)$ for some measurable and $\Pcal$-positive set $E\subset \Ecal$,
\begin{equation}
    \inner*{g}{\fOpconnf(k)^{2N}g}^\frac{1}{2} = \left(\frac{1}{\Pcal(E)^2}\int_E\int_E\fconnf^{(2N)}(k;c,c')\Pcal(\dd c)\Pcal(\dd c')\right)^\frac{1}{2} \leq \InfNorm{\fOpconnf(k)^{2N}}^\frac{1}{2}.
\end{equation}
By using Cauchy-Schwarz and supremum bounds we can split this into norms of $\fOpconnf(k)$ only, then use Lemma~\ref{lem:NormBounds} to replace $k$ with $0$, and finally use \ref{Assump:2ndMoment} to bound these terms:
\begin{equation}
\label{eqn:phi^k_pointwise}
    \inner*{g}{\fOpconnf(k)^{2N}g}^\frac{1}{2}\leq \InfNorm{\fOpconnf(k)^{2N}}^\frac{1}{2} \leq \TwoNorm{\fOpconnf(k)}\OneNorm{\fOpconnf(k)}^{N-1} \leq \TwoNorm{\fOpconnf(0)}\OneNorm{\fOpconnf(0)}^{N-1} \leq C^N.
\end{equation}
For the integral bound, we apply Cauchy-Schwarz to the $k$-integral and swap the $k$-integrals with the mark integrals to un-do the Fourier transforms. We get
\begin{multline}
\label{eqn:IntboundN=1Begin}
    \int\inner*{g_1}{\fOpconnf(k)^{2n-2}g_1}^\frac{1}{2}\inner*{g_2}{\fOpconnf(k)^2g_2}^\frac{1}{2}\frac{\dd k}{\left(2\pi\right)^d} \leq \left(\int\inner*{g_1}{\fOpconnf(k)^{2n-2}g_1}\frac{\dd k}{\left(2\pi\right)^d}\right)^\frac{1}{2}\left(\int\inner*{g_2}{\fOpconnf(k)^2g_2}\frac{\dd k}{\left(2\pi\right)^d}\right)^\frac{1}{2} \\= \inner*{g_1}{\Opconnf^{(2n-2)}\left(\zerobar\right)g_1}^\frac{1}{2}\inner*{g_2}{\Opconnf^{(2)}\left(\zerobar\right)g_2}^\frac{1}{2},
\end{multline}
where $\Opconnf^{(N)}\left(\zerobar\right)\colon L^2\left(\Ecal\right) \to L^2\left(\Ecal\right)$ is the integral operator with kernel function $\left(a,b\right)\mapsto \connf^{(N)}\left(\zerobar;a,b\right)$. Given $N\geq 1$, and $g(c)=\frac{1}{\Pcal\left(E\right)}\Id_E(c)$ for some measurable and $\Pcal$-positive set $E\subset \Ecal$,
\begin{equation}
    \inner*{g}{\Opconnf^{(2N)}\left(\zerobar\right)g} = \left(\frac{1}{\Pcal(E)^2}\int_E\int_E\connf^{(2N)}\left(\zerobar;c,c'\right)\Pcal(\dd c)\Pcal(\dd c')\right) \leq \InfNorm{\Opconnf^{(2N)}\left(\zerobar\right)}.
\end{equation}
Here for $N\geq 2$ we bound with $\InfNorm{\Opconnf^{(2N)}\left(\zerobar\right)}^\frac{1}{2} \leq \InfNorm{\Opconnf^{2N}}^\frac{1}{2} \leq g(d)^\frac{1}{2}$ from \ref{Assump:BallDecay}. For $N=1$ we bound using $\connf\left(\xbar;a,b\right)\in\left[0,1\right]$ and \ref{Assump:2ndMoment}:
\begin{multline}
\label{eqn:IntboundN=1End}
    \InfNorm{\Opconnf^{(2)}\left(\zerobar\right)} = \esssup_{a,b\in\Ecal}\int \connf\left(\xbar;a,c\right)\connf\left(-\xbar;c,b\right)\dd \xbar \Pcal\left(\dd c\right) \leq \esssup_{b\in\Ecal}\int \connf\left(\xbar;a,b\right)\dd \xbar \Pcal\left(\dd a\right) \\= \OneNorm{\fOpconnf(0)}\leq C.
\end{multline}
Therefore 
\begin{equation}
\label{eqn:phi^k_integral}
    \int\inner*{g_1}{\fOpconnf(k)^{2n-2}g_1}^\frac{1}{2}\inner*{g_2}{\fOpconnf(k)^2g_2}^\frac{1}{2}\frac{\dd k}{\left(2\pi\right)^d} \leq \begin{cases}
    C &: n=2,\\
    C^\frac{1}{2}g(d)^\frac{1}{2} &: n\geq 3.
    \end{cases}
\end{equation}

We now return to bounding \eqref{eqn:fullkintegral}. To perform the $k$-integral, we split the domain. We first consider $B_\varepsilon(0) = \{k\in\Rd: \abs*{k} < \varepsilon\}$, where we choose $\varepsilon>0$ such that $\varepsilon^2 \leq \tfrac{1-C_1}{C_2}$ - the constants $C_1,C_2$ coming from the assumption \ref{Assump:Bound}. From \ref{Assump:Bound}, we have $\fgmu(k) = \left(1 - \mulam\SupSpec{\fOpconnf(k)}\right)^{-1}\leq 1/C_2\abs*{k}^2$ on $B_\varepsilon(0)$ and $\fgmu(k)\leq 1/\left(C_2\varepsilon^2\right)$ on $B_\varepsilon(0)^c$. Our pointwise bound on the $\fOpconnf$ factors then gives 
\begin{multline}
    \int_{B_\varepsilon(0)}\fgmu(k)^m\inner*{g_1}{\fOpconnf(k)^{2n-2}g_1}^\frac{1}{2}\inner*{g_2}{\fOpconnf(k)^2g_2}^\frac{1}{2}\frac{\dd k}{\left(2\pi\right)^d} \\\leq \frac{C^n}{C^m_2}\frac{\mathfrak{S}_{d-1}}{\left(2\pi\right)^d}\int^\varepsilon_0\frac{1}{r^{2m}}r^{d-1}\dd r = \frac{C^n}{C^m_2}\frac{\mathfrak{S}_{d-1}}{d-2m}\frac{\varepsilon^{d-2m}}{\left(2\pi\right)^d},
\end{multline}
where $\mathfrak{S}_{d-1}= d\pi^\frac{d}{2}/\Gamma\left(\tfrac{d}{2}+1\right)$ is the surface area of an unit $d$-sphere. Note that for all fixed $\varepsilon>0$ this term vanishes in the $d\to\infty$ limit.

To deal with the integral over $B_\varepsilon(0)^c$, we find the upper bound
\begin{multline}
    \int_{B_\varepsilon(0)^c}\fgmu(k)^m\inner*{g_1}{\fOpconnf(k)^{2n-2}g_1}^\frac{1}{2}\inner*{g_2}{\fOpconnf(k)^2g_2}^\frac{1}{2}\frac{\dd k}{\left(2\pi\right)^d} \\\leq \frac{1}{C^m_2\varepsilon^{2m}}\int\inner*{g_1}{\fOpconnf(k)^{2n-2}g_1}^\frac{1}{2}\inner*{g_2}{\fOpconnf(k)^2g_2}^\frac{1}{2}\frac{\dd k}{\left(2\pi\right)^d} \leq \frac{C^\frac{1}{2}}{C^m_2\varepsilon^{2m}}\begin{cases}
    C^\frac{1}{2} &: n=2\\
    g(d)^\frac{1}{2} &: n\geq 3.
    \end{cases}
\end{multline}

Note that the bounds we found were independent of the sets $E_1,E_2$, and therefore these bounds also apply to $\inner*{g^{(\xbar,a,b,\delta)}_1}{\left(\Opconnf^{n-1}\Optlam^m\Opconnf\right)\left(\xbar\right) g^{(\xbar,a,b,\delta)}_2}$ uniformly over every $\delta>0$, $\xbar\in\Rd$, and $\Pcal$-almost every $a,b\in\Ecal$. Since we can take $\delta\to 0$, our bound also applies to $\InfNorm{\Opconnf^{n-1}\Optlam^m\Opconnf}$.

For $n=2$, the $B_\varepsilon(0)^c$ integral dominates the $B_\varepsilon(0)$ integral, and so the result is proven.

For $n\geq 3$, both parts of the integral approach $0$ as $d\to\infty$. If both are of the same order or if the $B_\varepsilon(0)^c$ integral dominates, then we can fix $\varepsilon>0$ and get the result. On the other hand, if $g(d)^\frac{1}{2} \ll \frac{1}{d}\mathfrak{S}_{d-1}\varepsilon^{d}$ for all fixed $\varepsilon>0$ and the $B_\varepsilon(0)$ integral always dominates, we can improve the overall bound by letting $\varepsilon=\varepsilon(d)$ and having $\varepsilon(d)\to0$ as $d\to\infty$. Having a smaller value of $\varepsilon$ produces a smaller bound for the $B_\varepsilon(0)$ integral, but a larger bound for the $B_\varepsilon(0)^c$ integral. To get an optimal $\varepsilon$ we can set both terms to be of the same order - that is by having $\varepsilon(d) = 2\pi^{\frac{1}{2}}A^\frac{1}{d}g(d)^\frac{1}{2d}\left(1-\frac{2m}{d}\right)^\frac{1}{d}\Gamma\left(\frac{d}{2}+1\right)^\frac{1}{d}$ for some fixed $A>0$. For this choice, we have both integrals of the order
\begin{multline}
    \varepsilon^{-2m}g(d)^\frac{1}{2} = 2^{-2m}\pi^{-m}A^{-\frac{2m}{d}} g(d)^{\frac{1}{2}-\frac{m}{d}}\left(1-\frac{2m}{d}\right)^{-\frac{2m}{d}}\Gamma\left(\frac{d}{2}+1\right)^{-\frac{2m}{d}} \\= \left(\frac{\e}{2\pi}\right)^m g(d)^{\frac{1}{2}-\frac{m}{d}}\frac{1}{d^m}\left(1+o\left(1\right)\right),
\end{multline}
as $d\to\infty$.

\end{proof}

\TauNbound*

\begin{proof}
The $n=1$ case is trivial because $\tlam\left(\xbar,a,b\right)\in\left[0,1\right]$.

For $n=2$, we first bound $\InfNorm{\Opconnf^2}$. We get
\begin{multline}
    \InfNorm{\Opconnf^2} = \esssup_{x,y\in\X}\int \abs*{\connf(x,u)\connf(u,y)}\nu(\dd u) \leq \esssup_{x\in\X}\int \abs*{\connf(x,u)}^2\nu(\dd u) \\\leq \esssup_{x\in\X}\int \abs*{\connf(x,u)}\nu(\dd u) = \OneNorm{\Opconnf},
\end{multline}
by using Cauchy-Schwarz and symmetry of $\connf$ in the first inequality and $\connf(x,u)\in\left[0,1\right]$ in the second. We then use \ref{Assump:2ndMoment} to bound this by the constant $C$. We use Lemma~\ref{lem:phi-tau-phi_bounds} to get the required bound for the remaining terms arising from \eqref{eqn:tau-phi-Expansion} in Lemma~\ref{lem:tau-phi-Expansion}.

For $n\geq 3$, we once again bound the terms in the relevant expansion in \eqref{eqn:tau-phi-Expansion}. For $\InfNorm{\Opconnf^n}$ we use \ref{Assump:BallDecay} to get $\InfNorm{\Opconnf^n}= O\left(g(d)^\frac{1}{2}\right)$. We then use Lemma~\ref{lem:phi-tau-phi_bounds} to bound the remaining terms.
\end{proof}

For our proofs of Lemma~\ref{lem:BoundTrilamBar} and Lemma~\ref{lem:Martini_Bound}, we adopt the following notation. Given $g(\xbar;a,b)$ and $h(\xbar;a,b)$, we define
\begin{equation}
    (gh)(\xbar;a,b) := \int g(\xbar-\ybar;a,c)h(\ybar;c,b)\dd \ybar \Pcal(\dd c).
\end{equation}
This notational convention is associative and therefore can generalise to three or more terms unambiguously.

\BoundTrilamBar*

\begin{proof}
By applying \eqref{eqn:tau-phi-Expansionfunction} and \eqref{eqn:tau-phi-Expansionfunction_2}, we can get
\begin{equation}
    \tlam(\xbar;a,b) \leq \connf(\xbar;a,b) + \lambda (\connf\connf)(\xbar;a,b)+ \lambda^2(\connf\tlam\connf)(\xbar;a,b).
\end{equation}

We will in fact prove the more general result for the convolution of $m\geq 2$ $\tlam$-functions in dimensions $d>2m$. By bounding $\lambda\leq f(\lambda)$, and using Lemma~\ref{lem:ApproxPointWithL2} and the Fourier inversion theorem we can realise that we only need to get further bounds for the following objects. For $\vec{j}\in\left\{1,2,3\right\}^m$, we want to bound
\begin{equation}
\label{eqn:PQexpression}
    \int \prod^m_{i=1} P^{j_i}_{2i-1,2i}(k)\frac{\dd k}{\left(2\pi\right)^{d}},
\end{equation}
where
\begin{equation}
    P^{j}_{p,q}(k) := \begin{cases}
    \abs*{\inner*{g_p}{\fOpconnf(k) g_q}} &:j=1\\
    \abs*{\inner*{g_p}{\fOpconnf(k)^2g_q}} &:j=2\\
    \abs*{\inner*{g_p}{\fOpconnf(k)\fOptlam(k)\fOpconnf(k)g_q}} &: j=3,
    \end{cases}
\end{equation}
where $\left\{g_i\right\}_{i=1}^{2m}$ are the functions of the form $g_i(a) = \frac{1}{\Pcal(E_i)}\Id_{E_i}(a)$ arising from the application of Lemma~\ref{lem:ApproxPointWithL2}.

We now bound the following terms in the same way as \eqref{eqn:ExtractNormT}:
\begin{align}
    \abs*{\inner*{g_1}{\fOpconnf(k)^2g_2}} &\leq \inner*{g_1}{\fOpconnf(k)^2g_1}^{\frac{1}{2}}\inner*{g_2}{\fOpconnf(k)^2g_2}^{\frac{1}{2}}\\
    \abs*{\inner*{g_1}{\fOpconnf(k)\fOptlam(k)\fOpconnf(k)g_2}} &\leq f\left(\lambda\right)\fgmu(k)\inner*{g_1}{\fOpconnf(k)^2g_1}^{\frac{1}{2}}\inner*{g_2}{\fOpconnf(k)^2g_2}^{\frac{1}{2}}.
\end{align}
In terms of pointwise uniform bounds for the $\fOpconnf$ terms, we have $\abs*{\inner*{g_1}{\fOpconnf(k)g_2}}\leq \InfNorm{\fOpconnf(0)} \leq C$ and $\inner*{g}{\fOpconnf(k)^2g}\leq C^2$ (the latter also having been used in the proof of Lemma~\ref{lem:phi-tau-phi_bounds}).

As in Lemma~\ref{lem:phi-tau-phi_bounds}, we will require various integral bounds for these $\fOpconnf$ terms. Suppose we have $r \in\left\{1,\ldots,m-1\right\}$ instances of $P^{2}_{p,q}(k)$ and $P^{3}_{p,q}(k)$, and $m-r$ instances of $P^{1}_{p,q}(k)$. Then by applying Cauchy-Schwarz and the definitions of the $g_i$ we get
\begin{align}
    &\int \prod^{r}_{i=1} \inner*{g_{2i-1}}{\fOpconnf(k)^2g_{2i-1}}^{\frac{1}{2}}\inner*{g_{2i}}{\fOpconnf(k)^2g_{2i}}^{\frac{1}{2}} \prod^{m}_{i'=r+1}\abs*{\inner*{g_{2i'-1}}{\fOpconnf(k)g_{2i'}}}\frac{\dd k}{\left(2\pi\right)^{d}} \nonumber\\
    &\hspace{1cm}\leq \left(\int \inner*{g_1}{\fOpconnf(k)^2g_1}\inner*{g_2}{\fOpconnf(k)^2g_2}\frac{\dd k}{\left(2\pi\right)^{d}}\right)^{\frac{1}{2}}\nonumber\\
    &\hspace{2cm}\times
    \left(\int \prod^{r}_{i=2} \inner*{g_{2i-1}}{\fOpconnf(k)^2g_{2i-1}}\inner*{g_{2i}}{\fOpconnf(k)^2g_{2i}}\prod^{m}_{i'=r+1}\inner*{g_{2i'-1}}{\fOpconnf(k)g_{2i'}}^2\frac{\dd k}{\left(2\pi\right)^{d}}\right)^{\frac{1}{2}} \nonumber\\
    &\hspace{1cm} \leq \left(\esssup_{a_1,\ldots,a_4\in\Ecal}\left(\connf^{(2)}(\cdot;a_1,a_2)\star\connf^{(2)}(\cdot;a_3,a_4)\right)\left(\zerobar\right)\right)^{\frac{1}{2}}\nonumber\\
    &\hspace{2cm}\times\left(\esssup_{a_5,\ldots,a_{2m}\in\Ecal}\left(\connf^{(2)}(\cdot;a_5,a_6)\star\ldots\star\connf^{(2)}(\cdot;a_{2r-1},a_{2r})\right.\right.\nonumber\\
    &\hspace{7cm}\left.\left.\star\connf(\cdot;a_{2r+1},a_{2r+2})\star\ldots\star\connf(\cdot;a_{2m-1},a_{2m})\right)\left(\zerobar\right)\right)^{\frac{1}{2}} \nonumber\\
    &\hspace{1cm} \leq C^{m+r}.
\end{align}
In this last inequality we have used $\InfNorm{\fOpconnf(0)}\leq C$ and $\InfNorm{\fOpconnf(0)^2} \leq \TwoNorm{\fOpconnf(0)}^2 \leq C^2$ to extract off $\connf$ and $\connf^{(2)}$ via supremum bounds on the spatial position. We will also require integral bounds where there are $m$ factors of $P^{2}_{p,q}(k)$ and $P^{3}_{p,q}(k)$, and where there are $m$ factors of $P^{1}_{p,q}(k)$:
\begin{align}
    &\int \prod^{m}_{i=1} \inner*{g_{2i-1}}{\fOpconnf(k)^2g_{2i-1}}^{\frac{1}{2}}\inner*{g_{2i}}{\fOpconnf(k)^2g_{2i}}^{\frac{1}{2}} \frac{\dd k}{\left(2\pi\right)^{d}} \nonumber\\
    &\hspace{1cm}\leq \left(\int \inner*{g_1}{\fOpconnf(k)^2g_1}\inner*{g_2}{\fOpconnf(k)^2g_2}\frac{\dd k}{\left(2\pi\right)^{d}}\right)^{\frac{1}{2}}
    \left(\int \prod^{m}_{i=2} \inner*{g_{2i-1}}{\fOpconnf(k)^2g_{2i-1}}\inner*{g_{2i}}{\fOpconnf(k)^2g_{2i}}\frac{\dd k}{\left(2\pi\right)^{d}}\right)^{\frac{1}{2}} \nonumber\\
    &\hspace{1cm} \leq \left(\esssup_{a_1,\ldots,a_4\in\Ecal}\left(\connf^{(2)}(\cdot;a_1,a_2)\star\connf^{(2)}(\cdot;a_3,a_4)\right)\left(\zerobar\right)\right)^{\frac{1}{2}}\nonumber\\
    &\hspace{6cm}\times\left(\esssup_{a_5,\ldots,a_{2m}\in\Ecal}\left(\connf^{(2)}(\cdot;a_5,a_6)\star\ldots\star\connf^{(2)}(\cdot;a_{2m-1},a_{2m})\right)\left(\zerobar\right)\right)^{\frac{1}{2}} \nonumber\\
    &\hspace{1cm} \leq C^{2m}.\\
    &\int \prod^{m}_{i=1}\abs*{\inner*{g_{2i-1}}{\fOpconnf(k)g_{2i}}}\frac{\dd k}{\left(2\pi\right)^{d}} \nonumber\\
    &\hspace{1cm}\leq \left(\int \inner*{g_1}{\fOpconnf(k)g_2}^2\frac{\dd k}{\left(2\pi\right)^{d}}\right)^{\frac{1}{2}} \left(\prod^{m}_{i=2}\inner*{g_{2i-1}}{\fOpconnf(k)g_{2i}}^2\frac{\dd k}{\left(2\pi\right)^{d}}\right)^{\frac{1}{2}} \nonumber\\
    &\hspace{1cm} \leq \left(\esssup_{a_1,\ldots,a_4\in\Ecal}\left(\connf(\cdot;a_1,a_2)\star\connf(\cdot;a_3,a_4)\right)\left(\zerobar\right)\right)^{\frac{1}{2}}\nonumber\\
    &\hspace{6cm}\times\left(\esssup_{a_5,\ldots,a_{2m}\in\Ecal}\left(\connf(\cdot;a_5,a_6)\star\ldots\star\connf(\cdot;a_{2m-1},a_{2m})\right)\left(\zerobar\right)\right)^{\frac{1}{2}} \nonumber\\
    &\hspace{1cm} \leq C^{m}.
\end{align}
Note that for $m\geq 3$, better bounds are available via \ref{Assump:BallDecay}, but will not be required here.

We can now proceed to bound each of the expressions of the form \eqref{eqn:PQexpression} in much the same way as we did in the proof of Lemma~\ref{lem:phi-tau-phi_bounds}. We set $\varepsilon>0$ and partition the integral over $\Rd$ into one over $B_\varepsilon(0)$ and one over $B_\varepsilon(0)^c$. For the $B_\varepsilon(0)$ integral we use the uniform bounds on the $\fOpconnf$ terms and perform the integral of the $\fgmu$ terms (if there are any) as before. Note that we require $d>6$ for all of these to be finite. For the $B_\varepsilon(0)^c$ integrals we uniformly bound $\fgmu(k) \leq 1/\left(C_2\varepsilon^2\right)$ and use the integral bounds we calculated above. Since we are only asking for a constant bound, we don't need to worry about having $\varepsilon \to 0$ as $d\to\infty$.

\end{proof}

\connfComparison*

\begin{proof}
The first two inequalities in \eqref{eqn:connfComparison} holds from the general inequalities that holds for these norms (on a probability space for the second inequality). Then we can remove the $l$-dependence by performing the following calculation:
\begin{multline}
    \label{eqn:l-independence}
    \InfNorm{\fOpconnf\left(l\right) - \fOpconnf\left(l+ k\right)} \leq \esssup_{a,b\in\Ecal}\int \abs*{\e^{il\cdot\xbar}}\abs*{\left(1-\cos\left( k\cdot \xbar\right)\right)\connf\left(\xbar;a,b\right)}\dd \xbar \\= \esssup_{a,b\in\Ecal}\int \abs*{\left(1-\cos\left(k\cdot \xbar\right)\right)\connf\left(\xbar;a,b\right)}\dd \xbar = \InfNorm{\fOpconnf\left(0\right) - \fOpconnf\left( k\right)}.
\end{multline}
The final step relating the $\InfNorm{\cdot}$-norm to the difference of the spectral suprema is precisely that given by \eqref{eqn:directional2ndMoment} in \ref{Assump:2ndMoment}.
\end{proof}

We now address $\InfNorm{\Opconnf\Optklam\Optlam\Opconnf}$.

\PTkTPBound*

\begin{proof}
First note that for $k=0$ the kernel function $\tklam\left(x,y\right)=0$ for all $x,y\in\X$, and so the required inequality holds trivially. In this proof we hereafter may assume $k\ne 0$.

We use \eqref{eqn:tau-phi-Expansionfunction} to write
\begin{equation}
    \InfNorm{\Opconnf\Optlam\Optklam\Opconnf} \leq \InfNorm{\Opconnf^2\Optklam\Opconnf} + \lambda\InfNorm{\Opconnf^2\Optlam\Optklam\Opconnf}.
\end{equation}

We first get the bound on $\InfNorm{\Opconnf^2\Optlam\Optklam\Opconnf}$, using a similar approach to Lemma~\ref{lem:phi-tau-phi_bounds}. For each $\xbar\in\Rd$ we define the integral linear operator $\left(\Opconnf^2\Optlam\Optklam\Opconnf\right)\left(\xbar\right)\colon L^2\left(\Ecal\right)\to L^2\left(\Ecal\right)$ as the integral operator with kernel function
\begin{multline}
    \left(a,b\right)\mapsto \left(\Opconnf^2\Optlam\Optklam\Opconnf\right)\left(\xbar;a,b\right) \\:= \int \connf^{(2)}\left(\xbar-\ubar;a,c\right)\tlam\left(\ubar-\ubar';c,c'\right)\tklam\left(\ubar'-\ubar'';c',c''\right)\connf\left(\ubar'';c'',b\right)\dd \ubar \dd \ubar' \dd \ubar''\Pcal^{\otimes 3}\left(\dd c,\dd c',\dd c''\right).
\end{multline}
Recall the notation $\connf^{(2)}$ used in \eqref{eqn:phi^n_tau^n_notation}. As in the proof of Lemma~\ref{lem:phi-tau-phi_bounds}, we let $E_1,E_2\subset \Ecal$ be measurable and $\Pcal$-positive sets, and aim to bound $\inner{g_1}{\left(\Opconnf\Optlam\Optklam\Opconnf\right)\left(\xbar\right)g_2}$ (where $g_i(c) := \frac{1}{\Pcal\left(E_i\right)}\Id_{E_i}\left(c\right)$ for $i=1,2$) independently of the choice of $E_1,E_2$. If we do this we will have proven the result.

As before, we use the Fourier inversion theorem to write $\inner{g_1}{\left(\Opconnf\Optlam\Optklam\Opconnf\right)\left(\xbar\right)g_2}$ as the integral of an inner product of Fourier transformed operators over the Fourier argument. Having $\lambda<\lambda_O$ allows us to swap the integrals in this step. Then we can use Cauchy-Schwarz and the definitions of the operator norm and the bootstrap functions to extract factors of $\fgmu(l)$ and $\widehat{J}_{\mulam}\left(k,l\right)$ from the inner product. The net result is
\begin{align}
    \inner*{g_1}{\left(\Opconnf^2\Optlam\Optklam\Opconnf\right)\left(\xbar\right)g_2} &\leq \int\abs*{\inner*{g_1}{\fOpconnf(l)^{2}\fOptlam(l)\fOptklam(l)\fOpconnf(l)g_2}}\frac{\dd l}{\left(2\pi\right)^d} \nonumber\\
    &\leq f(\lambda)^2\int \fgmu(l) \widehat{J}_{\mulam}\left(k,l\right) \inner*{g_1}{\fOpconnf(l)^4g_1}^\frac{1}{2}\inner*{g_2}{\fOpconnf(l)^2g_2}^\frac{1}{2}\frac{\dd l}{\left(2\pi\right)^d}.
\end{align}
The $\widehat{J}_{\mulam}\left(k,l\right)$ term produces the required factor of $\left(1-\SupSpec{\fOpconnf(k)}\right)$, so (after using symmetry in $k$) we only need to bound the expression
\begin{equation}
    \int \left(2\fgmu(l)^2\fgmu(l-k) + \fgmu(l+k)\fgmu(l)\fgmu(l-k)\right)\inner*{g_1}{\fOpconnf(l)^4g_1}^\frac{1}{2}\inner*{g_2}{\fOpconnf(l)^2g_2}^\frac{1}{2}\frac{\dd l}{\left(2\pi\right)^d}.
\end{equation}

We partition our integral using open $\varepsilon$-balls around the poles where $\varepsilon^2 \leq \tfrac{1-C_1}{C_2}$ - the constants $C_1$ and $C_2$ coming from \ref{Assump:Bound}. Let $B_{\varepsilon}(p)$ denote the open $\varepsilon$-ball around $p\in\Rd$. For $n\in\left\{0,1,2,3\right\}$, let 
\begin{equation}
\label{eqn:DisplacedEpsilonBalls}
    A_n := \left\{l\in\Rd: l\in B_\varepsilon(p) \text{ for precisely $n$ elements }p\in\left\{k,0,-k\right\}\right\}.
\end{equation}
Firstly it is possible for $A_3,A_2 = \emptyset$ if $\abs*{k}$ is sufficiently large compared to $\varepsilon$. It is also easy to see that $A_3\subset B_{\varepsilon}(0)$ and $A_2\subset B_{2\varepsilon}(0)$. We will partition $\Rd$ into $A_3$, $A_2$, $A_1$, and $A_0$ and bound the $l$-integral on each part. Recall that \ref{Assump:Bound} implies that $\fgmu(l) \leq C^{-1}_2\abs*{l}^{-2}$ for $\abs*{l}\leq \varepsilon$ and $\fgmu(l) \leq C^{-1}_2\varepsilon^2$ for $\abs*{l}\geq \varepsilon$. Also recall the pointwise \eqref{eqn:phi^k_pointwise} and integral \eqref{eqn:phi^k_integral} bounds we derived for the $\fOpconnf$ factors. We treat $A_3$ first:
\begin{align}
    &\int_{A_3}\fgmu(l)^2\fgmu(l-k)\inner*{g_1}{\fOpconnf(l)^4g_1}^\frac{1}{2}\inner*{g_2}{\fOpconnf(l)^2g_2}^\frac{1}{2}\frac{\dd l}{\left(2\pi\right)^d}\leq \frac{C^3}{C^3_2}\int_{B_\varepsilon(0)\cap B_\varepsilon(k)}\frac{1}{\abs*{l}^4\abs*{l-k}^2} \frac{\dd l}{\left(2\pi\right)^d} \nonumber \\
    &\qquad\leq \frac{C^3}{C^3_2}\left(\int_{B_\varepsilon(0)}\frac{1}{\abs*{l}^6} \frac{\dd l}{\left(2\pi\right)^d}\right)^{\frac{2}{3}}\left(\int_{B_\varepsilon(k)}\frac{1}{\abs*{l-k}^6} \frac{\dd l}{\left(2\pi\right)^d}\right)^{\frac{1}{3}} \leq \frac{C^3}{C^3_2}\int_{B_\varepsilon(0)}\frac{1}{\abs*{l}^6} \frac{\dd l}{\left(2\pi\right)^d} = \frac{C^3}{C^3_2}\frac{\mathfrak{S}_{d-1}}{d-6}\frac{\varepsilon^{d-6}}{\left(2\pi\right)^d}\\
    &\int_{A_3}\fgmu(l+k)\fgmu(l)\fgmu(l-k)\inner*{g_1}{\fOpconnf(l)^4g_1}^\frac{1}{2}\inner*{g_2}{\fOpconnf(l)^2g_2}^\frac{1}{2}\frac{\dd l}{\left(2\pi\right)^d} \nonumber\\
    &\hspace{2cm}\leq \frac{C^3}{C^3_2}\int_{A_3}\frac{1}{\abs*{l+k}^2\abs*{l}^2\abs*{l-k}^2} \frac{\dd l}{\left(2\pi\right)^d}\nonumber \\
    &\hspace{2cm}\leq \frac{C^3}{C^3_2}\left(\int_{B_\varepsilon(-k)}\frac{1}{\abs*{l+k}^6} \frac{\dd l}{\left(2\pi\right)^d}\right)^{\frac{1}{3}}\left(\int_{B_\varepsilon(0)}\frac{1}{\abs*{l}^6} \frac{\dd l}{\left(2\pi\right)^d}\right)^{\frac{1}{3}}\left(\int_{B_\varepsilon(k)}\frac{1}{\abs*{l-k}^6} \frac{\dd l}{\left(2\pi\right)^d}\right)^{\frac{1}{3}} \nonumber \\
    &\hspace{2cm}\leq \frac{C^3}{C^3_2}\int_{B_\varepsilon(0)}\frac{1}{\abs*{l}^6} \frac{\dd l}{\left(2\pi\right)^d} = \frac{C^3}{C^3_2}\frac{\mathfrak{S}_{d-1}}{d-6}\frac{\varepsilon^{d-6}}{\left(2\pi\right)^d}\label{eqn:Holder_trick}
\end{align}
Recall $\mathfrak{S}_{d-1}= d\pi^\frac{d}{2}/\Gamma\left(\tfrac{d}{2}+1\right)$ is the surface area of an unit $d$-sphere. In these calculations we used H\"older's inequality and increased the domain of integration to get upper bounds. These integrals are finite for $d>6$, and approach $0$ if we take bounded $\varepsilon$.

For $A_2$ the counting of cases is the only extra complication. A precise counting is entirely possible, but unnecessary for our purposes. It is simple to see that the arguments outlined for $A_3$ can be applied to get the bound as an integer multiple of 
\begin{equation}
    \frac{C^3}{C^3_2}\left(\int_{B_\varepsilon(0)}\left(\frac{1}{\abs*{l}^6}  + \frac{1}{\varepsilon^2\abs*{l}^4}  + \frac{1}{\varepsilon^4\abs*{l}^2} \right)\frac{\dd l}{\left(2\pi\right)^d}\right) = \frac{C^3}{C^3_2}\frac{\mathfrak{S}_{d-1}}{\left(2\pi\right)^d}\left(\frac{1}{d-6} + \frac{1}{d-4} + \frac{1}{d-2}\right)\varepsilon^{d-6}.
\end{equation}
Essentially, these terms arise because given a pair of two distinct points from $\left\{k,0,-k\right\}$, there are terms with three, two, and one factor(s) of $\fgmu$ centred on these two points. This bound is finite for $d>6$, and approaches $0$ if we take bounded $\varepsilon$.

Repeating this for $A_1$, we get the bound as an integer multiple of
\begin{equation}
    \frac{C^3}{C^3_2}\left(\int_{B_\varepsilon(0)}\left(\frac{1}{\varepsilon^2\abs*{l}^4}  + \frac{1}{\varepsilon^4\abs*{l}^2} + \frac{1}{\varepsilon^6} \right)\frac{\dd l}{\left(2\pi\right)^d}\right) = \frac{C^3}{C^3_2}\frac{\mathfrak{S}_{d-1}}{\left(2\pi\right)^d}\left(\frac{1}{d-4} + \frac{1}{d-2} + \frac{1}{d}\right)\varepsilon^{d-6}.
\end{equation}
Essentially, these terms arise because given a single point from $\left\{k,0,-k\right\}$, there are terms with two, one, and zero factor(s) of $\fgmu$ centred on this single point. This bound is finite for $d>4$, and approaches $0$ if we take bounded $\varepsilon$.

The $A_0$ integral is qualitatively different. By taking the integral bound for the $\fOpconnf$ factors and a uniform (on $A_0$) bound for the factors of $\fgmu$, we get the bound
\begin{equation}
    \frac{C^3}{C^3_2}\varepsilon^{-6}g(d)^\frac{1}{2}.
\end{equation}

As in Lemma~\ref{lem:phi-tau-phi_bounds}, if the $g(d)^\frac{1}{2}$ term dominates, then we have a sufficient bound for our result. The largest of the other terms (for $d$ sufficiently large and $\varepsilon\leq 1$) is the $\frac{\mathfrak{S}_{d-1}}{d-6}\frac{\varepsilon^{d-6}}{\left(2\pi\right)^d}$ term. Like we did in Lemma~\ref{lem:phi-tau-phi_bounds}, we can make these other terms smaller by decreasing $\varepsilon$ at the cost of making the $g(d)^\frac{1}{2}$ bound greater. The largest term can be minimised (up to a constant) by taking $\varepsilon(d) = 2\pi^{\frac{1}{2}}A^\frac{1}{d}g(d)^\frac{1}{2d}\left(1-\frac{6}{d}\right)^\frac{1}{d}\Gamma\left(\frac{d}{2}+1\right)^\frac{1}{d}$ for some fixed $A>0$. For this choice, we have both integrals of the order
\begin{equation}
    \varepsilon^{-6}g(d)^\frac{1}{2} = 2^{-6}\pi^{-3}A^{-\frac{6}{d}}g(d)^{\frac{1}{2}-\frac{3}{d}}\left(1-\frac{6}{d}\right)^{-\frac{6}{d}}\Gamma\left(\frac{d}{2}+1\right)^{-\frac{6}{d}} = \left(\frac{\e}{2\pi}\right)^3 g(d)^{\frac{1}{2}-\frac{3}{d}}\frac{1}{d^3}\left(1+o\left(1\right)\right),
\end{equation}
as $d\to\infty$.

Bounding the $\InfNorm{\Opconnf^2\Optklam\Opconnf}$ term is similar. For each $\xbar\in\Rd$ we define the integral linear operator $\left(\Opconnf^2\Optklam\Opconnf\right)\left(\xbar\right)\colon L^2\left(\Ecal\right)\to L^2\left(\Ecal\right)$ as the integral operator with kernel function
\begin{multline}
    \left(a,b\right)\mapsto \left(\Opconnf^2\Optklam\Opconnf\right)\left(\xbar;a,b\right) \\:= \int \connf^{(2)}\left(\xbar-\ubar;a,c\right)\tklam\left(\ubar-\ubar';c,c''\right)\connf\left(\ubar';c',b\right)\dd \ubar \dd \ubar' \Pcal^{\otimes 2}\left(\dd c,\dd c'\right).
\end{multline}
We once again take $\Pcal$-positive sets $E_1,E_2$ and corresponding $g_1,g_2$. Then we use the Fourier inversion theorem and Cauchy-Schwarz to get
\begin{equation}
    \inner*{g_1}{\left(\Opconnf^2\Optklam\Opconnf\right)\left(\xbar\right)g_2} \leq f(\lambda)\int\widehat{J}_{\mulam}\left(k,l\right) \inner*{g_1}{\fOpconnf(l)^4g_1}^\frac{1}{2}\inner*{g_2}{\fOpconnf(l)^2g_2}^\frac{1}{2}\frac{\dd l}{\left(2\pi\right)^d}.
\end{equation}
Then once again the $\widehat{J}_{\mulam}\left(k,l\right)$ term produces the required factor of $\left(1-\SupSpec{\fOpconnf(k)}\right)$, so (after using symmetry in $k$) we only need to bound the expression
\begin{equation}
\label{eqn:phiphitaukphiboundintegral}
    \int \left(2\fgmu(l)\fgmu(l-k) + \fgmu(l+k)\fgmu(l-k)\right)\inner*{g_1}{\fOpconnf(l)^4g_1}^\frac{1}{2}\inner*{g_2}{\fOpconnf(l)^2g_2}^\frac{1}{2}\frac{\dd l}{\left(2\pi\right)^d}.
\end{equation}

Counting the number of factors of $\fgmu$ centred on each of the poles $\left\{k,0,-k\right\}$ then tells us that there is a uniform bound that is some integer multiple of
\begin{multline}
    \frac{C^2}{C^2_2}\left(\int_{B_\varepsilon(0)}\left(\frac{1}{\abs*{l}^{4}}  + \frac{1}{\varepsilon^2\abs*{l}^{2}}  + \frac{1}{\varepsilon^4}\right)\frac{\dd l}{\left(2\pi\right)^d} + \varepsilon^{-4}g(d)^\frac{1}{2}\right) \\= \frac{C^2}{C^2_2}\frac{\mathfrak{S}_{d-1}}{\left(2\pi\right)^d}\left(\frac{1}{d-4} + \frac{1}{d-2} + \frac{1}{d}\right)\varepsilon^{d-4} + \frac{C^2}{C^2_2}\varepsilon^{-4}g(d)^\frac{1}{2}.
\end{multline}
Once again, if the $g(d)^\frac{1}{2}$ term dominates the result is proven. Otherwise, we let $\varepsilon$ vary with $d$. The largest term can be minimised (up to a constant) by taking $\varepsilon(d) = 2\pi^{\frac{1}{2}}A^\frac{1}{d}g(d)^\frac{1}{2d}\left(1-\frac{4}{d}\right)^\frac{1}{d}\Gamma\left(\frac{d}{2}+1\right)^\frac{1}{d}$ for some fixed $A>0$. For this choice, we have both integrals of the order
\begin{equation}
    \varepsilon^{-4}\beta^2 = 2^{-4}\pi^{-2}A^{-\frac{4}{d}} g(d)^{\frac{1}{2}-\frac{2}{d}}\left(1-\frac{4}{d}\right)^{-\frac{4}{d}}\Gamma\left(\frac{d}{2}+1\right)^{-\frac{4}{d}} = \left(\frac{\e}{2\pi}\right)^2 g(d)^{\frac{1}{2}-\frac{2}{d}}\frac{1}{d^2}\left(1+o\left(1\right)\right),
\end{equation}
as $d\to\infty$. For large $d$, this is dominated by $g(d)^{\frac{1}{2}-\frac{3}{d}}d^{-3}$ and therefore $\InfNorm{\Opconnf^2\Optklam\Opconnf}$ is dominated by $\InfNorm{\Opconnf^2\Optlam\Optklam\Opconnf}$ and we prove the result.

\end{proof}

The following lemma allows us to deal with occurrences of $\WkBar$.

\WkBound*

\begin{proof}
The first inequality follows by applying the cosine-splitting result (Lemma~\ref{lem:cosinesplitlemma}) to \eqref{eqn:tau-phi-Expansionfunction_2}.

For the $\tlam$ term in $\WkBar$ we apply \eqref{eqn:tau-phi-Expansionfunction} and \eqref{eqn:tau-phi-Expansionfunction_2} to get
\begin{equation}
    \tlam(\xbar;a,b) \leq \connf(\xbar;a,b) + \lambda (\connf\connf)(\xbar;a,b)+ \lambda^2(\connf\tlam\connf)(\xbar;a,b).
\end{equation}
Similarly, with the additional use of \eqref{eqn:tau-phiExpansionDisplacement}, we get
\begin{multline}
    \tklam(\xbar;a,b) \leq \connf_k(\xbar;a,b) + 2\lambda\left( (\connf_k\connf)(\xbar;a,b) +(\connf\connf_k)(\xbar;a,b)\right)+ 4\lambda^2(\connf\tlam\connf_k)(\xbar;a,b) \\ + 4\lambda^2(\connf\tklam\connf)(\xbar;a,b).
\end{multline}
By using Lemma~\ref{lem:ApproxPointWithL2}, the Fourier inversion theorem, and $\lambda\leq f(\lambda)$, we can realise that we can bound $\WkBar$ by bounding the following objects. For $\vec{j}\in\left\{1,2,3\right\}\times\left\{1,2,3,4,5\right\}$, define
\begin{equation}
\label{eqn:Wk_target_integral}
    \mathcal{W}(k;\vec{j}) := \int P^{j_1}_{1,2}(l)Q^{j_2}_{3,4}(l;k)\frac{\dd l}{\left(2\pi\right)^{d}},
\end{equation}
where
\begin{align}
    P^{j}_{1,2}(l) &:= \begin{cases}
    \abs*{\inner*{g_1}{\fOpconnf(l) g_2}} &:j=1\\
    \abs*{\inner*{g_1}{\fOpconnf(l)^2g_2}} &:j=2\\
    \abs*{\inner*{g_1}{\fOpconnf(l)\fOptlam(l)\fOpconnf(l)g_2}} &: j=3,
    \end{cases}\\
    Q^{j}_{3,4}(l;k) &:= \begin{cases}
    \abs*{\inner*{g_{3}}{\fOpconnf_k(l) g_{4}}} &:j=1\\
    \abs*{\inner*{g_{3}}{\fOpconnf_k(l)\fOpconnf(l) g_{4}}} &:j=2\\
    \abs*{\inner*{g_{3}}{\fOpconnf(l)\fOpconnf_k(l) g_{4}}} &:j=3\\
    \abs*{\inner*{g_{3}}{\fOpconnf(l)\fOptlam(l)\fOpconnf_k(l) g_{4}}} &:j=4\\
    \abs*{\inner*{g_{3}}{\fOpconnf(l)\fOptklam(l)\fOpconnf(l) g_{4}}} &:j=5,
    \end{cases}
\end{align}
where $\left\{g_m\right\}_{m=1}^{4}$ are the functions of the form $g_m(a) = \frac{1}{\Pcal(E_m)}\Id_{E_m}(a)$ arising from the application of Lemma~\ref{lem:ApproxPointWithL2}.

For the $P^j_{1,2}(l)$ factor we can get bounds as we did in the proof of Lemma~\ref{lem:BoundTrilamBar}. For $Q^{j}_{3,4}(l;k)$ we need to do a bit more processing. For $j=1$,
\begin{multline}
    \abs*{\inner*{g_{3}}{\fOpconnf_k(l) g_{4}}} \leq \frac{1}{\Pcal(E_{3})\Pcal(E_{4})}\int_{E_{3}\times E_{4}}\abs*{\fconnf_k(l;a,b)}\Pcal^{\otimes 2}(\dd a, \dd b) \leq \InfNorm{\fOpconnf_k(l)} \\\leq C\left(1 - \SupSpec{\fOpconnf(k)}\right),
\end{multline}
where in the last inequality we used Lemma~\ref{thm:connfComparison}. For $j=2$, we use Cauchy-Schwarz to get
\begin{multline}
    \abs*{\inner*{g_{3}}{\fOpconnf_k(l)\fOpconnf(l) g_{4}}} \leq \inner*{g_3}{\fOpconnf_k(l)^2 g_3}^{\frac{1}{2}} \inner*{g_4}{\fOpconnf(l)^2 g_4}^{\frac{1}{2}}
    \leq \InfNorm{\fOpconnf_k(l)}\inner*{g_4}{\fOpconnf(l)^2 g_4}^{\frac{1}{2}} \\\leq C\left(1 - \SupSpec{\fOpconnf(k)}\right)\inner*{g_4}{\fOpconnf(l)^2 g_4}^{\frac{1}{2}}.
\end{multline}
An identical argument gives the same bound for $j=3$. For $j=4$, we can use the self-adjointness of $\fOpconnf(l)$ and $\fOptlam(l)$ and the Cauchy-Schwarz inequality to get
\begin{align}
    \abs*{\inner*{g_{3}}{\fOpconnf(l)\fOptlam(l)\fOpconnf_k(l) g_{4}}} &\leq \inner*{g_{3}}{\fOpconnf(l)\fOptlam(l)^2\fOpconnf(l)g_{3}}^{\frac{1}{2}} \inner*{g_{4}}{\fOpconnf_k(l)^2g_{4}}^{\frac{1}{2}} \nonumber\\
    &\leq f(\lambda)\fgmu(l)\inner*{g_{3}}{\fOpconnf(l)^2g_{3}}^{\frac{1}{2}}\InfNorm{\fOpconnf_k(l)}\nonumber \\
    &\leq Cf(\lambda)\fgmu(l)\left(1 - \SupSpec{\fOpconnf(k)}\right)\inner*{g_{3}}{\fOpconnf(l)^2g_{3}}^{\frac{1}{2}},
\end{align}
and similarly
\begin{multline}
    \abs*{\inner*{g_{3}}{\fOpconnf(l)\fOptklam(l)\fOpconnf(l) g_{4}}} \leq f(\lambda)\widehat{J}_{\mulam}\left(k,l\right)\inner*{g_3}{\fOpconnf(l)^2g_3}^\frac{1}{2}\inner*{g_4}{\fOpconnf(l)^2g_4}^\frac{1}{2} \\ \leq Cf(\lambda)\dispfgmu(l;k) \left(1 - \SupSpec{\fOpconnf(k)}\right)\inner*{g_3}{\fOpconnf(l)^2g_3}^\frac{1}{2}\inner*{g_4}{\fOpconnf(l)^2g_4}^\frac{1}{2},
\end{multline}
where we write for convenience
\begin{equation}
    \dispfgmu(l_3;k) = \fgmu(l_3)\fgmu(l_3-k) + \fgmu(l_3)\fgmu(l_3+k) + \fgmu(l_3-k)\fgmu(l_3+k).
\end{equation}
Note that all the bounds for the $Q^{j}_{3,4}(l)$ terms have a $\left(1 - \SupSpec{\fOpconnf(k)}\right)$ factor. So we only need to prove that the remaining $l$-integral of $\fgmu$ and $\fOpconnf$ factors is bounded.

As in our previous proofs, we partition $\Rd$ into the sets $A_n$ defined earlier in \eqref{eqn:DisplacedEpsilonBalls}. For the $A_1$, $A_2$, and $A_3$ parts we then use the bounds $C$ and $C^2$ for the various $\fOpconnf$ factors and $1/\left(C_2\abs*{l}^2\right)$ to bound the $\fgmu$ factors. By using the techniques used previously in this section, we can show that these contributions is bounded by some $\varepsilon$ and $d$ (only) dependent constant. Since we don't need to show decay, we don't need to worry about taking $\varepsilon$ to $0$.

For the $A_0$ contribution, we bound $\fgmu$ factors by $1/\left(C_2\varepsilon^2\right)$ and show that the integral of the $\fOpconnf$ factors over all of $\Rd$ is bounded. For $j_2\ne 1$ this proceeds similarly to the argument in the proof of Lemma~\ref{lem:BoundTrilamBar}. For $j_2=1$ we need to take a step back. These versions of \eqref{eqn:Wk_target_integral} arise from trying to bound one particular type of term, which we now treat separately. We bound
\begin{equation}
    \esssup_{\xbar\in\Rd,a_1,\ldots,a_4\in\Ecal}\int \tlam\left(\xbar-\ubar;a_1,a_2\right)\connf_k\left(\ubar;a_3,a_4\right)\dd \ubar \leq \esssup_{a_3,a_4\in\Ecal}\int \connf_k\left(\ubar;a_3,a_4\right)\dd \ubar \leq \InfNorm{\fOpconnf_k(0)}.
\end{equation}
Then Lemma~\ref{thm:connfComparison} gives the result.

\end{proof}

We are now only left with bounding $\HkBar$.
\MartiniBound*

\begin{proof}
We begin by defining a sightly different object to $\HkBar$. Given $\vec{a}=\left(a_1,\ldots,a_{16}\right)\in\Ecal^{16}$, define 
\begin{align}
    H'\left(\xbar_1,\xbar_2,\xbar_3;\vec{a};k\right) :=& \int\tlam(\sbar-\xbar_1;a_1,a_2)\tlam(\ubar;a_3,a_4) \tlam(\vbar-\sbar;a_5,a_6) \tlam(\vbar+\xbar_2-\tbar;a_7,a_8) \nonumber\\
    &\hspace{1cm}\times \tlam(\sbar-\wbar;a_9,a_{10})\tlam(\wbar-\ubar;a_{11},a_{12}) \tlam(\tbar-\wbar;a_{13},a_{14}) \nonumber\\
    &\hspace{2cm}\times\tklam(\tbar + \xbar_3-\ubar;a_{15},a_{16}) \dd\tbar\dd\wbar\dd\zbar\dd\ubar,
\end{align}
so we have
\begin{equation}
    \HkBar \leq f(\lambda)^5\esssup_{\xbar_1,\xbar_2\in\Rd,\vec{a}\in\Ecal^{16}}H'\left(\xbar_1,\xbar_2,\zerobar;\vec{a};k\right).
\end{equation}
We then proceed to bound $H'\left(\xbar_1,\xbar_2,\xbar_3;\vec{a};k\right)$ to get our result. This form is preferred because it more easily indicates what the Fourier arguments should be when we take a Fourier transform. A schematic Fourier diagram for this can be found in \cite[Figure~2]{HeyHofLasMat19}.

We then expand out the $\tlam$ and $\tklam$ terms. For each of the $\tlam$ terms we apply \eqref{eqn:tau-phi-Expansionfunction} and \eqref{eqn:tau-phi-Expansionfunction_2} to get
\begin{equation}
    \tlam(\xbar;a,b) \leq \connf(\xbar;a,b) + \lambda (\connf\connf)(\xbar;a,b)+ \lambda^2(\connf\tlam\connf)(\xbar;a,b).
\end{equation}
Similarly, with the additional use of the cosine-splitting lemma, we get
\begin{multline}
    \tklam(\xbar;a,b) \leq \connf_k(\xbar;a,b) + 2\lambda\left( (\connf_k\connf)(\xbar;a,b) +(\connf\connf_k)(\xbar;a,b)\right)+ 4\lambda^2(\connf\tlam\connf_k)(\xbar;a,b) \\ + 4\lambda^2(\connf\tklam\connf)(\xbar;a,b).
\end{multline}
By using Lemma~\ref{lem:ApproxPointWithL2}, the Fourier inversion theorem, and $\lambda\leq f(\lambda)$, we can realise that we only need to get appropriate bounds for the following objects. For $\vec{j}\in\left\{1,2,3\right\}^7\times\left\{1,2,3,4,5\right\}$, define
\begin{multline}
\label{eqn:Hk_target_integral}
    \mathcal{H}(k;\vec{j}) := \int P^{j_1}_{1,2}(l_1)P^{j_2}_{3,4}(l_1)P^{j_3}_{5,6}(l_2)P^{j_4}_{7,8}(l_2)P^{j_5}_{9,10}(l_1-l_2)P^{j_6}_{11,12}(l_1-l_3)P^{j_7}_{13,14}(l_2-l_3) \\ \times Q^{j_8}_{15,16}(l_3;k)\frac{\dd l_1 \dd l_2 \dd l_3}{\left(2\pi\right)^{3d}},
\end{multline}
where
\begin{align}
    P^{j}_{m,n}(l) &:= \begin{cases}
    \abs*{\inner*{g_m}{\fOpconnf(l) g_n}} &:j=1\\
    \abs*{\inner*{g_m}{\fOpconnf(l)^2g_n}} &:j=2\\
    \abs*{\inner*{g_m}{\fOpconnf(l)\fOptlam(l)\fOpconnf(l)g_n}} &: j=3,
    \end{cases} \qquad\text{for }m,n\in\left\{1,2,\ldots,14\right\},\\
    Q^{j}_{15,16}(l;k) &:= \begin{cases}
    \abs*{\inner*{g_{15}}{\fOpconnf_k(l) g_{16}}} &:j=1\\
    \abs*{\inner*{g_{15}}{\fOpconnf_k(l)\fOpconnf(l) g_{16}}} &:j=2\\
    \abs*{\inner*{g_{15}}{\fOpconnf(l)\fOpconnf_k(l) g_{16}}} &:j=3\\
    \abs*{\inner*{g_{15}}{\fOpconnf(l)\fOptlam(l)\fOpconnf_k(l) g_{16}}} &:j=4\\
    \abs*{\inner*{g_{15}}{\fOpconnf(l)\fOptklam(l)\fOpconnf(l) g_{16}}} &:j=5,
    \end{cases}
\end{align}
where $\left\{g_m\right\}_{m=1}^{16}$ are the functions of the form $g_m(a) = \frac{1}{\Pcal(E_m)}\Id_{E_m}(a)$ arising from the application of Lemma~\ref{lem:ApproxPointWithL2}. If we can bound these $\mathcal{H}(k;\vec{j})$ appropriately, then we can combine them to get the required bound.

We now recall pointwise bounds for the $P^{j}_{m,n}(l)$ and $Q^{j}_{15,16}(l)$ terms. In the proof of Lemma~\ref{lem:W_kBound} we derived
\begin{align}
    \abs*{\inner*{g_m}{\fOpconnf(l)g_n}} &\leq \InfNorm{\fOpconnf(0)}\\
    \abs*{\inner*{g_m}{\fOpconnf(l)^2g_n}} &\leq \inner*{g_m}{\fOpconnf(l)^2g_m}^{\frac{1}{2}}\inner*{g_n}{\fOpconnf(l)^2g_n}^{\frac{1}{2}}\\
    \abs*{\inner*{g_m}{\fOpconnf(l)\fOptlam(l)\fOpconnf(l)g_n}} &\leq f(\lambda)\fgmu(l)\inner*{g_m}{\fOpconnf(l)^2g_m}^{\frac{1}{2}}\inner*{g_n}{\fOpconnf(l)^2g_n}^{\frac{1}{2}},
\end{align}
and 
\begin{align}
    \abs*{\inner*{g_{15}}{\fOpconnf_k(l) g_{16}}} &\leq \InfNorm{\fOpconnf_k(l)} \leq C\left(1 - \SupSpec{\fOpconnf(k)}\right)\\
    \abs*{\inner*{g_{15}}{\fOpconnf_k(l)\fOpconnf(l) g_{16}}} &\leq C\left(1 - \SupSpec{\fOpconnf(k)}\right) \inner*{g_{16}}{\fOpconnf(l)^2g_{16}}^{\frac{1}{2}}\\
    \abs*{\inner*{g_{15}}{\fOpconnf_k(l)\fOpconnf(l) g_{16}}} &\leq C\left(1 - \SupSpec{\fOpconnf(k)}\right) \inner*{g_{15}}{\fOpconnf(l)^2g_{15}}^{\frac{1}{2}}\\
    \abs*{\inner*{g_{15}}{\fOpconnf(l)\fOptlam(l)\fOpconnf_k(l) g_{16}}} &\leq Cf(\lambda)\left(1 - \SupSpec{\fOpconnf(k)}\right)\fgmu(l)\inner*{g_{15}}{\fOpconnf(l)^2g_{15}}\\
    \abs*{\inner*{g_{15}}{\fOpconnf(l)\fOptklam(l)\fOpconnf(l) g_{16}}} &\leq Cf(\lambda)\left(1 - \SupSpec{\fOpconnf(k)}\right)\dispfgmu(l;k)\inner*{g_{15}}{\fOpconnf(l)^2g_{15}}^\frac{1}{2}\inner*{g_{16}}{\fOpconnf(l)^2g_{16}}^\frac{1}{2}
\end{align}
where
\begin{equation}
    \dispfgmu(l;k) = \fgmu(l)\fgmu(l-k) + \fgmu(l)\fgmu(l+k) + \fgmu(l-k)\fgmu(l+k).
\end{equation}
Also recall that we can bound $\abs*{\inner*{g_m}{\fOpconnf(l)g_n}}\leq C$ and $\inner*{g_m}{\fOpconnf(l)^2g_m}\leq C^2$ if required. Note that all the bounds for the $Q^{j}_{15,16}(l)$ terms have a $\left(1 - \SupSpec{\fOpconnf(k)}\right)$ factor. We therefore only need to show that the remaining integrals of $\fOpconnf$ and $\fgmu$ terms can all be bounded by some constant multiple of $\beta^2$. Also note that the bounds for $Q^{1}_{15,16}(l)$, $Q^{2}_{15,16}(l)$, and $Q^{3}_{15,16}(l)$ only differ by a uniform constant (after we bound $\inner*{g_m}{\fOpconnf(l)^2g_m}\leq C^2$), and so once we have dealt with one of these we have dealt with the other two.

Let us first consider $\vec{j}= \left(1,1,1,1,1,1,1,1\right)$. After applying the uniform pointwise bound for $Q^{1}_{15,16}(l)$, we use the Cauchy-Schwarz inequality to bound
\begin{align}
    &\int \abs*{\inner*{g_1}{\fOpconnf(l_1)g_2}}\abs*{\inner*{g_3}{\fOpconnf(l_1)g_4}}\abs*{\inner*{g_5}{\fOpconnf(l_2)g_6}}\abs*{\inner*{g_7}{\fOpconnf(l_2)g_8}}\abs*{\inner*{g_9}{\fOpconnf(l_1-l_2)g_{10}}} \nonumber\\
    &\hspace{7cm}\times\abs*{\inner*{g_{11}}{\fOpconnf(l_1-l_3)g_{12}}}\abs*{\inner*{g_{13}}{\fOpconnf(l_2-l_3)g_{14}}}\frac{\dd l_1 \dd l_2 \dd l_3}{\left(2\pi\right)^{3d}} \nonumber\\
    &\qquad \leq \left(\int \inner*{g_1}{\fOpconnf(l_1)g_2}^2\inner*{g_3}{\fOpconnf(l_1)g_4}^2\inner*{g_5}{\fOpconnf(l_2)g_6}^2\inner*{g_{13}}{\fOpconnf(l_2-l_3)g_{14}}^2\frac{\dd l_1 \dd l_2 \dd l_3}{\left(2\pi\right)^{3d}}\right)^{\frac{1}{2}}\nonumber \\
    &\hspace{2cm}\times \left(\int \inner*{g_7}{\fOpconnf(l_2)g_8}^2\inner*{g_9}{\fOpconnf(l_1-l_2)g_{10}}^2\inner*{g_{11}}{\fOpconnf(l_1-l_3)g_{12}}^2\frac{\dd l_1 \dd l_2 \dd l_3}{\left(2\pi\right)^{3d}}\right)^{\frac{1}{2}}. \label{eqn:coreIntegral}
\end{align}
To deal with these parentheses, we use a volume-preserving change of variables and factorise each into three integrals. For example,
\begin{multline}
    \int \inner*{g_1}{\fOpconnf(l_1)g_2}^2\inner*{g_3}{\fOpconnf(l_1)g_4}^2\inner*{g_5}{\fOpconnf(l_2)g_6}^2\inner*{g_{13}}{\fOpconnf(l_2-l_3)g_{14}}^2\frac{\dd l_1 \dd l_2 \dd l_3}{\left(2\pi\right)^{3d}} \\
    = \left(\int \inner*{g_1}{\fOpconnf(l_1)g_2}^2\inner*{g_3}{\fOpconnf(l_1)g_4}^2\frac{\dd l_1}{\left(2\pi\right)^{d}}\right)\left(\int \inner*{g_5}{\fOpconnf(l_2)g_6}^2\frac{\dd l_2}{\left(2\pi\right)^{d}}\right)\left(\int \inner*{g_{13}}{\fOpconnf(l_3')g_{14}}^2\frac{\dd l_3'}{\left(2\pi\right)^{d}}\right).
\end{multline}
There are now two types of integral we need to bound:
\begin{align}
    &\int \inner*{g_m}{\fOpconnf(l)g_n}^2 \frac{\dd l}{\left(2\pi\right)^d}\nonumber \\
    & \hspace{2cm} = \frac{1}{\Pcal\left(E_m\right)^2\Pcal\left(E_n\right)^2}\int \left(\int_{E_m\times E_n \times E_m \times E_n} \fconnf(l;a,b)\fconnf(l;c,d) \Pcal^{\otimes 4}\left(\dd a,\dd b,\dd c,\dd d\right)\right) \frac{\dd l}{\left(2\pi\right)^d} \nonumber\\
    &  \hspace{2cm} = \frac{1}{\Pcal\left(E_m\right)^2\Pcal\left(E_n\right)^2}\int_{E_m\times E_n \times E_m \times E_n} \left(\connf(\cdot;a,b)\star \connf(\cdot;c,d)\right)\left(\zerobar\right)\Pcal^{\otimes 4}\left(\dd a,\dd b,\dd c,\dd d\right)\nonumber \\
    &  \hspace{2cm} \leq \esssup_{a,b,c,d\in\Ecal} \left(\connf(\cdot;a,b)\star \connf(\cdot;c,d)\right)\left(\zerobar\right) \nonumber\\
    &  \hspace{2cm} \leq \esssup_{a,b,c,d\in\Ecal} \int\connf(\xbar;a,b)\connf(-\xbar;c,d)\dd \xbar \leq \esssup_{a,b\in\Ecal} \int \connf(\xbar;a,b) \dd \xbar \leq C, \label{eqn:connf1times2} \\
    &\int \inner*{g_1}{\fOpconnf(l)g_2}^2 \inner*{g_3}{\fOpconnf(l)g_4}^2  \frac{\dd l}{\left(2\pi\right)^d} \nonumber\\
    & \hspace{2cm} \leq \esssup_{a_1,\ldots,a_8\in\Ecal}\left(\connf(\cdot;a_1,a_2)\star \connf(\cdot;a_3,a_4) \star \connf(\cdot;a_5,a_6) \star \connf(\cdot;a_7,a_8)\right)\left(\zerobar\right) \leq g(d). \label{eqn:connf1times4}
\end{align}
In \eqref{eqn:connf1times2} we used $\connf\left(-\xbar;c,d\right)\in\left[0,1\right]$ and \ref{Assump:2ndMoment}. In \eqref{eqn:connf1times4} we bounded the convolution of four adjacency functions using \ref{Assump:BallDecay}. The result of this is that we can bound \eqref{eqn:coreIntegral} by $C^{\frac{5}{2}}g(d)^{\frac{1}{2}}$. Since $g(d)\leq \beta^4$, this satisfies the bound we require. As noted above, this argument also proves the required bound for $\vec{j}= \left(1,1,1,1,1,1,1,2\right)$ and $\vec{j}= \left(1,1,1,1,1,1,1,3\right)$.

Now suppose that we change some subset of $\left\{j_1,\ldots,j_7\right\}$ from having value $1$ to having value $2$. The argument proceeds similarly, except that after we have factorised our integrals we now want to also bound integrals of the following types:
\begin{align}
    &\int  \inner*{g_1}{\fOpconnf(l)^2g_1} \inner*{g_2}{\fOpconnf(l)^2g_2}\frac{\dd l}{\left(2\pi\right)^{d}}\nonumber\\
    & \hspace{2cm}= \frac{1}{\Pcal\left(E_1\right)^2\Pcal\left(E_2\right)^2}\int\left(\int_{E_1\times E_1\times E_2\times E_2}\fconnf^{(2)}(l;a,b)\fconnf^{(2)}(l;c,d)\Pcal^{\otimes 4}(\dd a,\dd b, \dd c, \dd d)\right)\frac{\dd l}{\left(2\pi\right)^{d}} \nonumber\\
    & \hspace{2cm}= \frac{1}{\Pcal\left(E_1\right)^2\Pcal\left(E_2\right)^2}\int_{E_1\times E_1\times E_2\times E_2}\left(\connf^{(2)}(\cdot;a,b)\star\connf^{(2)}(\cdot;c,d)\right)(\zerobar)\Pcal^{\otimes 4}(\dd a,\dd b, \dd c, \dd d) \nonumber\\
    & \hspace{2cm}\leq \esssup_{a,b,c,d\in\Ecal}\left(\connf^{(2)}(\cdot;a,b)\star\connf^{(2)}(\cdot;c,d)\right)\left(\zerobar\right) \leq g(d), \\
    &\int  \inner*{g_1}{\fOpconnf(l)^2g_1} \inner*{g_2}{\fOpconnf(l)^2g_2}\inner*{g_3}{\fOpconnf(l)g_4}^2\frac{\dd l}{\left(2\pi\right)^{d}}\nonumber\\
    &  \hspace{2cm} \leq \esssup_{a_1,\ldots,a_8\in\Ecal}\left(\connf^{(2)}(\cdot;a_1,a_2)\star \connf^{(2)}(\cdot;a_3,a_4) \star \connf(\cdot;a_5,a_6) \star \connf(\cdot;a_7,a_8)\right)\left(\zerobar\right) \nonumber \\
    &  \hspace{2cm} \leq \InfNorm{\fOpconnf(0)}^2\esssup_{\xbar\in\Rd,a_1,\ldots,a_4\in\Ecal}\left(\connf^{(2)}(\cdot;a_1,a_2)\star \connf^{(2)}(\cdot;a_3,a_4)\right)\left(\xbar\right)\leq C^2 g(d)\\
    &\int  \inner*{g_1}{\fOpconnf(l)^2g_1} \inner*{g_2}{\fOpconnf(l)^2g_2}\inner*{g_3}{\fOpconnf(l)^2g_3} \inner*{g_4}{\fOpconnf(l)^2g_4}\frac{\dd l}{\left(2\pi\right)^{d}}\nonumber\\
    &  \hspace{2cm} \leq \esssup_{a_1,\ldots,a_8\in\Ecal}\left(\connf^{(2)}(\cdot;a_1,a_2)\star \connf^{(2)}(\cdot;a_3,a_4) \star \connf^{(2)}(\cdot;a_5,a_6) \star \connf^{(2)}(\cdot;a_7,a_8)\right)\left(\zerobar\right) \nonumber \\
    &  \hspace{2cm} \leq \InfNorm{\fOpconnf(0)^2}^2\esssup_{\xbar\in\Rd,a_1,\ldots,a_4\in\Ecal}\left(\connf^{(2)}(\cdot;a_1,a_2)\star \connf^{(2)}(\cdot;a_3,a_4)\right)\left(\xbar\right)\leq C^4 g(d).
\end{align}
Up to a constant, these are all less than or equal to the bounds we used for the $\left\{j_1,\ldots,j_7\right\} = \left\{1,\ldots,1\right\}$ case, and therefore that case dominates. These other cases would behave as constant multiple of one of $g(d)$, $g(d)^{\frac{3}{2}}$, $g(d)^2$, $g(d)^{\frac{5}{2}}$, and $g(d)^3$.

We have now dealt with all the cases which have no factors of $\fgmu$. To demonstrate how we will deal with these, we first consider $\vec{j}=\left\{3,3,3,3,3,3,3,5\right\}$. For notational compactness, we define
\begin{equation}
    \left<\fOpconnf(l)^2\right>_{I} := \prod_{m\in I}\inner*{g_m}{\fOpconnf(l)^2g_m},
\end{equation}
where $I\subset \N$ and $l\in\Rd$. Since we have already extracted the factor of $\left(1-\SupSpec{\fOpconnf(k)}\right)$ via the pointwise bound on $Q^{5}_{15,16}(l;k)$, we are left trying to bound the following integral:
\begin{multline}
    \int \fgmu(l_1)^2\fgmu(l_2)^2\fgmu(l_1-l_2)\fgmu(l_1-l_3)\fgmu(l_2-l_3)\dispfgmu(l;k) \\ \qquad\times\left<\fOpconnf(l_1)^2\right>^{\frac{1}{2}}_{1,2,3,4} \left<\fOpconnf(l_2)^2\right>^{\frac{1}{2}}_{5,6,7,8} \left<\fOpconnf(l_1-l_2)^2\right>^{\frac{1}{2}}_{9,10} \left<\fOpconnf(l_1-l_3)^2\right>^{\frac{1}{2}}_{11,12}\left<\fOpconnf(l_2-l_3)^2\right>^{\frac{1}{2}}_{13,14} \frac{\dd l_1 \dd l_2 \dd l_3}{\left(2\pi\right)^{3d}}.
\end{multline}

To bound this integral we need to partition our space. Fix $\varepsilon>0$ and define
\begin{align*}
    B_1 &:= \left\{\abs*{l_1}< \varepsilon\right\}\\ 
    B_2 &:= \left\{\abs*{l_2}< \varepsilon\right\}\\ 
    B_3 &:= \left\{\abs*{l_1-l_3}< \varepsilon\right\}\\
    B_4 &:= \left\{\abs*{l_1-l_2}< \varepsilon\right\}\\
    B_5 &:= \left\{\abs*{l_2-l_3}< \varepsilon\right\}\\
    B_6 &:= \left\{\abs*{l_3}< \varepsilon\right\}\cup\left\{\abs*{l_3-k}< \varepsilon\right\}\cup\left\{\abs*{l_3+k}< \varepsilon\right\},
\end{align*}
and for $n=0,1,\ldots,6$ define
\begin{equation}
    A_n := \left\{(l_1,l_2,l_3)\in\left(\R^{d}\right)^3 \colon \#\left\{m\colon (l_1,l_2,l_3)\in B_m \right\}=n\right\}.
\end{equation}
In words, $A_n$ is the set of points within $\varepsilon$ (in Euclidean distance) of precisely $n$ of the hyperplanes on which the $\fgmu$ factors diverge. If we are working with a factor of $\fgmu$ on the set $B_m$ associated with its relevant hyperplane, we use the bound $\fgmu(l)\leq 1/\left(C_2\abs*{l}^2\right)$, and if we are working off this set we use the bound $\fgmu(l) \leq 1/\left(C_2\varepsilon^2\right)$.

When we integrate over $A_0$, we bound $\fgmu(l) \leq 1/\left(C_2\varepsilon^2\right)$ for all such factors and recycle the bound on the ``only $\fOpconnf$" integrals from above. The contribution from $A_0$ is therefore bounded by some constant multiple of 
\begin{equation}
    \frac{1}{\varepsilon^{18}}g(d)^3.
\end{equation}

For $A_1$, first suppose we are considering $A_1\cap B_m$ for $m\ne 6$. First we bound $\fgmu(l) \leq 1/\left(C_2\varepsilon^2\right)$ and $\dispfgmu(l;k) \leq 3/\left(C_2\varepsilon^2\right)$ for the ``irrelevant" directions, and use the pointwise bound for $\left<\fOpconnf(l)^2\right>_{m,n}$ in the ``relevant" direction. Then we can use the Cauchy-Schwarz and factorise argument we used above and recycle the integral bounds for the $\fOpconnf$ factors as we did above. However we are careful about the way we split apart terms in the Cauchy-Schwarz step. We split the $\fgmu$ terms across both factors and arrange the remaining $\fOpconnf$ terms so that each factor has ``directions" spanning $\left(\Rd\right)^3$. To demonstrate we consider $A_1\cap B_1$:
\begin{align}
    &\int_{A_1\cap B_1} \fgmu(l_1)^2 \left<\fOpconnf(l_2)^2\right>^{\frac{1}{2}}_{5,6,7,8} \left<\fOpconnf(l_1-l_3)^2\right>^{\frac{1}{2}}_{11,12} \left<\fOpconnf(l_2-l_3)^2\right>^{\frac{1}{2}}_{13,14} \frac{\dd l_1 \dd l_2 \dd l_3}{\left(2\pi\right)^{3d}} \nonumber\\
    & \hspace{2cm} \leq C^2\left(\int_{A_1\cap B_1}\fgmu(l_1)^2\left<\fOpconnf(l_2)^2\right>_{5,6}\left<\fOpconnf(l_1-l_3)^2\right>_{11,12}\frac{\dd l_1 \dd l_2 \dd l_3}{\left(2\pi\right)^{3d}}\right)^{\frac{1}{2}}\nonumber \\
    & \hspace{5cm}\times \left(\int_{A_1\cap B_1}\fgmu(l_1)^2 \left<\fOpconnf(l_2)^2\right>_{7,8} \left<\fOpconnf(l_2-l_3)^2\right>_{13,14}\frac{\dd l_1 \dd l_2 \dd l_3}{\left(2\pi\right)^{3d}}\right)^{\frac{1}{2}}.
\end{align}
After a change of variables, factorising and using our integral bounds of $\fOpconnf$ terms from above, we are only left needing to bound
\begin{equation}
    \int_{\abs*{l}<\varepsilon} \fgmu(l)^2 \frac{\dd l}{\left(2\pi\right)^{d}} \leq \frac{1}{C^2_2} \int_{\abs*{l}<\varepsilon} \frac{1}{\abs*{l}^4} \frac{\dd l}{\left(2\pi\right)^{d}} = \frac{1}{C^2_2}\frac{\mathfrak{S}_{d-1}}{\left(2\pi\right)^d}\frac{\varepsilon^{d-4}}{d-4}.
\end{equation}
Therefore the contribution from $A_1\cap B_m$ for $m\ne 6$ is bounded by some constant multiple of $\frac{\mathfrak{S}_{d-1}}{\left(2\pi\right)^d}\frac{\varepsilon^{d-18}}{d-4}g(d)^2$. For $A_1\cap B_6$ we need to be slightly more careful because for $\abs*{k}<2\varepsilon$ the constituent parts of $B_6$ overlap - this is the same considerations we had to account for in the proof of Lemma~\ref{lem:PTkTPBound}. We can overcome it in the same way by using H\"older's inequality (actually just Cauchy-Schwarz in this case) and then re-centring the integrals. The net result is that the bound of a constant multiple of $\frac{\mathfrak{S}_{d-1}}{\left(2\pi\right)^d}\frac{\varepsilon^{d-18}}{d-4}g(d)^2$ also applies to $A_1\cap B_6$. In summary, the contribution from $A_1$ is bounded by some constant multiple of 
\begin{equation}
    \frac{\mathfrak{S}_{d-1}}{\left(2\pi\right)^d}\frac{\varepsilon^{d-18}}{d-4}g(d)^2.
\end{equation}

The $A_2$ case proceeds similarly. After partitioning the set according to which are the overlapping hyperplanes, we take the relevant $\fgmu$ factors from these directions and one $\fOpconnf$ factor from a linearly independent direction, and pointwise bound all the other $\fgmu$ and $\fOpconnf$ factors. The contribution from $A_2$ is then bounded by some constant multiple of 
\begin{equation}
    \left(\frac{\mathfrak{S}_{d-1}}{\left(2\pi\right)^d}\right)^2\frac{\varepsilon^{2d-18}}{\left(d-4\right)^2}g(d),
\end{equation}
arising from the $A_1\cap B_1\cap B_2$, $A_1\cap B_1\cap B_6$, and $A_1\cap B_2\cap B_6$ integrals.

The $A_3$ case is more complicated because there are two qualitatively different sub-cases to consider. First suppose that the three normal vectors to the overlapping hyperplanes are linearly independent. Then we take the relevant $\fgmu$ factors from these directions, pointwise bound the other $\fgmu$ factors and all the $\fOpconnf$ factors. The contribution from this sub-case of $A_3$ is then bounded by some constant multiple of $\left(\frac{\mathfrak{S}_{d-1}}{\left(2\pi\right)^d}\right)^3\frac{\varepsilon^{3d-18}}{\left(d-4\right)^3}$, arising from the $A_1\cap B_1\cap B_2\cap B_6$ integral.

On the other hand, if the three normal vectors to the overlapping hyperplanes are linearly \emph{dependent}, then we need to retain a $\fOpconnf$ from the remaining linearly independent direction. To demonstrate, let us consider the $A_3\cap B_1 \cap B_2 \cap B_4$ integral. After removing unnecessary terms via uniform bounds, 
\begin{align}
    &\int_{A_3\cap B_1 \cap B_2 \cap B_4}\fgmu(l_1)^2\fgmu(l_2)^2\fgmu(l_1-l_2) \left<\fOpconnf(l_2-l_3)^2\right>^{\frac{1}{2}}_{13,14} \left<\fOpconnf(l_1-l_3)^2\right>^{\frac{1}{2}}_{11,12}\frac{\dd l_1 \dd l_2 \dd l_3}{\left(2\pi\right)^{3d}}\nonumber\\
    & \qquad \leq \left(\int_{A_3\cap B_1 \cap B_2 \cap B_4}\fgmu(l_1)^2\fgmu(l_2)^2\fgmu(l_1-l_2) \left<\fOpconnf(l_2-l_3)^2\right>_{13,14} \frac{\dd l_1 \dd l_2 \dd l_3}{\left(2\pi\right)^{3d}}\right)^\frac{1}{2}\nonumber \\
    &\qquad \qquad\times \left(\int_{A_3\cap B_1 \cap B_2 \cap B_4}\fgmu(l_1)^2\fgmu(l_2)^2\fgmu(l_1-l_2) \left<\fOpconnf(l_1-l_3)^2\right>_{11,12} \frac{\dd l_1 \dd l_2 \dd l_3}{\left(2\pi\right)^{3d}}\right)^\frac{1}{2}.
\end{align}
Applying a change of variables, factorising the integral, and applying H\"older's inequality allows us to bound these parentheses:
\begin{align}
    & \int_{A_3\cap B_1 \cap B_2 \cap B_4}\fgmu(l_1)^2\fgmu(l_2)^2\fgmu(l_1-l_2) \left<\fOpconnf(l_2-l_3)^2\right>_{13,14} \frac{\dd l_1 \dd l_2 \dd l_3}{\left(2\pi\right)^{3d}}\nonumber\\
    &\qquad \leq \left(\int_{\left\{\abs*{l_1}<\varepsilon\right\}\cap \left\{\abs*{l_2}<\varepsilon\right\}\cap \left\{\abs*{l_1-l_2}<\varepsilon\right\}}\fgmu(l_1)^2\fgmu(l_2)^2\fgmu(l_1-l_2) \frac{\dd l_1 \dd l_2}{\left(2\pi\right)^{2d}}\right)\left(\int \left<\fOpconnf(l)^2\right>_{13,14}\frac{\dd l}{\left(2\pi\right)^{d}}\right)\nonumber\\
    & \qquad \leq \frac{1}{C^5_2}\left(\int_{\left\{\abs*{l_1}<\varepsilon\right\}\cap \left\{\abs*{l_2}<\varepsilon\right\}\cap \left\{\abs*{l_1-l_2}<\varepsilon\right\}}\frac{1}{\abs*{l_1}^4}\frac{1}{\abs*{l_2}^4}\frac{1}{\abs*{l_1-l_2}^2} \frac{\dd l_1 \dd l_2}{\left(2\pi\right)^{2d}}\right) g(d)\nonumber\\
    & \qquad \leq \frac{1}{C^5_2}\left(\int_{\left\{\abs*{l_1}<\varepsilon\right\}\cap \left\{\abs*{l_2}<\varepsilon\right\}}\frac{1}{\abs*{l_1}^6}\frac{1}{\abs*{l_2}^3} \frac{\dd l_1 \dd l_2}{\left(2\pi\right)^{2d}}\right)^{\frac{2}{3}}\left(\int_{\left\{\abs*{l_2}<\varepsilon\right\}\cap \left\{\abs*{l_1-l_2}<\varepsilon\right\}}\frac{1}{\abs*{l_2}^6}\frac{1}{\abs*{l_1-l_2}^6} \frac{\dd l_1 \dd l_2}{\left(2\pi\right)^{2d}}\right)^{\frac{1}{3}} g(d)\nonumber \\
    & \qquad = \frac{1}{C^5_2}\left(\frac{\mathfrak{S}_{d-1}}{d-6}\frac{\varepsilon^{d-6}}{\left(2\pi\right)^d}\right)^{\frac{4}{3}}\left(\frac{\mathfrak{S}_{d-1}}{d-3}\frac{\varepsilon^{d-3}}{\left(2\pi\right)^d}\right)^{\frac{2}{3}}g(d).
\end{align}
For all of these linearly dependent cases, we can bound the integral by some constant multiple of $\left(\frac{\mathfrak{S}_{d-1}}{\left(2\pi\right)^d}\right)^2\frac{\varepsilon^{2d-18}}{\left(d-6\right)^2}g(d)$. In summary, the contribution from $A_3$ is bounded by some constant multiple of 
\begin{equation}
    \left(\frac{\mathfrak{S}_{d-1}}{\left(2\pi\right)^d}\right)^3\frac{\varepsilon^{3d-18}}{\left(d-4\right)^3} + \left(\frac{\mathfrak{S}_{d-1}}{\left(2\pi\right)^d}\right)^2\frac{\varepsilon^{2d-18}}{\left(d-6\right)^2}g(d).
\end{equation}

For $A_4$, $A_5$, and $A_6$ we don't have the two linearly dependent/independent cases because any four of the directions spans $\left(\Rd\right)^3$. We therefore don't need factors of $\fOpconnf$ to control the unbounded directions. Instead, we need to be careful that when we split our integral at the Cauchy-Schwarz step the resulting parentheses have sufficiently few factors of $\fgmu$ that their bounds will be finite for $d>6$. The way we perform this split will be different for the different parts of $\dispfgmu(l_3;k)$. Let us define 
\begin{equation}
    \widehat{D}(l;k) := \fgmu(l+k) + \fgmu(l-k).
\end{equation}

We first consider $A_6$. By writing $\dispfgmu(l;k) = \fgmu(l)\widehat{D}(l;k) + \fgmu(l+k)\fgmu(l-k)$, we have two integrals we want to bound. First
\begin{align}
    &\int_{A_6}\fgmu(l_1)^2\fgmu(l_2)^2\fgmu(l_1-l_2)\fgmu(l_1-l_3)\fgmu(l_2-l_3)\fgmu(l_3)\widehat{D}(l_3;k) \frac{\dd l_1 \dd l_2 \dd l_3}{\left(2\pi\right)^{3d}}\nonumber \\
    &\qquad\leq \left(\int_{A_6}\left[\fgmu(l_1)^3\right]\left[\fgmu(l_2)\fgmu(l_1-l_2)^2\right]\left[\fgmu(l_3)\widehat{D}(l_3;k)^2\right] \frac{\dd l_1 \dd l_2 \dd l_3}{\left(2\pi\right)^{3d}}\right)^{\frac{1}{2}}\nonumber \\
    &\qquad\qquad\times\left(\int_{A_6}\left[\fgmu(l_2)^3\right]\left[\fgmu(l_3)\fgmu(l_2-l_3)^2\right]\left[\fgmu(l_1)\fgmu(l_1-l_3)^2\right] \frac{\dd l_1 \dd l_2 \dd l_3}{\left(2\pi\right)^{3d}}\right)^{\frac{1}{2}}. \label{eqn:A_6integral}
\end{align}
We then perform the $l_i$ integrals in specific orders. For the first factor we integrate over $l_3$ (which factorises out already), then we integrate over $l_2$ for fixed $l_1$, and finally integrate over $l_1$. For the second factor we integrate over $l_1$ for fixed $l_2$ and $l_3$, then we integrate over $l_3$ for fixed $l_2$, and finally we integrate over $l_2$. There are therefore three forms of integral we need to bound:
\begin{align}
    \int_{\abs*{l}<\varepsilon}\fgmu(l)^3\frac{\dd l}{\left(2\pi\right)^{d}} &\leq \frac{1}{C^3_2}\int_{\abs*{l}<\varepsilon}\frac{1}{\abs*{l}^6}\frac{\dd l}{\left(2\pi\right)^{d}} = \frac{1}{C^3_2}\left(\frac{\mathfrak{S}_{d-1}}{\left(2\pi\right)^d}\right)\frac{\varepsilon^{d-6}}{d-6}\\
    \int_{\left\{\abs*{l}<\varepsilon\right\}\cap \left\{\abs*{l-l'}<\varepsilon\right\}}\fgmu(l)\fgmu(l-l')^2\frac{\dd l}{\left(2\pi\right)^{d}} &\leq \frac{1}{C^3_2}\int_{\left\{\abs*{l}<\varepsilon\right\}\cap \left\{\abs*{l-l'}<\varepsilon\right\}}\frac{1}{\abs*{l}^2}\frac{1}{\abs*{l-l'}^4}\frac{\dd l}{\left(2\pi\right)^{d}}\nonumber\\
    &\leq \frac{1}{C^3_2}\left(\int_{\abs*{l}<\varepsilon}\frac{1}{\abs*{l}^6}\frac{\dd l}{\left(2\pi\right)^{d}}\right)^\frac{1}{3}\left(\int_{\abs*{l-l'}<\varepsilon}\frac{1}{\abs*{l-l'}^6}\frac{\dd l}{\left(2\pi\right)^{d}}\right)^\frac{2}{3}\nonumber\\
    & = \frac{1}{C^3_2}\left(\frac{\mathfrak{S}_{d-1}}{\left(2\pi\right)^d}\right)\frac{\varepsilon^{d-6}}{d-6}\\
    \int_{\left\{\abs*{l}<\varepsilon\right\}\cup \left\{\abs*{l-k}<\varepsilon\right\}\cup \left\{\abs*{l+k}<\varepsilon\right\}}\fgmu(l)\widehat{D}(l;k)^2\frac{\dd l}{\left(2\pi\right)^{d}} &\leq \frac{C'}{C^3_2}\left(\frac{\mathfrak{S}_{d-1}}{\left(2\pi\right)^d}\right)\left(\frac{1}{d-6} + \frac{1}{d-4} + \frac{1}{d-2} + \frac{1}{d}\right)\varepsilon^{d-6}.
\end{align}
In this last inequality, $C'>0$ is some uniform constant. The last inequality is derived by applying H\"older's inequality in much the same way as for the second inequality, taking care to see when the three sets that are integrated over are overlapping. The calculation uses H\"older's inequality and a partition of the space, and is similar to the calculation performed in the proof of Lemma~\ref{lem:PTkTPBound}. The second integral we want to bound for $A_6$ is 
\begin{align}
    &\int_{A_6}\fgmu(l_1)^2\fgmu(l_2)^2\fgmu(l_1-l_2)\fgmu(l_1-l_3)\fgmu(l_2-l_3)\fgmu(l_3+k)\fgmu(l_3-k) \frac{\dd l_1 \dd l_2 \dd l_3}{\left(2\pi\right)^{3d}}\nonumber \\
    &\qquad\leq \left(\int_{A_6}\left[\fgmu(l_3+k)\fgmu(l_3-k)^2\right]\left[\fgmu(l_2)\fgmu(l_2-l_3)^2\right]\left[\fgmu(l_1)^3\right] \frac{\dd l_1 \dd l_2 \dd l_3}{\left(2\pi\right)^{3d}}\right)^{\frac{1}{2}}\nonumber \\
    &\qquad\qquad\times\left(\int_{A_6}\left[\fgmu(l_2)^3\right]\left[\fgmu(l_1)\fgmu(l_1-l_2)^2\right]\left[\fgmu(l_3-l_1)^2\fgmu(l_3+k)\right] \frac{\dd l_1 \dd l_2 \dd l_3}{\left(2\pi\right)^{3d}}\right)^{\frac{1}{2}}. \label{eqn:A_6integral2}
\end{align}
We then perform the $l_i$ integrals in specific orders. For the first factor we integrate over $l_1$ (which factorises out already), then we integrate over $l_2$ for fixed $l_3$, and finally integrate over $l_3$. For the second factor we integrate over $l_3$ for fixed $l_1$ and $l_2$, then we integrate over $l_1$ for fixed $l_2$, and finally we integrate over $l_2$. The new integrals we need to bound are:
\begin{multline}
    \int_{\left\{\abs*{l}<\varepsilon\right\}\cup \left\{\abs*{l-k}<\varepsilon\right\}\cup \left\{\abs*{l+k}<\varepsilon\right\}}\fgmu(l+k)\fgmu(l-k)^2\frac{\dd l}{\left(2\pi\right)^{d}} \\ \leq \frac{C'}{C^3_2}\left(\frac{\mathfrak{S}_{d-1}}{\left(2\pi\right)^d}\right)\left(\frac{1}{d-6} + \frac{1}{d-4} + \frac{1}{d-2} + \frac{1}{d}\right)\varepsilon^{d-6}
\end{multline}
\begin{multline}
    \int_{\left(\left\{\abs*{l}<\varepsilon\right\}\cup \left\{\abs*{l-k}<\varepsilon\right\}\cup \left\{\abs*{l+k}<\varepsilon\right\}\right)\cap\left\{\abs*{l-l'}<\varepsilon\right\}}\fgmu(l+k)\fgmu(l-l')^2\frac{\dd l}{\left(2\pi\right)^{d}} \\ \leq \frac{C'}{C^3_2}\left(\frac{\mathfrak{S}_{d-1}}{\left(2\pi\right)^d}\right)\left(\frac{1}{d-6} + \frac{1}{d-4}\right)\varepsilon^{d-6}.
\end{multline}
The calculation of these two bounds uses H\"older's inequality and a partition of the space, and is similar to the calculation performed in the proof of Lemma~\ref{lem:PTkTPBound}. The result of these bounds is that the integral in \eqref{eqn:A_6integral} is bounded by some constant multiple of
\begin{equation}
    \frac{1}{\left(d-6\right)^3}\left(\frac{\mathfrak{S}_{d-1}}{\left(2\pi\right)^d}\right)^3\varepsilon^{3d-18}.
\end{equation}

For $A_4$ and $A_5$ we can repeat the argument for $A_6$, but note that in some places the bound $1/\left(C_2\abs*{l}^2\right)$ will be replaced by $1/\left(C_2\varepsilon^2\right)$. The net result of this is that the factor $\left(d-6\right)^3$ can be replaced by $\left(d-6\right)^{\alpha_0}\left(d-4\right)^{\alpha_1}\left(d-2\right)^{\alpha_2} d^{\alpha_3}$ for some $\alpha_0,\alpha_1,\alpha_2,\alpha_3\leq 0$ such that $\alpha_0+\alpha_1+\alpha_2+\alpha_3 =3$. The net result is that the contribution from both $A_4$ and $A_5$ can also be bounded by some constant multiple of
\begin{equation}
    \frac{1}{\left(d-6\right)^3}\left(\frac{\mathfrak{S}_{d-1}}{\left(2\pi\right)^d}\right)^3\varepsilon^{3d-18}.
\end{equation}

Above we have established bounds for the various parts of the integral when $\vec{j}=\left(3,3,3,3,3,3,3,5\right)$. Having $j_8=4$ can also be dealt with by very similar arguments to that outlined above - the $l_3$ integrals are in fact simpler. The above bounds still hold with the $\varepsilon^{-18}$ becoming a $\varepsilon^{-16}$. Since $\varepsilon<1$, this produces a larger bound. Recall that up to a constant factor, having $j_m=1$ produces a larger bound than $j_m=2$ for $m=1,\ldots,7$. If we replace $j_m=3$ with $j_m=1$, then after ignoring constant values we lose a factor of $\varepsilon^{-2}$, but we may also lose a factor of $g(d)^\frac{1}{2}$ unless that was for $m=1,2$ (corresponding to $l_1$ direction) or $m=3,4$ (corresponding to $l_2$ direction). Note that doing for both directions will lose a $g(d)^\frac{1}{2}$ factor.

If $\frac{1}{d}\mathfrak{S}_{d-1}\varepsilon^d= O\left(g(d)^{\frac{1}{2}}\right)$ for all fixed $\varepsilon>0$, then it is clear that the dominating bound is $g(d)^\frac{1}{2}$. This arises, for example from the bound for $\vec{j}=\left(1,1,1,1,1,1,1,1\right)$. The situation is more complicated if $g(d)^{\frac{1}{2}}\ll\frac{1}{d}\mathfrak{S}_{d-1}\varepsilon^d$ for fixed $\varepsilon$, because we can choose to have $\varepsilon=\varepsilon(d)$ and take it to $0$ as $d\to\infty$. It is easy to see that all the $A_1$ components (and $A_2,\ldots,A_6$ components) for each $\vec{j}$ can be bounded by some constant multiple of $\frac{1}{d}\frac{\mathfrak{S}_{d-1}}{\left(2\pi\right)^d}\varepsilon^d$ (provided $d>6$). We therefore only need to compare this to the contributions from $A_0$.

The bound we get from the above arguments for each $A_0$ contribution are of the form $\varepsilon^{-2m}g(d)^\frac{n}{2}$, where $m$ is the number of factors of $\fgmu$ and $n$ is related to the multiplicity of $\Opconnf$ terms in each Fourier direction. To find the terms with the largest bound, we want to have as many factors of $\fgmu$ as possible without producing extra factors of $g(d)^\frac{1}{2}$. Since $g(d)^{\frac{1}{2}}\ll\frac{1}{d}\mathfrak{S}_{d-1}\varepsilon^d$ for fixed $\varepsilon$, increasing $n$ here instantly produces a smaller bound. By changing $j_8$ from $1$ to $4$ or $5$, or any other element from $1$ to $3$, we do gain a factor of $\fgmu$ but we also increase the number of $\Opconnf$ factors and therefore in most places we acquire extra factors of $g(d)^{\frac{1}{2}}$. The exception to this is for $j_1$ and $j_2$, which correspond to the $l_1$ directions, since they already produce a factor of $g(d)^{\frac{1}{2}}$. The $A_0$ component of the $\vec{j}=\left(3,3,1,1,1,1,1,1\right)$ term can be bounded by constant multiple of $\varepsilon^{-4}g(d)^{\frac{1}{2}}$. We then optimise our choice of $\varepsilon$ to have our two dominant bounds be of the same order. This produces $\varepsilon(d)^2 = g(d)^{\frac{1}{d}}\frac{2\pi d}{\e}\left(1+o(1)\right)$, and therefore our overall bound is given by
\begin{equation}
    \varepsilon^{-4}g(d)^\frac{1}{2} = g(d)^{\frac{1}{2}-\frac{2}{d}}\left(\frac{\e}{2\pi}\right)^2\frac{1}{d^2}\left(1+o(1)\right).
\end{equation}
Since $g(d)^{\frac{1}{2}-\frac{2}{d}} \leq g(d)^{\frac{1}{2}-\frac{3}{d}}$ for sufficiently large $d$, this bound is of order $\beta^2$ and our bound is proven.
\end{proof}

\end{appendix}

\paragraph{Acknowledgements.} This work is supported by  
\textit{Deutsche Forschungsgemeinschaft} (project number 443880457) through priority program ``Random Geometric Systems'' (SPP 2265). 
The authors thank the \emph{Centre de recherches math\'ematiques} Montreal for hospitality during a research visit in spring 2022 through the Simons-CRM scholar-in-residence program.

\bibliography{bibliography}{}
\bibliographystyle{alpha}

\end{document}